\newcommand{\mapsfrom}{\mathrel{\reflectbox{\ensuremath{\mapsto}}}}
\title[Conformal dimension and splittings]{Conformal dimension of hyperbolic groups that split over elementary subgroups}
\author{Matias Carrasco}
\email{mcarrasco@fing.edu.uy}
\address{Instituto de Matem\'atica y Estad\'istica Rafael Laguardia\\ Universidad de la Rep\'ublica}
\author{John M. Mackay}\thanks{This research was supported in part by EPSRC grant EP/P010245/1 and the MathAMSUD project Geometry and Dynamics of Infinite Groups.}
\email{john.mackay@bristol.ac.uk}
\address{School of Mathematics \\ University of Bristol \\ Bristol, UK}
\date{\today}
\numberwithin{equation}{section}
\newtheorem{theorem}[equation]{Theorem}
\newtheorem{proposition}[equation]{Proposition}
\newtheorem{corollary}[equation]{Corollary}
\newtheorem{lemma}[equation]{Lemma}
\newtheorem{sublemma}[equation]{Sublemma}
\newtheorem{definition}[equation]{Definition}
\newtheorem{remark}[equation]{Remark}
\newtheoremstyle{citing}
  {3pt}
  {3pt}
  {\itshape}
  {}
  {\bfseries}
  {}
  {.5em}
  {\thmnote{#3}}
\theoremstyle{citing}
\DeclareMathOperator{\diam}{diam}
\DeclareMathOperator{\Conv}{Conv}
\DeclareMathOperator{\Mod}{Mod}
\DeclareMathOperator{\Vol}{Vol}
\newcommand{\bdry}{\partial_\infty}
\newcommand{\cL}{\mathcal{L}}
\newcommand{\cB}{\mathcal{B}}
\newcommand{\cC}{\mathcal{C}}
\newcommand{\cW}{\mathcal{W}}
\DeclareMathOperator{\Confdim}{Confdim}
\newcommand{\cG}{\mathcal{G}}
\newcommand{\cR}{\mathcal{R}}
\newcommand{\cS}{\mathcal{S}}
\newcommand{\ra}{\rightarrow}
\newcommand{\R}{\mathbb{R}}
\newcommand{\N}{\mathbb{N}}
\newcommand{\Z}{\mathbb{Z}}
\begin{document}
\begin{abstract}
	We study the (Ahlfors regular) conformal dimension of the boundary at infinity of Gromov hyperbolic groups which split over elementary subgroups.
	If such a group is not virtually free, we show that the conformal dimension is equal to the maximal value of the conformal dimension of the vertex groups, or $1$, whichever is greater, and we characterise when the conformal dimension is attained.
	As a consequence, we are able to characterise which Gromov hyperbolic groups (without $2$-torsion) have conformal dimension $1$, answering a question of Bonk and Kleiner.
\end{abstract}

\keywords{conformal dimension, hyperbolic groups, graph of groups decomposition}

\subjclass{20F67, 30L10, 51F99}

\maketitle

\section{Introduction}
\label{sec:intro}

\subsection{Overview}
The conformal dimension of a metric space, introduced by Pansu, is the infimal Hausdorff dimension of all the quasisymmetrically equivalent metrics on the space.
It is a natural quasisymmetric invariant, and is connected to the uniformisation problem of finding an optimal (``flattest'') metric for a given space.
Since the boundary at infinity $\bdry G$ of a Gromov hyperbolic group $G$ carries a canonically defined family of metrics that are pairwise quasisymmetric, the conformal dimension of $\bdry G$ is a well-defined quasi-isometric invariant of $G$. The initial motivation for the introduction of this invariant by Pansu in \cite{pansu1989dimension} was in the study of the large scale geometry of negatively curved homogeneous spaces, for which the conformal dimension can be computed explicitly. However, in general it is an invariant that is very hard to compute. Despite this difficulty, it has found applications in other areas of geometric group theory and dynamical systems. These include the work of Bonk and Kleiner on the rigidity of quasi-{M}\"{o}bius group actions \cite{Bonk-Kleiner-02-rigidity-QM-actions}; the works of Bonk and Kleiner \cite{Bonk-Kleiner-05-Confdim-Cannon} and Ha{\"\i}ssinsky \cite{haissinsky2015hyperbolic} on Cannon's conjecture and the boundary characterisation of Kleinian groups; the works of Ha{\"\i}ssinsky and Pilgrim on the characterisation of rational maps among coarse expanding conformal dynamical systems on the 2-sphere \cite{haissinsky-pilgrim-2014-minimal}; the works of Bourdon and Kleiner focussing on the relations between the $\ell_p$-cohomology, the conformal dimension, combinatorial modulus, and the Combinatorial Loewner Property \cite{bou-kle13-CLP,bourdon-kleiner-2015-some}; and the works of the second author on conformal dimension bounds for small cancellation and random groups \cite{mackay2012random,mackay2016random}, as well as further connections to actions on $L_p$-spaces \cite{bourdon2016cohomologie, DM-16-FLp}. We refer the reader to the survey \cite{MacTys10-confdim} for the basic theory of conformal dimension and its first applications.

In this paper we compute the conformal dimension of a hyperbolic group that splits as a graph of groups with elementary edge groups  in terms of the conformal dimensions of the resulting vertex groups.
Throughout the paper, an elementary (sub)group is a group that is finite or $2$-ended, i.e., virtually $\Z$.
Unless otherwise indicated, by `conformal dimension' we mean the now more commonly studied Ahlfors regular conformal dimension, see Definition~\ref{def:confdim}.
\begin{theorem}\label{thm:main}
	Suppose $G$ is a hyperbolic group, and we are given a graphs of groups decomposition of $G$, with vertex groups $\{G_i\}$ and all edge groups are elementary.
	Then if $G$ is not virtually free,
	\[
		\Confdim \bdry G = \max\Big\{ \{ 1 \} \cup \{\Confdim \bdry G_i : G_i \text{ infinite} \} \Big\}.
	\]
\end{theorem}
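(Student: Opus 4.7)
The proof naturally splits into two inequalities, the lower bound being more straightforward.

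\textbf{Lower bound.} Each vertex group $G_i$ is quasi-convex in $G$, since the edge groups, being elementary, are quasi-convex; hence the inclusion $G_i \hookrightarrow G$ extends to a quasisymmetric embedding $\bdry G_i \hookrightarrow \bdry G$ with respect to any pair of visual metrics. By a standard monotonicity result for Ahlfors regular conformal dimension under quasisymmetric embeddings of nicely embedded closed subsets---applied by restricting an almost-optimal AR metric on $\bdry G$ to the closed subset $\bdry G_i$---we obtain $\Confdim \bdry G_i \le \Confdim \bdry G$ for every infinite $G_i$. To obtain $\Confdim \bdry G \ge 1$, I would use the Bestvina-Mess-style fact that for infinite hyperbolic $G$ the boundary $\bdry G$ has topological dimension zero if and only if $G$ is virtually free, combined with the trivial inequality $\dimT \le \Confdim$.

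\textbf{Upper bound.} This is the substantive direction. Let $T$ be the Bass-Serre tree of the splitting. There is a well-known description of $\bdry G$ as the union of translates $g\bdry G_v$ (one for each vertex $v$ of $T$) together with the ends of $T$, where two translates corresponding to adjacent vertices of $T$ are identified along the limit set of the corresponding edge stabiliser. Crucially, because edge groups are elementary, each such identification is along an empty set, a single point, or a pair of points.

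Fix $\epsilon>0$. For each vertex-group type choose an Ahlfors $Q_i$-regular metric $\rho_i$ on $\bdry G_i$ with $Q_i \le \Confdim\bdry G_i + \epsilon$. The plan is to build a metric $\rho$ on $\bdry G$ by placing a rescaled copy of $\rho_i$ on each piece $g\bdry G_v$, with the scaling factor chosen so that its $\rho$-diameter matches the diameter of the piece in a fixed visual metric $\rho_0$ on $\bdry G$, and then gluing adjacent pieces at their common (at most two) points. A quasisymmetry check, using that piece diameters shrink exponentially with depth in $T$ and that attaching loci have uniformly controlled local geometry, confirms that $\rho$ is quasisymmetrically equivalent to $\rho_0$. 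Ahlfors regularity of $\rho$ of dimension $\max\{1, \max_i Q_i\}$ is then verified by summing local contributions from pieces meeting each small ball. Letting $\epsilon\to 0$ yields the desired bound.

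\textbf{Main obstacle.} The principal difficulty is verifying Ahlfors regularity near the gluing loci, especially for $2$-ended edge stabilisers. Such an edge group fixes two points of $\bdry G$ which are accumulation points of infinitely many translates $g\bdry G_v$ arranged exponentially along its axis; the packing of these pieces near the fixed points must contribute no more than the expected dimension $1$ to the regularity of $\rho$. Likewise, at finite edge-group loci many pieces may meet at a single cut point, and the branching must be controlled. Managing this delicate balance---where the term $\max\{1,\cdot\}$ in the statement enters---requires precise growth and packing estimates for the tree of spaces, and I would expect to adapt the combinatorial modulus and weighted-metric techniques developed by Bourdon-Kleiner and the second author in related contexts, potentially reducing first to the case of a single amalgamation or HNN extension and then iterating up the tree.
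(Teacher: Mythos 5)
Your lower bound matches the paper's: quasiconvexity of vertex groups gives $\Confdim \bdry G_i \le \Confdim \bdry G$, and the non--virtually-free hypothesis gives $\Confdim \bdry G \ge 1$. The paper also reduces the general case to the two-ended-edge-group case by first amalgamating across infinite edge groups and then invoking the (standard) finite-edge-group result, a bookkeeping step your proposal omits but which is not the crux.

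The gap is in your upper bound, and it is fatal as stated. You propose to place a rescaled copy of a near-optimal $Q_i$-regular metric $\rho_i$ on each translate $g\bdry G_v$, \emph{with the scaling factor chosen so that the $\rho$-diameter matches the visual diameter of the piece}, and then glue. But a metric built this way retains the tree-of-spaces branching structure of the visual metric, whose Hausdorff dimension is controlled by the growth of the Bass--Serre tree, not by $\max\{1,\max_i Q_i\}$. Concretely: near a fixed point of a loxodromic in a two-ended edge group, a ball of radius $r$ contains roughly $N^j$ pieces of visual diameter $\sim 3^{-j}r$ for each $j$ (where $N$ can be arbitrarily large depending on the splitting). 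Any measure assigning mass $\sim s^Q$ to a piece of diameter $s$ then gives total mass $\sum_j N^j 3^{-jQ} r^Q$, which diverges unless $Q \geq \log N/\log 3$. So the dimension of the glued metric is dictated by the ``entropy'' of the packing near the cut points, which generically exceeds $1$. This is exactly the phenomenon illustrated by the paper's toy example in Section 1.3, where the visual metric has dimension $\log 12/\log 3 > 2$ but the conformal dimension is $1$.

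What the paper actually does is not scale pieces to match visual diameters, but rather \emph{deform} the metric in a neighbourhood of each edge-group limit set so that the geometric sequence of scales becomes an arithmetic one (the factor $f_v$ in the definition of the weight function, equation (4.6)). This flattens the cut-point singularities and is the reason the contribution of the tree structure can be brought down to exactly $1$. Implementing it rigorously requires the combinatorial-modulus reduction (Theorem 3.9), the tree of cylinders normalisation (Proposition 2.4), and the multi-section inductive estimate on volume. You gesture towards combinatorial modulus in your final paragraph, which is the right escape route, but the metric-gluing construction that occupies the body of your upper-bound argument cannot produce the claimed Ahlfors regular representative, and the crucial ``geometric-to-arithmetic'' deformation idea is absent.
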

This theorem enables us to resolve a question of Bonk and Kleiner~\cite[Question 6.1]{Bonk-Kleiner-05-Confdim-Cannon}, characterising those hyperbolic groups which have conformal dimension equal to one (under the mild assumption of no $2$-torsion).
The conformal dimension of the boundary of a hyperbolic group is either $0$ or one of a dense set of values in $[1,\infty)$, and the groups whose boundaries have conformal dimension $0$ are exactly the virtually free groups (by Stallings--Dunwoody, see e.g.~\cite[Theorem 3.4.6]{MacTys10-confdim}).
Bonk and Kleiner's question therefore asks for a classification of the next fundamental case: conformal dimension $1$.  
Additional motivation for their question comes from the problem of understanding which hyperbolic groups attain their conformal dimension (see Subsection~\ref{ssec:intro-attain}): since Bonk--Kleiner had previously classified those hyperbolic groups attaining conformal dimension $1$, one can view our answer to their question as characterising those hyperbolic groups which have conformal dimension $1$, but do not attain it.  
In a different direction, as we discuss below, our work here also gives new kinds of self-similar metric spaces having conformal dimension $1$.
\begin{corollary}\label{cor:confdim1}
	If $G$ is a hyperbolic group with no $2$-torsion and not virtually free,
	then 
	$\Confdim \bdry G = 1$
	if and only if
	$G$ has a hierarchical decomposition over elementary edge groups
	so that each vertex group is
	elementary or virtually Fuchsian.
\end{corollary}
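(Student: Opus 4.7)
We prove the corollary by iterating Theorem~\ref{thm:main} along a hierarchy of splittings of $G$.

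For the direction ($\Leftarrow$), we induct on the depth of the given hierarchy. Each leaf is either elementary, with boundary of at most two points and hence $\Confdim \bdry \leq 0$, or an infinite virtually Fuchsian group, whose boundary is either a Cantor set ($\Confdim = 0$) or $\Sph^1$ ($\Confdim = 1$). At each internal node the vertex group $H$ is hyperbolic, infinite, and not virtually free, and splits over its elementary edge groups into its children $H_i$; Theorem~\ref{thm:main} gives $\Confdim \bdry H = \max\{1,\sup_i \Confdim \bdry H_i\}$, which by the induction hypothesis equals $1$. Applied at the root, this yields $\Confdim \bdry G = 1$.

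For the direction ($\Rightarrow$), we build the hierarchy from above. First apply a Stallings--Dunwoody decomposition of $G$ over finite subgroups. For each one-ended vertex group, apply Bowditch's canonical JSJ decomposition over virtually cyclic subgroups; the no-$2$-torsion hypothesis ensures this is well-behaved, and the vertex groups are $2$-ended, maximal hanging Fuchsian (hence virtually Fuchsian), or \emph{rigid}. Iterate by splitting each rigid vertex group further over elementary subgroups whenever possible; this process terminates by Delzant--Potyagailo accessibility for hyperbolic groups. Theorem~\ref{thm:main} applied at each stage propagates the bound $\Confdim \bdry \leq 1$ to every infinite vertex group. Consequently the leaves of the hierarchy are either elementary, maximal hanging Fuchsian, or one-ended hyperbolic groups that admit no elementary splitting, each having $\Confdim \bdry \leq 1$.

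The main obstacle is then to show that a one-ended hyperbolic group $K$ with no $2$-torsion, no splitting over elementary subgroups, and $\Confdim \bdry K \leq 1$ must be virtually Fuchsian. Rigidity of $K$ forces $\bdry K$ to be connected and locally connected with no local cut points, by Bowditch and Swarup. The plan is then to invoke a dichotomy---drawing on work of Keith--Laakso, Bonk--Kleiner, Bourdon--Kleiner and Mackay---that for a boundary of this form either the Ahlfors regular conformal dimension is strictly greater than $1$, or $\bdry K$ is homeomorphic to $\Sph^1$; in the latter case $K$ is virtually Fuchsian by the Convergence Group Theorem of Tukia, Gabai, and Casson--Jungreis. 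This is the principal technical difficulty of the argument. Once it is in place, the leaves of the hierarchy are precisely elementary or virtually Fuchsian, as required.
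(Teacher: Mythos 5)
Your $(\Leftarrow)$ direction matches the paper's proof in substance. Your $(\Rightarrow)$ direction has the right overall shape but contains a genuine gap, and a secondary mis-citation.

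The gap: you correctly reduce the problem to showing that a one-ended hyperbolic group $K$ that does not split over an elementary subgroup and has $\Confdim\bdry K\le 1$ must be virtually Fuchsian, but you then write ``the plan is then to invoke a dichotomy\ldots\ this is the principal technical difficulty of the argument.'' This step is not an open technical difficulty to be overcome; it is a theorem of the second author, \cite[Corollary 1.3]{Mac10-cdim1}, which states directly that a one-ended hyperbolic group which is not virtually Fuchsian and admits no splitting over a virtually cyclic subgroup has conformal dimension strictly greater than $1$. (One does not need to pass through topological characterisations of $\Sph^1$ and the Convergence Group Theorem; the conformal-dimension lower bound is established directly, using the fact that Bowditch's work gives no local cut points and hence a uniformly linearly connected boundary.) Without this citation, or an actual proof of your proposed dichotomy, the argument is incomplete.

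The secondary issue: accessibility of the hierarchy should be attributed to Louder--Touikan~\cite[Corollary 2.7]{lou-tou-17-accessibility}, not to Delzant--Potyagailo, whose accessibility result for elementary families was later found to have a gap. Indeed, the no-$2$-torsion hypothesis in the corollary is there precisely to invoke Louder--Touikan, not (as you write) to ensure that Bowditch's JSJ decomposition is well-behaved --- Bowditch's JSJ requires no such hypothesis. You should also be explicit that the hierarchy must alternate between Stallings--Dunwoody splittings over finite subgroups and JSJ splittings over two-ended subgroups, since the rigid vertex groups of a JSJ may still split over finite subgroups; your phrase ``iterate by splitting each rigid vertex group further over elementary subgroups whenever possible'' is compatible with this but should make the alternation explicit.
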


Let us now consider these results in more detail.
The case of Theorem~\ref{thm:main} when all the edge groups are finite is well known in the field (a proof may be found in the first author's thesis~\cite[Theorem 6.2]{carrascopiaggio-thesis}).
\begin{theorem}
	\label{thm:confdim-finite-edges}
	If $G$ is an infinite hyperbolic group with a finite graph of groups decomposition where the vertex groups are $\{G_i\}$ and the edge groups are finite, then
	\[
		\Confdim \bdry G = \max\{\Confdim \bdry G_i : G_i \text{ infinite} \},
	\]
	where $\max \emptyset = 0$.
\end{theorem}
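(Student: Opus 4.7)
The plan splits into a lower bound and an upper bound.

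\textbf{Lower bound.} Since every edge group is finite, each vertex subgroup $G_i \leq G$ is quasi-convex in $G$, so its limit set $\Lambda G_i \subset \bdry G$ is quasisymmetric to $\bdry G_i$ in the visual metric. Because the Ahlfors regular conformal dimension does not increase upon passing to a subset, one immediately obtains $\Confdim \bdry G \geq \Confdim \bdry G_i$ for each infinite $G_i$, which gives ``$\geq$''.

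\textbf{Upper bound.} The strategy is to exploit the structural description of $\bdry G$ coming from the Bass-Serre tree $T$ of the splitting. Fixing a basepoint $v_0 \in T$, one has a decomposition
\[
\bdry G \;=\; \bdry T \,\cup\, \bigcup_{v \in T} \Lambda G_v,
\]
where adjacent pieces $\Lambda G_v$, $\Lambda G_w$ meet only in the single point fixed by the finite edge stabiliser $G_{[v,w]}$. Thus $\bdry G$ is a tree-like gluing of translates of the boundaries $\bdry G_i$, and in any visual metric the diameter of $\Lambda G_v$ decays geometrically in $d_T(v_0,v)$.

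Given $\varepsilon > 0$ and setting $s := \max\{\Confdim \bdry G_i : G_i \text{ infinite}\}$ (with $\max\emptyset := 0$), the plan is to equip each piece $\Lambda G_v$ corresponding to a one-ended $G_v$ with an Ahlfors $(s+\varepsilon)$-regular metric $\rho_v$ in the quasisymmetric class of its visual metric, chosen $G$-equivariantly across the finitely many $G$-orbits of vertices in $T$. Rescaling $\rho_v$ so that its diameter matches $\mu^{d_T(v_0,v)}$ for $\mu \in (0,1)$ suited to the visual parameter of the visual metric on $\bdry G$, and then assembling the $\rho_v$ via the tree-like combinatorics (the tree boundary $\bdry T$ and the two-ended pieces contributing only a zero-dimensional subset), produces a candidate metric $\rho$ on $\bdry G$. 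The expectation is that $\rho$ remains in the quasisymmetric class of the visual metric on $\bdry G$, by standard visual-metric arguments for splittings of hyperbolic groups, and that $\rho$ is Ahlfors $(s+\varepsilon)$-regular: any $\rho$-ball of radius $r$ meets only pieces of diameter comparable to $r$, whose mass contributions sum to order $r^{s+\varepsilon}$. Letting $\varepsilon \to 0$ then yields $\Confdim \bdry G \leq s$.

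\textbf{Main obstacle.} The principal difficulty is in the upper bound, namely verifying Ahlfors $(s+\varepsilon)$-regularity of the assembled metric $\rho$ despite the fact that the Bass-Serre tree $T$ may have infinite valence (as it does, for example, in a free product). One must carefully track how infinitely many child pieces embed as essentially disjoint regions within a parent piece at the appropriate scale, leveraging the uniform AR structure of the individual piece metrics $\rho_v$ together with the geometric shrinking. Abstractly this can be packaged as a ``tree-like gluing lemma'' for Ahlfors regular metric spaces of a common dimension, which is the approach taken in the first author's thesis.
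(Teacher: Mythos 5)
The paper does not actually give a proof of Theorem~\ref{thm:confdim-finite-edges}; it records the result as well known and cites the first author's thesis (\cite[Theorem 6.2]{carrascopiaggio-thesis}), so there is no in-paper argument to compare against. Your plan --- quasisymmetric embedding of each $\bdry G_i$ for the lower bound, and a tree-like assembly of uniformly Ahlfors $(s+\epsilon)$-regular metrics on the pieces $\Lambda G_v$ for the upper bound --- is the thesis strategy and is the right one. The lower bound argument is correct.

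There is, however, a conceptual error in your description of the boundary. A finite subgroup of a hyperbolic group has \emph{empty} limit set (its orbit of a basepoint is finite, so has no accumulation points); a finite edge group need not fix any boundary point, and even when it does such a fixed point is not in its limit set. Consequently, when all edge groups are finite, the adjacent vertex limit sets $\Lambda G_v$ and $\Lambda G_w$ are \emph{disjoint}, not glued at a single point. The picture of pieces meeting at a pair of limit points of the edge group is the picture for two-ended edge groups --- the case the rest of the paper is devoted to --- not the present one. In fact the disjointness is what makes the finite-edge case comparatively tame: by Lemma~\ref{lem:bdry-split-over-finite}, the connected components of $\bdry G$ are exactly the $\Lambda_v$ for one-ended $G_v$ together with singletons from $\bdry T$, and these components are uniformly separated and uniformly linearly connected. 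So your ``assembly'' step is not a gluing at all; all the work is in controlling the mass of a ball that straddles several pairwise disjoint pieces.

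Relatedly, the claim that a $\rho$-ball of radius $r$ ``meets only pieces of diameter comparable to $r$'' is too strong: such a ball will in general meet infinitely many pieces of arbitrarily small diameter, and it may also sit inside a single much larger piece. The correct statement, which still gives Ahlfors regularity, is that it meets at most boundedly many pieces of diameter $\gtrsim r$, plus a family of strictly smaller pieces whose total mass is bounded by a geometric series coming from the tree structure and the uniform AR constants; this estimate is precisely what a tree-like AR gluing lemma has to supply. Your sketch names the right obstacle (infinite valence of the Bass--Serre tree) but does not carry out this verification, so as written it remains a plan rather than a proof.
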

In light of this result, Theorem~\ref{thm:main} reduces to the following:
\begin{theorem}\label{thm:main-precise}
	Suppose $G$ is a hyperbolic group with a graph of groups decomposition of $G$ with vertex groups $\{G_i\}$ and all edge groups $2$-ended, then if $G$ is not virtually free,
	\[
		\Confdim \bdry G = \max\Big\{ \{ 1 \} \cup  \{\Confdim \bdry G_i \} \Big\}.
	\]
\end{theorem}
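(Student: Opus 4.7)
The plan is to prove the theorem by establishing matching lower and upper bounds for $\Confdim \bdry G$ against $D := \max(\{1\} \cup \{\Confdim \bdry G_i\})$. By accessibility of hyperbolic groups the graph of groups may be taken finite, so this maximum is attained by some specific vertex group or equals $1$.

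For the lower bound, I would use two standard ingredients. Since the edge groups are $2$-ended, hence quasi-convex in the hyperbolic group $G$, each vertex group $G_i$ is quasi-convex in $G$, and the inclusion of limit sets yields a quasi-symmetric embedding $\bdry G_i \hookrightarrow \bdry G$. Since conformal dimension is monotone under quasi-symmetric embeddings into an Ahlfors regular ambient space, this gives $\Confdim \bdry G \geq \Confdim \bdry G_i$ for each infinite $G_i$. Moreover, $G$ not being virtually free forces $\bdry G$ to contain a nondegenerate continuum (either $G$ is one-ended and $\bdry G$ is connected and perfect, or in a Stallings--Dunwoody splitting over finite subgroups some vertex group is infinite and not virtually cyclic, with boundary of positive topological dimension), so $\Confdim \bdry G \geq \dim_{\mathrm{top}} \bdry G \geq 1$.

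For the upper bound I fix $\eps > 0$, choose Ahlfors $(D+\eps)$-regular metrics $\rho_i$ in the quasi-symmetric gauge of each $\bdry G_i$, and aim to construct an Ahlfors $(D+\eps)$-regular metric $\rho$ on $\bdry G$ quasi-symmetric to the visual metric; then $\Confdim \bdry G \leq D+\eps$ and $\eps \to 0$ concludes. The Bass-Serre tree $T$ of the splitting organises the construction: $\bdry G$ decomposes as the union over $v \in T$ of peripheral sets $\Lambda G_v$ (each canonically a translate of $\bdry G_v$), together with the ends of $T$, with adjacent $\Lambda G_v$ and $\Lambda G_{v'}$ sharing the two points of $\Lambda G_e$ for the common edge $e$. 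I would assign to each $v \in T$ a scale $r_v$ decreasing geometrically in a suitably weighted distance in $T$ from a fixed basepoint, endow $\Lambda G_v$ with a rescaling of $\rho_{G_v}$ of diameter $r_v$, and glue these pieces along the two-point overlaps to produce a global metric $\rho$.

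The hard part will be verifying that $\rho$ is simultaneously quasi-symmetric to the visual metric on $\bdry G$ and Ahlfors $(D+\eps)$-regular. Quasi-symmetry requires the scales $r_v$ to track the visual depth of each peripheral piece from a basepoint, which in turn depends on how $G$ sits above $T$ in the combined Cayley-graph/Bass-Serre geometry; the key point is that a geodesic in $G$ tracking a point of $\bdry G$ interacts with the tree $T$ in a controlled way. Ahlfors regularity reduces to a packing estimate: balls of $\rho$-radius $r$ in $\bdry G$ must meet only a controlled number of peripheral pieces of comparable scale, and the Hausdorff $(D+\eps)$-masses contributed by each $\rho_v$ must sum correctly to $\approx r^{D+\eps}$. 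The $1$-dimensional floor in $D$ is consistent with, and necessary to accommodate, the arc-like pieces in $\bdry G$ produced by paths in $T$ threading consecutive $2$-point edge overlaps; it is exactly the presence of these bridges that forces $\dim_{\mathrm{top}} \bdry G \geq 1$ in the lower bound, so the two bounds meet.
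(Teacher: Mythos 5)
Your lower bound is correct and is essentially the paper's argument: quasiconvexity of vertex groups gives the monotonicity $\Confdim \bdry G \geq \Confdim \bdry G_i$, and not being virtually free gives the floor of $1$.

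For the upper bound, your approach differs fundamentally from the paper's, and the gap lies precisely in the part you flag as ``the hard part.'' You propose to fix Ahlfors $(D+\eps)$-regular model metrics $\rho_i$ on each $\bdry G_i$, rescale the piece $\Lambda G_v$ by a scale $r_v$ decreasing \emph{geometrically} in tree depth, and glue along the two-point edge overlaps. But a uniform rescaling of each piece cannot work. Near a cut pair $\Lambda G_e \subset \Lambda G_v$, the descendant pieces $\Lambda G_{v'}$ accumulate along a geometric sequence of scales, and their number at scale $\sim 2^{-k} r_v$ is governed by the growth of two-ended cosets in $G_v$, which is tied to the full volume entropy of $G_v$, not to $D$. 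If you keep geometric scales $r_{v'}$ and try to bound $\sum_{v' \subset B} r_{v'}^{D+\eps}$, the sum diverges whenever the entropy exceeds $D+\eps$, which is the generic situation (indeed, the Hausdorff dimension of any visual metric on $\bdry G$ is typically strictly larger than $D$). What is actually needed is a \emph{non-uniform} deformation of each piece near its cut pairs: the paper (see the toy example in Subsection~\ref{ssec:toyexample} and the factor $f_v$ in \eqref{eq:deffv}) sends the geometric sequence of annuli at distances $a^{-i}$ from the cut pair to an arithmetic sequence of width $1/m_v$, shrinking the contribution of deep descendants by an extra factor of roughly $1/m_v$. This is the key mechanism that brings the dimension down to $D$, and it is absent from your construction.

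A second, related issue is strategic. You aim to construct an honest Ahlfors $(D+\eps)$-regular metric quasisymmetric to the visual one and verify both properties directly. The paper deliberately avoids this: by the Keith--Kleiner/Bourdon--Kleiner/Carrasco theory (Theorem~\ref{thm:kkc-confdim-hypgroup}, Lemma~\ref{lem:vol-max-p-bounds-combine}), one only needs to produce a sequence of \emph{weight functions} $\rho_n$ on covers $\cS_n$ that are admissible, have $\|\rho_n\|_\infty \to 0$, and have bounded $p$-volume. Admissibility plays the role of quasisymmetry (the deformation doesn't collapse curves) and bounded volume plays the role of Ahlfors regularity, but the weights only describe a ``virtual'' conformal change and never need to be integrated into a genuine metric. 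This is not merely a cosmetic simplification: Theorem~\ref{thm:main-attained} shows that unless $G = G_i$, the conformal dimension is \emph{not attained}, so at $\eps = 0$ the metric you are trying to build does not exist, and even for $\eps>0$ the construction would have to degenerate in a controlled way as $\eps\to 0$. The combinatorial modulus framework packages exactly this degeneracy without requiring one to exhibit a limiting metric. Without it, and without the geometric-to-arithmetic distortion, your plan does not go through.
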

\begin{proof}[Proof of Theorem~\ref{thm:main}]
	The lower bound for $\Confdim \bdry G$ is immediate: if we have $\Confdim \bdry G < 1$ then $\Confdim \bdry G = 0$ and $G$ is virtually free (see e.g.\ \cite[Theorem 3.4.6]{MacTys10-confdim}), thus $\Confdim \bdry G \geq 1$.  In addition, each $G_i$ is a quasiconvex subgroup of $G$ so each infinite $G_i$ has $\bdry G_i$ is quasisymmetrically embedded in $\bdry G$, therefore $\Confdim \bdry G \geq \Confdim \bdry G_i$.

	For the upper bound,
	amalgamate all edges with infinite edge groups to get a less refined graph of groups decomposition $\cG'$, where the conformal dimension of the new vertex groups has the bound from Theorem~\ref{thm:main-precise}.  Then as all edge groups in $\cG'$ are finite, the upper bound follows from Theorem~\ref{thm:confdim-finite-edges}.
\end{proof}
Particular cases of Theorems~\ref{thm:main} and \ref{thm:main-precise} were known before.
Keith and Kleiner in unpublished work~\cite{KeithKleiner} and Carrasco~\cite{carr14-cdim-split} showed that if $\bdry G$ has \emph{well spread} local cut points (``WS'' for short), then $\bdry G$ has conformal dimension $1$.  By saying $\bdry G$ has WS we mean that for some (any) fixed metric in the family, for any $\delta>0$ one can delete finitely many points from $\bdry G$ so that all remaining connected components have diameter at most $\delta$.  

As Theorem~\ref{thm:main} applies whether WS holds or not, we can complete the ``if'' direction of Corollary~\ref{cor:confdim1} characterising which hyperbolic groups have conformal dimension one.
The ``only if'' direction of Corollary~\ref{cor:confdim1} follows from work of the second author showing that hyperbolic groups with, for example, Sierpi\'nski carpet or Menger sponge boundaries have conformal dimension greater than one, and an accessibility result of Louder--Touikan~\cite{lou-tou-17-accessibility}.
\begin{proof}[Proof of Corollary~\ref{cor:confdim1}]
	Suppose $G$ admits a finite hierarchy of graph of groups decompositions over finite and $2$-ended subgroups, ending with vertex groups that are elementary or virtually Fuchsian; such groups have conformal dimension at most $1$.  Since $G$ is not virtually free we have $\Confdim \bdry G \geq 1$, and by repeatedly applying Theorem~\ref{thm:main} we have that $\Confdim \bdry G \leq 1$.

	Now for the converse, suppose $\Confdim \bdry G = 1$.
	As $G$ has no $2$-torsion, \cite[Corollary 2.7]{lou-tou-17-accessibility} implies that we can find a finite hierarchy for $G$ as follows: by Stallings and Dunwoody we can split $G$ maximally over finite edge groups leaving finite or one-ended vertex groups, then take the JSJ decomposition of the one-ended (hyperbolic) vertex groups, maximally splitting over $2$-ended subgroups, then repeat the Stallings--Dunwoody splitting for any vertex group with more than one end, and so on, repeating finitely many times until all the vertex groups remaining are either elementary, virtually Fuchsian groups, or one-ended groups that do not split over a $2$-ended subgroup.

	Each vertex group is quasiconvex in the original group $G$ as all splittings were over elementary subgroups.  The third case of one-ended, non-virtually Fuchsian groups with no splittings over a virtually $\Z$ subgroup cannot arise, as such groups have conformal dimension $>1$ by \cite[Corollary 1.3]{Mac10-cdim1}.
\end{proof}

\begin{remark}
	Corollary~\ref{cor:confdim1} holds also with the definition of conformal dimension as the infimal Hausdorff dimension of (not necessarily Ahlfors regular) quasisymmetrically equivalent metrics; let us denote this by $\Confdim_H$.
	First, if $G$ admits such a hierarchy and is not virtually free, $1 \leq \Confdim_H \bdry G \leq \Confdim \bdry G = 1$.
	Second, as the lower bound $>1$ from \cite{Mac10-cdim1} works for $\Confdim_H$ also, if $\Confdim_H \bdry G =1$ then all vertex groups in the hierarchical decomposition must be elementary or virtually Fuchsian as desired.
\end{remark}

\begin{remark}
	The groups considered in Corollary~\ref{cor:confdim1}, when torsion free, are the groups Wise suggests might be the hyperbolic virtual limit groups~\cite[Section 1.4]{Wise-19-v-limit-grps}.
\end{remark}

\subsection{Attainment of conformal dimension}\label{ssec:intro-attain}
It is natural to ask when the conformal dimension of a hyperbolic group is attained, i.e. when $\bdry G$ is quasisymmetric to an Ahlfors $Q$-regular space with $Q=\Confdim \bdry G$.
When this is satisfied $G$ often has rigidity properties, see the results and discussion in~\cite{Kle-06-ICM}.

Under the hypothesis of Corollary~\ref{cor:confdim1}, Bonk and Kleiner have shown that if a hyperbolic group $G$ has $\Confdim \bdry G =1$ and this is attained, i.e.\ if $\bdry G$ is quasisymmetric to an Ahlfors $1$-regular space, then $\bdry G$ is a circle and $G$ is virtually Fuchsian~\cite[Theorem 1.1]{Bonk-Kleiner-02-rigidity-QM-actions}.  

When we have a graph of groups as in Theorem~\ref{thm:main}, we can show the following.  
\begin{theorem}
	\label{thm:main-attained}
	Suppose $G$ is a hyperbolic group, and we are given a graph of groups decomposition of $G$ with vertex groups $\{G_i\}$ and all edge groups elementary.  Then the conformal dimension of $\bdry G$ is attained if and only if either:
	\begin{itemize}
		\item $\Confdim \bdry G = 0$ and $G$ is $2$-ended, or
		\item $\Confdim \bdry G = 1$ and $G$ is virtually cocompact Fuchsian, or
		\item $G = G_i$ for some vertex group with $\bdry G_i$ attaining its conformal dimension $\Confdim \bdry G_i>1$.
	\end{itemize}
\end{theorem}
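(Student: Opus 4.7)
The plan is to prove the two implications separately, with the forward direction split by the value $Q := \Confdim \bdry G$.

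For the backward direction: in the first bullet $\bdry G$ is a two-point set, trivially Ahlfors $0$-regular; in the second $\bdry G$ is a topological circle, which carries an Ahlfors $1$-regular metric; and in the third bullet the splitting is trivial, with $\bdry G = \bdry G_i$ and attainment holding by hypothesis.

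For the forward direction, suppose $\bdry G$ carries an Ahlfors $Q$-regular metric $\varrho$ in its canonical quasisymmetry class. If $Q = 0$, then $G$ is virtually free by Stallings--Dunwoody, and the Ahlfors regularity condition $\mu(B(x,r)) \asymp 1$ for $0 < r < \diam(\bdry G, \varrho)$ forces each point of $\bdry G$ to carry atomic mass bounded below in a space of uniformly bounded total mass, so $\bdry G$ is finite; hence $G$ is elementary with nonempty boundary, i.e.\ $2$-ended. If $Q = 1$, the Bonk--Kleiner theorem \cite[Theorem 1.1]{Bonk-Kleiner-02-rigidity-QM-actions} (recalled in Subsection~\ref{ssec:intro-attain}) identifies $\bdry G$ with a circle and $G$ with a virtually Fuchsian group; since $G$ is not virtually free, it must be virtually cocompact Fuchsian.

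The substantive case is $Q > 1$, where the goal is to show the given decomposition is essentially trivial, i.e., $G = G_i$ for some vertex group (so that $\bdry G_i = \bdry G$ attains $\Confdim \bdry G_i = Q$ by assumption). Since $Q > 1$ precludes $G$ being virtually free, $G$ is one-ended, so no edge group can be finite and every edge group in the decomposition is $2$-ended. Suppose for contradiction that the decomposition is nontrivial. Then, after collapsing subtrees, we may assume a single honest splitting $G = A *_H B$ or $G = A *_H$ with $H$ virtually cyclic, and the two points $\bdry H = \{p, q\}$ form a global cut pair of $\bdry G$ separating the limit sets of the two sides of the splitting. The main obstacle, and the step I expect to require the most care, is to rule out such cut pairs in a connected Ahlfors $Q$-regular space which attains its conformal dimension $Q > 1$. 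The intended approach is to combine the approximate self-similarity of the hyperbolic group boundary with combinatorial modulus / Loewner-type lower bounds furnished by attainment (in the spirit of Keith--Kleiner and the techniques developed earlier in the paper): attainment produces a positive lower bound on the $Q$-modulus of curve families joining two large separated subsets of $\bdry G$, yet any curve crossing from one side of the cut pair to the other is forced to pass through the $Q$-Hausdorff measure zero set $\{p,q\}$, making this $Q$-modulus vanish and yielding the required contradiction.
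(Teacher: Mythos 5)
Your backward direction and the cases $Q=0$ and $Q=1$ of the forward direction match the paper exactly (Stallings--Dunwoody for $Q=0$, Bonk--Kleiner for $Q=1$). The divergence, and the gap, is in the case $Q>1$.

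The key step you propose --- that attainment of $\Confdim \bdry G = Q$ furnishes a positive lower bound on the $Q$-modulus of curve families joining two large separated subsets of $\bdry G$, which you then contradict by observing that curves crossing the cut pair $\{p,q\}$ carry zero $Q$-modulus --- is not justified. Attainment of the (Ahlfors regular) conformal dimension is not known to imply Loewner-type or combinatorial Loewner lower bounds: the Keith--Kleiner/Carrasco machinery used in this paper (Theorem~\ref{thm:kkc-confdim-hypgroup}) identifies $\Confdim X$ with a critical exponent $p_c$, and for $p<p_c$ the moduli $\Mod_p(\Gamma_\delta,\cS_n)$ are bounded below; but at $p=p_c=Q$ one has no lower bound, even when $Q$ is attained. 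Nothing in the paper, nor in the literature it cites, gives the implication you need, and the known implications run in the opposite direction (CLP or Loewner conditions imply statements about $p_c$, not conversely). A secondary issue is that curves joining the two sides of the splitting need not all pass through the single cut pair $\{p,q\}$: $\bdry H$ has infinitely many $G$-translates, and a curve of diameter $\geq\delta$ from $E$ to $F$ can avoid any particular one; pinning down a specific curve family that is both controlled by $\{p,q\}$ and carries the desired lower modulus bound would require substantial extra work.

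The paper's actual route avoids modulus entirely. It uses that $\Confdim\bdry G = \Confdim\bdry G_i$ for some vertex group $G_i$ (Theorem~\ref{thm:main}), and the pair of facts that an infinite-index quasiconvex subgroup has porous limit set (Proposition~\ref{prop:qcvx-porous}), and that a porous subset of $X$ with the same conformal dimension as $X$ obstructs attainment (Proposition~\ref{prop:porous-non-attain}, via Assouad dimension and the Heinonen uniformization theorem). If the Bass--Serre tree $T$ had infinite diameter, then $G_i$ would be infinite index, hence $\Lambda G_i$ porous, contradicting attainment; so $T$ has finite diameter. A further elementary argument --- using $\Confdim\bdry G_i>1$ to force edge groups adjacent to $G_i$ to have infinite index, and iteratively removing leaves whose edge map is an isomorphism --- then pins down $G=G_i$. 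If you want to pursue a modulus-based argument you would first need to prove (or find) a result of the form ``attainment implies positive $Q$-modulus for separating curve families,'' which I believe is currently unavailable; the porosity route is both shorter and self-contained.

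If you retain your framing, you should also be explicit that your claim ``a connected Ahlfors $Q$-regular space with $Q>1$ attaining its conformal dimension has no global cut pair'' is precisely the content of Propositions~\ref{prop:qcvx-porous} and \ref{prop:porous-non-attain} in the group-boundary setting, not something stronger; it is a consequence of porosity plus Assouad dimension drop, not of any intrinsic modulus estimate on the space.
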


The main idea here is that if the conformal dimension $\Confdim \bdry G$ is attained, then any ``porous'' subset has strictly smaller conformal dimension.  Since, by Theorem~\ref{thm:main}, $\Confdim \bdry G = \Confdim \bdry G_i$ for some vertex group $G_i$, and $G_i$ is a quasiconvex subgroup of $G$, the limit set $\Lambda G_i$ cannot be porous in $\bdry G$ and one can conclude that $G_i$ must equal $G$.

\subsection{Idea of proof and toy example}
\label{ssec:toyexample}
By work of Keith--Kleiner, Bourdon--Kleiner and the first author \cite{KeithKleiner,bou-kle13-CLP,carr13-conf-gauge}, the (Ahlfors regular) conformal dimension of the boundary of a hyperbolic group $X = \bdry G$ is equal to the critical exponent of the combinatorial modulus of the family of all curves in $X$ of diameter at least $\delta$, for some fixed small $\delta$.  
Prior to the works just cited, other authors who have used combinatorial modulus to study conformal dimension include Pansu~\cite{pansu1989dimension} and Keith--Laakso~\cite{Keith-Laakso-04-confdim}; see \cite{carr13-conf-gauge} for further discussion.

These notions are formally defined in Section~\ref{sec:defns-modulus}, but we can illustrate the idea here with a toy example.
Consider the double $G= \pi_1(S) *_\Z \pi_1(S)$ where $S$ is a closed surface of genus $2$, and $\Z$ corresponds to a closed geodesic curve $\gamma$ in $S$.
The boundary $\bdry G$ is (speaking informally) the limit of a countable collection of circles, corresponding to $\bdry \pi_1(S)$, glued at pairs of points, corresponding to $\bdry \Z$, in a tree-like fashion given by the Bass--Serre tree of the splitting.

The topological properties of the boundary depend on the type of curve $\gamma$ chosen.
If $\gamma$ is a simple closed curve, then Pansu observed that $\Confdim \bdry G = 1$ by varying the hyperbolic structure on $S$ to find CAT$(-1)$ model spaces for $G$ with volume entropy arbitrarily close to that of the hyperbolic plane; see discussion in~\cite[Section 6]{Bonk-Kleiner-05-Confdim-Cannon} and \cite[Theorem 1.1]{buyalo2005volume}.

If $\gamma$ is not simple, but not filling, one cannot use this argument. Recall that a curve $\gamma$ is \emph{filling} if all connected components in $S \setminus \gamma$ are topological discs, see Figure~\ref{fig:lifts} for an example of a filling curve.
However, the second author observed that such boundaries still satisfy the WS property, with cut points arising from limit points corresponding to an essential curve in $S\setminus \gamma$, and so $\Confdim \bdry G = 1$ here also.
For a complete characterisation of when $\bdry G$ has WS, including this case, see the work of the first author~\cite[Theorem 1.3]{carr14-cdim-split}.

The case when $\gamma$ is filling remained unresolved, but now we can apply Theorem~\ref{thm:main} to find that $\Confdim \bdry G = 1$.

\begin{figure}
\includegraphics[width=0.5\textwidth]{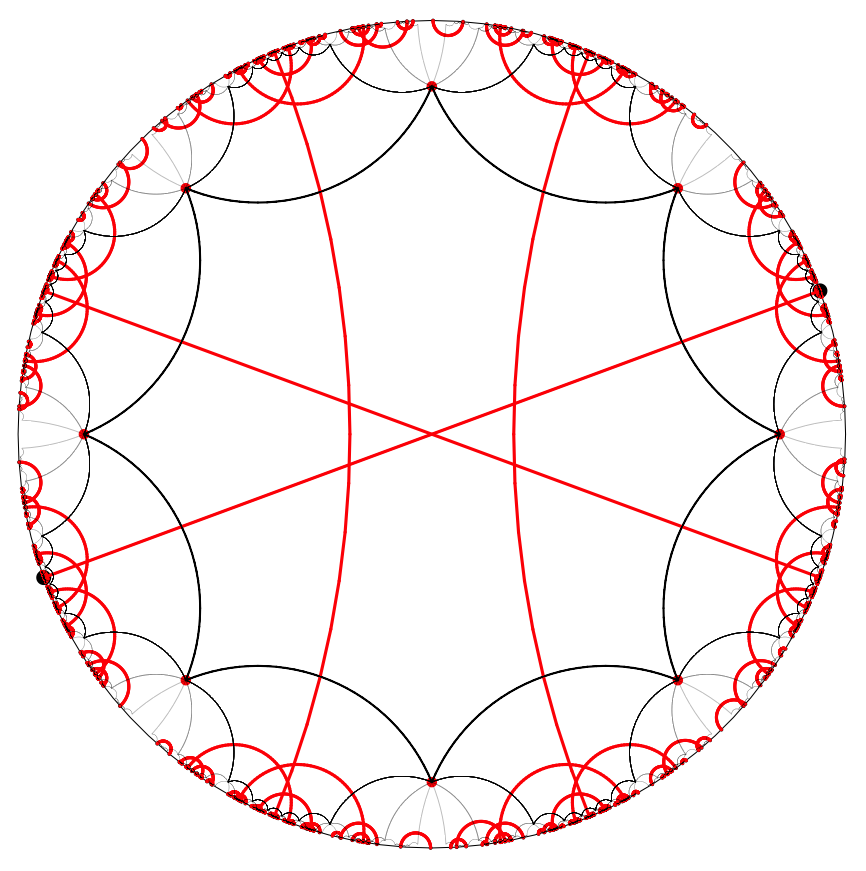}	
 \caption{\label{fig:lifts}Some lifts in the hyperbolic plane of the filling curve $abcd$ of the surface group $\langle a,b,c,d\ |\ [a,b][c,d]=1\rangle$.}
\end{figure}

To show how to prove this, we sketch the idea for a toy example which models $\bdry G$.
We build the space in stages, beginning by letting $X_0$ be a circle with length metric and of diameter $1$, and fix two antipodal points $x_-, x_+ \in X_0$.  
Define $X_1$ by taking $X_0$ and gluing on at pairs of points on say $12$ copies of $X_0$ scaled down by $1/3$ spaced around $X_0$ in an overlapping fashion.
For each $n=2,3,\ldots$, define $X_n$ in the following way: take a copy of $S^1$, and for each $j=1,\ldots,n$ glue on at pairs of points between $3^{j-1}$ and $12\cdot 3^{j-1}$ copies of $X_{n-j}$ scaled down by $1/3^j$, spaced around $S^1$.
We assume that these gluings are done in a self-similar way, so there is a natural limit space $X$ of this construction; see Figure~\ref{fig:toymodel} for a partial illustration of how $X_3$ is constructed. 
In the figure, circles are coloured black, blue, red, green. While the circles appear to overlap, a circle coloured blue, red or green meets no other circle of the same colour, and exactly one circle of some preceding colour at exactly one pair of points.
\begin{figure}
	\def\svgwidth{.7\textwidth}
   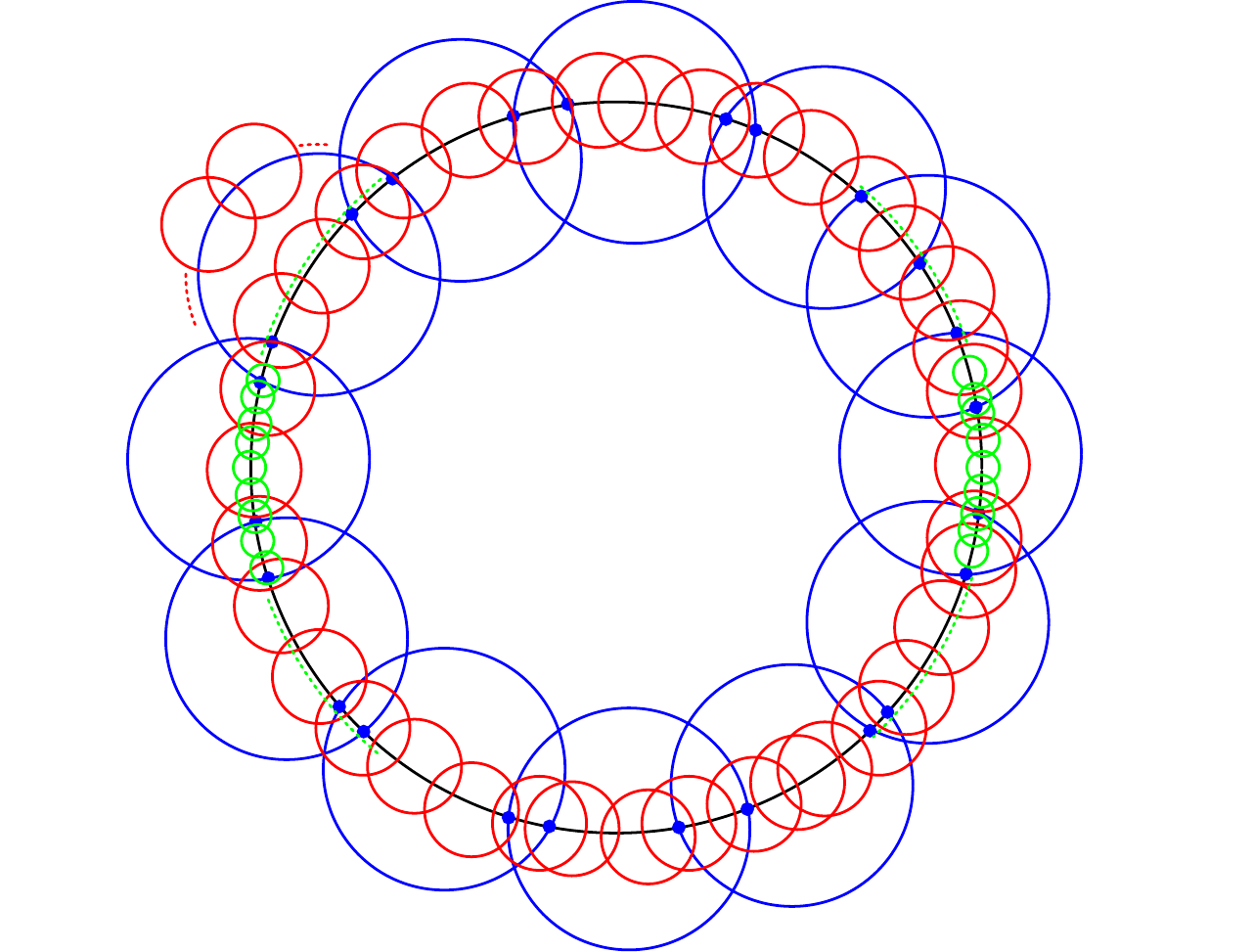
   \caption{\label{fig:toymodel} A toy model for the boundary of a surface group doubled along a filling curve}
\end{figure}

To show that $\Confdim X = 1$, since $\Confdim X \geq \dim_{top} X = 1$ is trivial, it suffices to show that $\Confdim X \leq p$ for an arbitrary $p>1$.
Using the machinery of Keith--Kleiner and Carrasco mentioned above, such a bound follows from a combinatorial modulus estimate on $X$.
Rather than considering all curves in $X$ of diameter $\geq \delta$, we simplify the argument here by considering the family of all paths in $X$ joining $x_-$ to $x_+$, which we call $\Gamma$.

For each $n \in \N$, let $\cS_n$ be the cover of $X$ by sets of size $3^{-n}$ corresponding to the copies of $X_0$ of size $3^{-n}$ in $X_n$.
A \emph{weight function} $\rho_n: \cS_n\to (0,\infty)$ is \emph{admissible} for $\Gamma$ if for any $\gamma \in \Gamma$, the \emph{$\rho_n$-length} $\ell_{\rho_n}(\gamma)$ satisfies
\[
  \ell_{\rho_n}(\gamma) := \sum_{A \in \cS_n : \gamma \cap A \neq \emptyset} \rho_n(A) \geq 1.
\]
Roughly speaking, a weight function describes a hoped for conformal deformation where the desired diameters of the images of $A \in \cS_n$ are the values of $\rho_n(A)$, and admissibility ensures that the image doesn't collapse down in size.
The \emph{$p$-volume} $\Vol_p(\rho_n)$ of $\rho_n$ is defined as
\[
	a_n := \Vol_p(\rho_n) := \sum_{A \in \cS_n} \rho_n(A)^p.
\]
To achieve the bound $\Confdim X \leq p$, we require a sequence $(\rho_n)$ of $\Gamma$-admissible weight functions so that $a_n = \Vol_p(\rho_n) \to 0$ as $n\to\infty$.

We now define $\rho_n: \cS_n \to (0,\infty)$ and estimate $a_n$ by induction.
The first step is easy: $\cS_0=\{A_0\}$ is a cover of $X_0$ by a single open set, and we let $\rho_0 : \cS_0 \to (0,\infty)$, $\rho_0(A_0)=1$, which is admissible and has $a_0 := 1$.

Now for the inductive step: assume that suitable $\rho_{i}$ have been defined for all $i=0,\ldots,n-1$.
The idea at step $n$ is that we send the geometric sequence of annuli $A^-_i:=B(x_-,3^{-i}/2)\setminus B(x_-,3^{-(i+1)}/2)$, for $i=0,\ldots,n-1$, to an arithmetic sequence of annuli each of size $1/2n$, and likewise for the annuli $A^+_i$ centred at $x_+$.
This will define an admissible weight function; see Figure~\ref{fig:toymodeldeform} for an illustration.  
\begin{figure}
\def\svgwidth{.95\textwidth}
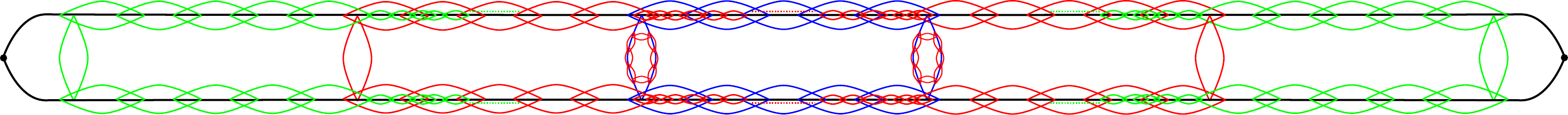
\caption{A cartoon of the weight function $\rho_3$}\label{fig:toymodeldeform} 
\end{figure}

Now, we describe $\rho_n$ in more detail (though not with an explicit formula), and we estimate its $p$-volume $a_n$.
For each $i=0,\ldots,n-1$, and each $j=i+1,\ldots,n$, the annuli $A^-_i, A^+_i$ contain in total $\leq C 3^{-i}/3^{-j} = C 3^{j-i}$ copies of $X_{n-j}$, which we endow with weights using $\rho_{n-j}$; here $C>1$ is a constant.
As we want these to have diameters totalling $\geq 1/n$ in the image, we apply a scaling factor of $1/( 3^{j-i} n )$ to these copies, which scales $a_{n-j}$ by $1/(3^{j-i} n)^p$.
Thus, summing these up and using geometric series bounds, we have
\begin{align*}
	a_n & \leq C \sum_{i=0}^{n-1} \sum_{j=i+1}^n 3^{j-i} \cdot \frac{a_{n-j}}{(3^{j-i} n)^p}
	\\ & = \frac{C}{n^p} \sum_{j=1}^n \sum_{i=0}^{j-1} 3^{-(j-i)(p-1)} a_{n-j}
	\\ & \leq \frac{C'}{n^p} \sum_{j=1}^n a_{n-j}
	\leq \frac{C'}{n^{p-1}} \max \{a_0,\ldots,a_{n-1} \},
\end{align*}
for some constant $C'>1$.
This inequality implies that for $n$ large, the sequence $(a_n)$ is nonincreasing, hence $(a_n)$ is bounded, hence the inequality again implies that $a_n \to 0$ as $n \to \infty$.
The proof is complete.

\medskip 

The general argument in the paper is more involved in several ways, but has the same key idea of deforming geometric sequences of annuli into arithmetic sequences at its foundation.
Many additional complications are laid on by incorporating deformations of $\bdry G_i$ which nearly achieve the conformal dimension of the boundaries of each space $\bdry G_i$, carefully checking admissibility (for all curves of given diameter, not just those joining two points), and setting up a suitable induction for the volume bounds.

\subsection{Outline of paper} 
In Section~\ref{sec:bdry-structure} we describe the metric properties of limit sets in hyperbolic groups with quasiconvex splittings.  In Sections~\ref{sec:defns-modulus}-\ref{sec:volume} we prove Theorem~\ref{thm:main-precise}: Section~\ref{sec:defns-modulus} reduces the theorem to a statement about combinatorial modulus, and in Section~\ref{sec:weight-def} a sequence of weight functions is defined. The weight functions are shown to have maximum values going to zero, to be admissible, and to have bounded volume in Sections~\ref{sec:maxbound}, \ref{sec:admissibility}, and \ref{sec:volume} respectively.  Finally, in Section~\ref{sec:attained} we consider attainment of conformal dimension and prove Theorem~\ref{thm:main-attained}. 

\subsection{Notation}
We write $A\preceq B$ if $A \leq C B$ for some constant $C>0$, and $A \asymp B$ if $A\preceq B$ and $B\preceq A$.  We may write $A \preceq_C B$ or $A \asymp_C B$ to indicate which $C$.  We also write $A \approx B$ if $|A-B|\leq C$ for some constant $C\geq 0$.
Throughout the paper, $C, C', C'', \ldots, C_1, C_2, \ldots $ refer to constants only depending on the relevant data; sometimes we make the dependence clear by writing $C=C(\alpha, \beta,...)$ and so on.
For $A,B \in \R$, we write $A\vee B := \max\{A,B\}$ and $A\wedge B:= \min\{A,B\}$.

\subsection{Acknowledgements}
The first author thanks Peter Ha{\"\i}ssinsky who first introduced him to this question many years ago. The second author thanks Bruce Kleiner and Daniel Meyer for helpful conversations about this question over the years.  Thanks go to Daniel Groves, Lars Louder and Henry Wilton for helpful comments.

\section{Graph of groups decompositions and boundaries}
\label{sec:bdry-structure}

In this section we present useful facts about boundaries and quasiconvex splittings of hyperbolic groups that will be used later. For references on graph of groups and Bass--Serre theory, see Serre~\cite{Ser80-trees}, Scott--Wall~\cite{ScottWall79-top} and Dru\c tu--Kapovich~\cite{DruKap18-ggt}.

\subsection{Boundaries of quasiconvex splittings}

An \emph{abstract (oriented) graph $\cG$} consists of two sets, the vertices $V\cG$ and the edges $E\cG$, with an initial vertex map $(\cdot)_-:E\cG\to V\cG, e\mapsto e_-$ and a terminal vertex map $(\cdot)_+:E\cG\to V\cG, e\mapsto e_+$.

Suppose $G$ acts on a tree $T$ without inversions on edges, minimally (i.e.\ there is no proper invariant sub-tree of $T$), and with the quotient graph $G\setminus T$ finite.
Any such action corresponds to a \emph{graph of groups decomposition $\cG$} for $G$ where the underlying graph is $G\setminus T$,
for each vertex $i \in V(G\setminus T)$ the vertex group is a copy of the stabilizer $G_v$ for some $v\in VT$ in the orbit corresponding to $i$,
for each edge $k \in E(G\setminus T)$ the edge group is a copy of the stabilizer $G_e$ for some $e\in ET$ in the orbit corresponding to $k$,
and the injective homomorphisms from edge groups into vertex groups are induced by the inclusions of stabilizers $G_e \to G_{e_-}, G_e \to G_{e_+}$.
We call $T$ the Bass--Serre tree for the graph of groups decomposition $\cG$.

As all the stabilizers in an orbit are conjugate, for $v \in VT$ we can define $i_v \in V\cG, g_v \in G$ so that $G_v = g_v G_{i_v} g_v^{-1}$,
and for $e \in ET$ we can define $k_e \in E\cG, g_e \in G$ so that $G_e = g_e G_{k_e} g_e^{-1}$.

We now build a model space $Z$ for $G$.
For each $i\in V\cG$ let $M_i$ be a presentation complex for $G_i$, so $M_i$ is a $2$-dimensional cell complex with $\pi_1(M_i)=G_{i}$.  Likewise for each $k\in E\cG$ let $M_k$ be a presentation complex for $G_k$.
The homomorphisms from edge groups to vertex groups are induced by continuous maps $f_{k_-}:M_k\to M_{k_-}, f_{k_+}:M_k\to M_{k_+}$ for $k\in E\cG$.
The graph of spaces $M$ is built from the collection $\{M_i\}_{i\in V\cG} \cup \{M_k\times[-1,1]\}_{k\in E\cG}$ where we glue each $M_k\times\{\pm 1\}$ to $M_{k_\pm}$ by the map $(z,\pm 1)\mapsto f_{k_\pm}(z)$.
By Bass--Serre theory, the fundamental group $\pi_1(M)$ equals $G$.

Define a length metric on $M$ which induces a geodesic metric on the universal cover $Z := \widetilde{M}$.
This space $Z$ is a tree of spaces with a copy $Z_v$ of $\widetilde{M_{i_v}}$ for each $v\in VT$ and a copy $Z_e \times [-1,1]$ of $\widetilde{M_{k_e}}\times [-1,1]$ for each $e \in ET$,
where the subset $\widetilde{M_{k_e}}\times\{\pm 1\}$ is glued into the corresponding vertex spaces.
The action $G \curvearrowright Z=\widetilde{M}$ preserves this tree-of-spaces structure, and so if we collapse each vertex space $Z_v$ to a point and each edge space $Z_e \times[-1,1]$ to an edge we recover our original tree $T$ and action $G \curvearrowright T$.

Fix a base vertex $v_0 \in T$ and a basepoint $o \in Z$ so that $Z\to T$ maps $o$ to $v_0$.
As $G$ acts geometrically on $Z$ the orbit map $G \to G\cdot o$ induces a quasi-isometry $G \to Z$.
This quasi-isometry coarsely maps each left coset $g_v G_{i_v}$, $v \in VT$ to $Z_v$, and likewise coarsely maps each $g_e G_{k_e}$, $e \in ET$, to $Z_e\times[-1,1]$.

In our case $G$ is a hyperbolic group, and so $Z$ is hyperbolic also.
We fix a visual metric $d$ on $X:=\bdry Z$ with visual parameter $\epsilon>0$, i.e.\ $d(\cdot,\cdot)\asymp e^{-\epsilon (\cdot|\cdot)_o}$, where $(\cdot|\cdot)_o$ denotes the Gromov product with basepoint $o$.
We may rescale to assume $\diam X = 1$.

For a subgroup $H$ of $G$, let $\Lambda(H) \subset X$ be the \emph{limit set of $H$}, i.e.\ the accumulation points of $H \cdot o$ in $X=\bdry Z$.
For $v \in VT$ we denote the limit set of the stabilizer $G_v$ by $\Lambda_v = \Lambda(G_v)$, and likewise for $e \in ET$ we let $\Lambda_e = \Lambda(G_e)$.

In each case considered here, the edge groups are uniformly quasiconvex as they are either finite or two-ended.
Therefore the vertex groups are uniformly quasiconvex also (see e.g.\ \cite[Proposition 1.2]{Bow98-jsj}), and so hyperbolic, and consequently for each $v \in VT$ the quasi-isometric embedding $g_v G_{i_v} \to Z$ found by restricting the orbit map induces a quasisymmetry $\bdry g_v G_{i_v} = g_v \bdry G_{i_v} \to \Lambda_v \subset \bdry Z$.  

\begin{lemma}[{cf.\ \cite[Proposition 1.3]{Bow98-jsj}, \cite[Lemma 10]{KapKle00-boundaries}}]
	\label{lem:bdry-limsets}
	If $G$ is a hyperbolic group with a graph of groups decomposition $\cG$ over quasiconvex edge groups with Bass--Serre tree $T$, with $G$ acting geometrically on the model space $Z$, and $X = \bdry Z$ with a visual metric,
	then every $x \in X$ corresponds to exactly one of the following:
	\begin{itemize} 
		\item a point of $\bdry T$, with a unique $x$ for each $t\in\bdry T$, or
		\item a point of $\Lambda_e$ for some $e \in ET$, or
		\item a point of $\Lambda_v$ for some unique $v \in VT$ (but not in any $\Lambda_e$).
	\end{itemize}
\end{lemma}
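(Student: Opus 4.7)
The plan is to classify each $x \in X$ by fixing a geodesic ray $\gamma\colon[0,\infty)\to Z$ with $\gamma(0)=o$ and $\gamma(t)\to x$, and analysing its image under the natural collapse map $\pi\colon Z\to T$ that sends each vertex space $Z_v$ to $v$ and each edge space $Z_e\times[-1,1]$ to $e$. Since the edge stabilizers are finite or two-ended, they are uniformly quasiconvex in $G$, so the edge spaces $Z_e\times[-1,1]$ are uniformly quasiconvex in the hyperbolic space $Z$. A geodesic in $Z$ therefore enters and exits each edge space essentially once, and $\pi\circ\gamma$ coarsely tracks a geodesic in $T$ with bounded backtracking depending only on the hyperbolicity and quasiconvexity constants.

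I would then dichotomise according to whether $\pi\circ\gamma$ is unbounded or bounded in $T$. In the \emph{unbounded case}, $\pi\circ\gamma$ tracks a geodesic ray in $T$ converging to some $t\in\bdry T$, and this $t$ depends only on $x$: two geodesic rays in hyperbolic $Z$ with a common endpoint fellow-travel, so their $T$-projections stay at bounded distance. Conversely, each $t\in\bdry T$ is realised by concatenating geodesic segments across successive vertex spaces along the $T$-ray to $t$, producing a quasi-geodesic in $Z$; any two such choices must traverse a common tail sequence of edge spaces, and uniform quasiconvexity of these edge spaces forces them to converge to the same $x\in X$. Such $x$ cannot lie in any $\Lambda_v$ or $\Lambda_e$, since those limit sets come from orbits whose tree-projections remain bounded.

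In the \emph{bounded case}, $\pi\circ\gamma$ eventually stays in a finite subtree, which combined with hyperbolicity of $Z$ forces $\gamma$ eventually into a bounded neighbourhood of a single vertex space $Z_v$, so $x\in\Lambda_v$. If in addition $x\in\Lambda_{v'}$ for some $v'\neq v$, let $e$ be any edge on the $T$-geodesic from $v$ to $v'$: then any geodesic in $Z$ joining approximants of $x$ drawn respectively from $Z_v$ and $Z_{v'}$ must pass through a bounded neighbourhood of the edge space $Z_e\times[-1,1]$, and passing to the limit yields $x\in\Lambda_e$. Combined with the immediate inclusion $\Lambda_e\subseteq\Lambda_{e_-}\cap\Lambda_{e_+}$, this shows that points not in any $\Lambda_e$ lie in a \emph{unique} $\Lambda_v$, completing the trichotomy.

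The hard part will be making the ``crossing'' step rigorous: namely, that the quasiconvex edge space $Z_e\times[-1,1]$, which topologically separates its two sides in $Z$, must coarsely lie on every geodesic between approximants of $x$ from opposite sides, and that this coarse crossing survives in the limit to place $x$ in $\Lambda_e$ itself rather than merely in a neighbourhood of it. The required tool is the standard hyperbolic-space fact that a quasiconvex subspace separating two boundary points lies within bounded distance of every geodesic between them; combined with the fact that $\Lambda_e$ is either empty or a pair of points (by elementarity of $G_e$), the common limit point is forced into $\Lambda_e$. The remaining ingredient is existence: producing, for each $t\in\bdry T$, an actual geodesic ray in $Z$ with $\pi$-image tracking $t$, which can be done by a standard limit argument using local finiteness of the edges incident to each vertex of $T$ together with the Arzel\`a--Ascoli theorem.
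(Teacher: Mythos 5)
Your approach matches the paper's in its essentials: fix a geodesic ray from the basepoint, analyse its projection to the tree, and dichotomise on whether that projection is bounded. Two hypotheses you invoke are not actually present in the lemma (and are not needed). First, the edge groups are assumed merely \emph{quasiconvex}, not elementary, so you should not assume $\Lambda_e$ is a two-point set---your crossing argument already places $x$ in the limit set of a bounded neighbourhood of $Z_e$, and that limit set equals $\Lambda_e$ for any quasiconvex $Z_e$, with no finiteness needed. Second, the Bass--Serre tree $T$ need not be locally finite (vertex groups may contain edge groups with infinite index), so the Arzel\`a--Ascoli argument must be carried out in the proper space $Z$ rather than in $T$; this is in fact how the paper does it, extracting a limiting geodesic ray in $Z$ from geodesic segments whose endpoints have $T$-projections converging to $t$.

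Beyond that, be careful with the step ``boundedness of $\pi\circ\gamma$ forces the tail of $\gamma$ into a bounded neighbourhood of a single vertex space.'' Hyperbolicity of $Z$ alone does not give this; what is needed is quasiconvexity of the separating edge spaces: if the geodesic crosses $Z_e\times\{0\}$ and later returns, the intermediate segment must stay within bounded distance of $Z_e$, so all oscillations are absorbed into a bounded neighbourhood of a single vertex space (or of a single edge space, which is itself in a bounded neighbourhood of both adjacent vertex spaces). The paper phrases this via the notion of ``essentially crossing'' an edge, which produces a simple path $\hat\gamma$ in $T$ and makes the finiteness dichotomy cleaner. Once this is fixed, your handling of the bounded case---land in some $\Lambda_v$, then detect $x\in\Lambda_e$ when $x$ lies in a second $\Lambda_{v'}$ via a separating-edge argument---is a slightly different framing from the paper's (which checks directly whether $\gamma$ limits to a point of an adjacent edge limit set), but it is sound and gives the same trichotomy.
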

\begin{proof}
	Consider a geodesic ray $\gamma$ from $o$ in $Z$ representing $x \in X$.
	
	For an edge $e \in ET$ let $Z_{ e\to}$ be the component of $Z\setminus (Z_e\times\{0\})$ not containing $o$. 
	Let us say that $\gamma$ \emph{essentially crosses} the edge space corresponding to $e \in ET$, or just \emph{$\gamma$ essentially crosses $e$}, if for every $C>0$, $\gamma$ has unbounded intersection with $Z_{ e\to} \setminus N_C(Z_e\times\{0\})$.

	If $\gamma$ essentially crosses $e\in ET$, then it essentially crosses every edge between $v_0$ and $e$ in $T$.
	Moreover if a simple path from $v_0$ to some vertex $v$ in $T$ can be extended by either $e'$ or $e''$ in $ET$, then by quasiconvexity $\gamma$ cannot essentially cross both $e'$ and $e''$.
	Therefore the collection of edges in $T$ which $\gamma$ essentially crosses gives a simple path from $v_0$, either (i) infinite or (ii) finite.  Let us call this path $\hat\gamma$: by definition it depends only on the point $x \in X$ and not the choice of $\gamma$.

	In case (i), the path $\hat\gamma$ determines a unique point in $\bdry T$.
	We claim that there is a bijection between the set of $x\in X$ represented by $\gamma$ with $\hat\gamma$ unbounded, and points in $\bdry T$.
	First, given any point $t \in\bdry T$, by an Arzel\`a--Ascoli argument one can choose a geodesic ray $\gamma$ in $Z$ so that $\hat\gamma$ limits to $t$. 

	Second, if $\gamma, \alpha$ are geodesic rays and $\hat\gamma=\hat\alpha$ is unbounded, then $\gamma$ and $\alpha$ must represent the same point in $X$: suppose not, then $(\gamma|\alpha)_o < \infty$.  Choose a large constant $R$ and an edge $e\in ET$ which $\gamma$ and $\alpha$ essentially cross so that the edge space $Z_e\times\{0\}$ is outside $B(o,(\gamma|\alpha)_o+R)$.  Let $p, q \in Z_e\times\{0\}$ be points where $\gamma$ and $\alpha$ respectively meet the edge space.
	By hyperbolicity, the geodesic from $p$ to $q$ must go within distance $(\gamma|\alpha)_o+C$ of $o$, but by quasiconvexity it must remain within a distance $C$ of $Z_e\times\{0\}$, a contradiction for $R>2C$.  So case (i) is understood.

	Now suppose we are in case (ii), where $\hat\gamma$ is a finite path with final vertex $v$, and final edge $e$.
	If $\gamma$ leaves $Z_v$ through some $Z_{e'}\times\{0\}$ and does not return, as it does not essentially cross $e'$ it must limit to a point of $\Lambda_{e'}$.
	So if $\gamma$ does not limit to a point of any edge space, by quasiconvexity its tail must live in $N_C(Z_v)$ for some constant $C$, and so $x \in \Lambda_v$.
	If $x \in \Lambda_{v'}$ also for some $v' \neq v$, then the tail of $\gamma$ must live in $N_C(Z_{v'})$ also, hence in $N_C(Z_{e'})$ for any edge $e'$ between $v$ and $v'$; as this contradicts our assumption on $\gamma$ we have that $v$ is unique as required.
\end{proof}

In the rest of this section we will use the \emph{approximate self-similarity} of the boundary of a hyperbolic group:
	there exists $L_0 \geq 1$ so that for all $x \in X=\bdry Z$, and all $0<r\leq \diam X$, there exists $g\in G$ so that the action of $g$ on $X$ induces an $L_0$-bi-Lipschitz map from the rescaled ball $(B(x,r),\frac{1}{r}d)$ to an open set $U \subset X$ with $B(gx,\frac{1}{L_0})\subset U$.

\begin{lemma}[{cf. \cite[Proposition 6.2]{BuyLeb08-dimensions}, \cite[Proposition 3.3]{bou-kle13-CLP}, \cite[Corollary 4.9]{MacSis-qhyp-planes}}] 
	\label{lem:hypbdry-selfsim}
	Suppose $Z$ is a hyperbolic, proper, geodesic metric space with a geometric group action $G \curvearrowright Z$, base point $o$, and a visual metric $d$ on $X = \bdry Z$ with visual parameter $\epsilon$.
	Then there exists $L_0\geq 1$ so that $X$ is approximately self-similar.
\end{lemma}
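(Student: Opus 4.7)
The plan is to exploit cocompactness of $G \curvearrowright Z$: for any $x \in X$ and scale $r$, we use an element $g \in G$ to translate the basepoint a distance $\approx -\epsilon^{-1}\log r$ along a geodesic toward $x$, thereby ``zooming in'' on $x$. First I would fix $R_0 > 0$ such that $G \cdot B(o, R_0) = Z$; this exists by cocompactness. Given $x \in X$ and $0 < r \leq 1$, pick a geodesic ray $\sigma$ from $o$ representing $x$, set $t := -\epsilon^{-1} \log r \geq 0$, and choose $g \in G$ with $d_Z(g^{-1} o, \sigma(t)) \leq R_0$; this is the element witnessing approximate self-similarity at $(x, r)$.

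Next I would check the bi-Lipschitz estimate. Since $g$ acts isometrically on $Z$, for all $z, w \in X$,
\[
d(gz, gw) \asymp e^{-\epsilon (z|w)_{g^{-1} o}}.
\]
The general change-of-basepoint inequality gives $\bigl| (z|w)_{g^{-1}o} - (z|w)_{\sigma(t)} \bigr| \leq R_0$. Moreover, for $z, w \in B(x, r)$ one has $(z|x)_o, (w|x)_o \geq t - O(1)$ (from $d \asymp e^{-\epsilon(\cdot|\cdot)_o}$), so the geodesic rays from $o$ to $z$ and to $w$ both track $\sigma$ up to $\sigma(t)$; a standard $\delta$-hyperbolicity argument then yields
\[
(z|w)_{\sigma(t)} = (z|w)_o - t + O(\delta).
\]
Combining these gives $d(gz, gw) \asymp e^{\epsilon t} d(z, w) = (1/r) d(z, w)$ for $z, w \in B(x, r)$, with implicit bi-Lipschitz constant $L$ depending only on $\epsilon$, $\delta$, the visual-metric comparison constant, and $R_0$.

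It remains to verify the image inclusion $B(gx, 1/L_0) \subset g(B(x, r))$. I would argue contrapositively: if $w \in X$ has $d(x, w) \geq r$, then $(w|x)_o \leq t + O(1)$, so the geodesic from $o$ to $w$ leaves $\sigma$ at or before $\sigma(t)$. The geodesic from $\sigma(t)$ to $w$ therefore essentially backtracks along $\sigma$, giving $(w|x)_{\sigma(t)} = O(\delta)$, and hence $(w|x)_{g^{-1}o} = O(1)$. This yields $d(gw, gx) \geq c_0$ for some positive constant $c_0$ depending only on the above data. Consequently any $w' \in B(gx, 1/L_0)$ with $1/L_0 < c_0$ must satisfy $g^{-1}w' \in B(x, r)$, giving the containment; choosing $L_0 := \max(L, 2/c_0)$ closes the argument.

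The main obstacle is simply careful bookkeeping of the hyperbolicity and visual-metric constants; the underlying idea — that approximate self-similarity of $\bdry Z$ comes from cocompactness of $G \curvearrowright Z$ via a ``zooming'' element of $G$ — is standard, going back to Coornaert and used in precisely the form above in the references cited in the statement of the lemma.
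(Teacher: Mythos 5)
Your proposal is correct and follows essentially the same ``zooming by cocompactness'' argument that the paper uses; the only difference is that the paper condenses the whole calculation into an invocation of \cite[Corollary 4.9]{MacSis-qhyp-planes} (separating the two trivial cases depending on whether $-\epsilon^{-1}\log(2rC_0)-\delta_Z-1 \geq 0$, i.e.\ whether one zooms at all), whereas you unpack that reference from first principles. Your bookkeeping of the change-of-basepoint estimate for Gromov products, the bi-Lipschitz bound on $B(x,r)$, and the contrapositive containment $B(gx,1/L_0)\subset g(B(x,r))$ is all sound; the one minor point you gloss over is that when $t=-\epsilon^{-1}\log r$ is small (so $r$ close to $\diam X$) the bounds $(z|x)_o\geq t-O(1)$ can become vacuous, but since $d_Z(o,g^{-1}o)\leq t+R_0$ is then uniformly bounded the bi-Lipschitz estimate degenerates harmlessly, exactly as the paper's explicit second case records.
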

\begin{proof}
	By the cocompactness of $G\curvearrowright Z$ there exists $D>0$ so that $G \cdot B_Z(o,D)=Z$.  Let $L_0$ be given by \cite[Corollary 4.9]{MacSis-qhyp-planes} applied to $D$, the hyperbolicity constant $\delta_Z$ for $Z$, and parameters $C_0, \epsilon$ for the visual metric $d$.

	Suppose we are given $x \in X$ and $r \in (0,\diam X]$. 
	If $-\epsilon^{-1} \log(2rC_0)-\delta_Z-1 \geq 0$ then \cite[Corollary 4.9(1)]{MacSis-qhyp-planes} with ``$y$''$=$``$x$'', ``$r'$''$=r$, and an appropriate $g\in G$ gives an $L_0'$-bi-Lipschitz map from $(B(x,r),\frac{1}{r})$ to an open set $U \subset X$ with $B(f(x),\frac{1}{L_0'}) \subset U$.
Otherwise, $-\epsilon^{-1} \log(2rC_0)-\delta_Z-1 < 0$ and \cite[Corollary 4.9(2)]{MacSis-qhyp-planes} shows that $1\in G$ gives approximate self-similarity.
\end{proof}

\subsection{Connected components in boundaries}

Maximal splittings over finite edge groups enable us to control the geometry of connected components in any space arising as the boundary of any space admitting a geometric action by a hyperbolic group.

  Recall that a metric space $X$ is \emph{$C$-linearly connected} for some $C\geq 1$ if for any two points $x,y\in X$ there is a compact connected set $I \subset X$ with $\diam I \leq C d(x,y)$.  
The following definition is used in the proof of Theorem~\ref{thm:kkc-confdim-hypgroup}.
  
\begin{definition}[{see \cite[Theorem 3.11]{carr13-conf-gauge}}]\label{def:unif-lin-conn-sep}
  The components of a metric space are \emph{uniformly linearly connected} if they are each $K_\ell$-linearly connected for some fixed $K_\ell\geq1$.

  The components are \emph{uniformly separated} if for some fixed $K_s \geq 1$, for
all $0 < r \leq \diam X$, there exists a covering $\cW_r$ of $X$, by open and closed sets, such
that for all $W \in \cW_r$, we have $d(W, X \setminus W ) \geq r/K_s$ and there exists a connected
component $Y$ of $X$ with $Y \subset W$ and $W$ is contained in the $r$-neighbourhood of $Y$.
\end{definition}
  Recall that by Stallings--Dunwoody \cite{Sta68-torsion,Dun85-accessibility}, there is a maximal graph of groups decomposition of $G$ where all edge groups are finite, and the vertex groups $\{G_i\}$ are all finite or one-ended.
\begin{lemma}
	\label{lem:bdry-split-over-finite}
	Suppose $G$ is a hyperbolic group acting geometrically on a geodesic space $Z'$, 
	and let $X'=\bdry Z'$ with a fixed visual metric $d'$.
	Let $T$ be a Bass-Serre tree corresponding to a Stallings--Dunwoody decomposition for $G$ with vertex stabilizers denoted $\{G_v\}$.
	Then
	\begin{enumerate}
		\item the connected components of $X'$ correspond to $\Lambda_v$ for $G_v$ one-ended, and to points in $\bdry T$;
		\item $X'$ has uniform linear connectivity of components;
		\item $X'$ has uniform separation of components.
	\end{enumerate}
\end{lemma}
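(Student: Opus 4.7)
My plan is to treat each part in turn, using Lemma~\ref{lem:bdry-limsets} to pin down the structure and then visual metric estimates to get uniform quantitative control.

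For part (1), since all edge groups in a Stallings--Dunwoody decomposition are finite, each $\Lambda_e$ is empty, so Lemma~\ref{lem:bdry-limsets} tells us every $x \in X'$ lies either in some $\Lambda_v$ (with $G_v$ one-ended, otherwise $\Lambda_v=\emptyset$) or corresponds to a point of $\bdry T$. The key structural fact is that for each edge $e\in ET$, the two sides of $T \setminus \{e\}$ induce a disjoint decomposition $X' = X'_{e,+} \sqcup X'_{e,-}$ into clopen pieces: they are disjoint because $\Lambda_e=\emptyset$, and clopen because any sequence approaching a point in one side has geodesic rays that would have to cross the bounded edge space $Z_e\times\{0\}$ to reach the other side, forcing the Gromov product with a ray to the other side to remain bounded by roughly $d(o,Z_e\times\{0\})$. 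This immediately shows that distinct $\Lambda_v$'s lie in different components, and each point of $\bdry T$ is a singleton component (as the intersection of nested clopen sets with diameters tending to zero). Each $\Lambda_v$ with $G_v$ one-ended is connected since the boundary of a one-ended hyperbolic group is connected, so these are exactly the full components.

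For part (2), singleton components from $\bdry T$ are trivially linearly connected, so the work is in the $\Lambda_v$. Since $G \setminus T$ is finite, there are only finitely many types $i\in V\cG$, and for each one-ended type the boundary $\bdry G_i$ is connected and locally connected, hence linearly connected (a classical result for one-ended hyperbolic groups). The orbit-map quasi-isometric embedding $g_v G_{i_v} \hookrightarrow Z'$ extends to a quasisymmetric embedding $\bdry G_{i_v} \to \Lambda_v$ with constants depending only on the quasiconvexity data, which is uniform across translates. Since linear connectivity is preserved under quasisymmetries with a controlled constant, and only finitely many types $i_v$ appear, we obtain a single $K_\ell$ that works for all $\Lambda_v$.

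For part (3), given $0<r\le\diam X'$, the strategy is to cut $T$ at the right depth. Let $N \asymp -\log(r)/\epsilon$, and consider the ``front'' $\cE_N$ of edges $e \in ET$ that first cross distance $\asymp N$ from $o$ along geodesic paths in $T$ from $v_0$. For each such $e$, let $W_e$ be the clopen set in $X'$ on the far side from $v_0$; let $W_0$ be the remaining central clopen piece containing $\Lambda_{v_0}$ and the limit sets of vertices still on the $v_0$ side. These cover $X'$, and visual metric estimates give $d(W_e, X'\setminus W_e) \gtrsim e^{-\epsilon N} \asymp r$, since geodesics to points on different sides share initial segments of length at most $N + O(1)$. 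Each $W_e$ has diameter $\lesssim r$. If some vertex on the far side of $e$ has $G_v$ one-ended, then $\Lambda_v$ is a component of $X'$ contained in $W_e$, and $W_e \subset N_r(\Lambda_v)$ trivially by the diameter bound. Otherwise $W_e$ contains only singleton components from $\bdry T$, and we pick any one as $Y$ (again $W_e$ sits inside $N_r(Y)$ by the diameter bound). The central piece $W_0$ is handled similarly, either by $\Lambda_{v_0}$ or by further subdivision depending on whether $v_0$ is one-ended.

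The main obstacle I anticipate is making part (3) honestly uniform: one must choose the cutting front $\cE_N$ so that each $W_e$ genuinely contains some component (and is not, say, a cloud of singletons with no useful ``anchor''), and balance the depth choice against possible long chains of finite vertex groups that push the first one-ended descendant deeper than expected. The diameter bound on $W_e$ makes the ``contained in $N_r(Y)$'' requirement cheap once $W_e$ contains any component at all, so the real care is in choosing the depth and collecting merging pieces so that both clopenness and the separation bound $r/K_s$ survive with a uniform $K_s$ depending only on the visual metric parameters and the hyperbolicity constant.
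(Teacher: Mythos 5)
Your part (1) is correct and essentially the paper's argument (which, as you implicitly do, goes through the model space $Z$ and the quasi-isometry to $Z'$, then uses the clopen decomposition induced by each bounded edge space). For part (2), the assertion that the quasisymmetric embedding $\bdry G_{i_v}\to\Lambda'_v$ has distortion ``uniform across translates'' is not automatic: the boundary extension of a quasi-isometric embedding is quasisymmetric with a distortion function depending on the configuration of base points, and $d_{Z'}(o',\phi(Z_v))$ is unbounded in $v$. The conclusion is correct, but requires an argument — either that the $o'$-based and $\phi(Z_v)$-based visual metrics restricted to $\Lambda'_v$ differ only by a uniformly bi-Lipschitz rescaling (a tree-approximation computation), or, as the paper does, the approximate self-similarity of $X'$ (Lemma~\ref{lem:hypbdry-selfsim}), which reduces to finitely many limit sets of definite diameter.

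The genuine gap is in part (3). Your ``$\diam W_e\lesssim r$ gives $W_e\subset N_r(Y)$'' reasoning is sound for the far-side pieces of a front, where the bounded edge space sits at distance $\approx-\epsilon^{-1}\log r$ from $o'$ and forces a small diameter. But it cannot apply to the central piece $W_0$ (nor to any piece anchored at a one-ended vertex $v$ whose vertex space $\phi(Z_v)$ comes close to $o'$): such a $W$ can have diameter comparable to $\diam X'$, and $W_0$ contains the limit sets of all vertices inside the front, so in general $W_0\not\subset N_r(\Lambda'_{v_0})$ — you cannot anchor it to a single component with your argument. The fix, as in the paper, is to partition $X'$ by the \emph{last} edge of $E_R$ that a ray essentially crosses, giving one piece per vertex $v$ at the far end of such an edge; then for one-ended $v$ one proves $W\subset N_r(\Lambda'_v)$ by a hyperbolicity argument (any geodesic ray to $w\in W$ leaves $B(o',R)$ at a point $z$ within bounded distance of $\phi(Z_v)$, and a geodesic line near $\phi(Z_v)$ through $z$ has an endpoint $y\in\Lambda'_v$ with $(w|y)_{o'}\geq R-C$). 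You flag that ``real care'' is needed for the central piece, but the needed ingredient is of a different kind from the small-diameter bound you rely on, and the phrase ``further subdivision'' does not supply it.
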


  The uniform separation of components condition is tricky to work if we let $X'$ be arbitrarily quasisymmetrically equivalent to $X$, but we only need the case of visual metrics $d'$ as in the lemma.
\begin{proof}
Consider a Stallings--Dunwoody graph of groups decomposition of $G$ with corresponding tree $T$.
	Let $Z$ be the model space for this graph of groups decomposition for $G$ constructed as in the previous section with base point $o$, let $X=\bdry Z$ be its boundary with a visual metric $d$ as before.

	Since $G$ acts geometrically on $Z'$, there is a quasi-isometry $\phi:Z\to Z'$ which sends the orbit $G \cdot o$ to the orbit $G \cdot \phi(o)$ equivariantly; let $o':=\phi(o)$.  Let $\psi:Z'\to Z$ be a quasi-inverse of $\phi$, sending $G\cdot o'$ to $G\cdot o$ equivariantly.
	As before, write $\Lambda_v=\Lambda(G_v), \Lambda_e=\Lambda(G_e)$ for the given limit sets in $X$, and let $\Lambda'_v, \Lambda'_e$ be the corresponding limit sets in $X'$.
	By equivariance, $\bdry \phi (\Lambda_v) = \Lambda'_v$ and $\bdry \phi (\Lambda_e) = \Lambda'_e$.

	We now begin the proof of (1).
Since the edge groups are finite,
in $Z$ the edge spaces $Z_e \times \{0\}$, $e\in ET$, have uniformly bounded diameter.
	
	Consider a geodesic ray $\gamma'$ from $o'$ in $Z'$.
	Let $\gamma$ be a geodesic ray from $o$ in $Z$ at bounded Hausdorff distance from the quasi-geodesic $\psi(\gamma')$.
By Lemma~\ref{lem:bdry-limsets}, and since $\Lambda_e = \emptyset$ for any $e\in ET$, every $x\in X$ either corresponds to a point of $\bdry T$, or to a point in $\Lambda_v$ for a unique $v\in VT$.

	We define the simple path $\hat\gamma$ in $T$ as in the proof of Lemma~\ref{lem:bdry-limsets}; by construction it is independent of the choice of $\gamma$, so we write $\hat{\gamma}':=\hat{\gamma}$.
	Suppose we have two geodesic rays $\gamma',\alpha'$ in $Z'$ with corresponding geodesic rays $\gamma,\alpha$ in $Z$ as above.
	If $\gamma$ essentially crosses some edge $e \in ET$ and $\alpha$ does not, then the Gromov product $(\gamma|\alpha)_o$ is $\leq d_{Z}(o, Z_e\times\{0\})+C$, and so the condition ``essentially crossing $e$'' splits $X$ into two sets at positive distance, thus the limit points of $\gamma$ and $\alpha$ are in different connected components of $X$.
	As $\bdry \phi$ is a homeomorphism, the limit points of $\gamma'$ and $\alpha'$ are in different connected components of $X'$.
	Thus if the limit points of $\gamma'$ and $\alpha'$ are in the same connected component of $X'$, then $\hat{\gamma}'=\hat{\alpha}'$.
	
	If $\hat\gamma'$ is unbounded, and $\alpha'$ is a geodesic ray in $Z'$, either $\hat\alpha'\neq\hat\gamma'$ and so $\alpha'$ limits to a different connected component of $X'$, or $\hat\alpha'=\hat\gamma'$, so by Lemma~\ref{lem:bdry-limsets} $\alpha$ and $\gamma$ represent the same point in $X = \bdry Z$, and so $\alpha'$ and $\gamma'$ represent the same point in $X' = \bdry \phi(X)$.  This point corresponds to the point in $\bdry T$ represented by $\hat\gamma'$.

	On the other hand, if $\hat\gamma'$ is a finite path, let $v$ be the final vertex of the path.
	Therefore $\gamma$ must meet $Z_v$ in an unbounded set, and limits to a point of $\Lambda_v$, and so $\gamma'$ limits to a point of $\Lambda'_v$, which is the image of $\bdry g_v G_{i_v}$ under the boundary of the orbit map $G\to Z'$.
	As $G_{i_v}$ is infinite it is one-ended, so $\bdry G_{i_v}$ is connected and thus so is $\Lambda'_v$, and every geodesic ray $\alpha'$ with $\hat\alpha'=\hat\gamma'$ is in the same connected set $\Lambda'_v$.
	So (1) is proved.

	We now prove (2).
	By Bonk--Kleiner~\cite{bonk-kleiner2005-qh-planes} the boundary of a one-ended hyperbolic group is linearly connected.
	If $v \in VT$ corresponds to a one-ended vertex group, then
	as $g_v G_{i_v} \to Z_v \to \phi(Z_v)$ is a quasi-isometry embedding into $Z'$,
	the boundary map $g_v \bdry G_{i_v} = \bdry g_v G_{i_v} \to \Lambda_v \to \Lambda'_v$ is a quasisymmetric embedding,
	so $\Lambda'_v$ is also linearly connected, though not a priori with constants independent of $v$.

	However, we can use the approximate self-similarity of $X' = \bdry Z'$.
	Let $\epsilon'$ denote the visual parameter of the metric $d'$.
	For $v \in VT$ we have $\diam \Lambda'_v \preceq e^{-\epsilon' d_{Z'}(o', \phi Z_v)}$ because all geodesic rays from $o'$ to points in $\Lambda'_v$ must pass within bounded distance of the same edge space adjacent to $\phi Z_v$, and in particular their Gromov products with each other are all $\geq d_{Z'}(o',\phi Z_v) -C$.
	By Lemma~\ref{lem:hypbdry-selfsim}, for all $v$ we can find a $g\in G$ so that, up to scaling, $\Lambda'_v$ is uniformly bi-Lipschitz to $g \cdot \Lambda'_v = \Lambda'_{gv}$, where $\diam \Lambda'_{gv} \geq 1/C>0$. That is, $\Lambda'_{gv}$ is one of finitely many possible candidates.
	Thus the linear connectivity constant of $\Lambda'_v$ may be taken independent of $v$.
	We have proven (2).
	
	It remains to show (3).

	Given $R>0$, let $E_R$ be the set of edges of $T$ so that the corresponding edge spaces $\phi(Z_e)$ are within distance $R$ of the base point $o'\in Z'$.
	Partition the boundary $X'$ according to the last edge in $E_R$ which the corresponding geodesic rays $\gamma'$ essentially cross, i.e., a geodesic representative $\gamma$ of the quasi-geodesic ray $\psi(\gamma')$ essentially crosses the edge.  Denote the partition by $\cW_R$.

  Notice that there is a set in this partition that corresponds to the rays whose class does not essentially cross any edge in $E_R$.
  This set is a neighbourhood of $\Lambda'_{v_0}$. 
  The sets in this partition are closed since a limit of rays in the same set is also in the same set.  Since the partition is finite the sets are open as well.
 
  For $W \in \cW_R$ if $x \in X' \setminus W$ and $w \in W$ then by the definition of $\cW_R$ we must have $(x|w)_{o'} \leq R+C$ and so $d'(x,w) \geq e^{-\epsilon' R}/C'$ for some constant $C'$.

	Consider $W \in \cW_R$ corresponding to geodesic rays which essentially cross an edge $e \in E_R$ last, and let $v$ be the vertex of $e$ furthest from $v_0$. If $W$ corresponds to rays not essentially crossing any edge of $E_R$, let $v=v_0$.

	If $G_v$ is finite, then the (bounded) edge space for $e$ is at distance $\geq R-C$ from $o'$, else we would have to essentially cross another edge of $E_R$.
	So we can take as $Y \subset X'$ the connected component of some such geodesic ray $\gamma'$ in $W$.  
	Indeed, by the proof of (1) above, every geodesic ray corresponding to a point of $Y$ essentially crosses the same edge $e$ as $\gamma'$, so $Y \subset W$.
	Also, if $w \in W$ then for any $y \in Y$ we have $(y|w)_{o'}\geq R-C$ and so $d'(y,w) \leq e^{-\epsilon' R} C'$.
	
	On the other hand, if $G_v$ is infinite, let $Y = \Lambda'_v \subset W$.
	If $w \in W$ then the last point $z$ of a geodesic ray to $w$ in $B(o',R)$ is (within bounded distance of) some point in $\phi Z_v$, as the ray essentially crosses the same edges of $E_R$ as any geodesic from $o'$ to $\phi Z_v$.
	Since $G_v$ is infinite there is a geodesic line close to $\phi Z_v$ passing within bounded distance of $z$; let $y,y' \in Y = \Lambda'_v$ be the limit points of the line.
	By hyperbolicity, one of the geodesic rays from $o'$ to $y$ or to $y'$ passes within a uniformly bounded distance of $z$, so $\max \{ (w|y)_{o'}, (w|y')_{o'} \} \geq R-C$, thus $d'(Y,w)\leq e^{-\epsilon' R} C'$.
	
    In conclusion the uniform separation of components, statement (3) of the lemma, is satisfied for $K_s:= (C')^2$ and by taking $\cW_r := \cW_R$ for $R = \frac{-1}{\epsilon'} \log(r/C')$.
\end{proof}

\subsection{Two-ended edge groups}

We now consider in more detail the case when all edge groups are two-ended (and hence all vertex groups are infinite).
In general, stabilizers of different edge groups can have the same limit sets, so to get stronger results about the geometry of such groups we switch from the given graph of groups to a new, bipartite, graph of groups.
We follow Guirardel--Levitt \cite{GuiLev11-cylinders}.

\begin{proposition}\label{prop:tree-of-cylinders}
	Given a hyperbolic group $G$ with a graph of groups decomposition over $2$-ended edge groups,
	we can find a graph of groups corresponding to an action $G \curvearrowright T$ where the tree is bipartite with $VT = V_0T\sqcup V_1T$, and
	\begin{enumerate}
		\item[(1)] all $V_0T$ vertex groups are non-elementary and are conjugate to some original vertex group;
		\item[(2)] all $V_1T$ groups and all edge groups are $2$-ended;
		\item[(3)] different $V_1T$ vertex groups are not commensurable, and hence have disjoint limit sets in $G$;
		\item[(4)] every original vertex group that was non-elementary is also a new $V_0T$ vertex group.
	\end{enumerate}
\end{proposition}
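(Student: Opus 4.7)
The plan is to apply the tree of cylinders construction of Guirardel--Levitt~\cite{GuiLev11-cylinders} to the given Bass--Serre tree $T_0$ of the original splitting. Define an equivalence relation on $ET_0$ by declaring $e \sim e'$ iff the stabilizers $G_e$ and $G_{e'}$ are commensurable, i.e.\ $G_e \cap G_{e'}$ has finite index in both. Since $G$ is hyperbolic and both stabilizers are $2$-ended, this is equivalent to $G_e$ and $G_{e'}$ having the same limit set (a pair of points) in $\bdry G$. Call each equivalence class a \emph{cylinder}.

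The first key step is to show each cylinder $Y$ spans a subtree of $T_0$. Suppose $e \sim e'$ and let $e''$ be any edge on the geodesic segment of $T_0$ from $e$ to $e'$. The subgroup $G_e \cap G_{e'}$ is commensurable with each of the $2$-ended groups $G_e, G_{e'}$, so in particular it is infinite, hence itself $2$-ended. As it fixes both endpoints of the path, it fixes every edge along the path, so in particular $G_e \cap G_{e'} \leq G_{e''}$, forcing $G_{e''}$ to share a limit set with $G_e$ and $G_{e'}$ in $\bdry G$; thus $e'' \sim e$. Convexity of cylinders follows. Note also that if two distinct cylinders $Y \neq Y'$ both contain a common vertex $v$, then $G_v$ contains two $2$-ended subgroups with distinct limit point pairs in $\bdry G$, so $G_v$ is non-elementary.

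Next, construct the bipartite tree $T$. Let $V_1T$ be the set of cylinders, and let $V_0T$ be the set of vertices of $T_0$ whose stabilizer is non-elementary; by the previous paragraph this includes every $v \in VT_0$ meeting at least two distinct cylinders. Place an edge of $T$ between $v \in V_0T$ and $c \in V_1T$ whenever $v$ is an endpoint of some edge of $T_0$ lying in $c$. A standard check (see~\cite{GuiLev11-cylinders}) shows $T$ is a tree and $G$ acts on it without inversions; minimality follows from minimality of $T_0$ together with the observation that any original vertex with $2$-ended stabilizer $G_v$ has all incident edges in a single cylinder (since the $2$-ended edge groups sit with finite index in $G_v$, so are pairwise commensurable), which means such a vertex gets absorbed rather than creating a dangling leaf of $T$.

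Finally, verify the four properties. The stabilizer of a cylinder $c$ is the subgroup of $G$ preserving the common pair of limit points of its edges, which in a hyperbolic group is $2$-ended (the stabilizer of a pair of boundary points), giving half of (2); the other half, that new edge stabilizers are $2$-ended, is immediate since a new edge corresponds to an incidence of an original $2$-ended edge group with its cylinder, and its stabilizer is that edge group. Property (1) is built in since $V_0T$ vertex groups are original vertex groups that happen to be non-elementary, and (4) is immediate from the definition of $V_0T$. For (3), two distinct $V_1T$ vertex groups are commensurators of different commensurability classes of $2$-ended subgroups, hence not commensurable themselves; non-commensurable $2$-ended subgroups of a hyperbolic group have disjoint $2$-point limit sets, giving the limit set claim. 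The main obstacle I anticipate is the convexity of cylinders together with the careful treatment of elementary original vertices to ensure minimality and the bipartite structure; once those are in hand, the four listed properties follow formally.
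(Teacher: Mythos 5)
Your proposal follows the same route as the paper: both apply the Guirardel--Levitt tree of cylinders construction to the original Bass--Serre tree, using commensurability of edge stabilizers (equivalently, equality of their two-point limit sets) as the cylinder equivalence relation, and then read off properties (1)--(4). Your direct proof of convexity of cylinders via the intersection $G_e\cap G_{e'}$ fixing the geodesic is fine and replaces the paper's citation of \cite[Lemma 4.2]{GuiLev11-cylinders}.

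There is, however, a small but real gap. Guirardel--Levitt define $V_0T$ to be the set of original vertices lying in at least two distinct cylinders, and the citations you invoke for $T$ being a tree, for the $G$-action, and for minimality are stated for that $V_0T$. You instead define $V_0T$ to be the set of vertices with non-elementary stabilizer, and you prove only one inclusion: a vertex meeting two cylinders has non-elementary stabilizer. You never prove the converse, that a non-elementary vertex meets at least two cylinders, yet you silently use the coincidence when you cite ``a standard check (see \cite{GuiLev11-cylinders})'' for treeness and minimality of your $T$. If some non-elementary $v$ met only one cylinder, your $T$ would have an extra leaf not present in Guirardel--Levitt's tree of cylinders, and the cited results would not directly apply. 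The missing implication does hold here, but it needs an argument: either use the finiteness of cylinders (which the paper imports from \cite{GuiLev11-cylinders}) together with the fact that a non-elementary vertex group has infinite degree in $T_0$ because its $2$-ended edge subgroups have infinite index, or argue directly that a non-elementary $G_v$ has infinitely many $G_v$-translates of any limit pair $\Lambda_e$, hence infinitely many commensurability classes of incident edge stabilizers. Adding one of these arguments closes the gap and brings your $V_0T$ into agreement with the standard one, at which point the rest of your verification of (1)--(4) goes through.

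Separately, your heuristic for minimality (that $2$-ended vertices get absorbed) gestures in the right direction but is not a proof; the paper simply cites \cite[Lemma 4.9]{Gui04-limit}, and you should do the same once you have identified your $T$ with the tree of cylinders.
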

\begin{proof}
Let $G\curvearrowright S$ be the original tree action. The new tree $T$ is what Guirardel and Levitt call the tree of cylinders of $S$. Their construction is as follows (for details see \cite[Section 4]{GuiLev11-cylinders}, for an example see Figure \ref{fig:cylinders}).

\begin{figure}
\centering
\begin{tikzpicture}[text opacity=1]

\draw[ultra thick] (1,5) -- node[below] {\footnotesize $C$} (3,5);
\fill[red!60] (1,5) circle (0.1) node[below] {\footnotesize $A$};
\fill[blue!60] (3,5) circle (0.1) node[below] {\footnotesize $B$};
\node[below] at (2,4.5) {\footnotesize $a \mapsfrom c\mapsto b^2$};

\draw[ultra thick] (1+6,5) -- node[below] {\footnotesize $C_1$} (3+6,5);
\draw[ultra thick] (3+6,5) -- node[below] {\footnotesize $C_2$} (5+6,5);
\fill[red!60] (1+6,5) circle (0.1) node[below] {\footnotesize $A$};
\fill[blue!60] (3+8,5) circle (0.1) node[below] {\footnotesize $B$};
\draw[fill=white] (3+6,5) circle (0.1) node[below] {\footnotesize $\langle b \rangle$};
\node[below] at (2+6,4.5) {\footnotesize $a \mapsfrom c_1\mapsto b^2$};
\node[below] at (4+6,4.5) {\footnotesize $b \mapsfrom c_2\mapsto b$};

\draw[ultra thick,opacity=0.3] (0,0) -- node {\footnotesize $\langle b^2\rangle$} (2,0);
\draw[ultra thick,opacity=0.3,rotate around={30:(2,0)}] (0,0) -- node {\footnotesize $\langle b^2 \rangle$} (2,0);

\draw[ultra thick,opacity=0.3,rotate around={60:(2,0)}] (0,0) --  (2,0);
\draw[ultra thick,opacity=0.3,rotate around={90:(2,0)}] (0,0) --  (2,0);
\draw[ultra thick,opacity=0.3,rotate around={120:(2,0)}] (0,0) --  (2,0);
\draw[ultra thick,opacity=0.3,rotate around={135:(2,0)}] (0,0) --  (2,0);
\draw[ultra thick,opacity=0.3,rotate around={150:(2,0)}] (0,0) --  (2,0);
\draw[ultra thick,opacity=0.3,rotate around={157.5:(2,0)}] (0,0) --  (2,0);
\draw[ultra thick,opacity=0.3,rotate around={-30:(2,0)}] (0,0) --  (2,0);
\draw[ultra thick,opacity=0.3,rotate around={-60:(2,0)}] (0,0) --  (2,0);
\draw[ultra thick,opacity=0.3,rotate around={-90:(2,0)}] (0,0) --  (2,0);
\draw[ultra thick,opacity=0.3,rotate around={-105:(2,0)}] (0,0) --  (2,0);
\draw[ultra thick,opacity=0.3,rotate around={-120:(2,0)}] (0,0) --  (2,0);
\draw[ultra thick,opacity=0.3,rotate around={-127.5:(2,0)}] (0,0) --  (2,0);

\foreach \x in {120,90,60,30,0,-30,-60,-90}{
\draw[ultra thick,opacity=0.2,rotate around={\x:(2,0)}] (0,0) --  ({cos(150)},{sin(150)});
\draw[ultra thick,opacity=0.2,rotate around={\x:(2,0)}] (0,0) --  ({cos(160)},{sin(160)});
\draw[ultra thick,opacity=0.2,rotate around={\x:(2,0)}] (0,0) --  ({cos(170)},{sin(170)});
\draw[ultra thick,opacity=0.2,rotate around={\x:(2,0)}] (0,0) --  ({cos(-150)},{sin(-150)});
\draw[ultra thick,opacity=0.2,rotate around={\x:(2,0)}] (0,0) --  ({cos(-160)},{sin(-160)});
\draw[ultra thick,opacity=0.2,rotate around={\x:(2,0)}] (0,0) --  ({cos(-170)},{sin(-170)});
\fill[blue!60,rotate around={\x:(2,0)}] ({cos(150)},{sin(150)}) circle (0.05);
\fill[blue!60,rotate around={\x:(2,0)}] ({cos(160)},{sin(160)}) circle (0.05);
\fill[blue!60,rotate around={\x:(2,0)}] ({cos(170)},{sin(170)}) circle (0.05);
\fill[blue!60,rotate around={\x:(2,0)}] ({cos(-150)},{sin(-150)}) circle (0.05);
\fill[blue!60,rotate around={\x:(2,0)}] ({cos(-160)},{sin(-160)}) circle (0.05);
\fill[blue!60,rotate around={\x:(2,0)}] ({cos(-170)},{sin(-170)}) circle (0.05);
}

\fill[opacity=0.7,rotate around={170:(2,0)}] (1,0) circle (0.03);
\fill[opacity=0.7,rotate around={180:(2,0)}] (1,0) circle (0.03);
\fill[opacity=0.7,rotate around={190:(2,0)}] (1,0) circle (0.03);
\fill[opacity=0.7,rotate around={200:(2,0)}] (1,0) circle (0.03);
\fill[opacity=0.7,rotate around={210:(2,0)}] (1,0) circle (0.03);
\fill[opacity=0.7,rotate around={220:(2,0)}] (1,0) circle (0.03);

\fill[red!60] (0,0) circle (0.1) node[below] {\footnotesize $A$};
\fill[blue!60] (2,0) circle (0.1) node[below] {\footnotesize $B$};
\fill[red!60,rotate around={30:(2,0)}] (0,0) circle (0.1) node[below] {\footnotesize $bAb^{-1}$};
\fill[red!60,rotate around={60:(2,0)}] (0,0) circle (0.1);
\fill[red!60,rotate around={90:(2,0)}] (0,0) circle (0.1);
\fill[red!60,rotate around={120:(2,0)}] (0,0) circle (0.1);
\fill[red!60,rotate around={135:(2,0)}] (0,0) circle (0.1);
\fill[red!60,rotate around={150:(2,0)}] (0,0) circle (0.1);
\fill[red!60,rotate around={157.5:(2,0)}] (0,0) circle (0.1);
\fill[red!60,rotate around={-30:(2,0)}] (0,0) circle (0.1);
\fill[red!60,rotate around={-60:(2,0)}] (0,0) circle (0.1);
\fill[red!60,rotate around={-90:(2,0)}] (0,0) circle (0.1);
\fill[red!60,rotate around={-105:(2,0)}] (0,0) circle (0.1);
\fill[red!60,rotate around={-120:(2,0)}] (0,0) circle (0.1);
\fill[red!60,rotate around={-127.5:(2,0)}] (0,0) circle (0.1);

\foreach \x in {120,90,60,30,0,-30,-60,-90}{
\draw[ultra thick,opacity=0.2,rotate around={\x:(9,0)}] (0+7,0) --  ({cos(150)+7},{sin(150)});
\draw[ultra thick,opacity=0.2,rotate around={\x:(9,0)}] (0+7,0) --  ({cos(160)+7},{sin(160)});
\draw[ultra thick,opacity=0.2,rotate around={\x:(9,0)}] (0+7,0) --  ({cos(170)+7},{sin(170)});
\draw[ultra thick,opacity=0.2,rotate around={\x:(9,0)}] (0+7,0) --  ({cos(-150)+7},{sin(-150)});
\draw[ultra thick,opacity=0.2,rotate around={\x:(9,0)}] (0+7,0) --  ({cos(-160)+7},{sin(-160)});
\draw[ultra thick,opacity=0.2,rotate around={\x:(9,0)}] (0+7,0) --  ({cos(-170)+7},{sin(-170)});
\fill[blue!60,rotate around={\x:(9,0)}] ({cos(150)+7},{sin(150)}) circle (0.05);
\fill[blue!60,rotate around={\x:(9,0)}] ({cos(160)+7},{sin(160)}) circle (0.05);
\fill[blue!60,rotate around={\x:(9,0)}] ({cos(170)+7},{sin(170)}) circle (0.05);
\fill[blue!60,rotate around={\x:(9,0)}] ({cos(-150)+7},{sin(-150)}) circle (0.05);
\fill[blue!60,rotate around={\x:(9,0)}] ({cos(-160)+7},{sin(-160)}) circle (0.05);
\fill[blue!60,rotate around={\x:(9,0)}] ({cos(-170)+7},{sin(-170)}) circle (0.05);

\draw[fill=white,rotate around={\x:(9,0)}] ({cos(150)/2+7},{sin(150)/2}) circle (0.05);
\draw[fill=white,rotate around={\x:(9,0)}] ({cos(160)/2+7},{sin(160)/2}) circle (0.05);
\draw[fill=white,rotate around={\x:(9,0)}] ({cos(170)/2+7},{sin(170)/2}) circle (0.05);
\draw[fill=white,rotate around={\x:(9,0)}] ({cos(-150)/2+7},{sin(-150)/2}) circle (0.05);
\draw[fill=white,rotate around={\x:(9,0)}] ({cos(-160)/2+7},{sin(-160)/2}) circle (0.05);
\draw[fill=white,rotate around={\x:(9,0)}] ({cos(-170)/2+7},{sin(-170)/2}) circle (0.05);
}

\draw[ultra thick,opacity=0.3] (7,0) -- node {\tiny $\langle b^2\rangle$} ({(2*cos(210)+9)/4+7/4+9/2},{(2*sin(210))/4+0/4+0/2}) -- node {\tiny $\langle b\rangle$} (9,0);
\draw[ultra thick,opacity=0.3] ({2*cos(210)+9},{2*sin(210)}) -- node {\tiny $\langle b^2 \rangle$} ({(2*cos(210)+9)/4+7/4+9/2},{(2*sin(210))/4+0/4+0/2});
\fill[red!60] (7,0) circle (0.1) node[below] {\footnotesize $A$};
\fill[red!60] ({2*cos(210)+9},{2*sin(210)}) circle (0.1) node[right] {\footnotesize $bAb^{-1}$};
\draw[fill=white] ({(2*cos(210)+9)/4+7/4+9/2},{(2*sin(210))/4+0/4+0/2}) circle (0.2) node {\tiny $\langle b\rangle$};
\fill[blue!60] (9,0) circle (0.1) node[below] {\footnotesize $B$};

\foreach \x in {120,60,-60,-120}{
\draw[ultra thick,opacity=0.3,rotate around={\x:(9,0)}] (7,0) -- ({(2*cos(210)+9)/4+7/4+9/2},{(2*sin(210))/4+0/4+0/2}) -- (9,0);
\draw[ultra thick,opacity=0.3,rotate around={\x:(9,0)}] ({2*cos(210)+9},{2*sin(210)}) -- ({(2*cos(210)+9)/4+7/4+9/2},{(2*sin(210))/4+0/4+0/2});
\fill[red!60,rotate around={\x:(9,0)}] (7,0) circle (0.1);
\fill[red!60,rotate around={\x:(9,0)}] ({2*cos(210)+9},{2*sin(210)}) circle (0.1);
\draw[fill=white,rotate around={\x:(9,0)}] ({(2*cos(210)+9)/4+7/4+9/2},{(2*sin(210))/4+0/4+0/2}) circle (0.2);
\fill[blue!60,rotate around={\x:(9,0)}] (9,0) circle (0.1);

}

\fill[opacity=0.7,rotate around={170:(9,0)}] (8,0) circle (0.03);
\fill[opacity=0.7,rotate around={180:(9,0)}] (8,0) circle (0.03);
\fill[opacity=0.7,rotate around={190:(9,0)}] (8,0) circle (0.03);
\fill[opacity=0.7,rotate around={200:(9,0)}] (8,0) circle (0.03);
\fill[opacity=0.7,rotate around={210:(9,0)}] (8,0) circle (0.03);
\fill[opacity=0.7,rotate around={220:(9,0)}] (8,0) circle (0.03);

\end{tikzpicture}
\caption{Let $A=\langle a_1,a_2\rangle$, $B=\langle b_1,b_2\rangle$ be two copies of the free group and $C=\langle c\rangle \simeq \Z$. Let $a=[a_1,a_2]$ and $b=[b_1,b_2]$, and consider the amalgamated product $G=A\ast_C B$ where the injection maps are given by $c\mapsto a$ and $c\mapsto b^2$. The group $G$ is isomorphic to the fundamental group of the complex obtained by gluing two punctured tori to a Mobius band, one of them glued along its boundary to the boundary of the band, and the other one glued along its boundary to the mid-circle of the band. The splitting $A\ast_C B$ corresponds to the graph of groups decomposition shown on the left.
Notice that the edges of $S$ issuing from a $B$-vertex are naturally paired: in this case the cylinders are the pairs of edges having the same stabilizer. The tree of cylinders $T$ corresponds to replacing these pairs by a tripod. The associated graph of groups is shown on the right.
}
\label{fig:cylinders}
\end{figure}
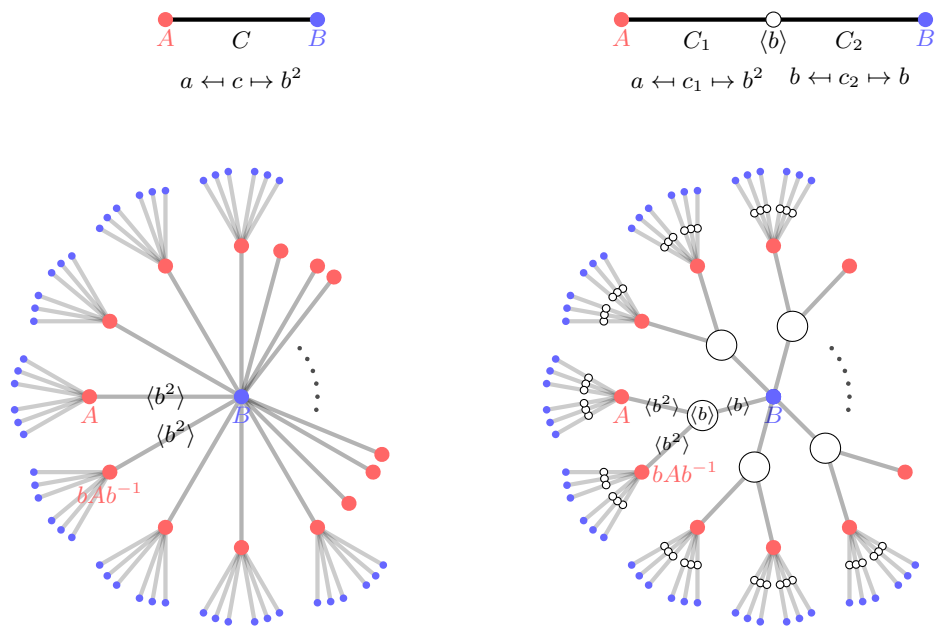

Define an equivalence relation $\sim_S$ on the set of non-oriented edges of $S$ by $e\sim_S e'$ if $G_{e}$ and $G_{e'}$ are commensurable (i.e.\ if $G_{e}\cap G_{e'}$ has finite index in both $G_e$ and $G_{e'}$). A \emph{cylinder} of $S$ is an equivalence class $[e]_S$.

Notice that since $G$ is hyperbolic, given two edges $e$ and $e'$ of $S$ either $\Lambda_e\cap\Lambda_{e'}=\emptyset$ or $\Lambda_e=\Lambda_{e'}$. Moreover, $e\sim_S e'$ if and only if $\Lambda_e=\Lambda_{e'}$. 

By \cite[Lemma 4.2]{GuiLev11-cylinders} every cylinder of $S$ is connected, and hence a subtree. Since there are only finitely many conjugacy classes of edge groups, and there are only finitely many conjugate edge groups that can contain a given loxodromic, every cylinder is finite.

The tree of cylinders $T$ is the bipartite tree with vertex set $VT=V_0T\sqcup V_1T$ defined as follows:
\begin{enumerate}
\item $V_0T$ is the set of vertices $v$ of $S$ belonging to at least two distinct cylinders;
\item $V_1T$ is the set of cylinders $[e]_S$ of $S$;
\item and there is an edge between $v$ and $[e]_S$ if $v$ as a vertex of $S$ belongs to the union of edges of $[e]_S$. 
\end{enumerate}
That is, the tree $T$ is obtained from $S$ by replacing each cylinder by the cone on its boundary. See \cite[Definition 4.8]{Gui04-limit} for the proof that $T$ is indeed a tree. Moreover, the group $G$ acts on $T$ and the action $G\curvearrowright T$ is also minimal,  \cite[Lemma 4.9]{Gui04-limit}.

Notice that a non-elementary stabilizer $G_v$ of a vertex $v$ of $S$ has infinite degree in $S$. Therefore, if a vertex of $S$ belongs to only one cylinder, it has finite degree and its stabilizer must be two-ended. That is, vertices of $S$ with non-elementary stabilizers are also vertices in $V_0T$. This shows (4). Moreover, the stabilizer of a vertex in $V_0T$ is the same as the stabilizer of the corresponding vertex in $S$, so no new non-elementary vertices are created by this construction, and this gives (1).

The stabilizer of a vertex in $V_1T$ is the global stabilizer of a cylinder $[e]_S$ in $S$, which coincides with the maximal two-ended subgroup containing $G_{e'}$ for any edge $e'\in [e]_S$. This proves the first claim of (2), and shows that an edge stabilizer of $T$ is elementary. But if $(v,[e]_S)$ is an edge of $T$, then its stabilizer contains the stabilizer of the edge of $[e]_S$ incident to $v$, which is two-ended. Therefore edge stabilizers of $S$ are two-ended (see also \cite[Proposition 6.1]{GuiLev11-cylinders}). This completes the proof of (2). Property (3) follows directly by the definition of cylinders.
\end{proof}

\subsection{Metric estimates for the limit sets of the bipartite tree action}

To compute conformal dimension we need metric estimates on boundaries.  In this section we estimate the distances and diameters of the limit sets of the vertex groups appearing in the bipartite tree action of Proposition~\ref{prop:tree-of-cylinders}.
We use $K_1, K_2, \ldots$ for the constants found in these estimates so that their use is clear later in the paper.

We think of the $V_1T$ vertex spaces/groups as generalised edge spaces/groups, and indeed we do not need to consider edges any more, since every edge space is at finite Hausdorff distance from the adjacent $V_1T$ vertex space. Nevertheless we keep the notation so that $v$ stands for a vertex in $V_0T$ and $e$ stands for a vertex in $V_1T$. So Lemma~\ref{lem:bdry-limsets} becomes:
\begin{lemma}
	\label{lem:bdry-limsets-2}
	If $G$ is a hyperbolic group with $G \curvearrowright T$ as in Proposition~\ref{prop:tree-of-cylinders}, with $G$ acting geometrically on the model space $Z$, and $X = \bdry Z$ with a visual metric,
	then every $x \in X$ corresponds to exactly one of the following:
	\begin{itemize} 
		\item a point of $\bdry T$, with a unique $x$ for each $t\in\bdry T$, or
		\item a point of $\Lambda_e$ for some unique $e \in V_1T$, or
		\item a point of $\Lambda_v$ for some unique $v \in V_0T$ (but not in any $\Lambda_e$).
	\end{itemize}
\end{lemma}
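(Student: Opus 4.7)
The plan is to reduce directly to Lemma~\ref{lem:bdry-limsets}, applied to the graph of groups decomposition corresponding to the bipartite Bass--Serre tree $T$ from Proposition~\ref{prop:tree-of-cylinders}. Its hypotheses are satisfied since every edge stabilizer of $G\curvearrowright T$ is $2$-ended by Proposition~\ref{prop:tree-of-cylinders}(2), hence quasiconvex in $G$. This produces the trichotomy: each $x\in X$ corresponds either to a unique point of $\bdry T$, or to a point of $\Lambda_\epsilon$ for some edge $\epsilon$ of $T$, or to a point of $\Lambda_w$ for a unique $w\in VT=V_0T\sqcup V_1T$ that lies in no $\Lambda_\epsilon$.

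Next I would identify each edge limit set with the adjacent cylinder-vertex limit set. Any edge $\epsilon$ of $T$ joins some $v\in V_0T$ to some $e\in V_1T$; by the description of $G_e$ in the proof of Proposition~\ref{prop:tree-of-cylinders} as the maximal $2$-ended subgroup containing the stabilizer of an edge in the cylinder, $G_\epsilon$ sits with finite index in $G_e$, and hence $\Lambda_\epsilon=\Lambda_e$. So the second clause rewrites as ``$x\in\Lambda_e$ for some $e\in V_1T$.'' Moreover, if $w$ were in $V_1T$ in the third clause, then picking any incident edge $\epsilon$ would give $\Lambda_w=\Lambda_\epsilon$, contradicting ``not in any $\Lambda_\epsilon$''; therefore the vertex in the third clause must lie in $V_0T$.

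To upgrade existence to uniqueness of $e\in V_1T$ in the second clause, I would invoke Proposition~\ref{prop:tree-of-cylinders}(3): distinct $V_1T$-stabilizers are non-commensurable $2$-ended subgroups of the hyperbolic group $G$. Two $2$-ended subgroups of a hyperbolic group whose limit sets meet necessarily share both fixed points at infinity and so are commensurable; non-commensurability therefore forces disjointness, giving the desired uniqueness. Uniqueness of $v\in V_0T$ in the third clause is inherited directly from Lemma~\ref{lem:bdry-limsets}.

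I expect no genuine obstacle; the argument is essentially bookkeeping, matching the cases of Lemma~\ref{lem:bdry-limsets} to those of the bipartite decomposition. The only point requiring care is the translation between edge limit sets and $V_1T$-vertex limit sets, where one must check both coincidence (via finite index of $G_\epsilon$ in $G_e$) and disjointness across distinct $V_1T$-vertices (via non-commensurability from Proposition~\ref{prop:tree-of-cylinders}(3)).
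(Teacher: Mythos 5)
Your proof is correct and takes essentially the same route as the paper, which treats the $V_1T$ vertex spaces as ``generalised edge spaces'' at bounded Hausdorff distance from the genuine edge spaces and then reads Lemma~\ref{lem:bdry-limsets-2} off directly from Lemma~\ref{lem:bdry-limsets}. Your argument fills in the identifications $\Lambda_\epsilon=\Lambda_e$ (finite index) and the uniqueness of $e\in V_1T$ (disjointness from Proposition~\ref{prop:tree-of-cylinders}(3)) more explicitly than the paper does, but the substance is the same.
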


As before, by quasiconvexity $\Lambda_v$ is a quasisymmetric image of $g_v \bdry G_{i_v}$ for each $v\in V_0T$.  Likewise, for each $e\in V_1T$, $\Lambda_e$ is a quasisymmetric image of $g_e \bdry G_{k_e}$, that is, it is a pair of points in $X$.

Fix corresponding basepoints $v_0 \in V_0T$, $o\in Z$.
For each $v \in V_0T\setminus \{v_0\}$, let $e_v \in V_1T$ be the last $V_1T$ vertex on the geodesic from $v_0$ to $v$.
We have that $\Lambda_{e_v}$ cuts $X$ into at least two components \cite[Sec 1]{Bow98-jsj}, while the interior of the open edge $(e_v,v)$ cuts $T$ into exactly two components, one containing $v_0$ and the other not.  
Let $Z_{\gets e_v}$ be the component of $Z\setminus Z_{(e_v,v)}\times\{0\}$ containing $o$,
	and let $Z_{ e_v\to}$ be the other component.
	We define $U_{\gets v} := \bdry Z_{\gets e_v}$ and $U_{v\to} := \bdry Z_{ e_v\to}$.
	Since $Z_{\gets e_v}$ and $Z_{ e_v\to}$ are quasiconvex, these correspond to the closure of the limit sets of the corresponding components of $T\setminus (e_v,v)$.
Note that $U_{\gets v} \cap U_{v\to} = \Lambda_{e_v}$.

We let $U_{v_0\to} := X$ and leave $U_{\gets v_0}$ and $\Lambda_{e_{v_0}}$ undefined.  

We say that $w\in V_0T$ is a descendant of $v\in V_0T\setminus\{v_0\}$ if $v$ separates $w$ from $v_0$ in $T$. We also say that all vertices of $T$ are descendants of $v_0$. For $v \in T$, we denote by $T_0(v)$ the collection of $v$ and all its descendants in $V_0T$.

In all the following lemmas we assume as above that $Z$ is a tree of spaces for a graph of groups decomposition of the group $G$ like in Proposition~\ref{prop:tree-of-cylinders}.

The following lemma implies that for any $e \neq e' \in V_1T$, we have $\Delta(\Lambda_e, \Lambda_{e'}) \geq 1/K_1$, where 
\[
	\Delta(U,V) := \frac{d(U,V)}{ \diam U \wedge \diam V  }
\]
is the \emph{relative distance} of $U, V \subset X$.
\begin{lemma} \label{lem:reldist1}
There exists a constant $K_1$ so that for $e \neq e' \in V_1T$ we have
			\[
			d(\Lambda_e, \Lambda_{e'}) \succeq_{K_1} \diam \Lambda_e \wedge  \diam \Lambda_{e'} \ .
			\]
\end{lemma}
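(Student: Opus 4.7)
My plan is to translate the inequality into a Gromov product estimate using the visual metric identity $d(x,y) \asymp e^{-\epsilon(x|y)_o}$ and then read both sides off the tree-of-spaces geometry. Since each stabilizer $G_e$ ($e \in V_1T$) is two-ended and uniformly quasiconvex, $\Lambda_e$ consists of exactly two points, and any two geodesic rays from $o$ representing them stay at bounded distance until they have entered and diverged inside the vertex space $Z_e$. Hyperbolicity and quasiconvexity give that their Gromov product at $o$ is $d(o, Z_e) + O(1)$, so $\diam \Lambda_e \asymp e^{-\epsilon d(o, Z_e)}$, with constants uniform across all $e \in V_1T$.

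Now fix distinct $e, e' \in V_1T$ and let $v^* \in VT$ denote the last common vertex of the tree paths $v_0 \to e$ and $v_0 \to e'$ in $T$ (the branching vertex, which may lie in either bipartite class, or may coincide with $v_0$, $e$, or $e'$). For any $x \in \Lambda_e$ and $y \in \Lambda_{e'}$, geodesic rays from $o$ to $x$ and to $y$ in $Z$ travel together through the shared initial portion of the tree, reaching a uniformly bounded neighborhood of $Z_{v^*}$, and then enter disjoint subtrees; uniform quasiconvexity forces them to diverge after leaving the vicinity of $Z_{v^*}$. This gives $(x|y)_o \approx d(o, Z_{v^*}) + O(1)$, uniformly in $x, y$, and therefore
\[
	d(\Lambda_e, \Lambda_{e'}) \asymp e^{-\epsilon d(o, Z_{v^*})}.
\]

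The proof then reduces to the combinatorial observation that $\max\{d(o, Z_e), d(o, Z_{e'})\} \geq d(o, Z_{v^*}) + c$ for a uniform constant $c > 0$. Since $e \neq e'$ and $v^*$ lies on both tree paths, at least one of $e, e'$ is distinct from $v^*$; for such a vertex, say $e$, the vertex spaces $Z_e$ and $Z_{v^*}$ are separated in the model space $Z$ by at least one intervening edge space product of definite length, and since $Z$ is built from finitely many $G$-orbits of vertex and edge spaces, one obtains a uniform lower bound $d(Z_{v^*}, Z_e) \geq c$. Combining all three estimates yields
\[
	\frac{d(\Lambda_e, \Lambda_{e'})}{\diam \Lambda_e \wedge \diam \Lambda_{e'}} \asymp \exp\bigl(\epsilon(\max\{d(o, Z_e), d(o, Z_{e'})\} - d(o, Z_{v^*}))\bigr) \succeq e^{\epsilon c},
\]
from which $K_1$ is determined. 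The main obstacle is the careful verification that $(x|y)_o = d(o, Z_{v^*}) + O(1)$ in the several position cases $v^* = v_0$, $v^* = e$, $v^* = e'$, or $v^*$ strictly interior to both paths; this is a routine quasi-geodesic tracking argument, but requires attention to the bipartite structure from Proposition~\ref{prop:tree-of-cylinders} and to the convention that vertex spaces for $V_1T$ vertices coincide coarsely with the original edge spaces of the splitting.
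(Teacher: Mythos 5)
There is a genuine gap in your argument, and it lies precisely in the step you dismiss as routine. The estimate $(x|y)_o = d(o, Z_{v^*}) + O(1)$, uniformly in $e, e'$, does not follow from quasiconvexity and quasi-geodesic tracking alone. The problem is inside the vertex space $Z_{v^*}$ (or, if $v^*$ is in $V_1T$, along that edge space and its adjacent vertex space): the geodesic rays to $x \in \Lambda_e$ and $y \in \Lambda_{e'}$ can enter a bounded neighbourhood of $Z_{v^*}$ together and then continue to fellow-travel there for an arbitrarily long time, because the coarse lines along which $Z_e$ and $Z_{e'}$ attach (quasi-axes of the edge stabilizers) could in principle remain close and nearly parallel for a long stretch. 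Quasiconvexity tells you they diverge once they pass into distinct subtrees; it does not bound how long they linger before doing so. Without such a bound, the constant implicit in ``$+O(1)$'' is not uniform and the claimed asymptotic $d(\Lambda_e, \Lambda_{e'}) \asymp e^{-\epsilon d(o, Z_{v^*})}$ fails.

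This fellow-travelling bound is exactly where the paper uses the non-commensurability of distinct $V_1T$ vertex stabilizers (Proposition~\ref{prop:tree-of-cylinders}(3)), and it is the real content of the lemma. Concretely: one picks loxodromics $g, g'$ with $g^{\pm\infty} = \Lambda_e$, $(g')^{\pm\infty} = \Lambda_{e'}$, normalised to have uniformly bounded translation lengths, and argues that if the axes remain $2\delta_Z$-close for length $L$, then the elements $g^{-\lfloor i\ell'/\ell\rfloor}(g')^i$ keep a point coarsely fixed for $i$ up to $\sim L$; by uniform properness, for $L$ large two such elements coincide, forcing $\langle g\rangle$ and $\langle g'\rangle$ to be commensurable, contradicting the tree-of-cylinders structure. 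Only after this do the tree-approximation calculations close up. Your visual-metric identities for $\diam \Lambda_e$ and the final combinatorial step are fine (and in fact you need only the additive bound $\max\{d(o,Z_e), d(o,Z_{e'})\} \geq d(o, Z_{v^*}) - C$, not a strict increase by $c>0$), but the argument cannot be completed without the commensurability input.
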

\begin{proof}
Pick loxodromic elements $g, g'$ so that $g^{\pm \infty} = \Lambda_e$ and $(g')^{\pm \infty} = \Lambda_{e'}$, and let $\ell, \ell'$ be their translation lengths; as there are finitely many conjugation classes of edge stabilizers, we may assume that $\ell,\ell'$ are uniformly bounded away from $0$ and $\infty$, and that there are uniform bounds on the quasi-geodesic constants for $n\mapsto g^n$ and $n\mapsto (g')^n$.
	\begin{figure}
	   \def\svgwidth{.8\textwidth}
	   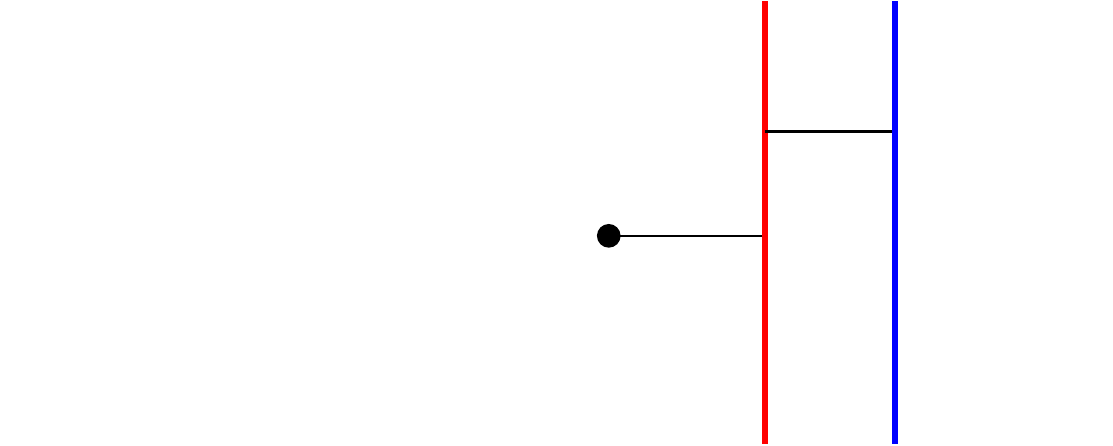
	 	\caption{Tree approximation for Lemma~\ref{lem:reldist1}}
		\label{fig:reldist1}
	\end{figure}

	Consider the tree approximation to geodesic axes for $\Lambda_e$ and $\Lambda_{e'}$ as in Figure~\ref{fig:reldist1}.  
	Suppose, as in the left of the figure, the axes remain $2\delta_Z$-close for a large distance $L$, where $\delta_Z$ is the hyperbolicity constant for $Z$.  Up to swapping $g, g^{-1}$ this means that there is a point $p$ so that for any $i \leq L/\ell'$, the point $g^{-\lfloor i\ell'/\ell\rfloor} (g')^i p$ is uniformly close to $p$.  Thus, by the uniform properness of $G \curvearrowright Z$, there exists $L'$ independent of $e,e'$ so that if $L>L'$ then there exist $i_1 \neq i_2$ so that $g^{-\lfloor i_1 \ell'/\ell\rfloor} (g')^{i_1} = g^{-\lfloor i_2 \ell'/\ell\rfloor} (g')^{i_2}$, hence $\langle g \rangle$ and $\langle g' \rangle$ are commensurable, a contradiction to the disjointness of $\Lambda_e, \Lambda_{e'}$.

	Thus, up to a uniformly bounded error, the tree approximation of $\Lambda_e, \Lambda_{e'}$ must look like the right of Figure~\ref{fig:reldist1}, for some $c\geq 0$.
	Up to swapping $e,e'$, the position of $o$ in the tree approximation must look like that of $o$ or $o'$ in the figure; suppose the former (the latter case is similar and easier), and label the other relevant distances $a,b$, up to bounded error.
	One can compute that 
	$d(\Lambda_e, \Lambda_{e'}) \asymp e^{-\epsilon (a+b)}$,
	$\diam \Lambda_e \asymp e^{-\epsilon a}$, and
	$\diam \Lambda_{e'} \asymp e^{-\epsilon (a+b+c)}$,
	so as $a+b+c\geq a+b$ we are done.
\end{proof}

The tree-of-spaces structure of $Z$ implies the following bounds when we consider how edge limit sets cut $X$.
\begin{lemma} \label{lem:reldist2}
	There exists a constant $K_2$ so that for $v,w \in V_0T\setminus\{v_0\}$ with $w \in T_0(v)$ we have
			\[
			d(U_{\gets v}, U_{w\to}) \asymp_{K_2} d(\Lambda_{e_v} , \Lambda_{e_w}).
			\]
\end{lemma}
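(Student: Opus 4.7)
The plan is to prove the two inequalities separately. The upper bound $d(U_{\gets v}, U_{w\to}) \leq d(\Lambda_{e_v}, \Lambda_{e_w})$ is immediate from the inclusions $\Lambda_{e_v} \subset U_{\gets v}$ and $\Lambda_{e_w} \subset U_{w\to}$, both of which follow from the relation $\Lambda_{e_u} = U_{\gets u} \cap U_{u\to}$ applied with $u = v$ and $u = w$. The substantive content is the lower bound $d(\Lambda_{e_v}, \Lambda_{e_w}) \preceq d(U_{\gets v}, U_{w\to})$ with a universal constant $K_2$; for this it suffices to show that given any $x \in U_{\gets v}$ and $y \in U_{w\to}$ there exist $x' \in \Lambda_{e_v}$ and $y' \in \Lambda_{e_w}$ with $d(x',y') \preceq d(x,y)$, after which passing to the infimum gives the conclusion.

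The central geometric argument would use the tree-of-spaces structure of $Z$ together with the visual metric formula $d(\cdot,\cdot) \asymp e^{-\epsilon(\cdot|\cdot)_o}$ to recast the target as the Gromov-product inequality $(x'|y')_o \geq (x|y)_o - C$. Fix geodesic rays $\alpha, \beta$ from $o$ to $x, y$; by definition they $C_0$-track one another up to time $r := (x|y)_o$. Because $w \in T_0(v)$ and $y \in U_{w\to}$, the ray $\beta$ essentially crosses both edges $(e_v,v)$ and $(e_w,w)$, so in the tree-of-spaces $\beta$ first enters a uniform neighbourhood of the line-like subspace $Z_{e_v}$ near the point $Q_v$ closest to $o$, travels along $Z_{e_v}$ to an exit point $P\in Z_{e_v}$ optimal for crossing $Z_v$ towards $Z_{e_w}$, and only then leaves the neighbourhood of $Z_{\gets e_v}$ for good at time $t^+ \approx d(o, P)$. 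Since $x \in U_{\gets v}$ forces $\alpha$ to remain in a uniform neighbourhood of $Z_{\gets e_v}$, the tracking yields $r \leq t^+ + C_1$.

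To complete the argument I would choose $x'$ to be the endpoint of the line $Z_{e_v}$ lying past $P$ from $Q_v$ (so that the ray $o\to x'$ traverses $Z_{e_v}$ through $P$) and $y'$ to be any point of $\Lambda_{e_w}$. Both rays $o\to x'$ and $o\to y'$ pass through uniform neighbourhoods of $Q_v$ and then $P$, and only diverge after $P$, the first continuing along $Z_{e_v}$ and the second branching through $Z_v$ towards $Z_{e_w}$; hence $(x'|y')_o \geq d(o,P) - C_2 \geq r - C$, giving $d(x',y') \preceq e^{-\epsilon r} \asymp d(x,y)$ as required. The main technical obstacle will be quantifying the entry and exit points $Q_v, P$ and the behaviour of $\beta$ within $Z_{e_v}$ up to universal error. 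This relies on uniform quasiconvexity of the edge and vertex spaces (inherited from the splitting being over quasiconvex subgroups, together with Proposition~\ref{prop:tree-of-cylinders} and the finite number of $G$-conjugacy classes of edge stabilisers), which ensures that $K_2$ depends only on the graph-of-groups data and not on $v$ or $w$.
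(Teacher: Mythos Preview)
Your proposal is correct and rests on the same geometric fact the paper uses: quasiconvexity of the edge spaces forces any geodesic ray from $o$ to a point of $U_{\gets v}$ to stay in a bounded neighbourhood of $Z_{\gets e_v}$, while any ray to a point of $U_{w\to}$ leaves that neighbourhood by the time it reaches the ``branch point'' between $Z_{e_v}$ and $Z_{e_w}$, so $(x|y)_o$ is bounded above by (essentially) $a+b$ in the notation of Lemma~\ref{lem:reldist1}. The paper's proof is organised more directly: rather than constructing witnesses $x'\in\Lambda_{e_v}$, $y'\in\Lambda_{e_w}$ and comparing $(x'|y')_o$ to $(x|y)_o$, it simply invokes the tree approximation already set up in Lemma~\ref{lem:reldist1} to read off $(x|y)_o \le a+b+C$ and then uses the computation $d(\Lambda_{e_v},\Lambda_{e_w})\asymp e^{-\epsilon(a+b)}$ from that lemma. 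Your route is slightly longer because you re-derive the location of the branch point $P$ via the ray $\beta$ rather than citing the earlier figure, and you then need the extra observation that the ray to $y'$ also passes near $P$ (which you justify, correctly, via the common passage near the closest point of $Z_{e_w}$); the payoff is a more self-contained argument that does not rely on the reader recalling the labels $a,b,c$ from the previous proof.
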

\begin{proof}
	Since $\Lambda_{e_v} \subset U_{\gets v}$ and $\Lambda_{e_w} \subset U_{w\to}$, we have $d(U_{\gets v}, U_{w\to}) \leq d(\Lambda_{e_v},\Lambda_{e_w})$. In particular, if $\Lambda_{e_v}=\Lambda_{e_w}$ then $d(U_{\gets v}, U_{w\to}) = 0 = d(\Lambda_{e_v},\Lambda_{e_w})$, so we can assume that $\Lambda_{e_v}$ and $\Lambda_{e_w}$ are disjoint.

	Suppose $x \in U_{\gets v}$ and $y \in U_{w\to}$.
	By the quasiconvexity of $Z_{e_v}$, a geodesic from $o$ to $x$ must lie in the $C$-neighbourhood of $Z_{\gets e_v}$.
	By the quasiconvexity of $Z_{e_w}$, a geodesic from $o$ to $y$ must consist of an initial segment of length $d_Z(o, Z_{e_w})$ from $o$ to a point within distance $C$ of $Z_{e_w}$, then a tail which remains within distance $C$ of $Z_{ e_w\to}$.
	If we let $e=e_v$ and $e'=e_w$ as in the proof of Lemma \ref{lem:reldist1} and consider Figure~\ref{fig:reldist1}, this means that $(x|y) \leq a+b+C$, and so $d(x,y) \succeq e^{-\epsilon (a+b)} \asymp d(\Lambda_{e_v},\Lambda_{e_w})$ by the argument of Lemma \ref{lem:reldist1}.  Thus $d(U_{\gets v}, U_{w\to})\succeq d(\Lambda_{e_v},\Lambda_{e_w})$ and we are done. 
\end{proof}
\begin{lemma}
	\label{lem:reldist3}
	There exists a constant $K_3$ so that for any $v \in V_0T\setminus\{v_0\}$ and $p \in U_{v\to}$, we have
	$d(p, U_{\gets v}) \asymp_{K_3} d(p, \Lambda_{e_v})$.
\end{lemma}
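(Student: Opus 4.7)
The plan is to observe that one direction, $d(p, U_{\gets v}) \leq d(p, \Lambda_{e_v})$, is immediate since $\Lambda_{e_v} \subset U_{\gets v}$. So the work lies in the reverse inequality $d(p, \Lambda_{e_v}) \preceq d(p, U_{\gets v})$. For this it suffices to find a uniform constant $C$ so that for every $q \in U_{\gets v}$ there is some $z \in \Lambda_{e_v}$ with Gromov product $(p|z)_o \geq (p|q)_o - C$; the visual metric bound $d(x,y) \asymp e^{-\epsilon(x|y)_o}$ then gives $d(p,z) \preceq d(p,q)$, and taking the infimum over $q \in U_{\gets v}$ yields the desired inequality. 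The degenerate case $q \in \Lambda_{e_v}$ is trivial (take $z = q$), so we may assume $q \in U_{\gets v} \setminus \Lambda_{e_v}$.

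The key geometric ingredient is that $G_{e_v}$ is two-ended, so $Z_{e_v}$ is quasi-isometric to a line and $\Lambda_{e_v} = \{z_1, z_2\}$ consists of exactly two points. Fix a geodesic ray $\gamma_p$ from $o$ to $p$. Since $o \in Z_{\gets e_v}$ and $p \in U_{v\to}$, by quasiconvexity $\gamma_p$ starts in $N_C(Z_{\gets e_v})$, passes through $N_C(Z_{e_v})$, and eventually remains in $N_C(Z_{e_v\to})$. Let $T_p$ denote the last time $\gamma_p$ lies in $N_C(Z_{\gets e_v})$ and $T'_p$ the last time it lies in $N_C(Z_{e_v})$; a standard hyperbolic geometry argument shows $T_p \approx T'_p$. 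Because $Z_{e_v}$ is a quasi-line, during its passage through $N_C(Z_{e_v})$ the ray $\gamma_p$ moves along $Z_{e_v}$ in the direction of one of its two ends; call this end $z_\ast \in \{z_1, z_2\}$. A geodesic ray from $o$ to $z_\ast$ then tracks $\gamma_p$ up to time approximately $T'_p$, yielding $(p|z_\ast)_o \geq T'_p - C$.

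On the other hand, any geodesic ray $\gamma_q$ from $o$ to $q \in U_{\gets v}$ remains in $N_C(Z_{\gets e_v})$ throughout, so once $\gamma_p$ has exited this neighbourhood (at time $T_p$), $\gamma_p(t)$ and $\gamma_q(t)$ are separated by a positive amount (they lie in disjoint $C$-neighbourhoods of different components of $Z \setminus (Z_{(e_v,v)}\times\{0\})$). This forces $(p|q)_o \leq T_p + C$. Combining the two bounds,
\[
(p|z_\ast)_o \;\geq\; T'_p - C \;\geq\; T_p - C' \;\geq\; (p|q)_o - C'',
\]
as required.

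The main obstacle is the tree-approximation step showing $T'_p \approx T_p$ and that the ray from $o$ to the correct endpoint $z_\ast$ really does track $\gamma_p$ up to time $\approx T'_p$. Both of these rely on the same style of hyperbolic/tree-approximation argument used in the proofs of Lemmas~\ref{lem:reldist1} and \ref{lem:reldist2}, and crucially exploit the fact that $Z_{e_v}$ is a quasi-line with exactly two boundary points, so that $\gamma_p$'s direction of travel within $N_C(Z_{e_v})$ unambiguously selects one of $z_1, z_2$.
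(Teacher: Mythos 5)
Your proof takes essentially the same route as the paper's. The paper fixes a geodesic ray $\gamma$ from $o$ to $p$, lets $q$ be the last point of $\gamma$ in $N_C Z_{e_v}$ (so $d_Z(o,q)$ plays the role of your $T'_p \approx T_p$), asserts $d(p,\Lambda_{e_v}) \asymp e^{-\epsilon d_Z(o,q)}$, and then shows that for any $y \in U_{\gets v}$ a geodesic from $o$ to $y$ cannot track $\gamma$ past $q$, giving $(y|p)_o \lesssim d_Z(o,q)$. The asserted estimate $d(p,\Lambda_{e_v}) \preceq e^{-\epsilon d_Z(o,q)}$ is exactly your claim $(p|z_*)_o \gtrsim T'_p$; the paper leaves the choice of endpoint $z_*$ implicit, whereas you spell out how the quasi-line structure of $Z_{e_v}$ selects the correct end. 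Your account of $(p|q)_o \leq T_p + C$ could be tightened slightly (the assertion that $\gamma_p(t)$ and $\gamma_q(t)$ are ``separated by a positive amount'' for all $t > T_p$ really comes from the fact that $\gamma_p$ moves away from the edge space at essentially unit speed after $T'_p$ while $\gamma_q$ stays in $N_C(Z_{\gets e_v})$, so the two rays cease to be $2\delta$-close within bounded time), but the idea is correct and matches the paper.
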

\begin{proof}
	Take $u \in V_0T$ so that $v\in T_0(u), d_T(u,v)=2$.
	Then since $\Lambda_{e_v} \subset \Lambda_u \subset U_{\gets v}$ we have $d(p,U_{\gets v}) \leq d(p, \Lambda_{e_v})$.

	Now suppose $p \notin \Lambda_{e_v}$.
	By the quasiconvexity of $Z_{e_v}$ a geodesic $\gamma$ from $o$ to $p$ travels from $o$ to within $C$ of a nearest point in $Z_{e_v}$ to $o$,
	then travels within $N_C Z_{e_v}$ to a point $q$, then stays in $Z_{ e_v\to} \setminus N_C Z_{e_v}$.
	Moreover, $d(p,\Lambda_{e_v}) \asymp e^{-\epsilon d_Z(o,q)}$.

	Suppose we have $y \in U_{\gets v} = \bdry Z_{\gets e_v}$.
	By the quasiconvexity of $Z_{e_v}$, a geodesic from $o$ to $y$ cannot stay close to $\gamma$ past $q$, thus $(y|p) \leq d_Z(o,q)+C'$.
	So $d(p,\Lambda_{e_v}) \asymp e^{-\epsilon d_Z(o,q)} \preceq d(p,y)$.
	Taking the infimum over all $y \in U_{\gets v}$, we conclude that $d(p,\Lambda_{e_v}) \preceq d(p,U_{\gets v})$.
\end{proof}

A vertex limit set $\Lambda_v$, its parent edge limit set $\Lambda_{e_v}$ and $U_{v\to}$, the part of $X$ containing $\Lambda_v$ which $\Lambda_{e_v}$ cuts out, all have comparable diameters.
\begin{lemma} \label{lem:reldist4}
	There exists a constant $K_4$ so that for $v \in V_0T\setminus\{v_0\}$, we have 
			\[
			\diam \Lambda_{e_v} \leq \diam \Lambda_v \leq \diam U_{v\to} \leq K_4 \diam \Lambda_{e_v}. \qedhere
			\]
\end{lemma}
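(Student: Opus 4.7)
The plan is to verify the three inequalities in turn. The first two are immediate from set inclusions, and the substance of the lemma is in the third.

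\textbf{First inequality.} I claim $\Lambda_{e_v} \subseteq \Lambda_v$, which gives $\diam \Lambda_{e_v} \leq \diam \Lambda_v$. To see this, let $G_{(e_v,v)}$ be the stabilizer of the oriented edge of $T$ from $e_v$ to $v$: it is a two-ended subgroup of $G_v$, and by Proposition~\ref{prop:tree-of-cylinders}(2) it is a finite-index subgroup of the two-ended group $G_{e_v}$, so $\Lambda(G_{(e_v,v)}) = \Lambda_{e_v}$, and thus $\Lambda_{e_v} \subseteq \Lambda(G_v) = \Lambda_v$.

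\textbf{Second inequality.} From the bipartite tree-of-spaces structure, the vertex space $Z_v$ sits inside the component $Z_{e_v\to}$, so $\Lambda_v \subseteq \bdry Z_{e_v\to} = U_{v\to}$ and hence $\diam \Lambda_v \leq \diam U_{v\to}$.

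\textbf{Third inequality.} Here the strategy is to show that both $\diam U_{v\to}$ and $\diam \Lambda_{e_v}$ are comparable, up to uniform multiplicative constants, to $e^{-\epsilon d_Z(o,\, Z_{(e_v,v)})}$. For the upper bound on $\diam U_{v\to}$: given $p,q \in U_{v\to}$, choose geodesic rays $\gamma_p, \gamma_q$ from $o$ representing them. By the (quasi-)convexity of $Z_{\gets e_v}$ (which contains $o$), each of $\gamma_p, \gamma_q$ must first travel to within some uniform distance $C$ of the edge space $Z_{(e_v,v)}$ before entering the tail region $Z_{e_v\to}$. A standard hyperbolicity/tree-approximation argument, of the same type used in the proofs of Lemmas~\ref{lem:reldist1}--\ref{lem:reldist3}, then yields $(p|q)_o \geq d_Z(o,\, Z_{(e_v,v)}) - C'$, so $d(p,q) \preceq e^{-\epsilon d_Z(o,\, Z_{(e_v,v)})}$. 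For the lower bound on $\diam \Lambda_{e_v}$: since $G_{e_v}$ is two-ended, $\Lambda_{e_v} = \{\xi_+,\xi_-\}$ consists of the two endpoints of the quasi-axis of a loxodromic generator of a finite-index subgroup of $G_{e_v}$, and this quasi-axis lies within bounded distance of $Z_{e_v}$, hence of $Z_{(e_v,v)}$. The geodesic rays from $o$ to $\xi_\pm$ follow this quasi-axis in opposite directions once they meet it, giving $(\xi_+|\xi_-)_o \leq d_Z(o,\, Z_{(e_v,v)}) + C''$, so $\diam \Lambda_{e_v} \succeq e^{-\epsilon d_Z(o,\, Z_{(e_v,v)})}$. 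Combining yields the desired $\diam U_{v\to} \leq K_4 \diam \Lambda_{e_v}$ with a uniform $K_4$.

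I do not expect any real obstacle: both bounds are essentially the same visual-metric-plus-quasiconvexity bookkeeping already carried out in Lemmas~\ref{lem:reldist1}--\ref{lem:reldist3}, and the constants are uniform in $v$ because the hyperbolicity, quasiconvexity, and visual-metric constants are, and because there are only finitely many conjugacy classes of edge/$V_1T$-vertex groups.
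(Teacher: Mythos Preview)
Your proposal is correct and follows essentially the same approach as the paper: the first two inequalities come from the set inclusions $\Lambda_{e_v}\subset\Lambda_v\subset U_{v\to}$, and the third is obtained by showing that both $\diam U_{v\to}$ and $\diam\Lambda_{e_v}$ are $\asymp e^{-\epsilon d_Z(o,Z_{e_v})}$ via quasiconvexity of the edge space and the visual metric. Your use of the edge space $Z_{(e_v,v)}$ in place of the $V_1T$-vertex space $Z_{e_v}$ is harmless since the two are at bounded Hausdorff distance.
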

\begin{proof}
As $\Lambda_{e_v} \subset \Lambda_v \subset U_{v\to}$, the first two inequalities are trivial.
	Now as $\Lambda_{e_v}$ is two-ended, $\diam \Lambda_{e_v} \asymp e^{-\epsilon d_Z(o, Z_{e_v})}$.
	By the quasiconvexity of $Z_{e_v}$ any geodesic $\gamma$ from $o$ with a tail in $Z_{ e_v\to}$ must satisfy $d_Z(\gamma(t),Z_{ e_v\to}) \leq C$ for all $t\geq d_Z(o, Z_{e_v})$.
	So by the definition of $U_{v\to}$, if $x \in U_{v\to}$ then the geodesic ray from $o$ to $x$ must have a tail in the $C$-neighbourhood of $Z_{ e_v\to}$ also.
	Thus for two points $x,y \in U_{v\to}$, the quasiconvexity of $Z_{e_v}$ implies that the geodesic line from $x$ to $y$ will live in a bounded neighbourhood of $Z_{ e_v\to}$, and hence $(x|y) \geq d_Z(o,Z_{e_v})-C$ thus 
	\[
		\diam U_{v\to}
		=\sup_{x,y \in U_{v\to}} d(x,y)
		\preceq e^{-\epsilon d_Z(o,Z_e)} \asymp \diam \Lambda_{e_v}.\qedhere
	\]
\end{proof}

The (relative) diameter of limit sets reflect the configuration of the corresponding vertex spaces.
\begin{lemma} \label{lem:reldist5}
There exists a constant $K_5$ so that
for any $v,w \in V_0T$ with $w \in T_0(v)$, we have
\begin{align}
\diam \Lambda_v & \asymp_{K_5} e^{-\epsilon d_Z(o, Z_v)}
\leq e^{-\epsilon d_T(v_0,v)}\label{eq:reldist51}, \text{ and}\\
\frac{\diam \Lambda_w}{\diam \Lambda_v}& \asymp_{K_5} e^{-\epsilon d_Z(p_v, Z_w)} \leq e^{-\epsilon d_T(v,w)},\label{eq:reldist52}
\end{align}			
where $p_v \in Z_v$ is a closest point in $Z_v$ to $o \in Z$.
\end{lemma}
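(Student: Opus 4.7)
The plan is to prove (\ref{eq:reldist51}) first by bounding $\diam\Lambda_v$ above and below via the quasiconvex inclusion $Z_v \hookrightarrow Z$, and then derive (\ref{eq:reldist52}) by combining these estimates with the tree-of-spaces geometry.

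For the upper bound in (\ref{eq:reldist51}), given $x,y \in \Lambda_v$, each geodesic ray from $o$ to $x$ or $y$ must eventually enter a bounded $C$-neighbourhood of $Z_v$ and hence pass within $C$ of $p_v$, by the uniform quasiconvexity of $Z_v$ in $Z$. Hence $(x|y)_o \geq d_Z(o,p_v) - C$ and $d(x,y) \preceq e^{-\epsilon d_Z(o,Z_v)}$. For the lower bound I would exploit that $G_v$ is non-elementary (Proposition~\ref{prop:tree-of-cylinders}(1)) together with the fact that there are only finitely many conjugacy classes of $V_0T$-vertex groups. For each such class, fix a hyperbolic element $\eta$ whose axis in the standard vertex space is based at a chosen point. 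Since the action $G_v \curvearrowright Z_v$ is realised by an isometry $g_v$ of $Z$ as one of these standard actions, a fundamental domain for $G_v$ on $Z_v$ has uniformly bounded diameter, so there exists $h \in G_v$ with the axis of $\gamma := h(g_v \eta g_v^{-1})h^{-1} \in G_v$ passing within a uniform distance $D$ of $p_v$. Then $\gamma^{\pm\infty} \in \Lambda_v$ satisfy $(\gamma^+|\gamma^-)_o \leq d_Z(o,p_v) + D + C'$, giving $\diam\Lambda_v \succeq e^{-\epsilon d_Z(o,Z_v)}$. The inequality $d_Z(o,Z_v) \geq d_T(v_0,v)$ is immediate from the tree-of-spaces model since crossing each edge of the tree geodesic from $v_0$ to $v$ contributes at least $1$ to the $Z$-distance.

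For (\ref{eq:reldist52}), applying (\ref{eq:reldist51}) to both $v$ and $w$ reduces the problem to the additive estimate
\[
d_Z(o,Z_w) = d_Z(o,Z_v) + d_Z(p_v,Z_w) + O(1).
\]
Since $w \in T_0(v)$, any geodesic from $o$ to a closest point $q_w \in Z_w$ must enter $Z_v$ via the edge space $Z_{e_v}$; by quasiconvexity of $Z_v$ and $Z_{e_v}$ the entry point lies within $O(1)$ of $p_v$, so such a geodesic decomposes, up to bounded error, into a segment from $o$ to $p_v$ followed by one from $p_v$ to $Z_w$. The remaining inequality $d_Z(p_v,Z_w) \geq d_T(v,w)$ follows from the same tree-distance bound.

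The main obstacle is establishing the uniformity of the constant $D$ in the lower bound: one must produce the conjugator $h \in G_v$ giving an axis near $p_v$ without losing control as $v$ varies. This rests on the fact that the action $G_v \curvearrowright Z_v$ is conjugate, via an isometry of subspaces of $Z$, to one of finitely many standard vertex-group actions, each carrying a fixed bounded fundamental domain.
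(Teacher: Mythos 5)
Your proposal is correct and follows essentially the same approach as the paper: upper bounds via quasiconvexity of $Z_v$ in $Z$, lower bounds via existence of a geodesic line in $N_C(Z_v)$ passing near $p_v$, the $d_T$-inequalities via the $1$-Lipschitz projection $Z\to T$, and (\ref{eq:reldist52}) by the additive estimate $d_Z(o,Z_w)\approx d_Z(o,Z_v)+d_Z(p_v,Z_w)$ using nearest-point projection. The only difference is cosmetic: you make the existence of a geodesic line near $p_v$ explicit by conjugating a hyperbolic element of $G_v$ using cocompactness and the finiteness of vertex orbits, whereas the paper invokes this more abstractly; both handle the uniformity of constants by the same finiteness-of-orbits observation.
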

\begin{proof}	
The projection $Z \to T$ that collapses each vertex space to a point, and each edge space to an edge is $1$-Lipschitz, so the second inequalities are trivial.

For the first inequality in (\ref{eq:reldist51}), as $Z_v$ is quasi-isometric to the coset $g_v G_{i_v}$ and $G_{i_v}$ is an infinite group, for any point $p \in Z_v$ there is a geodesic line $\gamma$ so that $d(\gamma,p)\leq C$ and $\gamma$ is in the $C$-neighbourhood $N_C(Z_v)$ of $Z_v$.
	Suppose $p_v\in Z_v$ is a closest point to $o$.
	As there is a geodesic line almost through $p_v$ which limits to points in $\Lambda_v$, we have $\diam \Lambda_v \succeq e^{-\epsilon d_Z(o,p_v)} \asymp e^{-\epsilon d_Z(o,Z_v)}$.
	On the other hand, for any distinct $x,y \in \Lambda_v$ if $\alpha$ is a geodesic line from $x$ to $y$, by quasiconvexity $\alpha \subset N_C(Z_v)$, and so $(x|y) \gtrsim d_Z(o,Z_v)-C$ and thus, taking the supremum over all $x,y \in \Lambda_v$, $\diam \Lambda_v \preceq e^{-\epsilon d_Z(o,Z_v)}$.
	
Let $p_v \in Z_v$ and $p_w \in Z_w$ be closest points to $o$ in $Z_v$ and $Z_w$, respectively.
	By (\ref{eq:reldist51}) we have $\diam \Lambda_w / \diam \Lambda_v \asymp e^{-\epsilon (d_Z(o,p_w)-d_Z(o,p_v))}$.
	By quasiconvexity and hyperbolicity, the geodesic from $o$ to $p_w$ passes within distance $C$ of $p_v$, and $p_w$ is within $C$ of a closest point to $p_v$ in $Z_w$, thus 
	\[ 
		\left| d_Z(p_v, Z_w) - (d_Z(o,p_w)-d_Z(o,p_v)) \right| \leq C,
	\]
	and the conclusion follows.
\end{proof}

Points in two different limit sets cannot be much closer to each other than they are to their first common ancestor.
\begin{lemma}\label{lem:reldist6}
There exists a constant $K_6$ so that if $v \in V_0T$, $w,w' \in T_0(v)$ with $d_T(v,w)=d_T(v,w')=2$ and $w\neq w'$, then for $x \in U_{w\to}$, $d(x,\Lambda_v) \leq K_6 d(x,U_{w'\to})$.
\end{lemma}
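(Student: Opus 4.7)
The plan is to prove this bound by a short chain of inequalities, using Lemma~\ref{lem:reldist3} as the main input together with two elementary set inclusions arising from the tree structure of $T$. The intuition is that both $\Lambda_v$ and $U_{w'\to}$ sit on the ``near side'' of the edge $(e_w,w)$ from $o$'s perspective, while $\Lambda_{e_w}$ serves as an intermediary: it is contained in $\Lambda_v$, and $d(x,\Lambda_{e_w})$ approximates $d(x,U_{\gets w})$ by Lemma~\ref{lem:reldist3}.

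First I would establish two set inclusions. For the first, since $e_w \in V_1T$ is adjacent to $v$ in the bipartite tree $T$, the edge stabilizer of $(v,e_w)$ is a finite-index subgroup of the $2$-ended group $G_{e_w}$ and is contained in $G_v$; hence $\Lambda_{e_w} \subset \Lambda_v$. For the second, consider the edges $(e_w,w)$ and $(e_{w'},w')$ of $T$. Removing the edge $(e_w,w)$ from $T$ separates the vertex $w$ and its descendants from the rest, so $w'$ and its descendants remain in the component containing $o$; this holds in either case $e_w \neq e_{w'}$ or $e_w = e_{w'}$, since $w'\neq w$. Consequently $Z_{e_{w'}\to} \subset Z_{\gets e_w}$, and passing to boundary points in $X$ gives $U_{w'\to} \subset U_{\gets w}$.

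With these inclusions in hand, I note that $x \in U_{w\to}$ and $w \in V_0T \setminus \{v_0\}$ (since $d_T(v,w)=2$), so I can apply Lemma~\ref{lem:reldist3} with $w$ in the role of $v$ and $x$ in the role of $p$ to obtain
\[
	d(x,\Lambda_v) \;\le\; d(x,\Lambda_{e_w}) \;\asymp_{K_3}\; d(x,U_{\gets w}) \;\le\; d(x,U_{w'\to}),
\]
where the first inequality uses $\Lambda_{e_w} \subset \Lambda_v$, the middle equivalence is Lemma~\ref{lem:reldist3}, and the last inequality uses $U_{w'\to} \subset U_{\gets w}$. Setting $K_6 := K_3$ completes the proof. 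There is no substantive obstacle; the content is essentially topological, and the only minor subtlety is verifying that the inclusion $U_{w'\to} \subset U_{\gets w}$ remains valid in the edge case $e_w = e_{w'}$, which it does because distinct $V_0T$-neighbors of a common $V_1T$ vertex still correspond to disjoint branches of $T$.
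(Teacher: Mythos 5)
Your proof is correct, and it takes a genuinely different and more economical route than the paper's. The paper argues geometrically in the model space $Z$: given $x \in U_{w\to}$ and an arbitrary $y \in U_{w'\to}$, it tracks a bi-infinite geodesic $\gamma$ from $x$ to $y$, uses quasiconvexity to locate a closest point $p$ on $\gamma$ to $o$ inside a bounded neighbourhood of $Z_v$, then passes a bi-infinite geodesic $\beta$ through $N_C(Z_v)$ near $p$ and shows one of its endpoints $\beta(\pm\infty)\in\Lambda_v$ satisfies $(x\mid\beta(\pm\infty))\geq d_Z(o,p)-C'$, yielding $d(x,\Lambda_v)\preceq e^{-\epsilon d_Z(o,p)}\asymp d(x,y)$; it then infimises over $y$. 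Your argument instead extracts the needed estimate from Lemma~\ref{lem:reldist3} via two set inclusions that follow purely from the tree structure: $\Lambda_{e_w}\subset\Lambda_v$ (which the paper itself uses without comment in the proof of Lemma~\ref{lem:reldist3}) and $U_{w'\to}\subset U_{\gets w}$ (which holds because the subtree hanging off $w'$ lies on the $v_0$-side of the edge $(e_w,w)$, whether or not $e_w=e_{w'}$). The resulting chain
\[
d(x,\Lambda_v)\le d(x,\Lambda_{e_w})\le K_3\,d(x,U_{\gets w})\le K_3\,d(x,U_{w'\to})
\]
gives the lemma with $K_6=K_3$. Both proofs are sound; yours has the advantage of avoiding a fresh geodesic/tree-approximation argument and makes transparent that the only geometric input needed is already packaged in Lemma~\ref{lem:reldist3}, at the mild cost of the extra (but straightforward) verification that $Z_{e_{w'}\to}\subset Z_{\gets e_w}$.
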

\begin{proof}
	Suppose $x \in U_{w\to}$ and $y \in U_{w'\to}$.
	By quasiconvexity of edge and vertex spaces, there is a $C$ so that
	a bi-infinite geodesic $\gamma$ from $x$ to $y$ has an initial tail in $N_C(Z_{ e_w\to})$, then a segment in $N_C(Z_v)$, then a terminal tail in $N_C(Z_{ e_{w'}\to })$ (these may overlap).
	Let $p$ be a closest point in $\gamma$ to $o$; necessarily $p \in N_C(Z_v)$.  Note that $e^{-\epsilon d_Z(o,p)}\asymp d(x,y)$.

	Take a bi-infinite geodesic $\beta$ in $N_C(Z_v)$ passing within distance $C$ of $p$.  Since $p$ is within $C$ of a geodesic ray from $o$ to $x$, either $(x|\beta(-\infty))\geq d_Z(o,p)-C'$ or $(x|\beta(+\infty))\geq d_Z(o,p)-C'$.  Without loss of generality, suppose the latter holds.  Then 
	\[
		d(x,\Lambda_v) \leq d(x, \beta(+\infty))
		\preceq e^{-\epsilon d_Z(o,p)} \asymp d(x,y).
	\]
	Taking the infimum of the right-hand side over all $y \in U_{w'\to}$ we get $d(x,\Lambda_v) \preceq d(x, U_{w'\to})$.
\end{proof}

Limit sets in the same orbit are, up to rescaling, uniformly bi-Lipschitz (as we do not use the explicit constant later, we just call it $C$).
\begin{lemma}\label{fact1}
	There exists $C$ so that for any $v \in T$, the metric spaces
	\[
	\left\{\frac{1}{\diam \Lambda_{gv}} \Lambda_{gv}\right\}_{g \in G}
	\]
	are all pairwise $C$-bi-Lipschitz.
\end{lemma}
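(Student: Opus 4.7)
The plan is to combine the approximate self-similarity of $X$ (Lemma~\ref{lem:hypbdry-selfsim}) with the cocompactness and properness of the action $G\curvearrowright Z$.

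First I would use approximate self-similarity to reduce to a bounded-scale problem: for each $g\in G$, pick a point $x_{gv}\in \Lambda_{gv}$ and apply Lemma~\ref{lem:hypbdry-selfsim} at scale $r_{gv}:=\diam \Lambda_{gv}$ to obtain an element $h_g\in G$ such that $h_g:(B(x_{gv},r_{gv}),\frac{1}{r_{gv}}d)\to X$ is $L_0$-bi-Lipschitz onto its image. Since $\Lambda_{gv}\subset B(x_{gv},r_{gv})$, a direct computation shows $\diam h_g\Lambda_{gv}\in[L_0^{-1},L_0]$, and the rescaled space $(\Lambda_{gv},\frac{1}{r_{gv}}d)$ is $L_0$-bi-Lipschitz equivalent to $\Lambda_{h_g gv}=h_g\Lambda_{gv}\subset(X,d)$. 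Thus it suffices to find a uniform constant $C'$ so that any two vertices $w_1,w_2$ in a single $G$-orbit with $\diam \Lambda_{w_i}\in[L_0^{-1},L_0]$ have $\Lambda_{w_1}$ and $\Lambda_{w_2}$ $C'$-bi-Lipschitz as subsets of $(X,d)$.

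Next, by Lemma~\ref{lem:reldist5}(\ref{eq:reldist51}) the bound $\diam \Lambda_w\in[L_0^{-1},L_0]$ forces $d_Z(o,Z_w)\leq M$ for some constant $M$. Since $G\curvearrowright Z$ is proper and cocompact, there are only finitely many vertices in each $G$-orbit satisfying this bound. Given such $w_1,w_2$, choose any $g_0\in G$ with $g_0 w_1=w_2$; then modify $g_0$ on the right by an element of the stabilizer $G_{w_1}$ (using cocompactness of $G_{w_1}\curvearrowright Z_{w_1}$) to produce $g\in G$ with $g w_1=w_2$ and $d_Z(o,go)\leq M'$ for some constant $M'$ depending only on $M$ and the coarse data of $G\curvearrowright Z$. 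The key identity $(gx|gy)_o=(x|y)_{g^{-1}o}=(x|y)_o+\tfrac{1}{2}(\beta_x(g^{-1}o,o)+\beta_y(g^{-1}o,o))$, combined with the Busemann bound $|\beta_x(g^{-1}o,o)|\leq d(o,go)\leq M'$, yields $d_X(gx,gy)\asymp d_X(x,y)$ with multiplicative constant at most $e^{\epsilon M'}$ (up to the visual-metric comparison constant); hence $g|_{\Lambda_{w_1}}:\Lambda_{w_1}\to\Lambda_{w_2}$ is uniformly bi-Lipschitz.

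Finally, since the graph of groups is finite there are only finitely many $G$-orbits of vertices of $T$, so taking the maximum of the bi-Lipschitz constants arising per orbit gives a single uniform $C$ that works for every $v\in T$. The case $v\in V_1T$ is trivial since then each $\Lambda_{gv}$ is a pair of points, so after rescaling to diameter $1$ they are all isometric.

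The main obstacle is the choice of $g\in G$ with $d_Z(o,go)$ uniformly bounded: a generic group element sending $w_1$ to $w_2$ can translate $o$ arbitrarily far, causing the conformal factor $e^{\epsilon|\beta_x(g^{-1}o,o)|}$ to blow up. Overcoming this requires exploiting the freedom to compose with stabilizer elements of $G_{w_1}$ acting cocompactly on $Z_{w_1}$ to bring $g^{-1}o$ back close to $o$, which is precisely what turns a mere uniform quasi-Möbius map into a uniformly bi-Lipschitz one on the limit sets in question.
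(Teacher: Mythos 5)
Your proposal is correct and follows essentially the same strategy as the paper: use approximate self-similarity (Lemma~\ref{lem:hypbdry-selfsim}) to reduce to limit sets of diameter $\asymp 1$, invoke Lemma~\ref{lem:reldist5} and properness to restrict to finitely many candidate vertex spaces near the basepoint, and then observe that a group element $g$ with $d_Z(o,go)$ uniformly bounded distorts Gromov products (hence visual distances) by only a bounded multiplicative factor. The only cosmetic difference is in how the boundedly-displacing element is produced — you adjust an arbitrary $g_0$ by a stabilizer element, while the paper selects orbit representatives $g_i$ minimising $d_Z(o, g_i g_v o)$ — but these two devices are interchangeable here.
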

\begin{proof}
	As there are finitely many vertex orbits it suffices to show the theorem for a fixed $v\in T$.
	We have that $g \Lambda_v = \Lambda_{gv}$ for any $g\in G$ by the equivariance of the map $Z\to T$.
	
	By approximate self-similarity (Lemma~\ref{lem:hypbdry-selfsim}) applied to a ball of radius $\diam \Lambda_{gv}$ around a point of $\Lambda_{gv}$, there exists $h\in G$ so that the map
	\[\left( \Lambda_{gv}, \frac{1}{\diam \Lambda_{gv}} d\right) \to (h \Lambda_{gv}, d)\]
	is bi-Lipschitz with uniform constant.
	Since $\diam h\Lambda_{gv} = \diam \Lambda_{hgv}$ is then $\succeq 1$, we have by Lemma~\ref{lem:reldist5} that $d_Z(o, Z_{hgv}) \leq C_1$ for some constant $C_1$.

	Recall that $g_v o \in Z_v$, so $g g_v o \in gZ_v = Z_{gv}$.
	Let $g_1, \ldots, g_k \in G$ be chosen so that any $Z_{g'v}$, $g'\in G$, with $d_Z(o,Z_{g'v})\leq C_1$ has $g'v = g_i v$ for some $i \in \{1,\ldots,k\}$.
	Moreover, we can choose $g_i$ so that $g_i g_v o \in Z_{g_i v}$ is a closest point to $o$ in the orbit $Go \cap Z_{g_i v}$, and so $d_Z(o, g_ig_v o)\leq C_2$.
	Thus for any $i,j \in \{1,\ldots,k\}$, 
	\[
	d_Z(o, g_j g_i^{-1} o)
	\leq d_Z(o, g_j g_v o) + d_Z(g_j g_v o, g_j g_i^{-1} o)
	\leq C_2 + d_Z(g_i g_v o, o)
	\leq 2C_2.
	\]

	Suppose for any two $i, j \in \{1,\ldots,k\}$ we map $\Lambda_{g_iv}$ to $\Lambda_{g_jv}$ by $h':=g_jg_i^{-1}$.
	Then for any two points $x,y \in \Lambda_{g_i v}$, 
	we have
	$d(x,y)\asymp e^{-\epsilon (x|y)_o} = e^{-\epsilon (h'x | h'y)_{h'o}}$,
	and $d(h'x,h'y) \asymp e^{-\epsilon (h'x|h'y)_o}$.
	As $|(h'x|h'y)_o-(h'x|h'y)_{h'o}|\leq d_Z(o,h'o) \leq 2C_2$,
	we then have that the map $h'$ acts to send $\Lambda_{g_i v}$ to $\Lambda_{g_j v}$ in a uniformly bi-Lipschitz way.
	
	So in conclusion, by a uniformly bi-Lipschitz map one can send any of the spaces $\left(\Lambda_{gv}, \frac{1}{\diam \Lambda_{gv}} d\right)$ to one of a finite set of spaces $\Lambda_{g_1 v},\ldots,\Lambda_{g_k v}$ where each $\diam \Lambda_{g_i v} \asymp 1$, and these spaces are each pairwise bi-Lipschitz with uniform constants.  
\end{proof}

A metric space $X$ is \emph{$C$-uniformly perfect} if for any $x \in X, r \in (0,\diam X)$, we have $B(x,r)\setminus B(x,r/C)\neq\emptyset$.
This property is preserved by quasisymmetric maps, up to changing the constant $C$ (see \cite[Exercise 11.2]{Hein-01-lect-an-mtc-spc}).
For completeness, we recall that a homeomorphism $f:X\to X'$ is \emph{quasisymmetric} if there exists a homeomorphism $\eta:[0,\infty)\to[0,\infty)$ so that for all $x,y,z \in X$, $d(x,y)\leq t d(x,z)$ implies that $d(f(x),f(y))\leq \eta(t)d(f(x),f(z))$~\cite{Tuk-Vai-80-QS}.
\begin{lemma}\label{fact2}
	There exists $C$ so that for any $v \in V_0T$, the metric space $\Lambda_{v}$ is $C$-uniformly perfect.
\end{lemma}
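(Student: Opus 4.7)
The plan is to use that each $\Lambda_v$ is the boundary of a non-elementary hyperbolic group, to combine approximate self-similarity with the absence of isolated points to obtain uniform perfectness for a single representative, and then to use Lemma~\ref{fact1} together with the finiteness of $V_0T/G$ to promote this to a universal constant across all of $V_0T$.

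First, since $G\setminus T$ is finite, $V_0T$ has only finitely many $G$-orbits. By Lemma~\ref{fact1}, within each orbit the rescaled limit sets $\frac{1}{\diam \Lambda_{gv}}\Lambda_{gv}$ are pairwise bi-Lipschitz with a universal constant, and since uniform perfectness is scale invariant and transforms under a bi-Lipschitz map by a controlled factor, it suffices to prove the statement for one representative of each orbit. Taking the maximum of the finitely many resulting constants then yields the required global $C$.

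Second, fix such a representative $v$. By Proposition~\ref{prop:tree-of-cylinders}(1), $G_v$ is non-elementary, and since all edge groups are elementary and hence uniformly quasiconvex, $G_v$ is a quasiconvex (and so non-elementary hyperbolic) subgroup of $G$. The orbit map $G_v\to Z_v$ is a quasi-isometry whose boundary extension $\bdry G_v \to \Lambda_v$ is a quasisymmetry; since quasisymmetries preserve uniform perfectness up to changing the constant, it suffices to show that $\bdry G_v$, with a visual metric, is uniformly perfect. Applying Lemma~\ref{lem:hypbdry-selfsim} to the geometric action $G_v \curvearrowright Z_v$, $\bdry G_v$ is approximately self-similar with some constant $L_0$, and since $G_v$ is non-elementary, $\bdry G_v$ is a compact perfect metric space. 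I claim that these two properties alone force uniform perfectness: the quantity $\rho := \inf_{z \in \bdry G_v} \sup_{w \in B(z,1/L_0)} d(z,w)$ must be positive, for otherwise a compactness argument produces a sequence $z_n \to z^*$ with $B(z^*, 1/(2L_0)) = \{z^*\}$, contradicting perfectness. Given $x \in \bdry G_v$ and $r \in (0, \diam \bdry G_v]$, the bi-Lipschitz map furnished by approximate self-similarity carries some $w \in B(gx, 1/L_0)$ with $d(gx, w) \geq \rho/2$ back to a point $y \in B(x, r)$ with $d(x,y) \succeq r\rho/L_0$, establishing uniform perfectness with constant $\asymp L_0/\rho$.

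The main obstacle is that the constants $L_0$, $\rho$, and the quasisymmetric distortion of the boundary extension $\bdry G_v \to \Lambda_v$ all depend a priori on the representative $v$; making these genuinely uniform across all vertices is precisely what the reduction in the first step accomplishes, using Lemma~\ref{fact1} and the finiteness of $V_0T/G$ to collapse the $v$-dependent estimates into a single universal constant.
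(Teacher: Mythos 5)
Your proof is correct, but the middle step---establishing uniform perfectness of a single $\bdry G_v$---is argued quite differently from the paper. The paper works directly with the hyperbolic geometry of $Z_v$: it fixes an ideal triangle in $Z_v$ with three distinct ideal vertices $y_1,y_2,y_3 \in \bdry G_v$, translates its quasi-centre by a group element to a point at distance $\approx -\epsilon^{-1}\log r$ along the geodesic ray to $x$, and reads off from the tree approximation that one of the translated $y_i$ must lie at distance $\asymp r$ from $x$. This is a hands-on argument that produces an explicit point in the desired annulus. You instead combine approximate self-similarity (Lemma~\ref{lem:hypbdry-selfsim}) with a soft compactness argument: the infimum $\rho = \inf_z \sup_{w\in B(z,1/L_0)} d(z,w)$ is positive by compactness and perfectness, and the bi-Lipschitz rescaling maps of approximate self-similarity propagate this unit-scale gap to every scale $r$. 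Both arguments are valid; yours is arguably more general (it shows that any compact, perfect, approximately self-similar space is uniformly perfect, with no reference to hyperbolic groups), whereas the paper's is more concrete and avoids the compactness step. The reduction to finitely many orbit representatives via Lemma~\ref{fact1}, the quasisymmetry $\bdry G_v \to \Lambda_v$, and the quasisymmetric invariance of uniform perfectness are all the same in both proofs. One minor stylistic point: the quasi-isometry you invoke is really the composition $G_v \to Z_v \hookrightarrow Z$ (the second inclusion being a quasi-isometric embedding by quasiconvexity), since $\Lambda_v$ sits inside $\bdry Z$ rather than $\bdry Z_v$; your conclusion is unchanged.
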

\begin{proof}
	Suppose $H$ is an infinite hyperbolic group, and $\bdry H$ is endowed with a visual metric with visual parameter $\epsilon$.

	If $\bdry H$ has at least $3$ points there is an ideal hyperbolic triangle limiting to distinct points $y_1, y_2, y_3 \in \bdry H$.
	For any $x \in \bdry H$ and $r \in (0,\diam \bdry H]$ there exists $h \in H$ so that the action of $h$ moves the quasi-centre of the ideal triangle to a point $p$ on the geodesic from the basepoint $o$ to $x$ at distance $\approx \frac{-1}{\epsilon} \log r$ from $o$.  Inspecting the tree approximation to $o, x, hy_1,hy_2,hy_3$, we see that for at least one $i \in \{1,2,3\}$, $(x|hy_i)_o$ is approximately $d_H(o,p)$, and so $d_{\bdry H}(x, hy_i) \asymp r$.  This suffices to show $\bdry H$ is uniformly perfect.
	
	For any $v \in V_0T$, since $\Lambda_v$ has more than two points then as the map $\bdry g_v G_{i_v} \to \Lambda_v$ is a quasisymmetry, $\bdry G_{i_v}$ has more than two points and so is uniformly perfect.
	The composition $\bdry G_{i_v} \to g_v \bdry G_{i_v} \to \Lambda_v$ is a quasisymmetry, and so $\Lambda_v$ is uniformly perfect too.
	By Lemma~\ref{fact1}, up to rescaling the spaces $\{\Lambda_{gv}\}$ are uniformly bi-Lipschitz, so $\{\Lambda_{gv}\}$ are uniformly uniformly perfect.
	As there are only finitely many vertex orbits in $T$ we are done.
\end{proof}

\section{Conformal dimension and Combinatorial modulus}
  \label{sec:defns-modulus}

  In this section we describe how conformal dimension can be calculated using combinatorial modulus by work of \cite{bou-kle13-CLP,carr13-conf-gauge}.
  Using this we reduce Theorem~\ref{thm:main-precise} to a statement about such modulus, Theorem~\ref{thm:weight-statement} below.

First, a complete metric space $X$ is \emph{Ahlfors ($Q$-)regular} if for some $Q \geq 0$ there is a Borel measure $\mu$ on $X$ so that for all $x \in X, r \in (0, \diam X]$ we have $\mu(B(x,r)) \asymp r^Q$.  In such a situation $Q$ must equal the Hausdorff dimension of $X$, and moreover $\mu$ must be comparable to the Hausdorff $Q$-measure on $X$.

If $Z$ is a Gromov hyperbolic space admitting a geometric action (that is, a proper and cocompact action by isometries) by a finitely generated group, then the boundary $\bdry Z$ endowed with a visual metric is Ahlfors regular by work of Coornaert~\cite{coornaert1993}.
We work with the following variation on Pansu's conformal dimension.
\begin{definition}
  \label{def:confdim}
  Let $X$ be a metric space.
  Then the \emph{(Ahlfors regular) conformal dimension} of $X$ is
  the infimum of all $Q$ such that $X$ is quasisymmetric to an Ahlfors $Q$-regular space.
\end{definition}
If $G$ is a Gromov hyperbolic group then $\Confdim \bdry G$ is a well-defined invariant of $G$, and if a group $H$ is quasi-isometric to $G$ then $\Confdim \bdry H = \Confdim \bdry G$.

The (Ahlfors regular) conformal dimension of a space which is approximately self-similar can be calculated using estimates on `combinatorial modulus' \cite{bou-kle13-CLP,carr13-conf-gauge}, which we now go on to describe.

We fix a large constant $a>1$ from now on ($a\geq 2$ suffices). 
For each $i \in \N$, let $X_i$ be a maximal $a^{-i}$-separated set in $X$, and let $\cS_i = \{B(x,a^{-i})\}_{x \in X_i}$ be the corresponding cover of $X$.

 For $\delta>0$ let $\Gamma_\delta$ be the collection of all paths in $X$ of diameter $\geq \delta$.

Let $\rho_n:\cS_n \ra [0,\infty)$ be a function (a ``weight function'').  We say that $\rho_n$ is \emph{$\Gamma_\delta$-admissible} if for any $\gamma \in \Gamma_\delta$, we have
\[
\ell_{\rho_n}(\gamma) := \sum_{A\in \cS_{n}, A \cap \gamma \neq \emptyset} \rho_n(A) \geq 1.
\]
The \emph{$\cS_n$-combinatorial $p$-modulus of $\Gamma_\delta$} is defined by
\[
\Mod_p(\Gamma_\delta,\cS_{n}) := \inf_{\rho_n} \Vol_p(\rho_n), \text{ where } \Vol_p(\rho_n) := \sum_{A \in \cS_{n}} \rho_n(A)^p
\]
and where we infimise over all $\Gamma_\delta$-admissible $\rho_n:\cS_n \to [0,\infty)$.
The \emph{critical exponent} for the $p$-modulus is defined by
\[
p_c(\delta) := \inf \left\{ p>0 : \liminf_{n\to\infty} \Mod_p(\Gamma_\delta,\cS_n) = 0 \right\}.
\]
\begin{theorem}[{Keith--Kleiner, Carrasco~\cite[Corollary 3.13]{carr13-conf-gauge}}]
	\label{thm:kkc-confdim-hypgroup}
	If $G$ is a hyperbolic group acting geometrically on an unbounded geodesic (hyperbolic) space $Z$, with boundary at infinity $X= \bdry Z$ endowed with a visual metric $d$ and $p_c(\delta)$ defined as above, then there exists $\delta_0>0$ so that for all $0 < \delta \leq \delta_0$,
	\[
		\Confdim \bdry G = \Confdim X = p_c(\delta).
	\]
\end{theorem}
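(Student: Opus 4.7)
The plan is to split the equality $\Confdim \bdry G = \Confdim X = p_c(\delta)$ into two pieces. The first equality $\Confdim \bdry G = \Confdim X$ follows from quasisymmetric invariance of the Ahlfors regular conformal dimension: the orbit map $G\to Z$ is a quasi-isometry between proper Gromov hyperbolic spaces, so it induces a quasisymmetry between $\bdry G$ (with any visual metric) and $X = \bdry Z$ (with any visual metric), and any two visual metrics on the same boundary are themselves quasisymmetric. Coornaert's result ensures these boundaries admit Ahlfors regular visual metrics in the first place, so the conformal dimension is a well-defined invariant.

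For the substantive equality $\Confdim X = p_c(\delta)$, the two key structural inputs I will use are the approximate self-similarity of $X$ (Lemma~\ref{lem:hypbdry-selfsim}) and, for $\delta_0$ small enough that $\Gamma_\delta$ meets every macroscopic region, the uniform linear connectivity and uniform separation of components supplied by Lemma~\ref{lem:bdry-split-over-finite} applied to any Stallings--Dunwoody splitting.

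The lower bound $p_c(\delta) \leq \Confdim X$ is straightforward. Suppose $f \colon X \to (X',d',\mu)$ is a quasisymmetry onto an Ahlfors $Q$-regular space and fix $p > Q$. Set $\rho_n(A) := C_\delta \diam_{d'} f(A)$, where $C_\delta$ is chosen so that admissibility holds: for any $\gamma \in \Gamma_\delta$, quasisymmetry converts the lower bound $\diam_d \gamma \geq \delta$ into a lower bound on $\diam_{d'} f(\gamma)$, and a standard chaining/overlap argument gives $\ell_{\rho_n}(\gamma) \gtrsim \diam_{d'} f(\gamma) \geq 1$. Ahlfors $Q$-regularity then yields
\[
\Vol_p(\rho_n) \;\lesssim\; \sum_{A \in \cS_n} \bigl(\diam_{d'} f(A)\bigr)^p \;\lesssim\; \Bigl(\max_{A\in\cS_n} \diam_{d'} f(A)\Bigr)^{p-Q} \mu(X') \;\longrightarrow\; 0,
\]
since quasisymmetry and $a^{-n}\to 0$ force the mesh $\max_A \diam_{d'} f(A) \to 0$. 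Hence $p_c(\delta) \leq p$, and infimising over $p > Q$ and over all such $X'$ gives $p_c(\delta) \leq \Confdim X$.

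The upper bound $\Confdim X \leq p_c(\delta)$ is the main obstacle and follows the Keith--Kleiner/Carrasco construction. Given $p > p_c(\delta)$ choose admissible $\rho_{n_k}$ with $\Vol_p(\rho_{n_k}) \to 0$. The goal is to build a metric $\tilde d$ on $X$, quasisymmetric to $d$ and Ahlfors $p$-regular. Informally, one treats each $\rho_n(A)$ as the prescribed new diameter of the piece $A \in \cS_n$, geometrically averages across nearby scales to smooth out the weights, and then defines $\tilde d$ via chain distances through overlapping pieces; approximate self-similarity is used to propagate a local construction around one scale into a coherent global one. The hard work is in verifying three interlocking properties: (i) $\tilde d$ is a metric comparable to a snowflake of $d$ on small scales, (ii) the identity $(X,d)\to(X,\tilde d)$ is a quasisymmetry --- here admissibility prevents collapse of $\tilde d$ along curves in $\Gamma_\delta$, and the uniform linear connectivity/separation from Lemma~\ref{lem:bdry-split-over-finite} rules out pathological expansion via disconnected components, and (iii) the measure built from the weight densities is Ahlfors $p$-regular, which is where $\Vol_p(\rho_n)\to 0$ (together with self-similarity giving sub-multiplicative volume estimates at all scales) is crucial. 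The smallness hypothesis $\delta \leq \delta_0$ is used to guarantee that $\Gamma_\delta$ genuinely probes the conformal structure at every point, by ensuring that every ball of a definite size contains curves of diameter at least $\delta$ (provided by uniform linear connectivity of components applied at the appropriate scale).
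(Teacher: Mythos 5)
Your proposal takes the same route as the paper: the key step in both cases is to recognise that $X$ with a visual metric satisfies the hypotheses of Carrasco's \cite[Corollary 3.13]{carr13-conf-gauge}, namely approximate self-similarity (Lemma~\ref{lem:hypbdry-selfsim}) and uniform linear connectivity and separation of components (Lemma~\ref{lem:bdry-split-over-finite}). The paper simply cites that corollary at this point, whereas you additionally sketch the two directions of the modulus--conformal-dimension correspondence, but the underlying argument and the hypotheses invoked are identical.
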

\begin{proof}
	Such an $X$ equipped with a visual metric satisfies the hypotheses of \cite[Corollary 3.13]{carr13-conf-gauge} by Lemmas~\ref{lem:hypbdry-selfsim} and \ref{lem:bdry-split-over-finite}.
\end{proof}

In order to estimate $p_c(\delta)$, it actually suffices to show that $\Mod_p(\Gamma_\delta,\cS_n)$ is bounded independently of $n$, provided the maximum value of $\rho_n$ goes to zero:
\begin{lemma}[{Bourdon--Kleiner \cite[Corollary 3.7(3)]{bou-kle13-CLP}}]
	\label{lem:vol-max-p-bounds-combine}
	For any $p\geq 1$ and $\delta$, 
	for some $\cS_n$, $\Gamma_\delta$ as above,
	if there exists $\rho_n: \cS_n \to [0,\infty)$ weights that are $\Gamma_\delta$-admissible, and
	$\| \rho_n \|_\infty \to 0$ as $n\to\infty$, and $\sup_n \Vol_p(\rho_n) < \infty$, then
	$p_c(\delta) \leq p$.
\end{lemma}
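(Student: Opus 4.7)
The plan is straightforward: use the assumed admissible weight functions $\rho_n$ for the $p$-modulus to build admissible weight functions witnessing that $\Mod_{p'}(\Gamma_\delta,\cS_n)\to 0$ for every $p'>p$. Admissibility is an unweighted condition — it only cares whether $\ell_{\rho_n}(\gamma)\geq 1$ — so the same weight function $\rho_n$ is automatically $\Gamma_\delta$-admissible no matter what exponent we use to measure its volume. The only thing that changes with the exponent is the volume.

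First I would compare volumes. For any $p'\geq p$ and any $A\in\cS_n$, factor
\[
\rho_n(A)^{p'} \;=\; \rho_n(A)^{p'-p}\,\rho_n(A)^{p} \;\leq\; \|\rho_n\|_\infty^{\,p'-p}\,\rho_n(A)^{p},
\]
and sum over $A\in\cS_n$ to obtain
\[
\Vol_{p'}(\rho_n) \;\leq\; \|\rho_n\|_\infty^{\,p'-p}\,\Vol_p(\rho_n).
\]
Now fix any $p'>p$. By hypothesis $\sup_n \Vol_p(\rho_n)<\infty$ and $\|\rho_n\|_\infty\to 0$, so the right-hand side tends to $0$ as $n\to\infty$. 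Since $\rho_n$ is $\Gamma_\delta$-admissible,
\[
\Mod_{p'}(\Gamma_\delta,\cS_n) \;\leq\; \Vol_{p'}(\rho_n) \;\longrightarrow\; 0.
\]

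Finally I would unwind the definition of $p_c(\delta)$. The display above gives $\liminf_{n\to\infty}\Mod_{p'}(\Gamma_\delta,\cS_n)=0$ for every $p'>p$, so $p_c(\delta)\leq p'$ for every such $p'$. Taking the infimum over $p'>p$ yields $p_c(\delta)\leq p$, as required.

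There is no genuine obstacle here: the only subtle point to emphasise in the write-up is that the $p'$-power of a weight in $[0,\|\rho_n\|_\infty]$ is uniformly smaller than its $p$-power once the sup-norm is small, which is what converts a bounded $p$-volume into a vanishing $p'$-volume. The hypothesis $p\geq 1$ is not actually needed for this argument — the proof works for any $p>0$ — but it is the regime in which combinatorial modulus is of interest.
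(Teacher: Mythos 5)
Your argument is exactly the paper's: write $p' = p+\epsilon$, use $\rho_n(A)^{p'} \le \|\rho_n\|_\infty^{\epsilon}\rho_n(A)^p$, sum to get $\Vol_{p'}(\rho_n)\le\|\rho_n\|_\infty^\epsilon\Vol_p(\rho_n)\to 0$, conclude $p_c(\delta)\le p+\epsilon$ for every $\epsilon>0$. Correct and identical in approach; your extra remarks about admissibility being exponent-independent and $p\geq 1$ being inessential are accurate side comments the paper leaves implicit.
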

\begin{proof}
	For any $\epsilon>0$, 
	\[
		\Vol_{p+\epsilon}(\rho_n) = 
		\sum_{A \in \cS_n} \rho_n(A)^{p+\epsilon}
		\leq \|\rho_n\|_\infty^\epsilon \Vol_p(\rho_n) \to 0 \text{ as } n \to \infty,
	\]
	therefore $p_c(\delta) \leq p+\epsilon$; as $\epsilon$ was arbitrary we are done.
\end{proof}

So for each $p$ bigger than our intended upper bound, it will suffice to find $\delta \in (0,\delta_0)$ and such a sequence of weight functions.

\begin{theorem}\label{thm:weight-statement}
	Suppose $G$ is as in the statement of Theorem~\ref{thm:main-precise} and $X$ a visual metric on the boundary of the model space arising from the tree of cylinders construction of Proposition~\ref{prop:tree-of-cylinders}, and $\delta >0$ is fixed.
	
	Then for any $p > 1\vee \max\{\Confdim \bdry G_i\}$,
	there exists weight functions $\rho_n$ on $\cS_n$ so that 
	each $\rho_n$ is $\Gamma_{\delta}$-admissible, 
	$\lim_{n\to\infty} \|\rho_n\|_\infty =0$, 	
	and the sequence $\Vol_p(\rho_n)$ is bounded.
\end{theorem}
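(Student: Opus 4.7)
The plan is to adapt the inductive annulus-deformation construction from the toy example in Subsection~\ref{ssec:toyexample} to the actual boundary $X = \bdry Z$, using the bipartite tree of cylinders from Proposition~\ref{prop:tree-of-cylinders} as the combinatorial skeleton. Fix $p > 1 \vee \max\{\Confdim \bdry G_i\}$. For each vertex orbit representative $v \in V_0T$, the conformal dimension of $\Lambda_v$ (which is quasisymmetric to $\bdry G_{i_v}$ by quasiconvexity) is strictly less than $p$, so by Theorem~\ref{thm:kkc-confdim-hypgroup} applied to $G_{i_v}$ there is a sequence of admissible weight functions on $\Lambda_v$ whose $p$-volume is bounded and whose supremum tends to zero. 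Via Lemmas~\ref{fact1} and~\ref{lem:hypbdry-selfsim}, these transfer uniformly to every $\Lambda_{gv}$, giving a ``vertex part'' weight $\omega_v$ on each $\Lambda_v$ at every scale.

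Next I would define $\rho_n$ additively from two pieces: a vertex part and an annular part. For the vertex part, each $\Lambda_v$ large enough to be resolved at scale $a^{-n}$ receives the rescaled $\omega_v$ weight controlling curves of diameter at least $\delta \diam \Lambda_v$ that stay essentially inside $\Lambda_v$. For the annular part, at each $w \in V_0T \setminus \{v_0\}$ the two-point set $\Lambda_{e_w}$ acts as a cut pair separating $U_{w\to}$ from $U_{\gets w}$; by Lemmas~\ref{lem:reldist4} and~\ref{lem:reldist1} these cut pairs are uniformly relatively separated and $\diam U_{w\to} \asymp \diam \Lambda_{e_w}$. Around each such cut pair I would, as in Figure~\ref{fig:toymodeldeform}, place weights on the geometric sequence of annuli $B(\Lambda_{e_w},a^{-i})\setminus B(\Lambda_{e_w},a^{-(i+1)})$ for $i$ ranging through the scales between $\diam \Lambda_{e_w}$ and the finest scale of descendants, deforming this into an arithmetic sequence so that the annular weights sum to $1$ along any transversal curve. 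Grandchildren, great-grandchildren etc.\ then inherit their own vertex and annular contributions by recursion into the subtrees $T_0(w)$.

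For admissibility (Section~\ref{sec:admissibility}), I would analyse any $\gamma \in \Gamma_\delta$ via Lemma~\ref{lem:bdry-limsets-2}: either $\gamma$ has a piece of diameter $\succeq \delta \diam \Lambda_v$ that stays essentially inside some $\Lambda_v$, in which case the vertex weight $\omega_v$ already gives $\ell_{\rho_n}(\gamma) \geq 1$, or $\gamma$ exits some $U_{w\to}$, meaning it must cross, up to bounded relative error, the annular region around the cut pair $\Lambda_{e_w}$ (controlled via Lemmas~\ref{lem:reldist3} and~\ref{lem:reldist6}), and then the annular part of $\rho_n$ is designed precisely so that transversals accumulate mass $\geq 1$. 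The bound $\|\rho_n\|_\infty \to 0$ (Section~\ref{sec:maxbound}) follows because both ingredients are built from bounded-max sub-pieces and the $1/n$ normalisation of the annular spread.

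The main obstacle, and the bulk of the technical work (Section~\ref{sec:volume}), is the $p$-volume bound. Writing $a_n(v)$ for the total $p$-volume on $U_{v\to}$ at depth $n$, the geometric-to-arithmetic deformation should yield a recursive inequality of the shape
\[
a_n(v) \leq C \cdot V_v + \frac{C'}{n^{p-1}}\sum_{w} \max_{0 \leq j < n} a_{n-j}(w),
\]
where $V_v$ is the (uniformly bounded) vertex $p$-volume contribution and $w$ ranges over the children of $v$ at distance two in $T$. Two things make this delicate: the children index $w$ ranges over an a~priori infinite set (so one must use Lemma~\ref{lem:reldist5} and the geometric decay of descendant diameters to make the sum converge), and the recursion must be iterated along the infinite tree while keeping $a_n(v)$ uniformly bounded. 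The factor $1/n^{p-1}$ produced by the annulus spreading, which requires $p>1$, is what permits the same induction as in the toy example: for $n$ large the recursion is contracting, so $\sup_n a_n(v_0) < \infty$, as required.
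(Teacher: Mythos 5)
Your sketch gets the right big picture (use the tree of cylinders, deform geometric annulus scales around the cut pairs $\Lambda_{e_w}$ into arithmetic ones, and close via a $1/n^{p-1}$-type recursion for the $p$-volume), but it diverges from the paper in a way that is not merely cosmetic. The paper's weight is \emph{multiplicative} over the path from $v_0$ to $\pi(A)$: $\rho_n(A)=E_1 a^{-n}\prod_{v}\rho_v^n(A)$, where each factor $\rho_v^n(A)$ is itself a product of a quasisymmetric-distortion term $\diam h_v(W_{v,A})/\diam W_{v,A}$ (sending $\Lambda_v$ to a fixed Ahlfors $Q_v$-regular model $X_v$ with $Q_v<p$) and an annulus-spreading term $E_3 D_v/f_v(W_{v,A})$. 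You instead propose an \emph{additive} split into a ``vertex part'' plus an ``annular part''. The multiplicative structure is load-bearing in two places: in Proposition~\ref{prop:admiss} the admissibility induction works because $\prod_{u\in T_0(v)}\rho_u^n(A)=\rho_v^n(A)\prod_{u\in T_0(w)}\rho_u^n(A)$ for $A\in\cS_n(w)$, with $\rho_v^n(A)$ essentially constant on $U_{w\to}$ and therefore factoring out to rescale the child estimate; and in Section~\ref{sec:volume} the same telescoping pulls $\rho_v^n(A)^p$ out of $(II)$, giving each descendant subtree an extra $\preceq 1/m_v^p$ shrink. With an additive weight no such compounding occurs, and since there are infinitely many vertices $v$ there is no a priori reason the total $p$-volume converges.

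There is a second, related gap. You propose to invoke Theorem~\ref{thm:kkc-confdim-hypgroup} on each $G_{i_v}$ to get admissible weight sequences on $\Lambda_v$, whereas the paper instead fixes a single Ahlfors $Q_v$-regular metric $X_v$ with $Q_v\in[\Confdim\bdry G_v,p)$ and a uniform quasisymmetry $h_v:\Lambda_v\to X_v$ (Lemmas~\ref{fact1}, \ref{fact2}). This choice is what controls the a priori infinite sum over children: in Lemma~\ref{lem:vol-small-bound-II} the packing estimate $\sum_{w\in\cC(v,q,j,k)}(\diam h_v(\Lambda_{e_w}))^{Q_v}\preceq a^{-jQ_v}$ comes from $Q_v$-regularity of $X_v$, and combined with $p>Q_v$ it yields the geometric-series savings giving the $1/m_v^{p-1}$ factor. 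A separately-chosen admissible weight sequence on $\Lambda_v$ is defined with respect to covers of $\Lambda_v$, not the global $\cS_n$, and offers no such uniform packing bound. Finally, your recursion is indexed by $n$ with a $\max_j a_{n-j}(w)$ inside the sum; in the paper the correct induction parameter is the relative scale $t_{v,n}=\lfloor n+\log_a D_v\rfloor$, and the finitely many children $w$ with $t_{w,n}\geq t_{v,n}$ must be peeled off via the bounded set $\cL(v,q)$ before $\hat{V}_{<t}$ can be applied (Proposition~\ref{prop:volume-v-hat-goes-to-0}); a recursion directly in $n$ does not see this issue.
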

This theorem will be proved in subsequent sections, as we now summarise.
\begin{proof}
	The weights are defined in Section~\ref{sec:weight-def}, up to a choice of parameters $\delta'$, $E_1$, $E_2$ and $E_3$.
	Theorem~\ref{thm:maxbound} shows that $\lim_{n\to\infty} \|\rho_n\|_\infty = 0$ and fixes the value of $E_2$.
	Admissibility is shown, for suitable (now fixed) parameters $\delta'$, $E_1$ and $E_3$, by Theorem~\ref{thm:admissible}.
	The uniform bounds on $\Vol_p(\rho_n)$ are then shown by Theorem~\ref{thm:volbound}.
\end{proof}

\begin{proof}[Proof of Theorem~\ref{thm:main-precise}]
  The lower bound 
  \[ \Confdim \bdry G \geq 1\vee \max\{\Confdim \bdry G_i\} \]
  follows from the fact that $G$ is not virtually free and that each vertex group $G_i$ is quasiconvex in $G$.

  For the upper bound, let $\delta_0$ be given by Theorem~\ref{thm:kkc-confdim-hypgroup} for $X=\bdry G$, and fix $\delta \in (0,\delta_0]$.
  By Theorem~\ref{thm:kkc-confdim-hypgroup}, Lemma~\ref{lem:vol-max-p-bounds-combine} and Theorem~\ref{thm:weight-statement} we then have 
  \[
  	\Confdim \bdry G  = p_c(\delta) \leq 1\vee \max\{\Confdim \bdry G_i\}.
	\qedhere
	\]
\end{proof}

\section{Candidate weight function}\label{sec:weight-def}

Our goal in this section is, given a choice of $p > \max \{\Confdim \bdry G_i\}$, to define suitable weight functions as in Theorem~\ref{thm:weight-statement}.
The idea is similar to that of the example in Subsection~\ref{ssec:toyexample}: to iteratively define weights that turn geometric sequences of scales into arithmetic.  There are additional complications which we describe as they arise.

We continue with the notation of Section~\ref{sec:bdry-structure}, and $T$ is the tree of cylinders of Proposition~\ref{prop:tree-of-cylinders} with $VT = V_0T \sqcup V_1T$.
Let $v_0\in V_0T$ be the fixed basepoint in $T$.  

\subsubsection*{Projections to $T$}
We project $\cS_n$ onto $T$ as follows: for $A \in \cS_n$, define the \emph{tree projection $\pi(A) \in V_0T$} to be the closest vertex to $v_0$ in the convex hull
\begin{equation*}
\Conv\left(v \in T_0V : \Lambda_v \cap A \neq \emptyset \right)
\end{equation*}
of all vertices whose limit set intersect $A$.
The relationship between $A$ and $\Lambda_{\pi(A)}$ is indicated by the following:
\begin{lemma}\label{lem:piA-properties}
	There exists $K_7 \geq 1$ so that
	for $A \in \cS_n$, $\diam \Lambda_{\pi(A)} \geq \frac{1}{K_7} a^{-n}$, and the distance from the centre of $A$ to $\Lambda_{\pi(a)}$ is at most $K_7a^{-n}$. 
\end{lemma}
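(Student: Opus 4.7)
The plan is to deduce both statements from the metric estimates of Subsection~2.4 applied to the tree-of-spaces geometry of $Z$. The distance bound is essentially immediate from the definition of $\pi(A)$, while the diameter bound (together with non-emptiness of the convex hull, needed for $\pi(A)$ to be defined) requires exhibiting a concrete vertex $v^*$ with $\Lambda_{v^*}\cap A\neq\emptyset$ and $\diam\Lambda_{v^*}\succeq a^{-n}$; the lower bound is then transferred to $\Lambda_{\pi(A)}$ using that $\pi(A)$ is an ancestor of $v^*$.

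For the distance bound, observe that since $\pi(A)\in V_0T$ is the $V_0T$-vertex closest to $v_0$ in the convex hull $\Conv(\{v\in V_0T:\Lambda_v\cap A\neq\emptyset\})$, every $V_0T$-vertex of this hull lies in $T_0(\pi(A))$. Either $\pi(A)$ itself has $\Lambda_{\pi(A)}\cap A\neq\emptyset$---giving $d(x,\Lambda_{\pi(A)})\leq a^{-n}$ directly---or $\pi(A)$ is a branching vertex, meaning there exist two distinct $V_0T$-descendants $v_1,v_2$ of $\pi(A)$ with $\Lambda_{v_i}\cap A\neq\emptyset$ whose paths to $\pi(A)$ first diverge at distinct children-directions $w_1,w_2$. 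In the branching case, picking $y_i\in\Lambda_{v_i}\cap A\subset U_{w_i\to}$ and applying Lemma~\ref{lem:reldist6} with $v=\pi(A)$ yields $d(y_1,\Lambda_{\pi(A)})\leq K_6\,d(y_1,U_{w_2\to})\leq K_6\,d(y_1,y_2)\leq 2K_6 a^{-n}$; combining with $d(x,y_1)\leq a^{-n}$ gives the asserted bound.

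For the diameter bound, I would construct $v^*$ by tracking the $V_0T$-vertices $v^{(0)}=v_0,v^{(1)},\ldots$ along a geodesic ray from $o$ representing $x\in X$, with $e^{(k)}\in V_1T$ denoting the vertex between $v^{(k-1)}$ and $v^{(k)}$. For each $k\geq 1$ we have $x\in U_{v^{(k)}\to}$ and $\Lambda_{e^{(k)}}\subset\Lambda_{v^{(k-1)}}$, so Lemma~\ref{lem:reldist4} gives
\[
d(x,\Lambda_{v^{(k-1)}})\leq d(x,\Lambda_{e^{(k)}})\leq\diam U_{v^{(k)}\to}\leq K_4\diam\Lambda_{e^{(k)}}.
\]
By Lemma~\ref{lem:bdry-limsets-2}, either the sequence is infinite (if $x\in\bdry T$, in which case $\diam\Lambda_{e^{(k)}}\to 0$ by Lemma~\ref{lem:reldist5}) or terminates at some $v^{(K)}\ni x$; in either case I can let $k^*$ be the smallest $k\geq 0$ with $K_4\diam\Lambda_{e^{(k+1)}}<a^{-n}$ (or $k^*=K$ in the terminating case if no such $k$ exists). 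This places $v^{(k^*)}$ in the convex hull, so $\pi(A)$ is well-defined. Minimality of $k^*$ forces $\diam\Lambda_{e^{(k^*)}}\succeq a^{-n}$ when $k^*\geq 1$ (and $\diam\Lambda_{v_0}$ is a fixed positive quantity when $k^*=0$), so in all cases $\diam\Lambda_{v^{(k^*)}}\succeq a^{-n}$ by Lemma~\ref{lem:reldist4}. Finally, since $v^{(k^*)}\in T_0(\pi(A))$, Lemma~\ref{lem:reldist5}(\ref{eq:reldist52}) transfers the lower bound to $\diam\Lambda_{\pi(A)}\succeq\diam\Lambda_{v^{(k^*)}}\succeq a^{-n}$.

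The principal subtleties are handling the two cases $x\in\bdry T$ versus $x\in\Lambda_v$ uniformly (dealt with by Lemma~\ref{lem:bdry-limsets-2}) and absorbing the $k^*=0$ base case into the constant $K_7$; beyond these, the argument is a direct application of the diameter and distance estimates of Subsection~2.4.
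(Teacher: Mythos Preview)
Your proof is correct. The distance bound is handled exactly as in the paper (the dichotomy $\Lambda_{\pi(A)}\cap A\neq\emptyset$ versus branching, followed by Lemma~\ref{lem:reldist6}).

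For the diameter bound you take a genuinely different route from the paper. The paper argues directly at $\pi(A)$: since the centre $p$ lies in $U_{\pi(A)\to}$ and $A$ misses $\Lambda_{e_{\pi(A)}}$, Lemma~\ref{lem:reldist3} gives $d(p,U_{\gets\pi(A)})\asymp d(p,\Lambda_{e_{\pi(A)}})\succeq a^{-n}$, so $U_{\pi(A)\to}$ contains a ball of radius $\asymp a^{-n}$; uniform perfectness of $X$ and Lemma~\ref{lem:reldist4} then give $\diam\Lambda_{\pi(A)}\succeq\diam U_{\pi(A)\to}\succeq a^{-n}$. Your approach instead walks along the $T$-ray toward $x$, produces an explicit vertex $v^{(k^*)}$ in the hull with $\diam\Lambda_{v^{(k^*)}}\succeq a^{-n}$, and pushes the bound up to $\pi(A)$ via Lemma~\ref{lem:reldist5}. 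The paper's argument is shorter but treats the claim $p\in U_{\pi(A)\to}$ as self-evident (which, when $x\in\bdry T$, actually needs an observation like yours that some $v^{(k)}$ on the ray lies in the hull). Your argument is more explicit, also establishes that the hull is nonempty so $\pi(A)$ is well-defined, and trades the appeal to Lemma~\ref{lem:reldist3} and uniform perfectness for Lemma~\ref{lem:reldist5}. One small notational slip: ``$v^{(K)}\ni x$'' should read $x\in\Lambda_{v^{(K)}}$.
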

\begin{proof}
	If $\pi(A)=v_0$ the bounds are trivial, so assume otherwise. Since $A$ is centred on a point $p\in U_{\pi(A)\to}$ and does not meet $\Lambda_{e_{\pi(A)}}$, by Lemma~\ref{lem:reldist3} $d(p,U_{\gets \pi(A)}) \asymp d(p, \Lambda_{e_{\pi(A)}}) \succeq a^{-n}$, thus $U_{\pi(A)\to} = \overline{X\setminus U_{\gets \pi(A)}}$ contains a ball centred on $p$ of radius $\asymp a^{-n}$. So by Lemma~\ref{lem:reldist4} and the uniform perfectness of $X$,
	$\diam \Lambda_{\pi(A)} \succeq \diam U_{\pi(A)\to} \geq a^{-n}$.

	If $A \cap \Lambda_{\pi(A)} \neq \emptyset$, we are done for any $K_7\geq 1$.
	Otherwise $A \cap \Lambda_{\pi(A)} = \emptyset$, but by the definition of $\pi(A)$, $A$ must meet $U_{w\to}$ and $U_{w'\to}$ for two distinct $w,w' \in T_0(v)$ with $d_T(v,w)=d_T(v,w')=2$.
	Therefore by Lemma~\ref{lem:reldist6}, $d(A, \Lambda_{\pi(A)}) \preceq a^{-n}$.
\end{proof}

Given $v,w \in V_0T$, let $[v,w] \subset V_0T$ be the unique simple path from $v$ to $w$.
Suppose $A \in \cS_n$ and $[v_0, \pi(A)]$ consists of $v_0, v_1, \ldots, v_m=\pi(A)$.
If $v = v_i$ for some $i \in \{0,1,\ldots,m-1\}$ then let $v_{\to A} = v_{i+1}$; if $v=\pi(A)$ then let $v_{\to A} = \pi(A)$; and if $v \notin [v_0,\pi(A)]$ let $v_{\to A}$ be undefined.

Let us also define for any $\delta>0$ 
\begin{equation}\label{eq:defTdelta}
	T_\delta := \Conv \left( \{v_0\} \cup \{v \in V_0T : \diam \Lambda_v > \delta \} \right),
\end{equation}
which is the convex hull of the finite set of vertices in $T$ whose limit sets are large (see the first equality in \eqref{eq:reldist51}); such sets will be used in the definition below.

\subsubsection*{Model spaces}
We are given a choice of $p > \max \{\Confdim \bdry G_i\}$, and want to define suitable weight functions as in Theorem~\ref{thm:weight-statement}.
For $v \in V_0T$, fix $Q_v \in [\Confdim \bdry G_v, p)$, with the choice uniform on each $G$-orbit.

For $v \in V_0T$, let $D_v = \diam \Lambda_v$.
For each $G$-orbit  $G v \subset V_0T$, the collection of rescaled spaces $\{\frac{1}{D_{gv}} \Lambda_{gv}\}$ are all uniformly bi-Lipschitz to each other (Lemma~\ref{fact1}).
For each $v \in T$, we fix a $Q_v$-regular space $X_v=(X_v,d_v)$ of diameter $1$ in the conformal gauge of $\bdry G_v$, and an $\eta$-quasisymmetry map $h_v:\Lambda_v \to X_v$.
Again by Lemma~\ref{fact1}, $X_v$ and $h_v$ may be chosen so that the maps $h_{gv} : \frac{1}{D_{gv}}\Lambda_{gv} \to X_{gv}$ have $X_{gv}$ independent of $g$ and the different maps $h_{gv}$ differing from each other only by a uniform bi-Lipschitz homeomorphism.
(This last condition means that there exists $C$ so that for any $g, g' \in G$, there exists a $C$-bi-Lipschitz homeomorphism $f: \frac{1}{D_{gv}}\Lambda_{gv}\to \frac{1}{D_{g'v}}\Lambda_{g'v}$ so that $h_{gv} = h_{g'v} \circ f$.)
Finally, the distortion function $\eta$ may be chosen uniformly for all $v$, as dilations do not affect distortion.

As $\eta$ is fixed and the spaces $\Lambda_v$ are uniformly perfect with constant independent of $v$ (Lemma~\ref{fact2}), we can find $\tau \in (0,1]$ and $\lambda \geq 1$ so that the maps $h_{v} : \frac{1}{D_{v}}\Lambda_{v} \to X_{v}$ are uniform $(\tau,\lambda)$-bi-H\"older maps by \cite[Theorem 3.14]{Tuk-Vai-80-QS}, i.e.\ for all $v \in V_0T$ and all $x,y \in \Lambda_v$,
\begin{equation}\label{eq:taulambdaHolder}
\frac{1}{\lambda} \left(\frac{d(x,y)}{D_v}\right)^{1/\tau} 
\leq d_v(h_v(x),h_v(y)) \leq
\lambda \left(\frac{d(x,y)}{D_v}\right)^{\tau}.
\end{equation}

When we push the cover $\cS_n$ forward by $h_v$ to $X_v$, it is useful to know that the images are contained in balls of radius smaller than $a^{-m_v}/2$ for a suitable $m_v$; by \eqref{eq:taulambdaHolder} we can take
\begin{equation}\label{eq:defmv}
	m_v := \lfloor \tau(n+\log_a D_v) -\log_a (2\lambda) \rfloor \vee 0.
\end{equation}

\subsubsection*{Definition of weight function}
For each $n \in \N$, and a constant $E_1$ found later, we define the weight function $\rho_n : \cS_n \ra \R_+$ by 
\begin{equation}\label{eq:defrhon}
	\rho_n(A) := E_1 a^{-n} \prod_{v \in V_0T} \rho^n_v(A).
\end{equation}

For $v \in V_0T$ and $A \in \cS_n$, 
$\Lambda_v$ and $A$ can interact in three ways according to whether $v \notin [v_0,\pi(A)], v \in [v_0,\pi(A))$ or $v=\pi(A)$.
In the first case, we don't want $\rho^n_v$ to influence $\rho_n(A)$ at all; in the latter two we need to define a subset of $\Lambda_v$ corresponding to the location of $A$ in or near $\Lambda_v$:
\begin{equation}\label{eq:defWvA}
	W_{v,A} := 
	\begin{cases}
		\emptyset & \text{if } v \notin [v_0,\pi(A)],\\
		\Lambda_{e_{v_{\to A}}} & \text{if } v \in [v_0,\pi(A)),\\ 
		B_A & \text{otherwise},
	\end{cases}
\end{equation}
where $B_A$ is a ball in $\Lambda_{v_{\to A}}=\Lambda_v$ of radius $a^{-n}$ centred on a point at most $K_7a^{-n}$ from the centre of $A$; such a ball exists by Lemma~\ref{lem:piA-properties}.

How shall we define $\rho_v^n(A)$?
The first ingredient is to distort according to $h_v$: we want the relative size of $W_{v,A}$ in $\Lambda_v$ to match the relative size of $h_v(W_{v,A})$ in $X_v$, so we have a factor of 
$\frac{\diam h_v(W_{v,A}) / 1}{\diam W_{v,A} / D_v}$.

The second ingredient is to transform geometric to arithmetic scales:
for $i=1,\ldots,m_v$ the annulus of points at distance $[a^{-(i+1)},a^{-i}]$ from $h_v\Lambda_{e_v}$ should be sent to an annulus of width $\frac{1}{m_v}$, so sets at distance $\sim a^{-i}$ from $h_v \Lambda_{e_v}$ should be stretched by $\sim \frac{1}{m_v a^{-i}}$.
But we don't want to do this to large vertex limit sets, or if a set is too close to $\Lambda_{e_v}$, as either could interfere with showing admissibility.
So, for $W \subset \Lambda_v$ we let
\begin{equation}\label{eq:deffv}
	f_v(W) := 
	\begin{cases}
		1 & \text{if } d_v(h_v W,h_v\Lambda_{e_v}) \leq a^{-m_v} \text{ or } v\in T_{\delta'}\text{, and} \\
		m_v d_v(h_v W, h_v \Lambda_{e_v}) & \text{otherwise},
\end{cases}
\end{equation}
where $T_{\delta'}$ is the finite subtree defined by \eqref{eq:defTdelta} for a suitable parameter $\delta'\in (0,\diam \Lambda_{v_0})$ determined later. 
Note that we only use $e_v$ in \eqref{eq:deffv} when it is defined since $v\notin T_{\delta'}$ implies $v \neq v_0$.

Combining the two deformations leads us to define, for each $v \in V_0T$, 
\begin{equation}\label{eq:defrhonv}
	\rho^n_v(A) :=
	\begin{cases}
		1 & \begin{aligned} &\text{if $d(A, U_{\gets v}) \leq E_2 a^{-n}$ or}\\ & \text{$m_v \leq 1$ or $W_{v,A}=\emptyset$, and} \end{aligned} \\
		{\displaystyle \frac{\diam h_v(W_{v,A})}{\diam W_{v,A}} \frac{E_3 D_v}{f_v(W_{v,A})}} & \text{otherwise.}
	\end{cases}
\end{equation}
Here $E_2,E_3$, along with $E_1$ from \eqref{eq:defrhon}, are constants we choose later.

By Lemma~\ref{lem:reldist5} and \eqref{eq:defmv}, for a given $n$ there are finitely many $v$ with $m_v > 1$, so $\rho_n$ is well-defined,
given choices of the constants $\delta'$, $E_1$, $E_2$ and $E_3$.

\section{Bounding the maximum value of $\rho_n$}
	\label{sec:maxbound}
	Recall that the idea of $\rho_n$ is to send a geometric sequence of annuli of points in $\Lambda_v$ at distance $[a^{-(i+1)},a^{-i}]$ for $i=1,\ldots,k$ (and suitable $k$) to an arithmetic sequence of annuli of points at distance $[\frac{i+1}{2k+1},\frac{i}{2k+1}]$.
	In particular, for some $v\notin T_{\delta'}$ but with $\Lambda_v\asymp 1$, the smallest annulus in $\Lambda_v$ has size $\asymp a^{-n}$, so is covered by boundedly many balls in $\cS_n$.  The $\rho_n$ value of these balls will be $\asymp 1/n$, giving a heuristic estimate $\|\rho_n\|_\infty \succeq 1/n$. This is essentially the worst case, as we now show.

\begin{theorem}
	\label{thm:maxbound}
	For $\rho_n$ as in Section~\ref{sec:weight-def}, for large enough $E_2$ and for any $E_1, E_3, \delta'$, $\lim_{n\to\infty} \|\rho_n\|_\infty = 0$.
\end{theorem}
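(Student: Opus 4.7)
The plan is to bound $\rho_n(A) = E_1 a^{-n} \prod_v \rho_v^n(A)$ uniformly in $A \in \cS_n$. Since $\rho_v^n(A) = 1$ whenever $W_{v,A} = \emptyset$, the product reduces to one over the tree path $v_0 = u_0, u_1, \ldots, u_m = \pi(A)$; by iterating Lemma~\ref{lem:reldist5}, $m \leq C_0 n$. The triggering conditions $m_{u_i} \leq 1$ and $d(A, U_{\gets u_i}) \leq E_2 a^{-n}$ are both monotone in $i$ (using that $D_{u_i}$ decreases and $U_{\gets u_i}$ grows with $i$), so the non-trivial set $I = \{i : \rho^n_{u_i}(A) > 1\}$ is an initial segment $\{0, \ldots, i_1\}$. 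Combining $A \subset U_{u_i\to}$ with $\diam U_{u_i\to} \leq K_4 D_{u_i}$ (Lemma~\ref{lem:reldist4}) gives $D_{u_i} \geq (E_2/K_4) a^{-n}$ for all $u_i \in I$.

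The heart of the argument is a per-vertex quasisymmetric estimate. For $u_i \in I$ with $i < m$ and $u_i \notin T_{\delta'}$, the pair $W_{u_i,A} = \Lambda_{e_{u_{i+1}}}$ satisfies $\diam W_{u_i,A} \asymp D_{u_{i+1}}$ (Lemma~\ref{lem:reldist4}) and is separated from $\Lambda_{e_{u_i}}$ by $\succeq D_{u_{i+1}}$ (Lemma~\ref{lem:reldist1}). Applying the uniform QS distortion function $\eta$ of $h_{u_i}$ to this bounded-ratio configuration yields $\diam h_{u_i}(W_{u_i,A}) \preceq d_{u_i}(h_{u_i} W_{u_i,A}, h_{u_i}\Lambda_{e_{u_i}})$, with constant depending only on $\eta$. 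Substituting into the definition of $\rho_v^n$ and using $f_{u_i} = m_{u_i} d_{u_i}$, the $d_{u_i}$ terms cancel, giving the key bound
\[
\rho^n_{u_i}(A) \preceq \frac{E_3 D_{u_i}}{m_{u_i} \diam W_{u_i,A}} \asymp \frac{1}{m_{u_i}} \cdot \frac{D_{u_i}}{D_{u_{i+1}}}.
\]
The endpoint case $i = m \in I$ uses the same QS argument on $W = B_A$: for $E_2$ large enough that $d(B_A, \Lambda_{e_{u_m}}) \gg \diam B_A$, one obtains $\rho^n_{u_m}(A) \preceq D_{u_m} a^n/m_{u_m}$. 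The finitely many vertices in $T_{\delta'}$ on the path (where $f_v = 1$) contribute only a bounded multiplicative factor, handled via the bi-H\"older estimate \eqref{eq:taulambdaHolder} together with the fact that $D_v \asymp 1$ on the finite set $T_{\delta'}$.

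Telescoping $\prod_{i \in I,\, i<m} D_{u_i}/D_{u_{i+1}}$ across the initial segment and using $D_{u_{i_1+1}} \geq a^{-n}/K_7$ from Lemma~\ref{lem:piA-properties} when $i_1 < m$, one obtains
\[
\rho_n(A) \preceq \frac{E_1 C^{|I|}}{\prod_{i \in I \setminus T_{\delta'}} m_{u_i}}.
\]
Since $m_{u_0} \asymp \tau n$ (as $D_{v_0}$ is bounded below) and $0 \in I$ for $n$ large, if $|I|$ is bounded then the right-hand side is $O(1/n)$. If $|I|$ grows with $n$, then an integral estimate using $m_{u_i} \asymp \tau(n + \log_a D_{u_i})$ decreasing roughly linearly along the path shows $\sum_{i \in I} \log m_{u_i} \succeq n\log n$, which dominates the linear contribution $|I|\log C \preceq n$; hence $\prod m_{u_i}$ grows super-polynomially and $\rho_n(A) \to 0$ uniformly in $A$.

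The main obstacle is the per-vertex QS comparison, especially at the endpoint $i = m$: choosing $E_2$ sufficiently large is essential, as it ensures that $B_A$ is separated from the parent cut set $\Lambda_{e_{u_m}}$ by much more than $\diam B_A$, so the distortion function $\eta$ applied to a tiny ratio gives a tiny bound on $\diam h_{u_m}(B_A)/d_{u_m}$. Secondary care is needed at the transitional vertices on the boundary of $T_{\delta'}$, where the annulus-stretching regime switches off and one must instead use the bi-H\"older estimate to extract a bounded contribution.
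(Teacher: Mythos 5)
Your proposal follows the paper's overall strategy closely: unwind $\rho_n(A)$ along the path $v_0=u_0,\ldots,u_m=\pi(A)$, telescope the factors $D_{u_i}/\diam W_{u_i,A}$ via Lemma~\ref{lem:reldist4}, convert each $\diam h_{u_i}(W_{u_i,A})/f_{u_i}(W_{u_i,A})$ into $\preceq 1/m_{u_i}$ using the uniform quasisymmetric distortion of relative distances, and treat $T_{\delta'}$ separately. That part is right. But the final accounting has three gaps, two of which are genuine.

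First, the set $I$ is not an initial segment. The condition $d(A,U_{\gets u_i})\leq E_2a^{-n}$ \emph{is} monotone in $i$ (since $U_{\gets u_i}$ grows), but $m_{u_i}\leq 1$ is not, because $D_{u_{i+1}}$ can be as large as $K_4 D_{u_i}$ (Lemma~\ref{lem:reldist4}), so $m_{u_i}$ need not be monotone. The paper works instead with the \emph{minimal} triggering index $s$ and observes that when the $m$-condition triggers first, $D_{v_s}\asymp a^{-n}$ forces $k-s\preceq 1$, so the remaining terms are boundedly many and each $\asymp 1$. Second, and more seriously, your source of the $1/n$ decay is wrong: $u_0=v_0\in T_{\delta'}$, where $f_{v_0}=1$, so $m_{v_0}$ never enters $\rho^n_{v_0}(A)$. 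The only $m$ available is $m_{v_{t+1}}$ (for $v_{t+1}$ the first path vertex outside $T_{\delta'}$), and this is \emph{not} $\asymp \tau n$ in general — it can be $O(1)$. The paper closes this by a dichotomy: if $m_{v_{t+1}}\geq\tau n/2$ one gets $1/n$ from the $m$ factor, and otherwise $D_{v_{t+1}}$ is so small that the bi-H\"older bound \eqref{eq:taulambdaHolder} forces $\diam h_{v_t}W_{v_t,A}\preceq a^{-\tau n/2}$. Your write-up mentions the H\"older estimate only to tame the finitely many $T_{\delta'}$ vertices, not to rescue the case of small $m_{v_{t+1}}$, so the bounded-$|I|$ case is not actually closed. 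Third, the claim $\sum_{i\in I}\log m_{u_i}\succeq n\log n$ is false in general (the path could be short, or the $m$'s could drop quickly). The correct mechanism, which your sketch gestures at but doesn't pin down, is that linear decrease of $m_{u_i}$ means only $O(1)$ indices have $m_{u_i}<C^2$, so after re-pairing numerators and denominators, \emph{every} factor $CD_{u_i}/(m_{u_i}\diam W_{u_{i-1},A})$ is $\preceq 1$ and the whole product is $\preceq 1$ independently of $|I|$ — no super-polynomial growth argument is needed or available.
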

\begin{proof}
	Given $A \in \cS_n$, consider the path $[v_0, \pi(A)] = \{v_0, v_1, \ldots, v_k=\pi(A)\}$ in $V_0T$.
	By \eqref{eq:defrhon}, \eqref{eq:defWvA}, \eqref{eq:defrhonv},
	\begin{equation}\label{eq:max0}
		\rho_n(A) = E_1 a^{-n} \prod_{i=0}^k \rho_{v_i}^n(A).
	\end{equation}
	In the proof we track the dependence of constants on $E_1, E_2, E_3, \delta'$.

	\emph{Step 1:} Let $t \geq 0$ be maximal with $v_t \in T_{\delta'}$.
	As $T_{\delta'}$ is finite, $t \leq C_1$ for some constant $C_1=C_1(\delta')$.
	For $i \leq t$, we have $f_{v_i}(W_{v_i,A})=1$ and $D_{v_i} \asymp 1$.
	For $i < t$, $\diam W_{v_i, A} \asymp 1$ and $\diam h_{v_i}(W_{v_i,A}) \asymp 1$.
	So
	\begin{equation}
	  \label{eq:max1}
		\prod_{i=0}^{t-1} \rho_{v_i}^n(A) \asymp_{C(\delta',E_3)} 1
	  \quad \text{ and } \quad
		\rho_{v_t}^n(A) \asymp_{C(\delta',E_3)} \frac{\diam h_{v_t}(W_{v_t,A})}{\diam W_{v_t,A}}.
	\end{equation}
	
	\emph{Step 2:} 	A useful fact is the following: 	
	by Lemma~\ref{lem:reldist4}, for $0 \leq i < k$, as $W_{v_i,A}=\Lambda_{e_{(v_i)_{\to A}}}=\Lambda_{e_{v_{i+1}}}$ we have
	\begin{equation}\label{eq:max2}
		1 \leq \frac{ D_{v_{i+1}} }{ \diam W_{v_i, A} } \leq K_4.
	\end{equation}

	\emph{Step 3:}
	Consider the definition of $\rho_v^n$ in \eqref{eq:defrhonv}.
	Suppose for some $i \in \{0,\ldots,k\}$ we have $d(A,U_{\gets v_i}) \leq E_2 a^{-n}$ or $m_{v_i} \leq 1$, then let $s$ be the minimal such $i$.  If no such $i$ exists, set $s=k+1$.  If $s \leq k$ then either

	(1) $d(A, U_{\gets v_s}) \leq E_2 a^{-n}$, and so $\rho_{v_i}^n(A) =1$ for all $i \geq s$, 
	
	(2) $m_{v_s} \leq 1$ so $D_{v_s} \asymp a^{-n}$, and so $s \leq k \leq s+C$ for some $C$ by Lemmas~\ref{lem:reldist5} and \ref{lem:piA-properties}.
	For each $i \geq s$ we have $m_{v_i} \leq C$, $\diam h_{v_i}(W_{v_i,A}) \asymp 1$, and $\frac{ D_{v_i} }{ \diam W_{v_i, A} } \asymp \frac{a^{-n}}{a^{-n}} =1$.
	If $m_{v_i} \leq 1$ then $\rho_{v_i}^n(A)=1$, else $m_{v_i}\in(1,C]$ thus $f_{v_i}(W_{v_i,A}) \asymp 1$ and so $\rho_{v_i}^n(A) \asymp_{C(E_3)} 1$ also.
	
	Therefore in either case (1) or (2) we have
	\begin{equation}\label{eq:max4}
		\prod_{i=s}^k \rho_{v_i}^n(A) \leq C_2=C_2(E_3).
	\end{equation}

	\emph{Step 4:} Now for every $t <  i < s$ we claim that
	\begin{equation}
	  \label{eq:max3}
	  \frac{\diam h_{v_i}(W_{v_i,A})}{f_{v_i}(W_{v_i,A})} 
		\leq \frac{C}{m_{v_i}}.
	\end{equation}
	First, if $i<k$ then by Lemma~\ref{lem:reldist1} the relative distance of $W_{v_i,A}=\Lambda_{e_{v_{i+1}}}$ and $\Lambda_{e_{v_i}}$ is bounded below.  If $i=k<s$ then as $d(A,U_{\gets v_k}) > E_2 a^{-n}$ we have that the relative distance of $W_{v_k,A}=B_A$ and $\Lambda_{e_{v_k}}$ is bounded below by the definition of $B_A$, provided we fix $E_2:=K_7+2$ say, by Lemma~\ref{lem:piA-properties}.  Since uniformly quasisymmetric maps uniformly distort relative distances (e.g.\ \cite[Lemma 3.2]{BK-02-S2-unif}),
	\[
		\frac{\diam h_{v_i}(W_{v_i,A})}{m_{v_i} d_{v_i}(h_{v_i}W_{v_i,A},h_{v_i}\Lambda_{e_{v_i}})}
		\leq \frac{C_3(E_2)}{m_{v_i}}.
	\]
 	Second, if $d_{v_i}(h_{v_i} W_{v_i,A}, h_{v_i}\Lambda_{e_{v_i}}) \leq a^{-m_{v_i}}$  then for $i<k$
	since the relative distance of $W_{v_i,A}$ and $\Lambda_{e_{v_i}}$ is $\geq 1/C$, so the relative distance of $h_{v_i} W_{v_i,A}$ and $h_{v_i} \Lambda_{e_{v_i}}$ is $\geq 1/C$, but this last relative distance is also $\leq a^{-m_{v_i}} / \diam h_{v_i} W_{v_i,A}$, we thus have $\diam h_{v_i}W_{v_i,A} \preceq a^{-m_{v_i}}$.
	If $i=k$, as $W_{v_i,A}$ is a ball of radius $a^{-n}$, $\diam h_{v_i}(W_{v_i,A}) \preceq (a^{-n}/D_{v_i})^{\tau} \asymp a^{-m_{v_i}}$.
	So for $i<k$ or $i=k$ in this second case we have
	\[
	\frac{\diam h_{v_i}(W_{v_i,A})}{f_{v_i}(W_{v_i,A})} = \diam h_{v_i}(W_{v_i,A}) 
		\leq C a^{-m_{v_i}} \leq \frac{C}{m_{v_i}}.
	\]

	\emph{Step 5:}
	By \eqref{eq:max0}, \eqref{eq:max1}, \eqref{eq:max4}, \eqref{eq:max3}
	we have
	\begin{align*}
		\rho_n(A) & \preceq_{C(E_1,\delta',E_3,E_2)}
	  a^{-n} \cdot 
	  \frac{\diam h_{v_t}(W_{v_t,A})}{\diam W_{v_t,A}}
	  \cdot \prod_{i=t+1}^{s-1} \frac{C D_{v_i} }{m_{v_i} \diam W_{v_i, A}}
	\end{align*}

	If $s-1 < t+1$ this last product is vacuous.
	In this case by \eqref{eq:taulambdaHolder}
	\[
		\rho_n(A) \preceq_{C(\delta')} \frac{a^{-n}}{(\diam W_{v_t,A})^{1-\tau}}
		\preceq \frac{a^{-n}}{a^{-n(1-\tau)}} = a^{-\tau n} \preceq \frac{1}{n}.
	\]
	So we may assume $t+1 \leq s-1$.

	By \eqref{eq:max2} for $t+1 \leq i \leq s-1$ we have that $C D_{v_i} / \diam W_{v_{i-1},A}$ is bounded.  As the sequence $m_{v_i}$ is roughly decreasing at least linearly in $i$ (by~\eqref{eq:reldist52}), for all but boundedly many terms at the tail of the sequence $i=t+1,\ldots,s-1$ we have that $C D_{v_i} / (m_{v_i} \diam W_{v_{i-1,A}}) \leq C^2 / m_{v_i} \leq 1$.
	Once $m_{v_i}$ is small (but still $\geq 1$), $D_{v_i} \asymp a^{-n}$ and $\diam W_{v_{i-1},A} \asymp a^{-n}$ also, so $C D_{v_i}/(m_{v_i} \diam W_{v_{i-1},A}) \preceq 1$.  Taken together, applying these bounds for $i=t+2,\ldots,s-1$, we have
	\begin{align*}
	  \rho_n(A) & \preceq
	  a^{-n} \cdot 
	  \frac{\diam h_{v_t}(W_{v_t,A})}{\diam W_{v_t,A}} 
		\cdot \frac{D_{v_{t+1}}}{m_{v_{t+1}}} \cdot \frac{1}{\diam W_{v_{s-1}, A}}
	  \\ & \preceq \frac{\diam h_{v_t}W_{v_t,A}}{m_{v_{t+1}}}
	\end{align*}
	by \eqref{eq:max2} and $\diam W_{v_{s-1},A} \succeq a^{-n}$.

	If $m_{v_{t+1}} \geq \tau n/2$, then $\rho_n(A) \preceq \frac{1}{n}$.
	Otherwise $m_{v_{t+1}} < \tau n/2$ so 
	as $D_{v_t} \asymp 1$ we then have by \eqref{eq:taulambdaHolder} that
	$\diam h_{v_t}W_{v_t,A} \preceq D_{v_{t+1}}^\tau \asymp a^{-\tau n+m_{v_{t+1}}} \leq a^{-\tau n /2}$
	and $\rho_n(A) \preceq \frac{a^{-\tau n/2}}{1} \preceq \frac{1}{n}$.

	As in either case $\rho_n(A) \preceq \frac{1}{n}$, we are done.
\end{proof}

\section{Admissibility}\label{sec:admissibility}

Our goal in this section is to show that for $\delta<\delta_0$ there are suitable choices of parameters $\delta', E_1, E_3$ making the weight  $\rho_n : \cS_n \ra \R$ as defined as in \eqref{eq:defrhon} admissible for $\Gamma_\delta$.
We now treat the parameter $E_2$ as a fixed constant given by Section~\ref{sec:maxbound}.
\begin{theorem}
	\label{thm:admissible}
	For $\delta<\delta_0$ fixed, we can find $\delta'\in (0,\delta]$ and $E_1, E_3$ large enough independent of $n$ so that $\rho_n$ defined as in Section~\ref{sec:weight-def} is $\Gamma_\delta$-admissible for all $n$.
\end{theorem}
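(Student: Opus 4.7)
The plan is to establish admissibility by showing that for every $\gamma \in \Gamma_\delta$, one can identify a ``dominant'' vertex $v_* \in V_0T$ whose associated deformation (via $h_{v_*}$ together with the arithmetic-from-geometric annular rescaling) already forces $\ell_{\rho_n}(\gamma) \geq 1$, independently of $n$, provided $\delta'$ is chosen sufficiently small in terms of $\delta$ and then $E_1, E_3$ are chosen large. The argument splits naturally into two geometric cases depending on whether $v_*$ lies in the finite subtree $T_{\delta'}$ or outside it.

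First, I would locate $v_*$ as follows. Consider the family of projections $\{\pi(A) : A \in \cS_n,\ A \cap \gamma \neq \emptyset\} \subset V_0T$. Since $\gamma$ has diameter $\geq \delta$ and each $U_{w\to}$ has diameter $\leq K_4 D_w$ (Lemma~\ref{lem:reldist4}), $\gamma$ cannot be entirely contained in any single $U_{w\to}$ with $D_w \ll \delta$. Choosing $\delta'$ sufficiently small in terms of $\delta$ and the visual constants, either (Case A) $\gamma$ already has substantial diameter inside $\Lambda_{v_*}$ for some $v_* \in T_{\delta'}$, or (Case B) $\gamma$ is centred ``deeper'' and must cross an edge limit set $\Lambda_{e_{v_*}}$ for some $v_* \notin T_{\delta'}$ in a way that forces it through the full annular structure built around it.

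In Case A, the image $h_{v_*}(\gamma \cap \overline{U_{v_* \to}})$ has diameter bounded below by a constant $c(\delta, \delta') > 0$ in the model space $X_{v_*}$. The Jacobian factor $\diam h_{v_*}(W_{v_*, A})/\diam W_{v_*, A}$ appearing in $\rho_{v_*}^n(A)$, summed over the $A \in \cS_n$ meeting $\gamma$ with $\pi(A)$ in the subtree rooted at $v_*$, gives a lower bound of order $\diam h_{v_*}(\gamma\cap\overline{U_{v_*\to}})$. The upstream factors $\prod_{v \in [v_0, v_*)} \rho_v^n(A)$ telescope up to a constant via Lemma~\ref{lem:reldist4} (each ratio $D_{v_{i+1}}/\diam W_{v_i, A}$ is sandwiched between $1$ and $K_4$). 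In Case B, $\gamma$ must traverse approximately $m_{v_*}$ annuli around $h_{v_*}\Lambda_{e_{v_*}}$ inside $X_{v_*}$, and each annulus contributes $\succeq 1/m_{v_*}$ through the $f_{v_*}$ factor in the denominator of $\rho_{v_*}^n$, for a total contribution of order unity. In both cases, after absorbing the $a^{-n}$ prefactor and the telescoped $D_v$ factors into constants depending on $E_3$, admissibility follows by taking $E_1$ sufficiently large.

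The principal obstacle is the multiplicative bookkeeping of $\rho_n(A) = E_1 a^{-n} \prod_v \rho_v^n(A)$ as $A$ varies along $\gamma$: different sets produce different paths $[v_0, \pi(A)]$, and one must verify that the telescoping of the upstream factors cooperates uniformly across the entire sum. The $E_2$-buffer in the definition of $\rho_v^n$ (fixed in Section~\ref{sec:maxbound}) is essential to exclude sets $A$ too close to $\Lambda_{e_v}$, where the deformation would be degenerate; one must invoke Lemmas~\ref{lem:reldist1}--\ref{lem:reldist6} repeatedly to ensure that the sets meeting $\gamma$ outside these exclusion regions still cover enough of the deformed image for the Jacobian sums to approximate the deformed diameter. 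Calibrating $\delta'$ in terms of $\delta$, then $E_3$ to match the bi-Lipschitz distortion between the maps $h_v$ (Lemma~\ref{fact1}) and the uniform perfection constants (Lemma~\ref{fact2}), and finally $E_1$ large enough to absorb all accumulated constants, should deliver the required admissibility for every $n$.
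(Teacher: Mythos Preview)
Your outline has the right instinct — locate a vertex $v_*\in T_{\delta'}$ and estimate $\ell_{\rho_n}$ via the $h_{v_*}$-deformation — but it is missing the structural idea that actually makes the argument work, and your Case~B is not well-posed.

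First, Case~B cannot occur as you describe it. If $v_*\notin T_{\delta'}$ then $\diam U_{v_*\to}\leq K_4 D_{v_*}\leq K_4\delta'$, so a curve of diameter $\geq\delta$ cannot be ``centred'' there once $\delta'<\delta/K_4$. The paper's Proposition~\ref{prop:tame} always produces $v_*\in T_{\delta'}$.

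More importantly, your Case~A argument fails at the step where you assert that the sum of Jacobian factors $\diam h_{v_*}(W_{v_*,A})/\diam W_{v_*,A}$ over all $A$ meeting $\gamma$ with $\pi(A)\in T_0(v_*)$ is bounded below by $\diam h_{v_*}(\gamma\cap U_{v_*\to})$. This would be true if $\pi(A)=v_*$ for every such $A$, since then $W_{v_*,A}=B_A$ and the images $h_{v_*}(B_A)$ chain along $h_{v_*}(\gamma\cap\Lambda_{v_*})$. But generically $\gamma$ leaves $\Lambda_{v_*}$ and traverses long subarcs through $U_{w\to}$ for children $w$ of $v_*$; for those $A$ one has $W_{v_*,A}=\Lambda_{e_w}$, a fixed set, so the ``Jacobian'' is constant on $\cS_n(w)$ and says nothing by itself about how many $A$ are needed or what their total weight is. You also drop the downstream factors $\prod_{u\in T_0(w)}\rho_u^n(A)$, which are not close to~$1$ in general and carry most of the information for such $A$.

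The paper resolves this by an \emph{induction up the tree} (Proposition~\ref{prop:admiss}): one proves a uniform bound
\[
\sum_{A\cap\beta\neq\emptyset}\ \prod_{w\in T_0(v)}\rho_w^n(A)\ \geq\ J\,a^n\,\diam\Lambda_{e_v}
\]
for every $v$ and every arc $\beta$ joining the two points of $\Lambda_{e_v}$ inside $U_{v\to}$. The base case (small $D_v$) is trivial; the inductive step decomposes $\beta$ into pieces inside annuli around $h_v\Lambda_{e_v}$ and runs a dichotomy (``many large jumps'' versus ``few large jumps'') per annulus, invoking the inductive hypothesis on each large jump through a child subtree. The constant $E_3$ is calibrated precisely so that $J$ does not degrade in the induction. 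Only after this is in place does one apply it at the top vertex $v_*\in T_{\delta'}$ furnished by Proposition~\ref{prop:tame}, and choose $E_1$ to absorb the finitely many remaining upstream factors. Your outline lacks this recursive mechanism and the large/small jump dichotomy; without them the lower bound on $\ell_{\rho_n}(\gamma)$ does not follow.
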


Recall from \eqref{eq:defTdelta} that 
\begin{equation*}
	T_\delta = \Conv \left( \{v_0\} \cup \{v \in T : \diam \Lambda_v > \delta \} \right)
\end{equation*}
is the convex hull of the finite set of vertices in $T$ whose limit sets are large.

Curves in $\Gamma_{\delta}$ need not be embedded and can start and end at arbitrary points in $X$; the following proposition finds a nice subcurve for any $\gamma\in\Gamma_\delta$.
\begin{proposition}\label{prop:tame}
	There exist $\delta'\in (0,\delta]$ so that:

	Given $\gamma \in \Gamma_\delta$, we can find an arc $\hat{\gamma} \in \Gamma_{\delta'}$ so that
	\begin{enumerate}
	  \item $\hat{\gamma}$ is contained in the 
		image of $\gamma$.  
		\item $\hat{\gamma}$ is contained in $U_{v\to}$ and has endpoints at least $\delta'$ apart in $\Lambda_v$, for some $v \in T_{\delta'}$.
	\end{enumerate}
\end{proposition}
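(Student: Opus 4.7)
The plan is to extract an arc $\alpha$ from $\gamma$, identify a ``deepest'' vertex $v\in V_0T$ whose half-space $U_{v\to}$ contains $\alpha$, and trim $\alpha$'s endpoints so that they land in $\Lambda_v$. First, since $\diam\gamma\geq\delta$, pick two points $x,y$ in the image of $\gamma$ with $d(x,y)\geq\delta/2$. The image $\gamma([0,1])$ is a Peano continuum (continuous image of $[0,1]$), hence arc-connected, so I can extract an embedded arc $\alpha\subset\mathrm{Image}(\gamma)$ from $x$ to $y$.

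Next, define $v$ to be a deepest vertex in $V_0T$ (maximizing $d_T(v_0,v)$) with $\alpha\subset U_{v\to}$, using the convention $U_{v_0\to}=X$. This is well defined: $v_0$ always works, and by Lemma~\ref{lem:reldist4}--\ref{lem:reldist5} $\diam U_{v\to}\to 0$ as $v$ gets deep, while $\diam\alpha\geq\delta/2$. In particular $\diam U_{v\to}\geq \delta/2$ gives $\diam\Lambda_v\geq \delta/(2K_4)$ by Lemma~\ref{lem:reldist4}, so $v\in T_{\delta/(2K_4)}$. By the maximality, $\alpha\not\subset U_{w\to}$ for any grandchild $w\in V_0T$ of $v$ (i.e.\ $d_T(v,w)=2$). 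Now by Lemma~\ref{lem:bdry-limsets-2} each of $x,y$ either lies in $\Lambda_v$ or in $U_{w\to}\setminus\Lambda_{e_w}$ for a unique such grandchild $w$.

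The construction of $\hat\gamma$ splits into cases. \emph{Case A} (both in $\Lambda_v$): take $\hat\gamma=\alpha$ with endpoints already at distance $\geq\delta/2$. \emph{Case D} (both in the same $U_{w\to}$): since $\alpha\not\subset U_{w\to}$ and $\Lambda_{e_w}$ consists of only two points, injectivity of $\alpha$ forces $\alpha$ to cross $\Lambda_{e_w}$ at both points $p,q$; let $\hat\gamma$ be the sub-arc from $p$ to $q$, which lies in $\alpha\subset U_{v\to}$ with endpoints in $\Lambda_{e_w}\subset\Lambda_v$ at distance $\diam\Lambda_{e_w}\geq\diam U_{w\to}/K_4\geq d(x,y)/K_4\geq\delta/(2K_4)$. \emph{Cases B and C} (endpoints in different grandchildren's $U_{w\to}$, or one endpoint in $\Lambda_v$): trim along $\alpha$ until each endpoint reaches a point $p_\bullet\in\Lambda_{e_{w_\bullet}}\subset\Lambda_v$; by Lemma~\ref{lem:reldist4} each trim moves the endpoint by at most $K_4\diam\Lambda_{e_{w_\bullet}}$, and $\hat\gamma\subset\alpha\subset U_{v\to}$.

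The main obstacle is showing the trimmed endpoints remain far apart in cases B and C. When every involved $\diam\Lambda_{e_w}$ is small (say $\leq\delta/(8K_4)$), a direct triangle inequality gives $d(p_x,p_y)\geq\delta/4$. When some $\diam\Lambda_{e_w}$ is comparable to $\delta$, I will combine Lemma~\ref{lem:reldist1} (separation between edge limit sets) and Lemma~\ref{lem:reldist6} (controlling distance to $\Lambda_v$ by distance to $U_{w'\to}$); the alternative is to note that the ``failed'' trimming forces the sub-arc of $\alpha$ inside the offending $U_{w\to}$ to have diameter comparable to $\delta$, whereupon I can iterate the whole construction at the child vertex $w$. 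Since $T_{\delta/C}$ is finite for any fixed $C$, and each iteration strictly descends in $T$, the procedure terminates after boundedly many steps, producing $\hat\gamma$ and $v\in T_{\delta'}$ with the required properties for $\delta':=\delta/C$ with $C$ depending on $K_1,K_4,K_6$.
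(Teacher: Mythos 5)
Your overall structure — extract an embedded arc, locate the deepest $v$ with $\alpha\subset U_{v\to}$, and trim the endpoints into $\Lambda_v$ — is sound and closely parallels the paper's, and your Cases~A and~D are handled cleanly; in particular the injectivity observation in Case~D, that an embedded arc re-entering a child region $U_{w\to}$ must cross $\Lambda_{e_w}$ at \emph{both} of its two points, is a nice way to produce the subarc. The route diverges in the hard subcase of B/C where some $\diam\Lambda_{e_w}$ is comparable to $\delta$. The paper sets up an explicit chain of at most five nested vertices $w_{-2},\dots,w_2$ and invokes Lemma~\ref{lem:reldist1} — if consecutive cut-pairs $\Lambda_{e_{w_i}},\Lambda_{e_{w_{i+1}}}$ are close then $\diam\Lambda_{e_{w_{i+1}}}$ is forced small — to bound the two ``tails'' and contradict the $\delta/3$ endpoint separation of $\gamma_2$, yielding the explicit constant $\delta'=\delta/100K_1K_4$. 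Your alternative is to iterate the trimming down the tree, which can be made to work, but your sketch omits the argument that actually makes it terminate: one must track that the surviving endpoint separation drops by at most $\delta'$ per step while Lemma~\ref{lem:reldist5} forces $\diam U_{v_n\to}$ to decay geometrically in the depth $n$, so after roughly $N\approx\frac{1}{2\epsilon}\log(K_4K_5/\delta)$ steps the two bounds conflict; one must then choose $\delta'\lesssim\delta/N$ (so $\delta'$ depends on $\log(1/\delta)$, not just a fixed multiple of $\delta$). Without this accounting, ``the procedure terminates after boundedly many steps'' is an assertion rather than a proof. Your ``first alternative'' citing Lemmas~\ref{lem:reldist1} and~\ref{lem:reldist6} is not developed enough to evaluate, and in any case $K_6$ from Lemma~\ref{lem:reldist6} does not appear to be needed on either route — the constants that matter are $K_1$, $K_4$, and (for the iteration) $K_5$.
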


Before proving this, in the following lemma we relate points in $X$ with points in $\bar T$, the compactification of $T$.
For $x \in X$, let $\Pi(x) \subset \bar T$ be the corresponding point(s) in $\bar T$ determined by Lemma~\ref{lem:bdry-limsets-2}: 
$\Pi(x)$ is either a unique point in $\bdry T$,
a closed ball of radius $1$ around a unique $e \in V_1 T$ (with $x\in \Lambda_e$),
or a unique $v\in V_0 T$ (with $x \in \Lambda_v$).
\begin{lemma}\label{lem:Tbar}
	For $G \curvearrowright T$ as in Proposition~\ref{prop:tree-of-cylinders}, and $\Pi$ as above,
			if $C \subset X$ is connected, then $\Pi(C) := \bigcup_{x\in C} \Pi(x)$ is connected.
\end{lemma}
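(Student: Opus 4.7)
The plan is to argue by contradiction, exploiting the fact that $\bar T$ is a dendrite-like space in which any two points are joined by a unique arc, and a subset fails to be connected exactly when some point of $\bar T$ separates two of its elements. So suppose $\Pi(C)$ is not connected; then there exist $p, q \in \Pi(C)$ and a point $t_0 \in \bar T \setminus \Pi(C)$ on the arc $[p,q] \subset \bar T$ with $p, q$ in distinct components of $\bar T \setminus \{t_0\}$.

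The first step is to reduce to the case where $t_0$ is an interior point of some edge $\alpha = [v,e] \in ET$ with $v \in V_0T$ and $e \in V_1T$. If instead $t_0$ were a vertex $v' \notin \Pi(C)$, I would replace it by an interior point $t_0'$ of one of the edges at $v'$ still lying on $[p,q]$; this $t_0'$ still separates $p$ from $q$. To see $t_0' \notin \Pi(C)$: by Lemma~\ref{lem:bdry-limsets-2} the only way $t_0' \in \Pi(x)$ for some $x \in X$ is for $\Pi(x)$ to be $\bar B(e'',1)$ where $e''$ is the $V_1T$-endpoint of the chosen edge, but then $v'$ lies in this closed ball too, contradicting $v' \notin \Pi(C)$.

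With $t_0$ in the interior of $\alpha=[v,e]$, the same observation forces $t_0 \in \Pi(x)$ to come from $\Pi(x)=\bar B(e,1)$, i.e.\ $x \in \Lambda_e$; hence $C \cap \Lambda_e = \emptyset$. Next I would invoke the separation provided by the edge space corresponding to $\alpha$: by quasiconvexity it cuts $Z$ into two components whose closures have boundaries at infinity $U_{\alpha\gets},U_{\alpha\to}$, each closed in $X$, with intersection $\Lambda_\alpha = \Lambda_e$ (equality because in the tree of cylinders the edge stabilizer has finite index in $G_e$). Consequently $X \setminus \Lambda_e = (U_{\alpha\gets}\setminus\Lambda_e) \sqcup (U_{\alpha\to}\setminus\Lambda_e)$ is a clopen partition of $X\setminus \Lambda_e$, so the connected set $C$, which avoids $\Lambda_e$, lies entirely in one piece; without loss of generality $C \subset U_{\alpha\to}\setminus\Lambda_e$.

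It remains to show that $\Pi(U_{\alpha\to}\setminus\Lambda_e)$ is contained in the $\to$-component $\bar T_{\alpha\to}$ of $\bar T \setminus\{t_0\}$, which will contradict $p$ lying in the other component. For $x \in U_{\alpha\to}\setminus \Lambda_e$, the representing geodesic ray essentially crosses $\alpha$, and Lemma~\ref{lem:bdry-limsets-2} leaves three cases. A $\bdry T$-limit is in $\bdry T_{\alpha\to}$; a $V_0T$-vertex $v'$ must differ from $v$ (else $x \in \Lambda_v \cap U_{\alpha\to} = \Lambda_e$, which is excluded) so $v' \in T_{\alpha\to}$; and a $V_1T$-vertex $e'\neq e$ has $\bar B(e',1)\subset T_{\alpha\to}$ since none of the edges incident to $e'$ is $\alpha$. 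The main obstacle will be precisely this last verification, together with the initial reduction, where one must track carefully how a closed ball $\bar B(e',1)$ can or cannot cross a given edge; once these are in hand, the contradiction is immediate.
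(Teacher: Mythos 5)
Your proposal is correct in strategy and essentially matches the paper's proof: both hinge on producing a tree-point not in $\Pi(C)$ that separates two points of $\Pi(C)$, deducing $C\cap\Lambda_e=\emptyset$ for the relevant $e\in V_1T$, and concluding that $C$ is split by the resulting clopen decomposition of $X\setminus\Lambda_e$. The paper is slightly more economical — it observes directly that the disconnecting point can be taken to be a $V_1T$ vertex $e$, rather than reducing to an interior edge point $t_0$ and then recovering $e$ as an endpoint — but this is a cosmetic difference. One small imprecision in your final verification: to place a $V_0T$-vertex $v'$ (with $\Pi(x)=v'$) in $T_{\alpha\to}$, you argue $v'\neq v$ via the equality $\Lambda_v\cap U_{\alpha\to}=\Lambda_e$; but that equality holds only if $v$ happens to lie on the $\gets$ side of $\alpha$ (which your ``WLOG $C\subset U_{\alpha\to}$'' does not control), and in any case $v'\neq v$ alone does not locate $v'$ on the $\to$ side. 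The clean justification, for the $V_0T$ and $V_1T$ cases alike, is the one you already stated at the outset of the paragraph: since $x\in U_{\alpha\to}\setminus\Lambda_e$, its geodesic ray essentially crosses $\alpha$, and its tail lies in a bounded neighbourhood of $Z_{v'}$ (resp.\ $Z_{e'}$), forcing the vertex to the $\to$ side.
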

\begin{proof}
	Suppose $\Pi(C)$ is disconnected.
	Then as $\Pi(C) \subset \bar T$ is a union of a subset of $V_0T$, radius-$1$ balls around vertices in $V_1T$, and points of $\bdry T$,
	then there is a vertex $e \in V_1T \setminus \Pi(C)$ so that $\Pi(C)$ meets more than one component of $\bar{T} \setminus \{e\}$.
	Since $C \cap \Lambda_e = \emptyset$, this means that $C$ meets at least two components of $X \setminus \Lambda_e$, and so $C$ is not connected.
\end{proof}

\begin{proof}[Proof of Proposition~\ref{prop:tame}]
	First, find an arc, that is, an embedded path $\gamma_1:[0,1]\to X$ in the image of $\gamma$ with endpoints $\diam(\gamma)$ apart.
	
	Let $w_0' \in V_0T$ be the closest point to $v_0$ in $\Pi(\gamma_1) \subset \bar T$, following the notation of Lemma~\ref{lem:Tbar}. 
	We call $w\in V_0T$ a \emph{child} of $w_0'$ if $d_T(v_0,w)=d_T(v_0,w_0')+2$.

	If $\gamma_1$ meets $\Lambda_{w_0'}$ in exactly one or two points, those point(s) lie in some $\Lambda_{e} \subset \Lambda_{w_0'}$ for some $e\in V_1T$ adjacent to $w_0'$ with $d_T(v_0,e)=d_T(v_0,w_0')+1$.
	The points of $\Lambda_e$ split $\gamma_1$ into two or three subarcs each living in some $U_{w\to}$ for some child $w$ of $w_0'$.  Necessarily, at least one of these subarcs has endpoints $\delta/3$ apart.
	Let $\gamma_2$ be such a subarc of $\gamma_1$, and let $w_0\in V_0T$ be the child of $w_0'$ with $\gamma_2\subset U_{w_0\to}$.

	If $\gamma_1$ meets $\Lambda_{w_0'}$ in more than two points, let $\gamma_2=\gamma_1$ and let $w_0=w_0'$.
	In either case, $\gamma_2$ meets $\Lambda_{w_0}$ in more than two points, has endpoints at least $\delta/3$ apart, and $\gamma_2$ lies in $U_{w_0\to}$.

	If there is a path in $\bar T \setminus \{w_0\}$ from $\Pi(\gamma_2(0))$ to $\Pi(\gamma_2(1))$, then as $\gamma_2$ is an arc, there is $e\in V_1T$ and two (possibly equal) children $w',w''$ of $w_0$ so that $\gamma_2$ consists of an initial subarc in $U_{w'\to}$ that joins $\gamma_2(0)$ to a point of $\Lambda_e$, a subarc $\hat{\gamma}$ in $U_{w_0\to}$ joining the endpoints of $\Lambda_e$, and a final subarc from the other point of $\Lambda_e$ to $\gamma_2(1)$ in $U_{w''\to}$.  
	Therefore by Lemma~\ref{lem:reldist4} 
\[ 
	\frac{\delta}{3} \leq d(\gamma_2(0),\gamma_2(1)) \leq \diam U_{w'\to} + \diam \Lambda_e + \diam U_{w''\to} \preceq_{2K_4+1} \diam \Lambda_e, 
\]
so $\hat{\gamma}$ satisfies our desired property.
	
	So we now assume that $w_0$ disconnects $\Pi(\gamma_2(0))$ from $\Pi(\gamma_2(1))$.  This includes the case that $w_0$ is in one of these sets; if it is in both, $\gamma_2$ already has our property.

	Let $t_{0}$ (resp.\ $t_1$) be the first (resp.\ last) time $\gamma_2$ meets $\Lambda_{w_0}$.  If $t_{0}>0$, the subarc $\gamma_2|_{[0,t_{0}]}$ lives in $U_{w_{-1}\to}$ for some child $w_{-1}$ of $w_0$.  Let $t_{-1}$ be the first time $\gamma_2$ meets $\Lambda_{w_{-1}}$, and if $t_{-1}>0$ let $w_{-2}$ be the child of $w_{-1}$ with $\gamma_2|_{[0,t_{-1}]}\subset U_{w_{-2}\to}$.
	Similarly, if $t_1<1$, let $w_1$ be the child of $w_0$ with $\gamma_2|_{[t_1,1]} \subset U_{w_1\to}$, let $t_2$ be the last time $\gamma_2$ meets $\Lambda_{w_1}$, and if $t_2<1$ let $w_2$ be the child of $w_1$ with $\gamma_2|_{[t_2,1]}\subset U_{w_2\to}$.

	We claim that we can take $\delta'=\delta/100K_1K_4$ and our desired arc $\hat{\gamma}$ to be $\gamma_2|_{[t_i,t_{i+1}]}$ for $i=-1, 0$ or $1$.
	Note these subarcs if defined have endpoints in $\Lambda_{w_i}$ and live in $U_{w_i\to}$ for $i=-1,0,1$ respectively.  
	For $i=-1,0,1$ let $\epsilon_i = d(\gamma_2(t_i),\gamma_2(t_{i+1}))$, when defined.  Certainly if $\epsilon_0>\delta'$ we can take $\hat{\gamma}=\gamma_2|_{[t_0,t_1]}$, so assume $\epsilon_0 \leq \delta'$.

	If $w_1$ is defined, either $\epsilon_1>\delta'$ and we are done, or $\epsilon_1\leq \delta'$.
	If $w_2$ is defined then the tail $\gamma_2|_{[t_2,1]}$ has diameter
	\[ 
		\leq \diam U_{w_2\to} 
		\leq K_4 \diam \Lambda_{e_{w_2}}
		\leq K_4 K_1 d(\Lambda_{e_{w_1}},\Lambda_{e_{w_2}})
		\leq K_4 K_1 \epsilon_1 
		\leq \delta/100
	\]
	by Lemmas~\ref{lem:reldist1}~and~\ref{lem:reldist4}.
	Likewise, if $w_{-1}$ is defined, either $\epsilon_{-1}>\delta'$ and we are done, or $\epsilon_{-1}\leq \delta'$, and then if $w_{-2}$ is defined the subarc $\gamma_2|_{[0,t_{-1}]}$ has diameter $\leq \delta/100$.  
	If we reach this point, then whether or not $w_i$ exist for $i=-2,-1,1,2$, we deduce by the triangle inequality that the endpoints of $\gamma_2$ have distance
 	$ \leq \delta/100 + \epsilon_{-1} +\epsilon_0+ \epsilon_1 + \delta/100  < \delta/10$,
 	a contradiction.

 	So for $\delta'=\delta/100K_1K_4$, we have found a subarc $\hat{\gamma}$ which has endpoints in some $\Lambda_v, v \in T_{\delta'}$, which are $\delta'$-separated, and $\hat{\gamma} \subset U_{v\to}$.  
\end{proof}

We will use the following observation about the relative positions of cut pairs.
\begin{lemma}\label{lem:relative-child-size}
	There exists $C$ so that if $v,w \in V_0T$ with $v\in [v_0,w]$, $v\neq v_0$, $d_T(v,w)=2$ and $\Lambda_{e_w}=\{p_+,p_-\} \subset \Lambda_v$, then 
	\[
		\frac{1}{C}
		\leq \frac{d(h_v p_+, h_v \Lambda_{e_v})}{d(h_v p_-, h_v \Lambda_{e_v})}
		\leq C.
	\] 
\end{lemma}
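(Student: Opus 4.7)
The plan is to first establish the analogous estimate downstairs in $(\Lambda_v, d)$, and then transfer it to $(X_v, d_v)$ via the quasisymmetry of $h_v$. Write $\Lambda_{e_v} = \{q_1, q_2\}$, so $d(p_\pm, \Lambda_{e_v}) = \min_i d(p_\pm, q_i)$, and similarly on the $X_v$ side.

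The key step is to show that $d(p_+, p_-) \preceq d(p_\pm, q_i)$ for each $i \in \{1,2\}$. Since $e_w \neq e_v$ (as $d_T(v,w)=2$ and the tree is bipartite with $w, v \in V_0T$), Lemma~\ref{lem:reldist1} yields
\[
  d(p_\pm, q_i) \;\geq\; d(\Lambda_{e_w}, \Lambda_{e_v}) \;\succeq_{K_1}\; \diam \Lambda_{e_w} \wedge \diam \Lambda_{e_v}.
\]
If the minimum is $\diam \Lambda_{e_w} = d(p_+, p_-)$, we are done. Otherwise $\diam \Lambda_{e_v} < \diam \Lambda_{e_w}$, and then Lemma~\ref{lem:reldist4} gives $d(p_+, p_-) \leq \diam \Lambda_v \leq K_4 \diam \Lambda_{e_v}$, so once again $d(p_\pm, q_i) \succeq \diam \Lambda_{e_v} \succeq d(p_+, p_-)$. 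Combining this with the triangle inequality $|d(p_+, q_i) - d(p_-, q_i)| \leq d(p_+, p_-)$ gives $d(p_+, q_i) \asymp d(p_-, q_i)$ for each $i$, with a constant depending only on $K_1$ and $K_4$.

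To transfer to $X_v$, apply the $\eta$-quasisymmetry of $h_v$ with basepoint $q_i$ to the pair $p_+, p_-$: the bounded ratio $d(p_+,q_i)/d(p_-,q_i)$ yields a bounded ratio $d_v(h_v p_+, h_v q_i)/d_v(h_v p_-, h_v q_i)$, with constant depending only on $\eta$ (which is uniform by our choice of the maps $h_v$). Finally, if $d_v(h_v p_+, h_v \Lambda_{e_v}) = d_v(h_v p_+, h_v q_j)$, then $d_v(h_v p_-, h_v \Lambda_{e_v}) \leq d_v(h_v p_-, h_v q_j) \asymp d_v(h_v p_+, h_v q_j) = d_v(h_v p_+, h_v \Lambda_{e_v})$; the reverse inequality is symmetric. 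This gives the desired two-sided bound with constant $C$ depending only on $K_1$, $K_4$ and $\eta$. I do not anticipate a serious obstacle: the estimate for $\diam \Lambda_{e_w} > \diam \Lambda_{e_v}$ requires noticing that Lemma~\ref{lem:reldist4} bounds $d(p_+, p_-)$ from above by $\diam \Lambda_{e_v}$, which is the only slightly non-obvious point.
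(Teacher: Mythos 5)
Your proof is correct, and it takes a genuinely cleaner route than the paper's.

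Both proofs rest on the same core estimates: $d(\Lambda_{e_w},\Lambda_{e_v})\succeq\diam\Lambda_{e_w}\wedge\diam\Lambda_{e_v}$ from Lemma~\ref{lem:reldist1}, and the diameter comparison $\diam\Lambda_{e_w}\leq\diam\Lambda_v\leq K_4\diam\Lambda_{e_v}$ from Lemma~\ref{lem:reldist4} (using $\Lambda_{e_w}\subset\Lambda_v$). But you organize them differently. You first prove the two-sided bound $d(p_+,q_i)\asymp d(p_-,q_i)$ in the \emph{domain} $\Lambda_v$, for \emph{both} points $q_1,q_2\in\Lambda_{e_v}$, and only then transfer to $X_v$ using the uniform quasisymmetry with basepoint $q_i$. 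The final comparison of distances to the two-point set $h_v\Lambda_{e_v}$ then drops out in one line. The paper's proof instead works directly on the $X_v$ side: it lets $q_\pm\in\Lambda_{e_v}$ be the realizing points (so $d(h_vp_\pm,h_v\Lambda_{e_v})=d(h_vp_\pm,h_vq_\pm)$), applies the triangle inequality in the image, and then requires a case split depending on whether $q_-=q_+$; the second case needs a further chain of comparisons using quasisymmetry of $h_v^{-1}$ and another application of Lemma~\ref{lem:reldist4}. Your route buys a shorter and more symmetric argument by avoiding the choice of minimizers (and hence the case split) entirely. One small remark: since $\Lambda_{e_w}\subset\Lambda_v$ forces $\diam\Lambda_{e_w}\leq K_4\diam\Lambda_{e_v}$ unconditionally, your own case split (on which of $\diam\Lambda_{e_w}$, $\diam\Lambda_{e_v}$ is smaller) is also avoidable --- in either case $\diam\Lambda_{e_w}\wedge\diam\Lambda_{e_v}\geq\diam\Lambda_{e_w}/K_4=d(p_+,p_-)/K_4$ --- but it is harmless as written.
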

\begin{proof}
	By symmetry it suffices to prove that $d(h_v p_+, h_v \Lambda_{e_v}) \preceq d(h_v p_-, h_v \Lambda_{e_v})$.
	
	Choose (not necessarily distinct) $q_-, q_+ \in \Lambda_{e_v}$
	so that we have
	$d(h_v p_+, h_v \Lambda_{e_v}) = d(h_v p_+, h_v q_+)$
	and
	$d(h_v p_-, h_v \Lambda_{e_v}) = d(h_v p_-, h_v q_-)$.

	By Lemmas~\ref{lem:reldist5}~and~\ref{lem:reldist4} 
	$\diam \Lambda_{e_w} \preceq \diam \Lambda_{e_v}$
	so by Lemma~\ref{lem:reldist1}
	\begin{equation}\label{eq:relchild1}
		d(p_-,p_+) = \diam \Lambda_{e_w} \preceq d(\Lambda_{e_w},\Lambda_{e_v}) .
	\end{equation}
	In particular, $d(p_-, p_+) \preceq d(p_-, q_-)$, and so by quasisymmetry $d(h_vp_-, h_vp_+)\preceq d(h_vp_-,h_vq_-)$.

	Thus
	\begin{align}
		d(h_vp_+,h_v\Lambda_{e_v})
		& = d(h_vp_+, h_vq_+)
		\leq d(h_vp_+, h_vp_-) + d(h_vp_-,h_vq_+) \notag
		\\ & \preceq d(h_vp_-, h_vq_-) + d(h_vp_-, h_vq_+).
		\label{eq:relchild2}
	\end{align}
	If $q_-=q_+$ we are done as $d(h_vp_-,h_vq_-)=d(h_vp_-,h_v\Lambda_{e_v})$.

	Suppose $q_-\neq q_+$.
	Since $d(h_vp_+,h_vq_+) \leq d(h_vp_+,h_vq_-)$, by the quasisymmetry of $h_v^{-1}$, $d(p_+,q_+)\preceq d(p_+,q_-)$.
	Combining this with  \eqref{eq:relchild1},
	\begin{align*}
		\diam \Lambda_{e_v} = d(q_+,q_-)
		& \leq d(q_+,p_+)+d(p_+,q_-)
		\preceq d(p_+,q_-)
		\\ & \leq d(p_+,p_-)+d(p_-,q_-)
		\preceq d(\Lambda_{e_w}, \Lambda_{e_v})+d(p_-,q_-)
		\\ & \leq 2d(p_-,q_-),
	\end{align*}
	therefore $d(h_vq_+,h_vq_-)\preceq d(h_vp_-,h_vq_-)$.
	By Lemma~\ref{lem:reldist4}, $d(p_-,q_+)\leq \diam \Lambda_v \preceq d(q_+,q_-)$, so $d(h_vp_-,h_vq_+) \preceq d(h_vq_+,h_vq_-)$.
	Therefore
	\[
		d(h_vp_-,h_vq_+)\preceq d(h_vq_-,h_vq_+) \preceq d(h_vp_-,h_vq_-),
	\]
	and applying this to \eqref{eq:relchild2} we are done.
\end{proof}

\begin{proposition}\label{prop:admiss}
  There are choices of parameters $E_1,E_3$ so that there exists $J>0$ so that for all $v \in T$, and any arc $\beta$ joining $\Lambda_{e_v}$ in $U_{v\to}$, we have
	\begin{equation}\label{eq:admiss1}
		\sum_{A\in \cS_n: A\cap\beta\neq\emptyset} \prod_{w \in T_0(v)} \rho_w^n(A) \geq J a^n \diam \Lambda_{e_v},
	\end{equation}
	where we take $\diam\Lambda_{e_{v_0}}:=1$.
	Moreover, if $\beta$ is an  arc in some $U_{v\to}$, $v \in T_{\delta'}$ with endpoints in $\Lambda_v$ that are $\delta'$-separated, then
	\begin{equation}\label{eq:admiss2}
		\ell_{\rho_n} (\beta) \geq 1.
	\end{equation} 
\end{proposition}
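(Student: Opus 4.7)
My plan is to prove (\ref{eq:admiss1}) by downward induction on vertices $v$ of $T$ (well-founded since by Lemma~\ref{lem:reldist5} only finitely many $v$ have $D_v \succeq a^{-n}$), and then derive (\ref{eq:admiss2}) by extracting a sub-arc and applying (\ref{eq:admiss1}).

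For the base case of (\ref{eq:admiss1}), when $m_v \leq 1$, Lemma~\ref{lem:reldist4} gives $\diam U_{v\to} \preceq a^{-n}$; every $A \in \cS_n$ meeting $\beta$ has $\pi(A) \in T_0(v)$, and each $w \in T_0(v)$ inherits $m_w \leq 1$, so by the first clause of \eqref{eq:defrhonv} we have $\rho_w^n(A) = 1$. The sum is then $\geq 1 \succeq a^n \diam \Lambda_{e_v}$, which holds for small enough $J$. For the inductive step when $m_v \geq 2$, I decompose $\beta$ into maximal \emph{excursions} $\beta^j$ whose interiors lie in $U_{w_j\to} \setminus \Lambda_{e_{w_j}}$ for children $w_j \in V_0T$ of $v$ ($d_T(v,w_j)=2$). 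By maximality each $\beta^j$ joins $\Lambda_{e_{w_j}}$ in $U_{w_j\to}$, so the inductive hypothesis yields
\[
\sum_{A \cap \beta^j \neq \emptyset} \prod_{w' \in T_0(w_j)} \rho_{w'}^n(A) \geq J a^n \diam \Lambda_{e_{w_j}}.
\]
For every such $A$, $\pi(A) \in T_0(w_j)$ forces $W_{v,A} = \Lambda_{e_{w_j}}$, hence $\rho_v^n(A) = r_j := \rho_v^n(\Lambda_{e_{w_j}})$ is constant along $\beta^j$.

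Multiplying through and summing, (\ref{eq:admiss1}) at $v$ reduces to proving $\sum_j r_j \diam \Lambda_{e_{w_j}} \succeq \diam \Lambda_{e_v}$, equivalently, after unpacking \eqref{eq:defrhonv}--\eqref{eq:deffv},
\[
\sum_j \frac{\diam h_v(\Lambda_{e_{w_j}})}{m_v\, d_v(h_v\Lambda_{e_{w_j}}, h_v\Lambda_{e_v})} \succeq 1.
\]
For this I would argue that $h_v(\beta) \cup \bigcup_j h_v(\Lambda_{e_{w_j}}) \subset X_v$ is a connected set joining the two points of $h_v(\Lambda_{e_v})$ and, by a Lebesgue-type covering argument in $X_v$, crosses every shell $\{x : a^{-(i+1)} \leq d_v(x, h_v \Lambda_{e_v}) \leq a^{-i}\}$ for $i = 0, \ldots, m_v$; Lemma~\ref{lem:relative-child-size} guarantees both endpoints of each cut pair $h_v \Lambda_{e_{w_j}}$ are at comparable $d_v$-distance from $h_v \Lambda_{e_v}$, so each shell is crossed by some pair contributing $\succeq 1/m_v$ to the sum. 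Summing over the $m_v$ shells yields the required $\succeq 1$ bound; the innermost shell, where the first case of \eqref{eq:deffv} applies, and shells traversed by $\beta$ inside $\Lambda_v$ without entering a child (contributing through $A$'s with $\pi(A) = v$) are handled by separate cases.

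To derive (\ref{eq:admiss2}), given $\beta \subset U_{v\to}$ with $\delta'$-separated endpoints in $\Lambda_v$ for $v \in T_{\delta'}$, I extract by a sub-arc argument analogous to Proposition~\ref{prop:tame} a sub-arc $\tilde\beta \subset \beta$ joining $\Lambda_{e_{v'}}$ in $U_{v'\to}$ for some $v' \in T_{\delta'} \cap T_0(v)$, with $\diam \Lambda_{e_{v'}} \succeq \delta'$. Applying (\ref{eq:admiss1}) to $\tilde\beta$ and observing that for every $A$ meeting $\tilde\beta$ the ancestral factor $\prod_{w \in [v_0,v')}\rho_w^n(A)$ is bounded below by a constant $c(\delta')>0$ (each such $w$ lies in $T_{\delta'}$, so $f_w=1$, $D_w \asymp 1$, and all relevant diameters are comparable to $1$), we deduce
\[
\ell_{\rho_n}(\beta) \geq \ell_{\rho_n}(\tilde\beta) \geq E_1 a^{-n}\, c(\delta')\, J a^n \diam \Lambda_{e_{v'}} \succeq E_1 c(\delta') J \delta',
\]
which is $\geq 1$ once $E_1$ is chosen large enough. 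The principal obstacle is the displayed geometric estimate in the inductive step: verifying that the union of excursion cut pairs together with the residual trace of $\beta$ in $\Lambda_v$ actually spans all $m_v$ scales of shells in $X_v$, so that the arithmetic-progression deformation sums to $\succeq 1$. The subtlety lies in the boundary cases — handling when $\beta$ lingers close to $\Lambda_{e_v}$ (where the first branch of \eqref{eq:deffv} flattens $f_v$ to $1$) and when $\beta$ stays inside $\Lambda_v$ between children — which together require the careful coordination of Lemma~\ref{lem:relative-child-size}, the bi-H\"older control \eqref{eq:taulambdaHolder} on $h_v$, and the fact that any $A$ with $\pi(A)=v$ has diameter $\asymp a^{-n}$ in order to correctly account for the local $\rho_v^n$-weight.
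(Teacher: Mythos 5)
Your high-level architecture matches the paper's: an induction on $v$ (well-founded because only finitely many $v$ have $D_v \succeq a^{-n}$), a decomposition of $\beta$ into excursions through children, an annular decomposition of $X_v$ into $\asymp m_v$ shells, a Lebesgue-type covering argument for the portions of $\beta$ staying inside $\Lambda_v$, and the observation that Ahlfors $Q_v$-regularity with $Q_v < p$ keeps the geometric series summable. However, there are several genuine gaps.

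First, the claim that \emph{``$\rho_v^n(A)=r_j$ is constant along $\beta^j$''} is false. The value of $W_{v,A}$, and hence of $\rho_v^n(A)$, depends on whether $\pi(A) \in T_0(w_j)$ (giving $W_{v,A}=\Lambda_{e_{w_j}}$) or $\pi(A)=v$ (giving $W_{v,A}=B_A$); sets $A$ that overlap the excursion $\beta^j$ near its endpoints typically meet $\Lambda_v$ (or even $U_{\gets v}$), so $\pi(A)$ need not lie in $T_0(w_j)$. The paper deals with this by separating the contribution of $A$'s with $\pi(A)=v$ and introducing a comparison constant $C_1^*$ for the intermediate regime $\diam\Lambda_{e_{w_j}} \asymp_{C(J)} a^{-n}$ where $\diam h_v(B_A)/\diam B_A$ must be traded for $\diam h_v(\Lambda_{e_{w_j}})/\diam \Lambda_{e_{w_j}}$. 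Your proposal neither separates these cases nor accounts for the very small children (small jumps) for which the inductive hypothesis gives weaker information than counting $A$'s directly.

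Second, the inequality $\sum_j r_j \diam\Lambda_{e_{w_j}} \succeq \diam \Lambda_{e_v}$ with an unspecified implied constant is not sufficient: the induction requires the \emph{same} $J$ in the hypothesis and the conclusion, so the implied constant must be $\geq 1$. This is where the parameter $E_3$ in \eqref{eq:defrhonv} is essential --- and its value depends on the constants $C_1, C_1^*, C_4$ produced by the body of the argument --- but your proposal never chooses $E_3$ and your $\succeq$ masks the point entirely. In particular the hidden constant does \emph{not} improve with $E_1$, since $E_1$ does not appear in \eqref{eq:admiss1}.

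Third, the derivation of \eqref{eq:admiss2} is incorrect. You propose to extract a sub-arc $\tilde\beta$ joining a cut pair $\Lambda_{e_{v'}}$ with $\diam\Lambda_{e_{v'}}\succeq\delta'$, but no such sub-arc need exist: $\beta$ may lie entirely in $\Lambda_v$ (e.g.\ when $\Lambda_v$ is a circle), in which case $\beta$ touches no cut pair at all; and even when $\beta$ does enter some $U_{w\to}$, the cut pairs it meets may have diameter $\ll\delta'$. Nor does Proposition~\ref{prop:tame} give you such a sub-arc --- when both endpoints already lie in $\Lambda_v$ its output in the relevant branch is $\beta$ itself, still with endpoints not on $\Lambda_{e_v}$. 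The paper instead re-runs the Step~3 argument for $v\in T_{\delta'}$ verbatim, replacing the annulus around $h_v\Lambda_{e_v}$ by an annulus around $\{h_v\beta(0),h_v\beta(1)\}$, which is legitimate precisely because the Step~3 argument never used that the two ``poles'' were the cut pair --- only that $\beta$ crossed an annulus of width $\asymp D_v$.

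Finally, the claim that the connected set $h_v(\beta)\cup\bigcup_j h_v(\Lambda_{e_{w_j}})$ ``crosses every shell'' is too strong as stated: a cut pair can jump over a shell. Lemma~\ref{lem:relative-child-size} bounds the jump by $j_0$, and the paper exploits this by selecting only $\asymp m_v/(2j_0+1)$ shells spaced $2j_0+1$ apart, so each cut pair can jump over at most one selected shell and the corresponding collections $\cB_i$ are disjoint. This spacing is what makes the sum of $\succeq 1/m_v$ contributions actually $\succeq 1$, rather than double-counting or missing shells.
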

\begin{proof}
	We prove that \eqref{eq:admiss1} holds in stages.
	Before we begin, we summarise the dependence of constants chosen in the proof.  
	All constants, in particular $C_1,\ldots,C_4 \geq 1$, depend on the data of our space and the constants $K_1, \ldots, K_7 \geq 1$.
	We choose $k_0, k_1 \in \N$ with $k_0 := \lceil \log_a (2a\lambda (K_4\vee \frac{1}{\delta'})^{1/\tau})\rceil$ and $k_1 := \lceil \log_a(6K_2 K_3) \rceil$.
	We choose $j_0 \in \N$ based on Lemma~\ref{lem:relative-child-size}.
	We introduce a parameter $E_4$ which is chosen large enough depending on $j_0, k_0, k_1$, and set $J:=1/E_4$.
	We find a constant $C_1^*=C_1^*(J)$.
	The parameter $E_3$ is chosen large enough depending on $j_0, k_0, C_1^*$ (and $C_1,C_4$).
	Finally we find $C_2^*=C_2^*(\delta')$ and set $E_1 := 1/(C_2^* J\delta') = E_4/C_2^*\delta'$.

	\smallskip
	{\noindent \emph{Step 1}:}
	Suppose $v$ is a vertex with $\diam \Lambda_v \leq E_4a^{-n}$ for a choice of $E_4 \geq E_2$ below.  (The important case is when $v$ is the child of some $\hat{v}$ with $\diam \Lambda_{\hat{v}} \geq E_4a^{-n}$.)
	Thus $\diam \Lambda_{e_v} \leq E_4a^{-n}$, so $a^n \diam \Lambda_{e_v} \leq E_4$.

In the left hand side of \eqref{eq:admiss1}, for any $A$ meeting $\Lambda_{e_v}$ (as $\beta$ does), we have that $d(A,U_{\gets v})=0$ so $\rho_v^n(A)=1$.
Also, for all $w \in T_0(v) \setminus \{v\}$, $\pi(A)$ belongs to $[v_0,v]$ so $w \notin [v_0,\pi(A)]$, thus $W_{w,A}=\emptyset$ and $\rho_w^n(A)=1$ also.  Therefore, the left-hand side is $\geq 1$, and so \eqref{eq:admiss1} holds for $J=1/E_4$.

	\smallskip
	{\noindent \emph{Step 2:}} Suppose $v$ has $\diam \Lambda_v \in [E_4a^{-n},\delta')$ and all children of $v$ satisfy \eqref{eq:admiss1} with $J$. Note that $v\neq v_0$. 

	The idea is that by requiring $E_3$ large enough, $J$ doesn't get worse in our estimate for \eqref{eq:admiss1}.

	If $A \in \cS_n$ meets $\beta \subset U_{v\to}$ and has $W_{v,A}=\emptyset$, then as $v \notin [v_0,\pi(A)]$, $A$ must also meet $U_{\gets v}$.
	If $d(A, \Lambda_{e_v}) \geq 4 a^{-n} \geq 2 \diam A$ then for any $p \in A$, and using $K_3 \geq 1$ from Lemma~\ref{lem:reldist3},
	\[
	 d(A, \Lambda_{e_v}) \asymp_2 d(p, \Lambda_{e_v}) \asymp_{K_3} d(p, U_{\gets v}) \leq \diam A \leq 2a^{-n}.
	\]
	Thus we conclude that 
	\begin{equation}\label{eq:admiss3}
		d(A, \Lambda_{e_v}) \leq 4K_3 a^{-n}.
	\end{equation}

	The path $\beta$ joins the endpoints of $\Lambda_{e_v}$, travelling through $\Lambda_v$ with subarcs passing through $U_{w\to}$ for various children $w$ of $v$.

	Let $k_0, k_1$ be constants chosen as above, then add the condition $m_v>k_0+k_1$ to $E_4$, so that $0 \leq k_0 \leq m_v-k_1 \leq m_v$.

	For $k \in \{k_0, \ldots, m_v-k_1 \}$, consider the points of $X_v=h_v \Lambda_v$ at distance $(a^{-k}, a^{-k+1}]$ from $h_v \Lambda_{e_v}$, and call this set $Y_k$.
	By Lemma~\ref{lem:relative-child-size}, there exists $j_0$ so that if a child $w$ of $v$ has $\Lambda_{e_w} = \{p_+,p_-\}$, and $p_+ \in Y_k, p_- \in Y_l$, then $|k-l|\leq j_0$.

	The pair $\Lambda_{e_v}$ has $\diam \Lambda_{e_v} \geq \frac{1}{K_4}\diam \Lambda_v$ by Lemma~\ref{lem:reldist4}, so $\diam h_v\Lambda_{e_v} \geq \lambda^{-1} K_4^{-1/\tau}$ by \eqref{eq:taulambdaHolder}.
	Note that $a^{-k_0+1} \leq \frac{1}{2} \lambda^{-1}K_4^{-1/\tau}$ by the choice of $k_0$, so each $Y_k, k\geq k_0,$ consists of two disjoint balls centred on $h_v\Lambda_{e_v}$.

	Let $M := \lfloor (m_v-k_0-k_1-2j_0-2)/(2j_0+1) \rfloor$; add the condition that $E_4$ is large enough so that $M \geq 1$.

	For each $i \in 0, \ldots, M$, consider \[ k=k(i):=(2j_0+1)i+j_0+1+k_0 \in\{k_0+(j_0+1),\ldots,m_v-k_1-(j_0+1)\}, \] 
	and the collection $\cB_i$ of subarcs of $\beta$ that are either (i) in $\Lambda_v$ with $h_v$-image in $Y_k$, or (ii) join $\Lambda_{e_w}$ in some $U_{w\to}$ with $w$ a child of $v$ and $h_v\Lambda_{e_w}$ meets or jumps over $Y_k$.
	Here ``jumps over'' means that one end point lies in $\bigcup_{s > k} Y_s$ and the other in $\bigcup_{s<k} Y_s$.
	Note that by the choices of $M$ and $k=k(i)$, the collections $\cB_i$ are disjoint, and if $h_v\Lambda_{e_w}$ jumps over $Y_k$, both points of $h_v\Lambda_{e_w}$ are in $B(z,a^{-k_0+1})$ for the same $z \in h_v\Lambda_{e_v}$.

	\begin{sublemma}
		For $k_1 \geq \log_a(6K_2K_3)$, for any $i =0,\ldots, M$ and any $A \in \cS_n$ which intersects some arc in $\cB_i$, we have that $\pi(A) \in T_0(v)$.
	\end{sublemma}
	\begin{proof}
		\emph{Case 1, $A\cap\Lambda_v\neq\emptyset$:}
		By the definitions of $\cB_i$, \eqref{eq:defmv} and $k$,
	\begin{align*}
		d(h_v(A \cap \Lambda_v), h_v \Lambda_{e_v}) & \geq a^{-k}-\diam h_v(A\cap\Lambda_v)
		\\ &	\geq a^{-k} - \tfrac{1}{2}a^{-m_v} \geq a^{-m_v}\left(a^{k_1+j_0} -\tfrac{1}{2}\right).
	\end{align*}
	Thus by \eqref{eq:taulambdaHolder} and \eqref{eq:defmv},
	\begin{align*}
		d(A, \Lambda_{e_v}) & \geq d(A\cap \Lambda_v, \Lambda_{e_v}) - \diam A
		\\ & \geq D_v \left( \frac{a^{-m_v}(a^{k_1+j_0}-\tfrac{1}{2})}{\lambda} \right)^{1/\tau} - 2a^{-n}
		\\ & \geq a^{-n} \left(2a^{k_1+j_0}-1\right)^{1/\tau} -2a^{-n}
		> 4K_3 a^{-n}
	\end{align*}
		because $k_1+j_0\geq k_1 \geq \log_a(6K_2K_3) \geq \log_a(6K_3)$.
	Thus by \eqref{eq:admiss3} we have $W_{v,A} \neq \emptyset$ and as $A\cap\Lambda_v\neq\emptyset$ we have $v=\pi(a)$.
		(Note that therefore $W_{v,A} = B_A$ for a ball $B_A$ of radius $a^{-n}$ satisfying $A \subset (K_7+1)B_A$, so if a collection of $A$'s covers $\beta'$, then the corresponding collection of $(K_7+1)B_A$'s also covers $\beta'$.)
	
		\emph{Case 2: For some child $w$ of $v$, $A$ intersects a subarc $\beta'\subset U_{w\to}$ that meets or jumps over $Y_k$.}
	The endpoints of $h_v\Lambda_{e_w}$ lie in $\bigcup_{s=k-j_0}^{k+j_0}Y_s$, and both endpoints have the same closest point in $h_v\Lambda_{e_v}$.
	It suffices to show that $W_{v,A}$ is either $B_A$ (as $\pi(A)=v$) or $\Lambda_{e_w}$ (as $\pi(A) \in T_0(w)$ but $\pi(A)\neq v$).
	This follows if we rule out $\pi(A) \in [v_0,v)$.
	
	Similarly to Case 1,
	\[ d(h_v \Lambda_{e_w}, h_v\Lambda_{e_v}) \geq a^{-k-j_0} \geq a^{-m_v}a^{k_1} \]
	so
	$d(\Lambda_{e_w}, \Lambda_{e_v}) \geq a^{-n} \left(2a^{k_1}\right)^{1/\tau}$.
 Since $A \cap \beta' \subset U_{w\to}$, Lemma~\ref{lem:reldist2} gives
	\begin{align*}
		d(A,\Lambda_{e_v}) 
		& \geq d(U_{w\to}, U_{\gets v}) - \diam A
		\geq \frac{1}{K_2} d(\Lambda_{e_w},\Lambda_{e_v}) - 2a^{-n}
		\\& \geq a^{-n} K_2^{-1} (2a^{k_1})^{1/\tau} - 2 a^{-n}
		> 4K_3 a^{-n},
	\end{align*}
		where the last inequality uses $k_1\geq \log_a(6K_2 K_3)$,
	and so by \eqref{eq:admiss3} $W_{v,A} \neq \emptyset$.
	\end{proof}

	A jump $\beta' \in \cB_i$ going through some $U_{w\to}$ is \emph{large} if there is some $A \in \cS_n$ with $A \cap \beta' \neq \emptyset$ and $\pi(A) \in T_0(w)$.
	In this case, by Lemma~\ref{lem:reldist4} and Lemma~\ref{lem:piA-properties},
	\begin{equation}\label{eq:large-jump-gap}
		\diam \Lambda_{e_w} \geq \frac{1}{K_4} \diam U_{w\to} \geq \frac{1}{K_4} \diam \Lambda_{\pi(A)} \geq \frac{1}{K_4 K_7} a^{-n}.
	\end{equation}

	Suppose $\{\beta_j\} \subset \cB_i$ are the large jumps in $\cB_i$, going through $U_{w_j\to}$ for $w_j$ children of $v$.
	Consider the sets $\cC_{i,j} := \{ A \in \cS_n : A \cap \beta_j \neq \emptyset\}$ for each $j$.
	If $A \in \cC_{i,j}\cap \cC_{i,j'}$ for $j\neq j'$ then $A$ intersects both $U_{w_j\to}$ and $U_{w_{j'}\to}\subset U_{\gets w_j}$ so by Lemma~\ref{lem:reldist3} $d(A, \Lambda_{e_{w_j}})\preceq a^{-n}$.
	Therefore by \eqref{eq:large-jump-gap}, Lemma~\ref{lem:reldist1} and the doubling of $X_v$ we have that there exists $C_1$ independent of $v, i, j$ so that any $A$ appears in at most $C_1$ of these sets.
	
	For each choice of $i$ (which fixes $k$), we either have `many' or `few' large jumps.

	\smallskip
	{\noindent \emph{Case of many large jumps:}}
	Suppose $\sum_{j} \diam h_v(\Lambda_{e_{w_j}}) \geq \frac{1}{2}a^{-k}$.
	Consider a given (large) jump $h_v \Lambda_{e_{w_j}}$.
	There is a constant $C_2$ so that there are at most $C_2$ many $A \in \cS_n$ with $A\cap \beta_j\neq \emptyset$ and $\pi(A)=v$;
		for such $A$, $\prod_{u\in T_0(w_j)} \rho_u^n(A)=1$.
	By the Step 2 hypothesis
	$ \sum_{A \in \cS_n: A\cap\beta\cap U_{w_j\to} \neq\emptyset} \prod_{u\in T_0(w_j)} \rho_u^n(A) \geq Ja^n \diam \Lambda_{e_{w_j}}$.
	So if $\diam \Lambda_{e_{w_j}} \geq (2C_2/J) a^{-n}$ then we have
	\[
		\sum_ {\substack{A \in \cS_n : \pi(A)\neq v,\\ A \cap \beta \cap U_{w_j\to} \neq \emptyset} }
		\prod_{u\in T_0(w_j)} \rho_u^n(A)
		\geq \frac{1}{2} Ja^n \diam \Lambda_{e_{w_j}}.
	\] 
	Moreover, when $\pi(A) \neq v$ in this sum $W_{v,A} = \Lambda_{e_{w_j}}$ so
	\begin{align*}
	\sum_{A \in \cS_n : A \cap \beta \cap U_{w_j\to} \neq \emptyset}
	\prod_{u \in T_0(v)} \rho^n_u(A)
	& =
	\sum_{A \in \cS_n : A \cap \beta \cap U_{w_j\to} \neq \emptyset}
	\rho^n_v(A)
	\prod_{u \in T_0({w_j})} \rho^n_u(A)
	\\ & \geq 
		\sum_{\substack{A \in \cS_n : \pi(A)\neq v,\\ A \cap \beta \cap U_{w_j\to} \neq \emptyset} }
	\rho^n_v(A)
	\prod_{u \in T_0({w_j})} \rho^n_u(A)
	\\ & \geq
	\frac{E_3 D_v \diam h_v(\Lambda_{e_{w_j}})}{m_v a^{-k+j_0} \diam \Lambda_{e_{w_j}}}
		\sum_{\substack{A \in \cS_n : \pi(A)\neq v,\\ A \cap \beta \cap U_{w_j\to} \neq \emptyset}} 
	\prod_{u \in T_0({w_j})} \rho^n_u(A)
	\\ & \geq 
	\frac{E_3 D_v}{m_v a^{-k+j_0}} \frac12 J a^n \diam h_v(\Lambda_{e_{w_j}}).
	\end{align*}
	If $\diam \Lambda_{e_{w_j}} < (2C_2/J) a^{-n}$ holds, then $\diam \Lambda_{e_{w_j}} \asymp_{C(J)} a^{-n}$.
	So for any $A \in \cS_n$ with $A \cap \beta \cap U_{w_j\to} \neq \emptyset$ and $\pi(A) = v$ (and so $W_{v,A}=B_A$ with $\diam B_A \asymp a^{-n}$), by the uniform quasisymmetry of $h_v$ we have
	\[
	\frac{\diam h_v\Lambda_{e_{w_j}}}{\diam h_v B_A} \asymp 1 \asymp \frac{\diam \Lambda_{e_{w_j}}}{\diam B_A},
	\]
	thus, at the cost of a constant $C_1^*=C_1^*(J)$, we can replace $\diam h_v B_A \, / \diam B_A$ in the relevant $\rho_v^n(A)$ by $\diam h_v \Lambda_{e_{w_j}} / \diam \Lambda_{e_{w_j}}$, and so induction again gives
	\begin{align*}
		\sum_{\substack{A \in \cS_n :\\ A \cap \beta \cap U_{w_j\to} \neq \emptyset}}
	\prod_{u \in T_0(v)} \rho^n_u(A)
	& =
	\sum_{\substack{A \in \cS_n :\\ A \cap \beta \cap U_{w_j\to} \neq \emptyset}}
	\rho^n_v(A)
	\prod_{u \in T_0({w_j})} \rho^n_u(A)
	\\ & \geq
	\frac{E_3 D_v \diam h_v(\Lambda_{e_{w_j}})}{m_v a^{-k+j_0} C_1^* \diam \Lambda_{e_{w_j}}}
	\sum_{\substack{A \in \cS_n :\\ A \cap \beta \cap U_{w_j\to} \neq \emptyset}}
	\prod_{u \in T_0({w_j})} \rho^n_u(A)
	\\ & \geq 
	\frac{E_3 D_v}{C_1^*m_v a^{-k+j_0}} J a^n \diam h_v(\Lambda_{e_{w_j}}).
	\end{align*}
	Summing over all large jumps $\{\beta_j\} \subset \cB_i$ we have
	\begin{align*}
		\sum_{A\in \bigcup_j \cC_{i,j}}
		\prod_{u \in T_0(v)} \rho^n_u(A) 
		& \geq \frac{1}{C_1} \sum_{j} 
		\sum_{A\in \cC_{i,j}}
		\prod_{u \in T_0(v)} \rho^n_u(A)
		\\ & \geq \frac{1}{C_1} \sum_j
		\frac{E_3 D_v}{(C_1^* \vee 2)m_v a^{-k+j_0}} J a^n \diam h_v(\Lambda_{e_{w_j}})
		\\ & \geq 
		\frac{E_3 D_v}{C_1(C_1^* \vee 2)m_v a^{-k+j_0}} J a^n \frac{a^{-k}}{2}
		 = \frac{E_3 D_v}{2C_1(C_1^* \vee 2)m_v a^{j_0}} J a^n .
	\end{align*}

	\smallskip
	{\noindent \emph{Case of few large jumps:}}
	We have that $\sum_j \diam h_v(\Lambda_{e_{w_j}}) < \frac{1}{2}a^{-k}$,
	but the arcs and jumps in $\cB_i$ must total at least $a^{-k+1}-a^{-k}\geq a^{-k}$ in diameter.
	Thus the arcs and small jumps (that is jumps $\beta'$ where any $A \in \cS_n$ with $A \cap \beta' \neq \emptyset$ has $\pi(A)=v$) must have total diameter in $h_v \Lambda_v$ at least $\frac{1}{2} a^{-k}$.

	Suppose we have a small jump $\beta'$ through some $U_{w'\to}$, $w'$ a child of $v$.
	The $A$ which cover $\beta'$ each have a ball $B_A$ centred at a point of $\Lambda_v$ at most $K_7a^{-n}$ from the centre of $A$, 
	so $\beta'$ is in the $(K_7+1)a^{-n}$ neighbourhood of $\Lambda_v$. Therefore by Lemma~\ref{lem:reldist3} we have $\beta'$ is in the $K_3 (K_7+1) a^{-n}$ neighbourhood of $\Lambda_{e_{w'}}$, so $\diam \Lambda_{e_{w'}} \leq 2 K_3 (K_7+1) a^{-n}$, and thus by Lemma~\ref{lem:reldist4} $\diam U_{w'\to} \leq 2K_4K_3 (K_7+1)a^{-n}$.
	So there exists $C_3 \geq (K_7+1)$ so that if $A$ meets the small jump $\beta'$, then $C_3B_A$ covers the entire small jump including its endpoints.  Here for a ball $B=B(x,r)$ and $C>0$, we set $CB:=B(x,Cr)$.

	In the image then, if there is a sequence of arcs and small jumps connecting points at least some distance $L$ apart, then the sum of $\diam h_v(C_3B_A)$ for those $A$ covering the corresponding arcs in $\cB_i$ must total at least $L$.
	So as we do not have many large jumps, 
	we must have that $\sum_{A \in \cS_n(\cB_i, v) } \diam h_v(C_3B_A) \geq \frac{1}{2}a^{-k}$, where $\cS_n(\cB_i,v)$ is defined to be the set of all $A \in \cS_n$ so that $\pi(A)=v$ and $A \cap \beta' \neq \emptyset$ for some $\beta' \in \cB_i$.

	By uniform quasisymmetry $\diam h_v(C_3B_A) \asymp_{C_4} \diam h_v(B_A)$.
	Putting it together,
	\begin{align*}
		\sum_{A \in \cS_n(\cB_i,v)}
		\prod_{u \in T_0(v)} \rho_u^n(A)
		& =
		\sum_{A \in \cS_n(\cB_i,v)}
		\rho_v^n(A)
		 \geq  
		\sum_{A \in \cS_n(\cB_i,v)} 
		\frac{E_3 D_v \diam h_v(B_A)}{m_v a^{-k+j_0} \diam B_A} 
		\\ & \geq 
		\frac{1}{C_4} 
		\sum_{A \in \cS_n(\cB_i,v)} 
		\frac{E_3 D_v \diam h_v(C_3B_A)}{m_v a^{-k+j_0} 2a^{-n}} 
		\\ & \geq
		\frac{E_3 D_v a^{-k}}{4C_4 m_v a^{-k+j_0} a^{-n}} 
		= \frac{E_3 D_v }{4C_4 m_v a^{j_0}} a^n.
	\end{align*}

	As for each $i$ there are either many large jumps or not, we have:
	\begin{align*}
		\sum_{A\in \cS_n: A\cap\beta\neq\emptyset} \prod_{w \in T_0(v)} \rho_w^n(A) 
		& \geq \sum_{i=0}^{M} 
		\frac{E_3 D_v}{  m_v a^{j_0} } \left( \frac{J}{2C_1(C_1^*\vee 2)} \wedge \frac{1}{4C_4} \right) a^n
		\\ & 
		= \frac{E_3 D_v (M+1)}{  m_v a^{j_0} } \left( \frac{J}{2C_1(C_1^*\vee 2)} \wedge \frac{1}{4C_4} \right) a^n
		\\ & \geq J \left(\frac{(M+1)}{4a^{j_0} (C_1(C_1^*\vee 2) \vee C_4) m_v} \right)E_3  a^n D_v.
	\end{align*}
	By our earlier conditions on $E_4$, we have that $M = \lfloor (m_v -k_0-k_1-2j_0-2)/(2j_0+1)\rfloor$ satisfies $M \geq 1$.  Thus $(M+1)/m_v$ is bounded away from zero, so we can and do require that $E_3$ is large enough depending on $j_0,C_1,C_1^*,C_4$ so that the term in parentheses is at least $1/E_3$ 
	thus \eqref{eq:admiss1} holds for $v$ with the same $J$.
	Note that $D_v = \diam \Lambda_v \geq \diam \Lambda_{e_v}$.

	\smallskip
	{\noindent \emph{Step 3:}} Suppose $v$ has $\diam \Lambda_v \geq \delta'$ and all children satisfy \eqref{eq:admiss1} for some $J$.

	The argument is identical to that of step 2 until we apply the definition of $f_v$ from \eqref{eq:deffv}, and as we don't resize the annuli it suffices to consider the largest annulus $Y_k$ with $i=0, k=j_0+1+k_0$.
	We now indicate the slight differences.
	
	{\noindent\emph{Case of many large jumps $\{\beta_j\} \subset \cB_0$}:}
	If a given large jump $\beta_j$ has $\diam \Lambda_{e_{w_j}} \geq (2C_2/J) a^{-n}$ then
	\begin{align*}
		\sum_{\substack{A \in \cS_n :\\ A \cap \beta \cap U_{w_j\to} \neq \emptyset} }
	\prod_{u \in T_0(v)} \rho^n_u(A)
	& \geq 
		\sum_{\substack{ A \in \cS_n : \pi(A)\neq v \\ A \cap \beta \cap U_{w_j\to} \neq \emptyset}}
	\rho^n_v(A)
	\prod_{u \in T_0({w_j})} \rho^n_u(A)
	\\ & \geq
	\frac{E_3 D_v \diam h_v(\Lambda_{e_{w_j}})}{1 \cdot \diam \Lambda_{e_{w_j}}}
	\sum_{\substack{ A \in \cS_n : \pi(A)\neq v \\ A \cap \beta \cap U_{w_j\to} \neq \emptyset}}
	\prod_{u \in T_0({w_j})} \rho^n_u(A)
	\\ & \geq 
	\frac{E_3 D_v}{1} \frac12 J a^n \diam h_v(\Lambda_{e_{w_j}}).
	\end{align*}
	While if $\diam \Lambda_{e_{w_j}} < (2C_2/J) a^{-n}$ then, for the same $C_1^*=C_1^*(J)$ as before
	\begin{align*}
		\sum_{\substack{A \in \cS_n :\\ A \cap \beta \cap U_{w_j\to} \neq \emptyset}}
	\prod_{u \in T_0(v)} \rho^n_u(A)
	 & \geq
	\frac{E_3 D_v \diam h_v(\Lambda_{e_{w_j}})}{1 \cdot C_1^* \diam \Lambda_{e_{w_j}}}
		\sum_{\substack{A \in \cS_n :\\ A \cap \beta \cap U_{w_j\to} \neq \emptyset}}
	\prod_{u \in T_0({w_j})} \rho^n_u(A)
	\\ & \geq 
	\frac{E_3 D_v}{C_1^*} J a^n \diam h_v(\Lambda_{e_{w_j}}).
	\end{align*}
	Summing over all large jumps $\{\beta_j\} \subset \cB_0$, as $k=j_0+1+k_0$ we have
	\begin{align*}
		\sum_{A\in \bigcup_j \cC_{i,j}}
		\prod_{u \in T_0(v)} \rho^n_u(A) 
		& \geq \frac{1}{C_1} \sum_j
		\frac{E_3 D_v}{(C_1^* \vee 2)\cdot 1} J a^n \diam h_v(\Lambda_{e_{w_j}})
		\\ & \geq 
		\frac{E_3 D_v}{C_1(C_1^* \vee 2) \cdot 1} J a^n \frac{a^{-(j_0+1+k_0)}}{2} .
	\end{align*}

	{\noindent\emph{Case of few large jumps $\{\beta_j\} \subset \cB_0$}:}
	The argument is the same, giving the bound
	\begin{align*}
		\sum_{A \in \cS_n(\cB_0,v)}
		\prod_{u \in T_0(v)} \rho_u^n(A)
		 & \geq  
		\sum_{A \in \cS_n(\cB_0,v)} 
		\frac{E_3 D_v \diam h_v(B_A)}{1 \cdot \diam B_A} 
		\\ & \geq 
		\frac{1}{C_4} 
		\sum_{A \in \cS_n(\cB_0,v)} 
		\frac{E_3 D_v \diam h_v(C_3B_A)}{1 \cdot \diam B_A} 
		\\ & \geq
		\frac{E_3 D_v a^{-(j_0+1+k_0)}}{2C_4 \diam B_A} 
		\geq \frac{E_3 D_v a^{-(j_0+1+k_0)}}{4C_4} a^n.
	\end{align*}
	
	As $\cB_0$ either has many large jumps or not, we have:
	\begin{align*}
		\sum_{A\in \cS_n: A\cap\beta\neq\emptyset} \prod_{w \in T_0(v)} \rho_w^n(A) 
		& \geq 
		E_3 D_v a^{-(j_0+1+k_0)} \left( \frac{J}{2C_1(C_1^*\vee 2)} \wedge \frac{1}{4C_4}\right) a^n
		\\ & \geq J \left(\frac{a^{-(j_0+1+k_0)}}{4 (C_1(C_1^*\vee 2) \vee C_4) } \right)E_3  a^n D_v.
	\end{align*}
	Since $D_v=\diam \Lambda_v \geq \diam \Lambda_{e_v}$, provided $E_3$ is required to be large enough depending only on $j_0,k_0,C_1,C_1^*,C_4$ we get \eqref{eq:admiss1} for $v$ with the same value of $J$.  As this point we fix the value of $E_3$.

	\smallskip
	{\noindent \emph{Conclusion:}}
	So we have shown that \eqref{eq:admiss1} for all $v \in T$.

	Suppose now that the curve $\beta$ has endpoints in $\Lambda_v$, $v \in T_{\delta'}$, that are $\delta'$ separated, but not necessarily agreeing with $\Lambda_{e_v}$.
	By the bi-H\"older estimates on $h_v$ for such $v$, this implies that the distance between the $h_v$-images of the endpoints of $\beta$ is $\geq \lambda^{-1} (\delta')^{1/\tau} >0$.
	Notice that in step 3 the location (if defined) of $\Lambda_{e_v}$ was not relevant, only that $\beta$ crossed an annulus of width proportional to $D_v$.  So as we already required $k_0$ to satisfy $a^{-k_0+1} \leq \frac12 \lambda^{-1} (\delta')^{1/\tau}$,   we can set $i=0$, $k=j_0+1+k_0$ and let $Y_k$ be all points in $h_v\Lambda_v$ at distance in $(a^{-k},a^{-k+1}]$ from $\{h_v\beta(0),h_v\beta(1)\}$; note that the balls of radius $a^{-k+1}$ centred on $h_v\Lambda_v$ are disjoint. Step 3 then gives, with the same choice of $E_3$, 
	\[
		\sum_{A\in\cS_n:A\cap\beta\neq\emptyset}\prod_{w\in T_0(v)}\rho_w^n(A) \geq Ja^nD_v.
	\]

	It remains to bound $\ell_{\rho_n}(\beta)$.
	Since $\beta$ is in $U_{v\to}$, we have for any $A\in\cS_n$ with $A\cap\beta\neq\emptyset$ that $\pi(A)\in [v_0,v]$.
	This means that if $w\in V_0T$ has $\rho_w^n(A)\neq 1$ then $w \in T_0(v)$ or $w \in [v_0,v]$.  Write $\{v_0,v_1,\ldots,v_m=v\} \subset V_0T$ for the vertices of $[v_0,v]$; these are in $T_{\delta'}$.
	For $i=0,\ldots,m-1$, we have $\diam W_{v_i,A} \asymp_{\delta'} 1$, so $\diam h_{v_i}W_{v_i,A} \asymp 1$, and $D_{v_i}\asymp $ so there is some constant $C'$ depending on $\delta'$ so that $\rho_{v_i}^n(A) \asymp_{C'} E_3$.
	Thus, using the trivial bound $m\leq|T_{\delta'}|$ and setting $C_2^*:= (C'E_3)^{|T_{\delta'}|}$,
	\begin{align*}
		\ell_{\rho_n} (\beta)
		& = E_1 a^{-n} \sum_{A\in S_n: A\cap\beta\neq\emptyset} \prod_{w \in V_0T} \rho_w^n(A)
		\\ & = E_1 a^{-n} \sum_{A\in S_n: A\cap\beta\neq\emptyset} \Bigg( \prod_{i=0}^{m-1} \rho_{v_i}^n(A) \Bigg) \Bigg( \prod_{w \in T_0(v)} \rho_w^n(A) \Bigg)
		\\ & \geq E_1 a^{-n} \sum_{A\in S_n: A\cap\beta\neq\emptyset} \left( C'E_3 \right)^{|T_{\delta'}|} \Bigg( \prod_{w \in T_0(v)} \rho_w^n(A) \Bigg)
		\\ & \geq E_1 a^{-n} C_2^* Ja^n D_v
		\geq E_1 C_2^* J \delta' \geq 1,
	\end{align*}
	thanks to our choice of $E_1 := 1/(C_2^* J \delta')$.
\end{proof}

Admissibility now follows.
\begin{proof}[Proof of Theorem~\ref{thm:admissible}]
	For $\delta < \delta_0$, let $\delta'>0$ be chosen by Proposition~\ref{prop:tame}, and then let $E_1, E_3$ be given by Proposition~\ref{prop:admiss}.
	By Proposition~\ref{prop:tame}(2), any curve $\gamma \in \Gamma_\delta$ has a subarc $\hat\gamma \in \Gamma_{\delta'}$, which Proposition~\ref{prop:admiss} shows has $\ell_{\rho_n}(\hat{\gamma}) \geq 1$, so $\ell_{\rho_n}( \gamma )\geq \ell_{\rho_n} (\hat\gamma) \geq 1$.
	Therefore $\rho_n$ is $\Gamma_\delta$-admissible.
\end{proof}

\section{Volume}\label{sec:volume}

It remains to bound the volume of the weights $\rho_n$.
For $v \in V_0T$, recall that $T_0(v)$ is the set of $v$ and its descendants in $V_0T$.
Let $\cS_n(v) := \pi^{-1}(T_0(v))$, and $\cS_n^0(v) := \pi^{-1}(v)$.
Note that for any $v \in T$ we have the partition
\begin{equation}\label{eq:def:Sn0}
	\cS_n(v)=\cS_n^0(v) \sqcup \bigsqcup_{w \text{ child of } v} \cS_n(w).
\end{equation}
We define for $v \in T$,
\begin{equation}\label{eq:def:Vnv}
	V_n(v) := E_1^p a^{-np} \sum_{A \in \cS_n(v)} \prod_{w \in T_0(v)} \rho_w^n(A)^p.
\end{equation}
Observe that the $p$-volume of $\rho_n$ is, by definition,
\begin{align}\label{eq:vol1}
  \Vol_p(\rho_n) =  \sum_{A \in \cS_n} \rho_n(A)^p 
	= E_1^p a^{-np} \sum_{A \in \cS_n} \prod_{v \in V_0T} \rho_v^n(A)^p
	= V_n(v_0) .
\end{align}

The goal of the section is the following, given fixed 
parameters $\delta', E_1, E_2, E_3$.
\begin{theorem}\label{thm:volbound}
	We have $V_n(v_0) =\Vol_p(\rho_n)$ bounded independently of $n$.
\end{theorem}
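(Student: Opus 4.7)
The strategy follows the same pattern as the toy-model calculation of Subsection~\ref{ssec:toyexample}: set up a recursion for $V_n$ along the tree $T$, bound each piece using Ahlfors regularity of $X_v$ together with the strict inequality $p > Q_v$, then iterate. Using the partition \eqref{eq:def:Sn0}, for each $A \in \cS_n(w)$ with $w$ a child of $v$ I observe that $W_{v,A} = \Lambda_{e_w}$ depends on $w$ but not otherwise on $A$, so $\rho_v^n(A)$ takes essentially a single value on $\cS_n(w)$ (modulo the ``$d(A,U_{\gets v}) \leq E_2 a^{-n}$'' case); and for any $u \in T_0(v) \setminus T_0(w) \setminus \{v\}$ we have $u \notin [v_0, \pi(A)]$, hence $W_{u,A} = \emptyset$ and $\rho_u^n(A) = 1$. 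This yields a recursion
\[
V_n(v) \leq V_n^0(v) + \sum_{w \text{ child of } v} \bar\rho_{v,w}^{\,p}\, V_n(w),
\]
where $V_n^0(v) := E_1^p a^{-np} \sum_{A \in \cS_n^0(v)} \rho_v^n(A)^p$ is the ``local'' contribution at $v$, and $\bar\rho_{v,w}$ upper-bounds $\rho_v^n$ restricted to $\cS_n(w)$.

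The next step is to control both $V_n^0(v)$ and the children-weights via dyadic annular estimates. Partition $\{B_A : A \in \cS_n^0(v)\}$ and $\{\Lambda_{e_w} : w \text{ child of } v\}$ according to the annulus $\{ a^{-k-1} < d_v(\cdot, h_v \Lambda_{e_v}) \leq a^{-k} \}$ of $X_v$ for $k \in \{1, \ldots, m_v\}$, plus one ``interior'' annulus handled by the $\rho_v^n = 1$ cases. In annulus $k$, the factor $1/f_v$ contributes $(m_v a^{-k})^{-1}$, while the Ahlfors $Q_v$-regularity of $X_v$ together with the bi-H\"older bound \eqref{eq:taulambdaHolder} limits the number of entries of image-size $\asymp a^{-l}$ to $\preceq a^{(l-k)Q_v}$. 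Summing in $l \geq k$ gives a convergent geometric series precisely because $p > Q_v$, and then summing over $k \leq m_v$ produces (for $v \notin T_{\delta'}$)
\[
V_n^0(v) \preceq \big(D_v^p / m_v^{p-1}\big) \vee a^{-np}, \qquad \sum_{w \text{ child of } v} (D_w / D_v)^p \bar\rho_{v,w}^{\,p} \preceq 1/m_v^{p-1},
\]
with analogous bounds by uniform constants for $v \in T_{\delta'}$, a finite subtree.

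Finally, I would iterate the recursion, expanding $V_n(v_0)$ as a sum over $v \in V_0 T$ of path-products of $\bar\rho^{\,p}$-factors times $V_n^0(v)$. Since $m_v$ decreases essentially linearly with tree-depth (by Lemma~\ref{lem:reldist5} and \eqref{eq:defmv}) and approximate self-similarity (Lemma~\ref{lem:hypbdry-selfsim}) lets me identify deep subtrees with rescaled copies of the initial problem, the resulting sum organises into exactly the same structure as the toy-example inequality $a_n \preceq n^{-(p-1)} \max_{k<n} a_k$. This produces a uniform bound; the finitely many vertices in $T_{\delta'}$ contribute only a multiplicative constant, giving the desired uniform bound on $V_n(v_0) = \Vol_p(\rho_n)$.

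The main obstacle is coordinating the bi-H\"older distortion of $h_v$ with the arithmetic rescaling by $f_v = m_v\, d_v(\cdot, h_v \Lambda_{e_v})$: one must verify that each dyadic annulus contributes precisely the geometric-series factor needed for $p > Q_v$ to close the estimate, and that the iteration over the depth of $T$ produces convergent rather than divergent bounds. This is the natural analogue of the geometric-to-arithmetic rescaling that made the toy model work, but now one must carry it out simultaneously at every vertex of $T$.
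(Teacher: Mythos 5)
Your high-level strategy is genuinely the one the paper uses: decompose $V_n(v)$ into a local piece coming from $\cS_n^0(v)$ and a children piece coming from $\cS_n(w)$, observe that $\rho_u^n(A)=1$ for $u\in T_0(v)\setminus T_0(w)\setminus\{v\}$ when $A\in\cS_n(w)$, bound each piece by a dyadic-annulus argument that pairs the Ahlfors $Q_v$-regularity of $X_v$ and the bi-H\"older distortion \eqref{eq:taulambdaHolder} against the factor $1/f_v\sim 1/(m_v a^{-k})$, and harvest a convergent geometric series from $p>Q_v$. The annular estimates you sketch are essentially Lemmas~\ref{lem:vol-small-bound-I} and \ref{lem:vol-small-bound-II}, and the final iteration is meant to replicate the toy-example inequality $a_n\preceq n^{-(p-1)}\max_{k<n}a_k$.

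The gap is in the iteration, and it is not cosmetic. Your recursion needs the scale parameter $t_{v,n}=\lfloor n+\log_a D_v\rfloor$ to be \emph{monotone decreasing} along the tree so that ``children are at strictly smaller scale'' closes the induction. That is not true here: the second estimate in Lemma~\ref{lem:reldist5} is $\diam\Lambda_w/\diam\Lambda_v\asymp_{K_5}e^{-\epsilon d_Z(p_v,Z_w)}\leq e^{-\epsilon d_T(v,w)}$ only \emph{up to} the multiplicative constant $K_5$, so a child $w$ of $v$ can have $t_{w,n}$ up to $t_{\Delta}=\lceil\log_a K_5\rceil+1$ \emph{larger} than $t_{v,n}$. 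Your proposed bound $\sum_w(D_w/D_v)^p\bar\rho_{v,w}^p\preceq 1/m_v^{p-1}$ then yields only
\[
\frac{V_n(v)}{D_v^p}\preceq\frac{1}{t^{p-1}}+\frac{1}{t^{p-1}}\sup\Big\{\frac{V_n(w)}{D_w^p}:t_{w,n}\leq t+t_\Delta\Big\},
\]
which is circular because the supremum ranges over scales at least as large as $t$. The paper resolves this with the sets $\cL(v,q)$ and $\cL^*(v,q)$ (uniformly bounded in size), applying Proposition~\ref{prop:volume-small-v-case} repeatedly with a shifted cutoff $q=t_{w,n}-t_{v,n}$; each application strictly shrinks $\cL^*$, so the process terminates in boundedly many steps and delivers a clean $\hat V_t\leq C' t^{-(p-1)}\hat V_{<t}$, which is the inequality you want. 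Your appeal to approximate self-similarity to ``identify deep subtrees with rescaled copies of the initial problem'' also does not close this gap, because $\rho_n$ is not scale-self-similar: the arithmetic stretching $1/m_v$ and the exceptional regime $v\in T_{\delta'}$ depend on the absolute scale $n$, not merely the relative scale $D_v/a^{-n}$. Without the $\cL$ bookkeeping (or some equivalent device to handle the boundedly many children that jump to larger $t$), the proposed iteration does not converge.
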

We are going to set up an induction to bound the quantities $V_n(v)$.
What is important here is the relative size of $D_v$ and $a^{-n}$: 
for a given $n$ and $v \in V_0T$, our cover of $U_{v\to}$ by balls of radius $a^{-n}$ 
is, if we rescale $U_{v\to}$ by $1/D_v$, like a cover of $\frac{1}{D_v} U_{v\to}$ by balls of radius $a^{-n}/D_v$,
and the corresponding $p$-volume is scaled by $1/D_v^p$.
Let $t_{v,n} = \lfloor n+\log_a D_v \rfloor$, then the balls in the rescaled cover are approximately of size $a^{-t_{v,n}}$. 

When $t \geq t_0$ for some fixed constant $t_0 \in \Z$ set below, and $t_{v,n}=t$ for some $n, v$, we are covering $U_{v\to}$ by sets significantly smaller than $U_{v\to}$, and estimating $V_n(v)$ is amenable to induction; the relevant quantity we try to bound is the following:
\begin{definition}\label{def:vhat}
For $t \in \Z$, if $t < t_0$ set $\hat{V}_{t} = 1$, otherwise let $\hat{V}_{t}$ be the supremum of $V_n(v)/D_v^p$ over all $n$, over all $v \in V_0T \setminus T_{\delta'}$ with $t_{v,n} = t$.  
\end{definition}

As an initial observation, for a given $t$ there is an easy uniform bound on $V_n(v)/D_v^p$ with $t_{v,n}=t$.
\begin{lemma}\label{lem:vhat-finite}
	For each $t \in \Z$ fixed, $\hat{V}_t < \infty$.
\end{lemma}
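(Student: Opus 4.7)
Fix $t \in \Z$; if $t < t_0$ then $\hat{V}_t = 1$ by definition, so assume $t \geq t_0$. For any $n$ and $v \in V_0T \setminus T_{\delta'}$ with $t_{v,n} = t$, the constraint $\lfloor n + \log_a D_v \rfloor = t$ pins down $D_v/a^{-n} \in [a^t, a^{t+1})$. The plan is to show, uniformly in such $(n,v)$, both that $|\cS_n(v)|$ is bounded by a constant $N_t$ depending only on $t$ and that $\prod_{w \in T_0(v)} \rho_w^n(A)$ is bounded by a constant $\Pi_t$ depending only on $t$ for each $A \in \cS_n(v)$. The claim $\hat{V}_t < \infty$ then follows immediately from the definition of $V_n(v)$.

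For the cardinality bound, any $A \in \cS_n(v)$ has $\pi(A) \in T_0(v)$, so by Lemma~\ref{lem:piA-properties} its centre lies within $K_7 a^{-n}$ of $\Lambda_{\pi(A)} \subset U_{v\to}$; combined with $\diam U_{v\to} \leq K_4 D_v$ (Lemma~\ref{lem:reldist4}) and $D_v \asymp_t a^{-n}$, the balls of $\cS_n(v)$ lie in a region of diameter $\asymp_t a^{-n}$, so doubling of $X$ gives the desired bound. For the product bound, using Lemmas~\ref{lem:piA-properties} and \ref{lem:reldist5} I would first observe that $d_T(v, \pi(A))$ is bounded by some $M_t$, so only $M_t + 1$ factors in $\prod_{w \in T_0(v)} \rho_w^n(A)$ are non-trivial. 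Writing $[v, \pi(A)] = \{u_j = v, u_{j+1}, \ldots, u_m = \pi(A)\}$, I would show each factor satisfies $\rho_w^n(A) \preceq_t E_3 D_w / \diam W_{w,A}$: in the second case of \eqref{eq:defrhonv}, the ratio $\diam h_w(W_{w,A}) / f_w(W_{w,A})$ is uniformly bounded, which for $w = u_i$ with $i < m$ follows from Lemma~\ref{lem:reldist1} (giving a bounded relative distance of $\Lambda_{e_{u_{i+1}}}$ and $\Lambda_{e_{u_i}}$) combined with the uniform quasi-symmetry of the $h_w$, and for $w = \pi(A)$ follows from the choice of $E_2$ in Section~\ref{sec:maxbound} and uniform perfectness (Lemma~\ref{fact2}). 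Combining Lemma~\ref{lem:reldist4} (so $\diam W_{u_i, A} \asymp D_{u_{i+1}}$ for $i < m$) with uniform perfectness (giving $\diam B_A \succeq \min(a^{-n}, D_{\pi(A)})$), the product telescopes to
\[
\prod_{w \in [v, \pi(A)]} \rho_w^n(A) \preceq_t \frac{D_v}{\min(a^{-n}, D_{\pi(A)})} \preceq_t 1.
\]

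Combining, $V_n(v)/D_v^p \leq E_1^p (a^{-n}/D_v)^p N_t \Pi_t^p$ is bounded uniformly in $(n,v)$, hence $\hat{V}_t < \infty$. The main subtle point to watch is the uniform boundedness of $\diam h_w(W_{w,A}) / f_w(W_{w,A})$, which relies on the uniform quasi-symmetry of the family $\{h_w\}_{w \in V_0T}$ established via Lemma~\ref{fact1}; the rest is a straightforward combination of the metric estimates from Section~\ref{sec:bdry-structure}.
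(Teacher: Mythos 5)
Your proposal is correct and follows essentially the same approach as the paper: bound the number of terms in the sum by a $t$-dependent constant via doubling (the paper invokes Ahlfors regularity, equivalent here), bound the number of non-trivial factors in the product by a $t$-dependent constant via Lemmas~\ref{lem:piA-properties} and \ref{lem:reldist5}, and then bound each factor. The only real difference is presentational: you derive $\rho_w^n(A)\preceq E_3 D_w/\diam W_{w,A}$ for each factor and then telescope the product down to $\asymp D_v/\diam B_A\preceq a^t$, whereas the paper bounds each factor individually by $\preceq a^t$ (citing Steps~3--4 of the proof of Theorem~\ref{thm:maxbound}) and multiplies the bounds; both yield a $t$-dependent constant, since the path length is $\leq M_t$.
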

\begin{proof}
	The claim is trivial for $t < t_0$.
	We fix $t \geq t_0$.
	
	Suppose for a given $n$ that
	$v \in T\setminus T_{\delta'}$ has $t_{v,n}=t$,
	i.e. $\lfloor n + \log_a D_v \rfloor = t$,
	so $a^{-np}/D_v^p \asymp a^{-tp}$, and
	\begin{equation}
		\label{eq:vhat-fin1}
		\frac{V_n(v)}{D_v^p} \asymp a^{-tp} \sum_{A\in \cS_n}\prod_{w\in T_0(v)} \rho_w^n(A)^p .
	\end{equation}

	By definition, $\cS_n(v)$ consists of a collection of balls of radius $a^{-n}$ with centres separated by $a^{-n}$ in $U_{v\to}$ (not quite a cover since balls close to $\Lambda_{e_v}$ will not be included).
	As $X$ is Ahlfors regular, and by Lemma~\ref{lem:reldist4} $\diam U_{v\to} \preceq \diam \Lambda_v = D_v$, the number of such $A$ included is bounded above by a constant depending on $D_v/a^{-n} \asymp a^{t}$, i.e., depending only on $t$.

	For a given $w \in T_0(v)$, the only way that $\rho_w^n(A) \neq 1$ is if $d(A,U_{\gets w}) > E_2 a^{-n}$ and $w \in [v,\pi(A)]$.
	The first condition and Lemma~\ref{lem:reldist4} implies that $\diam \Lambda_w \succeq a^{-n}$.
	The second condition and Lemma~\ref{lem:reldist5} implies that
	\[
		a^{-t} \asymp \frac{a^{-n}}{D_v} \preceq \frac{\diam \Lambda_w}{\diam \Lambda_v} \preceq e^{-\epsilon d_T(v,w)},
	\]
	and so $d_T(v,w)$ is bounded by a constant depending on $t$.
	We have shown that the number of terms in the sum and in the product on the right-hand side of \eqref{eq:vhat-fin1} are both bounded by a constant depending on $t$, and not on $n$.

	It remains to show that $\rho_w^n(A)$ is bounded.
	This follows the argument in the proof of Theorem~\ref{thm:maxbound}.   For those $w$ that are in $T_0(w')$ for some $w'$ with $d(A,U_{\gets w'})\leq E_2a^{-n}$ or $m_{w'}\leq 1$, then by cases (1) and (2) of Step 3 of the proof of Theorem~\ref{thm:maxbound}, $\rho_{w}^n(A) \leq C(E_3)$.
	For all other $w$, we have $d(A,U_{\gets w}) > E_2a^{-n}$ and $m_w > 1$, and by \eqref{eq:max3} in Step 4 of the proof of Theorem~\ref{thm:maxbound}, we have $\diam(h_w W_{w,A})/f_w(W_{w,A}) \preceq 1/m_w$.  Moreover, by Lemma~\ref{lem:reldist5} $D_w \preceq D_v$ and $\diam W_{w,A} \succeq a^{-n}$, $D_w/\diam W_{w,A} \preceq D_v/a^{-n} \asymp a^{t}$.
	So 
	\[ 
		\rho_w^n(A) 
		= \frac{E_3 D_w}{\diam W_{w,A}} \cdot \frac{\diam h_w(W_{w,A})}{f_w(W_{w,A})} 
		\preceq a^t \cdot \frac{1}{m_w} \preceq a^t ,
	\]  
	which is a constant bounded in terms of $t$.

	So we conclude that $V_n(v)/D_v^p$ is bounded by a constant depending on $t$, independent of $n$.
\end{proof}

We are going to bound $V_n(v)/D_v^p$ in two stages: a general inductive step using $\hat{V}_t$ when $v \notin T_{\delta'}$, then the finitely-many vertices of $T_{\delta'}$ will be dealt with using a weaker bound.

\subsection{Volume bounds for $v \notin T_{\delta'}$}\label{ssec:vol-small-vertices}

The goal of this subsection is to bound $\hat{V}_t$ by induction on $t$, and thus to bound $V_n(v)$ for $v \notin T_{\delta'}$.  In fact, we show more.
\begin{proposition}\label{prop:volume-v-hat-goes-to-0}
	$\lim_{t\to\infty}\hat{V}_t = 0$.
\end{proposition}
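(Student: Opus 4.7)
The plan is to derive a recursion for $V_n(v)/D_v^p$ from the partition \eqref{eq:def:Sn0} and then iterate, mirroring the toy example of Subsection~\ref{ssec:toyexample}. Unravelling \eqref{eq:def:Sn0}: for $A\in\cS_n^0(v)$ one has $\pi(A)=v$, so $W_{u,A}=\emptyset$ and $\rho_u^n(A)=1$ for every $u\in T_0(v)\setminus\{v\}$; for $A\in\cS_n(w)$ with $w$ a child of $v$ one has $W_{v,A}=\Lambda_{e_w}$, so $\rho_v^n(A)$ takes a common value (up to a harmless modification when $\rho_v^n(A)=1$ by the degenerate clause of \eqref{eq:defrhonv}), and $\rho_u^n(A)=1$ for $u$ in the subtrees $T_0(w')$ of the other children $w'$. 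Writing $\bar\rho_v^n(w):=\max(1,\rho_v^n(\Lambda_{e_w}))$ this gives
\begin{equation*}
V_n(v)\ \leq\ \Sigma_v^0(n)\ +\ \sum_{w\text{ child of }v}\bar\rho_v^n(w)^p\, V_n(w),
\end{equation*}
where $\Sigma_v^0(n):=E_1^p a^{-np}\sum_{A\in\cS_n^0(v)}\rho_v^n(A)^p$ collects the local contribution. Note that $v\notin T_{\delta'}$ forces all descendants of $v$ to lie outside $T_{\delta'}$ too, so $\hat V_{t_{w,n}}$ is a legitimate quantity to bound each child's normalised volume by.

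Next I would estimate each piece by an annular decomposition of $X_v$ around $h_v\Lambda_{e_v}$ at scales $a^{-k}$, $k=1,\ldots,m_v$, just as in Section~\ref{sec:admissibility}. A child $w$ with $h_v\Lambda_{e_w}$ in the $k$th annulus satisfies $\rho_v^n(\Lambda_{e_w})\preceq E_3 D_v/(m_v a^{-k})$ times an $h_v$-Jacobian, while the bi-H\"older estimate \eqref{eq:taulambdaHolder} together with Lemma~\ref{lem:reldist1} (relative separation of edge limit sets) implies that the $h_v\Lambda_{e_w}$ for different children sit in pairwise disjoint balls in $X_v$. Ahlfors $Q_v$-regularity of $X_v$ then bounds them, and since $p>Q_v$ we get
\begin{equation*}
\sum_{w\text{ at scale }k}(\diam h_v\Lambda_{e_w})^p\ \preceq\ (a^{-k})^{p-Q_v}\sum_w(\diam h_v\Lambda_{e_w})^{Q_v}\ \preceq\ a^{-kp},
\end{equation*}
which exactly cancels the $a^{kp}$, leaving $\preceq 1/m_v^p$ per annulus. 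A parallel computation bounds $\Sigma_v^0(n)/D_v^p$ by $\preceq 1/m_v^{p-1}$. Grouping children by a further dyadic class in $\diam h_v\Lambda_{e_w}$ (to absorb the variation of $t_{w,n}$ within a fixed annulus), factoring out $\hat V_{t_{w,n}}$, and summing the resulting geometric series in both the annular and dyadic indices, I expect the master inequality
\begin{equation*}
\frac{V_n(v)}{D_v^p}\ \leq\ \varepsilon(t)\ +\ \frac{C_0}{m_v^{p-1}}\ \sup_{s\leq t-1}\hat V_s
\end{equation*}
for $v\notin T_{\delta'}$ and $t_{v,n}=t$ large enough, where $\varepsilon(t)\to 0$ (the local contribution and the tail in $k$ each contribute $\preceq 1/m_v^{p-1}$, and the finitely many children with $t_{w,n}<t_0$ contribute a bounded amount by Lemma~\ref{lem:vhat-finite}).

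Taking the supremum over admissible pairs $(v,n)$ with $t_{v,n}=t$ yields $\hat V_t\leq\varepsilon(t)+(C_0/m_v^{p-1})M_t$ with $M_t:=\sup_{s\leq t-1}\hat V_s$ and $m_v\asymp\tau t$. For $t$ large enough that $C_0/m_v^{p-1}\leq 1/2$ and $\varepsilon(t)\leq\varepsilon^*$, one has $\hat V_t\leq\varepsilon^*+M_t/2$; whenever $M_t>2\varepsilon^*$ this forces $M_{t+1}=M_t$, so $(M_t)$ is eventually non-increasing, and together with the pointwise finiteness of $\hat V_t$ from Lemma~\ref{lem:vhat-finite} this gives $M:=\sup_t M_t<\infty$. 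Feeding this back, $\hat V_t\leq\varepsilon(t)+C_0 M/m_v^{p-1}\to 0$, as desired.

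The main obstacle will be the bookkeeping in the second step: children $w$ whose images $h_v\Lambda_{e_w}$ fall into a single annulus of $X_v$ can still have wildly different $D_w$ (and hence $t_{w,n}$), because the bi-H\"older bound \eqref{eq:taulambdaHolder} only controls one side of the relation between $\diam\Lambda_{e_w}$ and $\diam h_v\Lambda_{e_w}$. This forces a secondary dyadic partition by image diameter, whose Ahlfors count again uses $p>Q_v$ to make the doubly-indexed geometric series converge. The assumption $p>Q_v\geq\Confdim\bdry G_{i_v}$ at every $v$ is the decisive input, playing exactly the role that $p>1$ played in killing the geometric series of the toy example in Subsection~\ref{ssec:toyexample}.
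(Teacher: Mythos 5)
Your overall scheme is the same as the paper's: derive a recursion for $V_n(v)/D_v^p$ from the partition \eqref{eq:def:Sn0}, use Ahlfors $Q_v$-regularity of $X_v$ together with $p>Q_v$ to make the annular sums converge, arrive at a master inequality of the form $\hat V_t\lesssim t^{-(p-1)}\sup_{s<t}\hat V_s$, and iterate. The key inputs (bi-H\"older bound \eqref{eq:taulambdaHolder}, relative separation from Lemma~\ref{lem:reldist1}, the doubly-indexed partition by annular scale and diameter) are also what the paper uses in Lemmas~\ref{lem:vol-small-bound-I} and~\ref{lem:vol-small-bound-II}.

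However, there is a real gap in how you close the recursion. Your master inequality
\[
\frac{V_n(v)}{D_v^p}\ \leq\ \varepsilon(t)\ +\ \frac{C_0}{m_v^{p-1}}\ \sup_{s\leq t-1}\hat V_s
\]
implicitly bounds every child's normalised volume $V_n(w)/D_w^p$ by $\sup_{s\leq t-1}\hat V_s$, i.e.\ it assumes $t_{w,n}\leq t_{v,n}-1$ for every contributing child $w$. That is false: by Lemma~\ref{lem:reldist5}\eqref{eq:reldist52}, $\diam\Lambda_w$ can exceed $\diam\Lambda_v$ by a bounded multiplicative factor ($K_5$), so $t_{w,n}$ can exceed $t_{v,n}$, and more seriously a descendant $u\in T_0(w)$ can have $t_{u,n}\geq t_{v,n}$. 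Your secondary dyadic partition by $\diam h_v\Lambda_{e_w}$ controls the Ahlfors count, but it does not prevent the recursion in $t$ from looping upward. This is precisely what the paper's set $\cL(v,q)$ of ``large descendants'' is for: those finitely many $w$ get an extra term $\frac{C}{t^p}\sum_{w\in\cL(v,q)}V_n(w)/D_w^p$ in Proposition~\ref{prop:volume-small-v-case}, and the proof of Proposition~\ref{prop:volume-v-hat-goes-to-0} then iterates that inequality a uniformly bounded number of times (bounded because $|\cL^*(v,0)|$ is uniformly bounded and strictly shrinks with each application), carefully shifting the $q$-parameter so that after the iteration only $\hat V_{<t}$ survives. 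Without this mechanism your inequality simply does not hold at vertices whose children are comparably large, and the $(M_t)$-monotonicity argument that follows cannot get off the ground. To repair the proof you need to isolate the finitely many children/descendants with $t_{w,n}\geq t_{v,n}-q$ and unroll the recursion on them before invoking $\sup_{s<t}\hat V_s$; once that term is in place, your final iteration step (bounding $M_t$ and feeding it back) is correct.
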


There are two kinds of contribution to $V_n(v)$, those coming from $\Lambda_v$ and those coming from $U_{w\to}$ for some child $w$ of $v$.  Now for fixed $n$, a child $w$ of $v$ usually has $\Lambda_w$ smaller than $\Lambda_v$, and so $t_{w,n}<t_{v,n}$, and we can use an inductive bound to estimate the contribution of each child to $V_n(v)$; we have to treat carefully the finitely many children that are exceptions.

Note that if $\cS_n(w) \neq \emptyset$ for $w$ a child of $v$, then there exists $A \in \cS_n(w)$ so $\pi(A)$ is a descendant of $w$, therefore by \eqref{eq:reldist52} and Lemma~\ref{lem:piA-properties}
$\diam \Lambda_w \geq \frac{1}{K_5}\diam \Lambda_{\pi(A)} \geq \frac{1}{K_5K_7}a^{-n}$.
Thus in the induction we are only interested in finitely many $w$: let
\[
\cC(v) = \Big\{ w \text{ child of } v : \diam \Lambda_w \geq \frac{1}{K_5K_7} a^{-n} \Big\} \subset V_0T.
\]
For such a $w$, $t_{w,n} = \lfloor n + \log_a \diam \Lambda_w\rfloor \geq \lfloor -\log K_5K_7 \rfloor$; let \[ t_0:= \lfloor -\log K_5K_7 \rfloor. \]

In our induction, the problem is that a child $w$ of a given $v \in T$ may have $t_{w,n}\geq t_{v,n}$.
However, by Lemma~\ref{lem:reldist5}\eqref{eq:reldist52} for 
\[ 
	t_{\Delta} := \log K_5 +1 \geq 1
\]
if $w \in T_0(v)$ then $t_{w,n} \leq t_{v,n}+t_{\Delta}$.
To set up the induction, we group together the finitely-many large descendants of $v$,
where ``large'' depends on a parameter $q \leq t_{\Delta}$: let
\begin{gather*}
	\begin{split}
		\cL^*(v,q) & := \{w \in T_0(v)\setminus\{v\} : \exists u \in T_0(w), t_{u,n} \geq t_{v,n}-q\} 
		\\ & \quad\quad \cup (T_{\delta'} \cap (T_0(v)\setminus \{v\}))\ ,
\quad  \text{ and} 
	 \\ 
\cL(v,q)& :=\cL^*(v,q) \cap \cC(v).
\end{split}
\end{gather*}

Note that as $v \notin T_{\delta'}$, $T_{\delta'}\cap (T_0(v)\setminus\{v\}) = \emptyset$, but we will also use these definitions of $\cL^*(v,q), \cL(v,q)$ later for more general $v$.
There are uniform bounds $|\cL(v,q)| \leq |\cL^*(v,q)| \leq C(t_{\Delta},\delta')$ for any $v\in T$: by \eqref{eq:reldist52} $|\cL^*(v,q)|$ is bounded by the sum of $|T_{\delta'}|$ and the number of spaces $Z_w \subset Z$ which can meet a ball of a radius depending on $t_{\Delta}$ and $K_5$.
In the remainder of this subsection~\ref{ssec:vol-small-vertices}, $v \notin T_{\delta'}$.

Write $\cL(v):=\cL(v,0), \cL^*(v):=\cL^*(v,0)$.
For each $t\in\Z$ define
\[
\hat{V}_{<t} := \max\{\hat{V}_s:s<t\},
\]
and note that we immediately have:
\begin{lemma}\label{lem:volume-vhat-def}
For $t =t_{v,n} = \lfloor n+\log_a D_v \rfloor$, if $w \in \cC(v)\setminus\cL(v)$ then $V_n(w)/D_w^p \leq \hat{V}_{<t}$.
More generally, if $w \in \cC(v)\setminus \cL(v,q)$, then $V_n(w)/D_w^p \leq \hat{V}_{<(t-q)}$.
\end{lemma}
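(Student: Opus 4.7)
The plan is essentially a definition chase: the lemma is a bookkeeping statement that unpacks what it means for a child $w$ not to lie in the ``exceptional'' set $\cL(v,q)$, and shows that $V_n(w)/D_w^p$ is then legitimately controlled by a $\hat V_s$ with $s<t-q$. I would first observe that the main claim (for $\cL(v) = \cL(v,0)$) is just the $q=0$ instance of the more general statement, so I only need to prove the second assertion.

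Let $w \in \cC(v) \setminus \cL(v,q)$, and write $t = t_{v,n}$. Since $\cL(v,q) = \cL^*(v,q) \cap \cC(v)$ and $w \in \cC(v)$, I conclude $w \notin \cL^*(v,q)$. Because $w$ is a child of $v$, certainly $w \in T_0(v) \setminus \{v\}$, so comparing with the definition of $\cL^*(v,q)$ gives two pieces of information: (i) $w \notin T_{\delta'}$, and (ii) no $u \in T_0(w)$ satisfies $t_{u,n} \geq t - q$. Taking $u = w \in T_0(w)$ in (ii) yields $t_{w,n} < t - q$.

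Next I would verify that $V_n(w)/D_w^p$ is among the quantities over which $\hat V_{t_{w,n}}$ is defined as a supremum. The membership $w \in \cC(v)$ gives $\diam \Lambda_w \geq a^{-n}/(K_5 K_7)$, hence
\[
t_{w,n} = \lfloor n + \log_a D_w \rfloor \geq \lfloor -\log_a(K_5 K_7) \rfloor = t_0,
\]
so by Definition~\ref{def:vhat}, $\hat V_{t_{w,n}}$ is indeed a supremum (not the default value $1$), and combined with (i) this shows that $V_n(w)/D_w^p$ is one of the terms in that supremum. Therefore $V_n(w)/D_w^p \leq \hat V_{t_{w,n}}$, and finally $t_{w,n} < t-q$ gives $\hat V_{t_{w,n}} \leq \hat V_{<(t-q)}$, as required.

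There is no real obstacle here; the proof is several lines of definition-chasing. The lemma's role is to encode cleanly the inductive hypothesis that will be fed into the harder volume estimate on $\hat V_t$ in the next step of the argument, where one decomposes $V_n(v)$ via the partition~\eqref{eq:def:Sn0} and controls each child contribution using this bound together with a separate (finite) bound on the ``exceptional'' children in $\cL(v,q)$.
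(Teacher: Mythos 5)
Your proof is correct and follows exactly the approach the paper has in mind: the paper states this lemma with no proof (``note that we immediately have''), treating it as an immediate consequence of the definitions of $\cC(v)$, $\cL^*(v,q)$, and $\hat V_t$, and your definition-chase — extracting $w \notin T_{\delta'}$ and $t_{w,n} < t-q$ from $w \notin \cL^*(v,q)$, and checking $t_{w,n}\geq t_0$ via $w\in\cC(v)$ so that $V_n(w)/D_w^p$ genuinely enters the supremum defining $\hat V_{t_{w,n}}$ — is precisely that intended argument.
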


If $D_v$ is too close to $a^{-n}$ there isn't space to do induction, so we fix 
\begin{equation}\label{eq:deft0prime}
	t_0' := 
		\lceil \tau^{-1} (2+\log_a(2\lambda)) \rceil 
		\in \N
\end{equation} 
For any $v \in T$ with $t_{v,n}\geq t_0'$, $\tau t_{v,n} \geq m_v > 1$ by \eqref{eq:defmv}, and therefore we have that not only is $|m_v -\tau t_{v,n}|$ bounded by a universal constant, but also $1/C \leq m_v / \tau t_{v,n} \leq C$ for some universal $C>1$, i.e.\ 
\begin{equation}
	\label{eq:mv-nice}
	m_v \approx \tau t_{v,n} \quad \text{and} \quad m_v \asymp \tau t_{v,n}.
\end{equation}

Our main technical bound in this subsection is the following.

\begin{proposition}\label{prop:volume-small-v-case}
	There exists $C$ depending on $t_{\Delta}$ and the data of our construction so that for any $v \notin T_{\delta'}$ with $t:=t_{v,n} \geq t_0'$ and any $q \leq t_{\Delta}$, we have
\begin{align*}
	\frac{V_n(v)}{D_v^p}
	 \leq \frac{C}{t^{p-1}} \hat{V}_{<(t-q)} + 
	\frac{C}{t^p} 
	 \sum_{w\in\cL(v,q)} \frac{V_n(w)}{D_w^p} .
 \end{align*}
\end{proposition}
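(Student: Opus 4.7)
The plan is to expand $V_n(v)/D_v^p$ using the partition \eqref{eq:def:Sn0} into a ``boundary'' piece over $A \in \cS_n^0(v)$ and ``child'' pieces over $A \in \cS_n(w)$ for each child $w$ of $v$, and to recognise the child pieces (up to a multiplicative factor) as copies of $V_n(w)/D_w^p$. First I would observe that for every $A\in\cS_n(w)$ we have $W_{v,A}=\Lambda_{e_w}$, so $\rho_v^n(A)$ takes a value depending only on $w$; call it $\theta(w)$ (with the convention $\theta(w)=1$ when the conditions of \eqref{eq:defrhonv} force $\rho_v^n(A)=1$). Factoring this out gives
\[
\sum_{A\in\cS_n(w)}\prod_{w'\in T_0(v)}\rho_{w'}^n(A)^p \leq \theta(w)^p \sum_{A\in\cS_n(w)}\prod_{w'\in T_0(w)}\rho_{w'}^n(A)^p = \theta(w)^p\,\frac{V_n(w)}{E_1^p a^{-np}},
\]
and $\cS_n(w)=\emptyset$ unless $w\in\cC(v)$. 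Splitting the children according to $\cC(v) = (\cC(v)\setminus\cL(v,q)) \sqcup \cL(v,q)$ and applying Lemma~\ref{lem:volume-vhat-def} on the first piece, I obtain a schematic bound
\[
\frac{V_n(v)}{D_v^p} \preceq \frac{1}{D_v^p}\bigg[\Sigma_0 + \hat{V}_{<(t-q)}\sum_{w\in\cC(v)\setminus\cL(v,q)}\theta(w)^p D_w^p + \sum_{w\in\cL(v,q)}\theta(w)^p D_w^p\cdot\frac{V_n(w)}{D_w^p}\bigg],
\]
where $\Sigma_0$ is the boundary contribution from $A\in\cS_n^0(v)$.

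The heart of the argument is then to prove the geometric estimate
\begin{equation}\label{eq:key-vol-est}
\sum_{w\in\cC(v)} \theta(w)^p D_w^p \preceq \frac{D_v^p}{m_v^{p-1}} \asymp \frac{D_v^p}{t^{p-1}},
\end{equation}
together with the slightly better bound $\Sigma_0 \preceq a^{np}D_v^p/m_v^p$. To get \eqref{eq:key-vol-est} I would group children by annular scale: for $k=1,\ldots,m_v$, let $\cA_k(v) := \{w\in\cC(v) : d_v(h_v\Lambda_{e_w},h_v\Lambda_{e_v})\in[a^{-k},a^{-k+1}]\}$, treating the cases $f_v(\Lambda_{e_w})=1$ (too close to $h_v\Lambda_{e_v}$, or $v\in T_{\delta'}$, which doesn't occur here) as a bounded extra contribution. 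For $w\in\cA_k(v)$, by Lemma~\ref{lem:reldist4} and uniform quasisymmetry of $h_v$,
\[
\theta(w) D_w \;\asymp\; \frac{D_v \diam h_v\Lambda_{e_w}}{m_v\,a^{-k}}.
\]
By Lemma~\ref{lem:reldist1} and the relative-distance control under quasisymmetry, the sets $\{h_v\Lambda_{e_w}\}_{w\in\cA_k(v)}$ have bounded overlap and lie in an annulus of thickness $\asymp a^{-k}$ around $h_v\Lambda_{e_v}$. Since $X_v$ is Ahlfors $Q_v$-regular of diameter $1$ and $h_v\Lambda_{e_v}$ consists of two points, the total $Q_v$-volume of this annulus is $\preceq a^{-kQ_v}$, so with $r_w := \diam h_v\Lambda_{e_w}\preceq a^{-k}$ and $p>Q_v$,
\[
\sum_{w\in\cA_k(v)} r_w^p \;\leq\; (a^{-k})^{p-Q_v}\sum_{w\in\cA_k(v)} r_w^{Q_v} \;\preceq\; (a^{-k})^{p-Q_v}\cdot a^{-kQ_v} \;=\; a^{-kp}.
\]
This is the single place where $Q_v<p$ is used, and it cancels the $a^{-kp}$ denominator coming from $(m_v a^{-k})^p$, giving $\sum_{w\in\cA_k(v)}(\theta(w)D_w)^p \preceq D_v^p/m_v^p$, independent of $k$. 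Summing the $m_v\asymp\tau t$ scales yields \eqref{eq:key-vol-est}.

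For the boundary term $\Sigma_0$, the images $h_v(B_A)$ for $A\in\cS_n^0(v)$ form an approximate packing by balls of radius $\asymp a^{-m_v}$ in $X_v$; repeating the annular decomposition and using the $Q_v$-regularity to count balls in each annulus gives $\Sigma_0 \preceq a^{np}D_v^p/m_v^p \asymp a^{np}D_v^p/t^p$, which once multiplied by $E_1^p a^{-np}/D_v^p$ yields a term of order $1/t^p$; since $\hat{V}_{<(t-q)}\geq 1$ whenever $t-q>t_0$ (by the convention $\hat V_s = 1$ for $s<t_0$), and since we can absorb the finitely many exceptional small-$t$ cases into the constant $C$, this boundary contribution is controlled by the first term $C t^{-(p-1)}\hat V_{<(t-q)}$. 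Finally, for each $w\in\cL(v,q)$ the factor $\theta(w)^p D_w^p / D_v^p$ is bounded by the same annular estimate applied to a single child, giving $\preceq 1/t^p$, which produces the second term of the proposition. The main obstacle is Step~3: verifying the bounded-overlap hypothesis for $\{h_v\Lambda_{e_w}\}_{w\in\cA_k(v)}$ (which comes down to a careful application of Lemma~\ref{lem:reldist1} together with the quasisymmetric invariance of relative distance) and making the Ahlfors-regularity count rigorous despite $h_v$ not being an isometry, while also handling the boundary cases in \eqref{eq:defrhonv} where $\rho_v^n\equiv 1$ without dominating the main estimate.
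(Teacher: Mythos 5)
Your proof follows the same overall strategy as the paper: split $V_n(v)$ via \eqref{eq:def:Sn0}, peel off $\rho_v^n(A)$ from the child-sums, invoke Lemma~\ref{lem:volume-vhat-def} on children outside $\cL(v,q)$, and reduce everything to a geometric annular estimate using Ahlfors $Q_v$-regularity of $X_v$ and the fact that $p>Q_v$. Your single-index annular decomposition $\cA_k(v)$ (indexing children by $d_v(h_v\Lambda_{e_w},h_v\Lambda_{e_v})$ alone, using $\diam h_v\Lambda_{e_w}\preceq a^{-k}$ from Lemma~\ref{lem:reldist1} plus quasisymmetry) is a legitimate simplification of the paper's two-index decomposition $\cC(v,q,j,k)$, and the resulting bound $\sum_{w\in\cA_k}r_w^p\preceq a^{-kp}$ gives the same $D_v^p/m_v^{p-1}$ after summing over $k$. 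Your treatment of the $\cL(v,q)$ term and of $\Sigma_0$ also matches the paper's Lemmas~\ref{lem:vol-small-bound-II} and~\ref{lem:vol-small-bound-I}.

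There is one genuine gap, in the factoring step. You claim that for $A\in\cS_n(w)$ the value $\rho_v^n(A)$ depends only on $w$, since $W_{v,A}=\Lambda_{e_w}$ is constant on $\cS_n(w)$. But inspecting \eqref{eq:defrhonv}, the case distinction also tests $d(A,U_{\gets v})\leq E_2 a^{-n}$, and this condition genuinely depends on $A$, not only on $w$: those $A\in\cS_n(w)$ that happen to lie within $E_2 a^{-n}$ of $U_{\gets v}$ have $\rho_v^n(A)=1$, while the rest get the product formula. When the formula value $\theta(w)$ is small (which is exactly the regime that makes the volume bound work), the inequality $\rho_v^n(A)^p\leq\theta(w)^p$ fails for those near-$U_{\gets v}$ balls, so your displayed factoring bound does not hold as written; the convention ``$\theta(w)=1$ when the conditions force $\rho_v^n(A)=1$'' is not well-posed because the condition holds for some $A\in\cS_n(w)$ and not others. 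The fix, which the paper carries out as its term $(IIa)$ in the proof of Lemma~\ref{lem:vol-small-bound-II}, is to set these $A$ aside at the start: for such $A$ one has $U_{\gets v}\subset U_{\gets u}$ for all $u\in T_0(w)$, so $\rho_u^n(A)=1$ throughout, the whole product is $1$, and (by Lemmas~\ref{lem:reldist1}--\ref{lem:reldist4} and doubling) there are only boundedly many such $A$ over all children combined. Their total contribution is thus $\preceq a^{-np}$, which is $\preceq D_v^p/t^p\preceq D_v^p\hat{V}_{<(t-q)}/t^{p-1}$ and is absorbed. With that piece added, your argument is complete and is the same argument as in the paper, modulo your welcome simplification of the annular bookkeeping.
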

\begin{proof}
We decompose $V_n(v)$ as follows:
\begin{equation}\label{eq:vol2}
	\begin{aligned}	V_n(v) & = E_1^p a^{-np}\sum_{A \in \cS_n(v)} \prod_{w \in T_0(v)} \rho_w^n(A)^p
	\\ & = \underbrace{E_1^p a^{-np} \sum_{A \in \cS_n^0(v)} \rho_{v}^n(A)^p}_{\text{(I)}} 
	 +\underbrace{E_1^p a^{-np} \sum_{w \in \cC(v)} \sum_{A \in \cS_n(w)} \rho_{v}^n(A)^p \prod_{u \in T_0(w)} \rho_u^n(A)^p}_{\text{(II)}}.
\end{aligned}
\end{equation}
	and we proceed to bound $(I)$ and $(II)$ in the following lemmas, whose proofs we defer.  (Recall that our parameters $E_1,E_2,E_3,\delta'$ 
	are now fixed constants.)

\begin{lemma}
	\label{lem:vol-small-bound-I}
	There exists $C$ so that for any $v \notin T_{\delta'}$ with $t=t_{v,n} \geq t_0'$ we have
	\begin{equation*}
		(I) \preceq_{C} a^{-np} + \frac{D_v^p}{m_v^p} \ .
	\end{equation*}
\end{lemma}
\begin{lemma}
	\label{lem:vol-small-bound-II}
	There exists $C$ depending on $t_\Delta$ so that for any $v\notin T_{\delta'}$ with $t=t_{v,n}\geq t_0'$ and $q \leq t_\Delta$ we have
	\begin{equation*}
	(II)  
	\preceq_{C} a^{-np}
		+ \frac{D_v^p\; \hat{V}_{<(t-q)}}{m_v^{p-1}} 
	 + D_v^p \sum_{w\in\cL(v,q)} \frac{1}{m_v^p} \cdot \frac{V_n(w)}{D_w^p} \ .
\end{equation*}
\end{lemma}

We now combine these bounds. 
By \eqref{eq:vol2} and Lemmas~\ref{lem:vol-small-bound-I} and \ref{lem:vol-small-bound-II}:
\begin{align*}
	\frac{V_n(v)}{D_v^p}
	& \preceq \frac{a^{-np}}{D_v^p}
	+  \frac{1}{m_v^p} 
	+ \frac{a^{-np}}{D_v^p}+ 
	\hat{V}_{<(t-q)} \cdot 
	\frac{1}{m_v^{p-1}} 
  	 + \sum_{w\in\cL(v,q)} \frac{1}{m_v^p} \cdot \frac{V_n(w)}{D_w^p} \ .
\end{align*}
As $t =t_{v,n} \geq t_0'$ we have \eqref{eq:mv-nice}, so we can write this as
\begin{align*}
	\frac{V_n(v)}{D_v^p}
	& \preceq a^{-tp} + \frac{1}{t^p} 
	+ \frac{1}{t^{p-1}} \hat{V}_{<(t-q)}
	+ \frac{1}{t^p} \sum_{w\in\cL(v,q)} \frac{V_n(w)}{D_w^p}
	\\ & \preceq \frac{1}{t^{p-1}} \hat{V}_{<(t-q)} + 
	\frac{1}{t^p} 
	 \sum_{w\in\cL(v,q)} \frac{V_n(w)}{D_w^p} \ ,
\end{align*}
where we use that $\hat{V}_{<(t-q)} \geq 1$, so
\[
a^{-tp} \leq a^{-\tau t p} \preceq \frac{1}{t^p} \preceq \frac{1}{t^{p-1}} \hat{V}_{<(t-q)}.
\]
We have completed the proof of Proposition~\ref{prop:volume-small-v-case}.
\end{proof}

Proposition~\ref{prop:volume-small-v-case} applies as follows to prove that $\lim_{t\to\infty}\hat{V}_t =0$.
\begin{proof}[{Proof of Proposition~\ref{prop:volume-v-hat-goes-to-0}}]
	Note that by Lemma~\ref{lem:vhat-finite}, for $t \in \Z$ with $t_0 \leq t < t_0'$ we have $\hat{V}_t<\infty$, so we can restrict to values of $t \geq t_0'$ where Proposition~\ref{prop:volume-small-v-case} applies.

	Our goal is to bound $V_n(v)/D_v^p$ in terms of $\hat{V}_{<t}$, by applying Proposition~\ref{prop:volume-small-v-case} boundedly many times.
	As the first step, Proposition~\ref{prop:volume-small-v-case} for $v$ and $q=0$ gives:
\begin{align*}
	\frac{V_n(v)}{D_v^p}
	 \leq \frac{C}{t^{p-1}} \hat{V}_{<t} + 
	\frac{C}{t^p} \sum_{w\in\cL(v,0)} \frac{V_n(w)}{D_w^p} . 
 \end{align*}

	Now consider $w \in \cL(v,0)$; some descendant $u$ of $w$ has $t_u \geq t_{v,n}$, so $t_{w,n} \geq t_{v,n}-t_{\Delta}$, and also $t_{w,n} \leq t_{v,n}+t_{\Delta}$. 
	If $t_{w,n} < t_{v,n}$ then as $w\in\cC(v), t_{w,n} \geq t_0$ so $V_n(w)/D_w^p \leq \hat{V}_{t_{w,n}} \leq \hat{V}_{<t}$.
	If $t_{w,n} \geq t_{v,n}$, apply Proposition~\ref{prop:volume-small-v-case} to $w$ with $q=t_{w,n}-t_{v,n} \leq t_\Delta$ to get
\[
\frac{V_n(w)}{D_w^p} \leq \frac{C}{t_{w,n}^{p-1}} \hat{V}_{<(t_{w,n}-q)} + \frac{C}{t_{w,n}^p} 
\sum_{w'\in\cL(w,q)} \frac{V_n(w')}{D_{w'}^p}
\]
where $C$ is the constant from Proposition~\ref{prop:volume-small-v-case}.
Note that $\cL^*(w,q) \subset \cL^*(v,0)$ so these sums are getting smaller.
	If any $w'$ has $t_{w',n}<t_{w,n}-q=t_{v,n}$ we bound $V_n(w')/D_{w'}^p \leq \hat{V}_{<t}$,
	otherwise we apply Proposition~\ref{prop:volume-small-v-case} to $w'$ with ``$q$''$=t_{w',n}-(t_{w,n}-q)=t_{w',n}-t_{v,n}$ to bound the term, and continue this process.  Note that any summand $V_n(w'')/D_{w''}^p$ that appears has $w'' \in \cL^*(v,0)$ and appears once.
	Since $\cL^*(v,0)$ has a uniform finite bound on its size, after boundedly many steps this process terminates when there are no more vertices $w''$ with $t_{w'',n} \geq t_{v,n}$.  At the end we have
\begin{align*}
	\frac{V_n(v)}{D_v^p}
	& \leq \frac{C'}{t^{p-1}} \hat{V}_{<t}  
\end{align*}
for a constant $C'$.
Taking the supremum over all $n$ and all $v$ with $t_{v,n}=t$, we get
\begin{equation*}
  \hat{V}_{t} \leq \frac{C'}{t^{p-1}} \hat{V}_{<t},
\end{equation*}
so for $t$ large the sequence $\hat{V}_{t}$ is a decreasing function of $t$, hence the sequence is uniformly bounded, and by the same inequality $\lim_{t \to \infty} \hat{V}_{t} = 0$.
\end{proof}

We still have to prove the technical bounds of Lemmas~\ref{lem:vol-small-bound-I} and \ref{lem:vol-small-bound-II}.

\subsubsection{Bound (I)}
\begin{proof}[{Proof of Lemma~\ref{lem:vol-small-bound-I}}]
We further split $\cS_n^0(v) = \cR(v)\sqcup \cR(v)^c$ where $\cR(v)$ consists of those $A \in \cS_n^0(v)$ with $d(A, U_{\gets v}) > E_2 a^{-n}$, and $\cR(v)^c = \cS_n^0(v)\setminus \cR(v)$.
\begin{align}
	(I) & = E_1^p a^{-np} \sum_{A \in \cR(v)^c} \rho_{v}^n(A)^p + E_1^p a^{-np} \sum_{A \in \cR(v)} \rho_{v}^n(A)^p \notag
	\\ & \asymp  a^{-np} \left| \cR(v)^c \right| 
	+  a^{-np} \sum_{A \in \cR(v)} \left(\frac{\diam h_v(B_A) \cdot D_v}{\diam B_A \cdot f_v(B_A)}\right)^p \label{eq:volI1}
\end{align}
as for $A \in S_n^0(v)$, $W_{v,A}=B_A$.
	Lemma~\ref{lem:reldist3} gives that any $A\in\cR(v)^c$ is a distance $\leq K_3E_2a^{-n}$  to $\Lambda_{e_v}$, so the doubling property of $X$ gives
\begin{equation}
	\label{eq:volI2} |\cR(v)^c| \leq C = C(E_2, K_3). 
\end{equation}

The value of $f_v(B_A)$ depends on $d_v(h_v B_A, h_v \Lambda_{e_v})$, and so we write $\cR(v) = \cR(v,1) \sqcup \cdots \sqcup \cR(v,m_v) \sqcup \cR(v,m_v+1)$ where 
\[
\cR(v,j) := \{A \in \cR(v) : d_v(h_vB_A,h_v\Lambda_{e_v}) \in (a^{-j},a^{-j+1}]\},
\] 
for $1 \leq j\leq m_v$ and $\cR(v,m_v+1) = \{A \in \cR(v) : d_v(h_vB_A,h_v\Lambda_{e_v}) \leq a^{-m_v}\}$.
	So, pulling out for now the common factor $a^{-np} D_v^p$, the second term of \eqref{eq:volI1} is
\begin{align}
& \sum_{A \in \cR(v)} \left(\frac{\diam h_v(B_A)}{\diam B_A \cdot f_v(B_A)}\right)^p \notag
\\ & = \sum_{j=1}^{m_v+1} \sum_{A \in \cR(v,j)} \left(\frac{\diam h_v(B_A)}{\diam B_A \cdot f_v(B_A)}\right)^p \notag
\\ & \preceq \sum_{A \in \cR(v,m_v+1)} \frac{(\diam h_v(B_A))^{Q_v+p-Q_v}}{a^{-np} \cdot 1} 
+ \sum_{j=1}^{m_v} \sum_{A \in \cR(v,j)} \frac{(\diam h_v(B_A))^{Q_v+p-Q_v}}{a^{-np} m_v^p a^{-jp}} \ . \label{eq:volI3}
\end{align}
	Recall that by \eqref{eq:taulambdaHolder}, $\diam h_vB_A \leq a^{-m_v}/2$.  Since $\cS_n$ is a bounded multiplicity cover of $X$ and each $B_A \subset (K_7+1)A$ the collection $\{B_A : A \in \cS_n^0(v)\}$ has bounded multiplicity, with constants depending only on $X$.  Thus $\{h_v B_A : A \in \cS_n^0(v)\}$ is a bounded multiplicity collection of quasi-balls, and so the Ahlfors $Q_v$-regularity of $X_v$ gives, for $1 \leq j \leq m_v+1$,
\begin{equation}\label{eq:volI4}
	\begin{split} \sum_{A \in \cR(v,j)} (\diam h_v(B_A))^{Q_v} \diam h_v(B_A)^{p-Q_v}
& \leq  \sum_{A \in \cR(v,j)} (\diam h_v(B_A))^{Q_v} a^{-m_v (p-Q_v)}
\\ & \preceq a^{-(j-1)Q_v} \cdot a^{-m_v (p-Q_v)} .
\end{split}\end{equation}
So when $j=m_v+1$ the right-hand side is $a^{-pm_v}$.
	By~\eqref{eq:volI4} the second term in \eqref{eq:volI3} sums to
\begin{align}
& \preceq \sum_{j=1}^{m_v} \frac{a^{-(j-1)Q_v} \cdot a^{-m_v (p-Q_v)}}{a^{-np} m_v^p a^{-jp}}
\asymp \frac{a^{-m_v(p-Q_v)+np}}{m_v^p} \sum_{j=1}^{m_v} a^{j(p-Q_v)} \notag
\\ & \preceq \frac{a^{-m_v(p-Q_v)+np}}{m_v^p} \cdot a^{m_v(p-Q_v)} = \frac{a^{np}}{m_v^p}. \label{eq:volI5}
\end{align}
	Combining \eqref{eq:volI1},\eqref{eq:volI2},\eqref{eq:volI3},\eqref{eq:volI4},\eqref{eq:volI5} and $a^{-pm_v}\preceq 1/m_v^p$ we have
\begin{equation}
	(I) 
	\preceq a^{-np} 
	+ a^{-np} D_v^p \left( \frac{a^{-pm_v}}{a^{-np}} + \frac{a^{np}}{m_v^p} \right) 
	\preceq a^{-np} + D_v^p \cdot \frac{1}{m_v^p} \ . 
	\label{eq:volI6} \qedhere
\end{equation}
\end{proof}

\subsubsection{Bound (II)}
\begin{proof}[{Proof of Lemma~\ref{lem:vol-small-bound-II}}]
	Note that we are considering $A \in \cS_n(w)$ for some $w \in \cC(v)$, i.e.\ $\pi(A)$ equals $w$ or a descendant of $w$, therefore $W_{v,A} = \Lambda_{e_{v_{\to A}}} = \Lambda_{e_w}$ in this proof.

	Let us denote by $(IIa)$ the contribution to $(II)$ by $w \in \cC(v)$ and $A \in \cS_n(w)$ with $d(A, U_{\gets v}) \leq E_2 a^{-n}$.
	For such $w,A$ by Lemmas~\ref{lem:reldist1}--\ref{lem:reldist4} we have
	\begin{align*}
		a^{-n} \preceq \diam \Lambda_w &
		\overset{\text{\ref{lem:reldist4}}}{\asymp} \diam \Lambda_{e_w}
		\overset{\text{\ref{lem:reldist4}}}{\preceq} \min\{\diam \Lambda_{e_w}, \diam \Lambda_{e_v}\} 
		\\ & \overset{\text{\ref{lem:reldist1}}}{\preceq} d(\Lambda_{e_w},\Lambda_{e_v})
		\overset{\text{\ref{lem:reldist2}}}{\asymp} d(U_{\gets v},U_{w\to}) 
		\leq d(U_{\gets v},A) \preceq a^{-n},
	\end{align*}
	so all such $w$ have $\diam\Lambda_w \asymp a^{-n}$ and also $d(A, \Lambda_{e_v}) \preceq a^{-n}$ by Lemma~\ref{lem:reldist3}.
	By doubling and the separation property of Lemma~\ref{lem:reldist1} there are $\preceq 1$ such $w$.
	Moreover, $\rho_v^n(A)=1$ and $\rho_u^n(A)=1$ for all $u \in T_w$, since $U_{\gets v} \subset U_{\gets u}$ so $d(A,U_{\gets u})\leq E_2a^{-n}$.  So the total contribution to $(II)$ by $w,A$ as above with $d(A,U_{\gets v})\leq E_2 a^{-n}$ is
\begin{equation}
	\label{eq:volII1}
	(IIa) \preceq a^{-np}.
\end{equation}
So in the remainder of this proof we only need to consider $w \in \cC(v)$ and $A \in \cS_n(w)$ with $d(A,U_{\gets v}) > E_2 a^{-n}$.

In case $(I)$ above we partitioned $A \in \cS_n^0(v)$ according to the distance values $d_v(h_v B_A,h_v\Lambda_{e_v})$; this time we partition the set of children of $v$ according to both their distance $d_v(h_v \Lambda_{e_w}, h_v \Lambda_{e_v})$ and their size $\diam h_v \Lambda_{e_w}$ in the model space $X_v$.
We defined $\cL(v,q) \subset \cC(v)$ to be, depending on $q$, those children with large descendants, and will consider their contribution in $(IIc)$ below.
For $1 \leq j \leq m_v$ and $1 \leq k$ we partition the remaining children as follows:
\begin{multline*}
	\cC(v,q,j,k) = \Big\{w \in \cC(v)\setminus \cL(v,q) : d_v(h_v \Lambda_{e_w}, h_v \Lambda_{e_v}) \in (a^{-j},a^{-j+1}], \\ \text{ and } \diam h_v\Lambda_{e_w} \in (a^{-k},a^{-k+1}]\Big\},
\end{multline*}
and we define $\cC(v,q,j,k)$ similarly when $j=m_v+1$, replacing $(a^{-(m_v+1)},a^{-m_v}]$ by $(0,a^{-m_v}]$ in the appropriate place.

	There exists $k_\Delta$ so that if $j \geq k+k_\Delta$ then $\cC(v,q,j,k) = \emptyset$ because 
	Lemma~\ref{lem:reldist1} gives a lower bound on the relative distance between $\Lambda_{e_w}$ and $\Lambda_{e_v}$, and so there is a uniform lower bound on the relative distance of $h_v\Lambda_{e_w}$ and $h_v\Lambda_{e_v}$, since $h_v$ is $\eta$-quasisymmetric for uniform $\eta$ \cite[Lemma 3.2]{BK-02-S2-unif}.

Also, as already remarked, $\cC(v)$ is finite so only finitely many $\cC(v,q,j,k)$ are non-empty.

Having this notation, we bound the terms of $(II)$ not in $(IIa)$ as follows:
\begin{align}
   &  E_1^pa^{-np} \sum_{w \in \cC(v)}
  \sum_{\substack{A \in \cS_n(w) \\ d(U_{\gets v},A) >E_2 a^{-n}}} \rho_v^n(A)^p \prod_{u \in T_0(w)} \rho_u^n(A)^p
\notag	\\ & = E_1^pa^{-np} \sum_{w \in \cC(v)} \sum_{\substack{A \in \cS_n(w) \\ d(U_{\gets v},A) >E_2 a^{-n}}} 
		\left(\frac{\diam h_v(\Lambda_{e_w})}{\diam \Lambda_{e_w}} \frac{E_3 D_v}{f_v(\Lambda_{e_w})}\right)^p
		\prod_{u \in T_0(w)} \rho_u^n(A)^p
\notag	\\ & \preceq D_v^p \Bigg( 
	\underbrace{\sum_{k=1}^{\infty} \sum_{j=1}^{(m_v+1)\wedge (k+k_\Delta)} 
	\sum_{w \in \cC(v,q,j,k)} 
	\frac{a^{-kp} V_n(w)}{D_w^p f_v(\Lambda_{e_w})^p} 
	}_{(IIb)} 
		+
		\underbrace{\sum_{w \in \cL(v,q)} 
		\frac{(\diam h_v(\Lambda_{e_w}))^p V_n(w)}{D_w^p f_v(\Lambda_{e_w})^p}}_{(IIc)} \Bigg)
	\label{eq:volII2}
\end{align}

We now use that $v \notin T_{\delta'}$.
Dropping for the moment the constant $D_v^p$, we decompose $(IIb)$ as 
\begin{multline}
	(IIb) 
	\leq \sum_{k=1}^{\infty} \sum_{j=1}^{m_v \wedge (k+k_\Delta)}
	\sum_{w \in \cC(v,q,j,k)} 
	\frac{a^{-kp}V_n(w)}{D_w^p m_v^p a^{-jp}} 
	\\ +	
	\sum_{k=1\vee(m_v-k_\Delta+1)}^{\infty} 
	\sum_{w \in \cC(v,q,m_v+1,k)} 
	\frac{a^{-kp}V_n(w)}{D_w^p \cdot 1} ,
	\label{eq:volII3}
\end{multline} 
where we use that for $w \in \cC(v,q,j,k)$, $f_v(\Lambda_{e_w}) = 1$ if $j=m_v+1$ and $f_v(\Lambda_{e_w}) \geq m_v a^{j}$ if $j \leq m_v$.

In \eqref{eq:volII3}, each $w$ considered is not in $\cL(v,q)$, so $t_{w,n} < t_{v,n}-q=t-q$, so by the definition of $\hat{V}_{<(t-q)}$, \eqref{eq:volII3} is at most
\begin{equation}\label{eq:volII4}
	\begin{split} 
		& \Bigg( \sum_{k=1}^{\infty} \sum_{j=1}^{m_v\wedge (k+k_\Delta)}
	\sum_{w \in \cC(v,q,j,k)} 
	\frac{a^{-kp} }{m_v^p a^{-jp}} 
	+
	\sum_{k=1\vee(m_v-k_\Delta+1)}^{\infty} 
	\sum_{w \in \cC(v,q,m_v+1,k)} 
	a^{-kp} \Bigg) \hat{V}_{<(t-q)}
\end{split}
\end{equation}
Now by a volume estimate, for each $j, k$,
\begin{align}
	\sum_{w\in\cC(v,q,j,k)} a^{-kp}
	&= \sum_{w\in\cC(v,q,j,k)} a^{-kQ_v}a^{-k(p-Q_v)}
	\notag \\ & \leq \sum_{w \in \cC(v,q,j,k)} (\diam h_v(\Lambda_{e_w}))^{Q_v} a^{-k(p-Q_v)} \notag \\ & 
	 \preceq a^{-jQ_v} a^{-k(p-Q_v)}.  \label{eq:volII4s1}
\end{align}
So \eqref{eq:volII4} is at most $\hat{V}_{<(t-q)}$ times
\begin{align}
	& \sum_{k=1}^{\infty} \sum_{j=1}^{m_v\wedge(k+k_\Delta)}
	\frac{a^{-jQ_v} a^{-k(p-Q_v)}}{m_v^p a^{-jp}} 
	+
	 \sum_{k=1\vee(m_v-k_\Delta+1)}^{\infty} 
 	a^{-(m_v+1)Q_v} a^{-k(p-Q_v)} 
	\notag \\&
	\preceq
	\frac{1}{m_v^p} \sum_{k=1}^{m_v-k_\Delta} 
	a^{k(p-Q_v)} a^{-k(p-Q_v)} 
	+
	\frac{1}{m_v^p} \sum_{k=m_v-k_\Delta+1}^{\infty} 
	a^{m_v (p-Q_v)} a^{-k(p-Q_v)} 
	\notag \\&\qquad +
	 \sum_{k=m_v-k_\Delta+1}^{\infty} 
 	a^{-m_v Q_v} a^{-k(p-Q_v)} 
\notag 	\\ & =  
	\frac{1}{m_v^p} \sum_{k=1}^{m_v-k_\Delta} 1 
	+
	\frac{1}{m_v^p} \sum_{k=m_v-k_\Delta+1}^{\infty} 
	a^{-(k-m_v)(p-Q_v)} 
 	+	 \sum_{k=m_v-k_\Delta+1}^{\infty} 
 	a^{-m_v Q_v - k(p-Q_v)} 
\notag 	\\ & 
	\preceq 
	\frac{1}{m_v^{p-1}}
	+
	\frac{1}{m_v^p}  
	+
	a^{-m_v Q_v - m_v(p-Q_v)}
	 \preceq \frac{1}{m_v^{p-1}} \ . \label{eq:volII4b}
\end{align}
where the implied constants depend on $k_\Delta, p-Q_v$ and our other data.

It remains to bound $(IIc)$ from \eqref{eq:volII2}.  We now show
\begin{align}\label{eq:volII5}
  (IIc) & = \sum_{w\in\cL(v,q)} \frac{(\diam h_v(\Lambda_{e_w}))^p}{f_v(\Lambda_{e_{w}})^p} \cdot \frac{V_n(w)}{D_w^p}
  \preceq \sum_{w\in\cL(v,q)} \frac{1}{m_v^p} \cdot \frac{V_n(w)}{D_w^p} \ .
\end{align}
This is true because if $w \in \cL(v,q)$ we have $t_w\geq t_v-q-t_\Delta \geq t_v-2t_\Delta$, 
thus $\diam \Lambda_{e_w} \asymp \diam \Lambda_{e_v}$, and so by \eqref{eq:taulambdaHolder} $\diam h_v(\Lambda_{e_w}) \asymp 1$.
Moreover $d(\Lambda_{e_w}, \Lambda_{e_v}) \succeq \diam \Lambda_{e_w}$ by Lemma~\ref{lem:reldist1}.
By uniform relative distance distortion of quasisymmetric maps, we then get that $d_v(h_v\Lambda_{e_w},h_v\Lambda_{e_v})$ is bounded away from $0$ for a uniform constant.
In the case $d_v(h_v\Lambda_{e_w},h_v\Lambda_{e_v}) < a^{-m_v}$ then $m_v$ is bounded from above, so by \eqref{eq:mv-nice} $m_v\asymp 1$, thus $f_v(\Lambda_{e_w}) = 1 \asymp m_v$.
On the other hand if $d_v(h_v\Lambda_{e_w},h_v\Lambda_{e_v})\geq a^{-m_v}$ then $f_v(\Lambda_{e_w})=m_v d_v(h_v\Lambda_{e_w},h_v\Lambda_{e_v})\succeq m_v$.  In either case \eqref{eq:volII5} holds.

So in total, \eqref{eq:volII1}, \eqref{eq:volII2}, \eqref{eq:volII3}, \eqref{eq:volII4}, \eqref{eq:volII4b} and \eqref{eq:volII5} give
\begin{align*}
	(II) & \preceq (IIa) + D_v^p \big( (IIb)+(IIc) \big)  
	\\ & \preceq 
	a^{-np} + D_v^p \hat{V}_{<(t-q)} \cdot \frac{1}{m_v^{p-1}} 
	+ D_v^p \sum_{w\in\cL(v,q)} \frac{1}{m_v^p} \cdot \frac{V_n(w)}{D_w^p} \ . \qedhere
\end{align*}
\end{proof}

\subsection{Volume bound for $v \in T_{\delta'}$}
For the boundedly many vertices in $T_{\delta'}$, our bound of Proposition~\ref{prop:volume-small-v-case} need not hold, however the following weaker bound does hold by a similar argument.

Note that for all large enough $n$, for any $v \in T_{\delta'}$ we have $t_{v,n} \geq t_0'$ where $t_0'$ is the constant of \eqref{eq:deft0prime}, since $t_{v,n} \approx n$.  So we assume from now on that for all $v \in T_{\delta'}$ we have $\tau t \geq m_v > 1$ and so \eqref{eq:mv-nice} holds as well: $m_v \approx \tau t$ and $m_v \asymp \tau t$.
\begin{proposition}\label{prop:volume-large-v-case}
	There exists $C$ depending on $t_{\Delta}$ and the data of our construction so that for any $v \in T_{\delta'}$ and any $q \leq t_{\Delta}$, we have
\begin{align*}
	\frac{V_n(v)}{D_v^p}
	\leq C \hat{V}_{<(t-q)} + 
	{C} 
	 \sum_{w\in\cL(v,q)} \frac{V_n(w)}{D_w^p}
 \end{align*}
\end{proposition}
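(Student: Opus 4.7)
The plan is to mirror the proof of Proposition~\ref{prop:volume-small-v-case}, decomposing $V_n(v) = (I) + (II)$ exactly as in \eqref{eq:vol2}, and then exploiting the key structural simplification for $v \in T_{\delta'}$: by the definition \eqref{eq:deffv} of $f_v$, we have $f_v \equiv 1$ on all subsets, so every factor $1/m_v^p$ or $1/m_v^{p-1}$ that appeared in the bounds for Lemmas~\ref{lem:vol-small-bound-I} and~\ref{lem:vol-small-bound-II} now collapses to a constant (equivalently, for $v \in T_{\delta'}$ we have $D_v \asymp_{\delta'} 1$ but we no longer gain any smallness in $m_v$). This is precisely what produces the weaker, constant-order bound claimed in the proposition.

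For bound (I), I would split $\cS_n^0(v)$ into $\cR(v)$ and $\cR(v)^c$ as before. The term from $\cR(v)^c$ remains $\preceq a^{-np}$ by Lemma~\ref{lem:reldist3} and doubling. For $A \in \cR(v)$, $\rho_v^n(A) \asymp \diam h_v(B_A) \cdot D_v / \diam B_A$ with no $f_v$ in the denominator. Using $\diam h_v(B_A) \leq a^{-m_v}/2$ from \eqref{eq:taulambdaHolder}, writing $(\diam h_v(B_A))^p = (\diam h_v(B_A))^{Q_v}(\diam h_v(B_A))^{p-Q_v}$, and applying Ahlfors $Q_v$-regularity of $X_v$ together with the bounded multiplicity of the cover $\{h_v(B_A)\}$, gives
\[
\sum_{A \in \cR(v)} (\diam h_v(B_A))^p \preceq a^{-m_v(p-Q_v)}.
\]
Thus $(I) \preceq a^{-np} + D_v^p\, a^{-m_v(p-Q_v)}$, which since $D_v \asymp 1$ and $m_v \asymp n$ (using \eqref{eq:mv-nice}) is $\preceq 1$.

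For bound (II), isolate the contribution $(IIa)$ from $A$ with $d(A,U_{\gets v}) \leq E_2 a^{-n}$; by the same argument as in Lemma~\ref{lem:vol-small-bound-II} this is $\preceq a^{-np}$. For the remainder, partition children $w \in \cC(v)$ by whether $w \in \cL(v,q)$ or not. Since $f_v(\Lambda_{e_w}) = 1$, the analogue of $(IIb)$ reduces to a single sum over $k$ of $\sum_w a^{-kp} V_n(w)/D_w^p$ where $w \notin \cL(v,q)$ has $\diam h_v\Lambda_{e_w} \in (a^{-k}, a^{-k+1}]$. Using $V_n(w)/D_w^p \leq \hat{V}_{<(t-q)}$ and the volume estimate \eqref{eq:volII4s1} (now without a distance index $j$), and summing the geometric series $\sum_k a^{-k(p-Q_v)}$ which converges because $p > Q_v$, gives $(IIb) \preceq \hat{V}_{<(t-q)}$. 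The analogue of $(IIc)$ becomes $\sum_{w \in \cL(v,q)}(\diam h_v\Lambda_{e_w})^p V_n(w)/D_w^p \leq \sum_{w \in \cL(v,q)} V_n(w)/D_w^p$ since $\diam h_v\Lambda_{e_w} \leq \diam X_v = 1$.

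Combining, $V_n(v) \preceq a^{-np} + D_v^p \bigl(\hat{V}_{<(t-q)} + \sum_{w \in \cL(v,q)} V_n(w)/D_w^p\bigr)$, and dividing by $D_v^p \asymp_{\delta'} 1$ yields the stated inequality. The main obstacle here is mostly bookkeeping; the only conceptual point that needs care is that the geometric series in $(IIb)$ converges to an absolute constant, which requires $p > Q_v$ together with the uniform Ahlfors regularity of the finitely many model spaces $X_v$ associated to vertices in $T_{\delta'}$ (so the implicit constants may depend on $\delta'$, which is allowed since $T_{\delta'}$ is finite).
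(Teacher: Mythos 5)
Your proposal is correct and takes essentially the same approach as the paper: decompose $V_n(v)$ as $(I)+(II)$ via \eqref{eq:vol2}, observe that $f_v\equiv 1$ for $v\in T_{\delta'}$, use bounded multiplicity plus Ahlfors $Q_v$-regularity for $(I)$, split $(II)$ into $(IIa)$, $(IIb)$, $(IIc)$ with the geometric series in $(IIb)$ converging since $p>Q_v$, and absorb $D_v^p\asymp_{\delta'}1$. The paper organizes this as two separate lemmas and retains the double index $j,k$ in the $(IIb)$ sum whereas you collapse to the single index $k$; both work since $f_v$ no longer depends on $j$ and the extra $\sum_j a^{-jQ_v}$ is just another convergent geometric series.
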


\begin{proof}
	We follow the proofs of Proposition~\ref{prop:volume-small-v-case} and Lemmas~\ref{lem:vol-small-bound-I} and \ref{lem:vol-small-bound-II} in Subsection~\ref{ssec:vol-small-vertices}, but consider the case $v \in T_{\delta'}$.

	Recall that by \eqref{eq:vol2} we can write $V_n(v)$ as two sums, $(I)$ where $A \in S_n^0(v)$, or $(II)$ where $\pi(A)$ is in $S_n(w)$ for some child $w \in \cC(v)$.
	These are bounded as follows; we defer the proofs until later.
	\begin{lemma}
		\label{lem:vol-large-bound-I}
	There exists $C$ so that for any $v \in T_{\delta'}$ we have
	\begin{equation*}
		(I) \preceq_{C} a^{-np} + D_v^p a^{-m_v(p-Q_v)}.
	\end{equation*}
	\end{lemma}
	\begin{lemma}
		\label{lem:vol-large-bound-II}
	There exists $C$ depending on $t_\Delta$ so that for any $v\in T_{\delta'}$ and $q \leq t_\Delta$ we have
	\begin{equation*}
	(II)  
	\preceq_{C} a^{-np}
	+ D_v^p \hat{V}_{<(t-q)} 	+ 
	D_v^p \sum_{w\in\cL(v,q)} \frac{V_n(w)}{D_w^p}.
	\end{equation*}
	\end{lemma}

	We now combine \eqref{eq:vol2}, Lemmas~\ref{lem:vol-large-bound-I} and \ref{lem:vol-large-bound-II} to find:
\begin{align*}
	\frac{V_n(v)}{D_v^p}
	& \preceq \frac{a^{-np}}{D_v^p}
	+ a^{-m_v(p-Q_v)} 
	+ \hat{V}_{<(t-q)} 
	+ \sum_{w \in \cL(v,q)} \frac{V_n(w)}{D_w^p}
	\\ & \asymp
	a^{-tp} + a^{-\tau t (p-Q_v)} 
	+ \hat{V}_{<(t-q)}
	+ \sum_{w\in\cL(v,q)} \frac{V_n(w)}{D_w^p}
	\\ & \preceq  \hat{V}_{<(t-q)} + 
	 \sum_{w\in\cL(v,q)} \frac{V_n(w)}{D_w^p} ,
\end{align*}
where we use that $m_v \approx \tau t$ and $\hat{V}_{<(t-q)} \geq 1$.
The proposition is proven.
\end{proof}

\subsubsection{Bound (I)}

\begin{proof}[Proof of Lemma~\ref{lem:vol-large-bound-I}]
	We follow the notation and proof of Lemma~\ref{lem:vol-small-bound-I}.
	The argument begins identically with \eqref{eq:volI1} and \eqref{eq:volI2}.
	
	Instead of decomposing $\cR(v)\subset \cS_n^0(v)$ to find the bound \eqref{eq:volI3}, we instead use the simpler fact that $\{h_v B_A : A \in \cS_n^0(v)\}$ is a bounded multiplicity collection of quasi-balls in the Ahlfors $Q_v$-regular space $X_v$.  Therefore, as $f_v(B_A)=1$, we have: 
\begin{align*}
 \sum_{A \in \cR(v)} \left(\frac{\diam h_v(B_A)}{\diam B_A \cdot f_v(B_A)}\right)^p 
& \preceq a^{np} \sum_{A \in \cR(v)} (\diam h_v B_A)^{Q_v+(p-Q_v)}
\\ & \leq a^{np} a^{-m_v(p-Q_v)} \sum_{A \in \cR(v)} (\diam h_v B_A)^{Q_v}
\\ & \preceq a^{np} a^{-m_v(p-Q_v)} .
\end{align*}
so instead of \eqref{eq:volI6} we have
\begin{align}
	(I) & 
	\preceq a^{-np} 
	+ a^{-np} D_v^p \left( a^{np} a^{-m_v(p-Q_v)} \right) 
	= a^{-np} + D_v^p a^{-m_v(p-Q_v)} . \qedhere \notag 
\end{align}
\end{proof} 

\subsubsection{Bound (II)}
\begin{proof}[Proof of Lemma~\ref{lem:vol-large-bound-II}]
We follow the argument and notation used in the proof of Lemma~\ref{lem:vol-small-bound-II}.

As before, $(IIa)$ satisfies the bound of \eqref{eq:volII1}.
We write the remaining terms as $D_v^p\big((IIb)+(IIc)\big)$ as in \eqref{eq:volII2}.
Since $f_v(\Lambda_{e_w})=1$, the bound \eqref{eq:volII3} for $(IIb)$ is replaced by the following
(again we drop for the moment the constant $D_v^p$):
\begin{equation}
		(IIb)  
=
		\sum_{k=1}^{\infty} \sum_{j=1}^{(m_v+1)\wedge(k+k_\Delta)}
	\sum_{w \in \cC(v,q,j,k)} 
	\frac{a^{-kp}V_n(w)}{D_w^p \cdot 1} .
	\label{eq:volII31}
\end{equation}

In \eqref{eq:volII31}, each $w$ considered is not in $\cL(v,q)$, so $t_{w,n}<t_{v,n}-q=t-q$, so by the definition of $\hat{V}_{<(t-q)}$, \eqref{eq:volII31} is at most
\begin{equation}\label{eq:volII41}
		\Bigg( 
		\sum_{k=1}^{\infty} \sum_{j=1}^{(m_v+1)\wedge(k+k_\Delta)}
	\sum_{w \in \cC(v,q,j,k)} 
	a^{-kp} \Bigg) \hat{V}_{<(t-q)}
\end{equation}
Now the volume estimate \eqref{eq:volII4s1} gives for each $j, k$, 
	\[ \sum_{w\in\cC(v,q,j,k)} a^{-kp}
	\preceq a^{-jQ_v} a^{-k(p-Q_v)}, \]
	so \eqref{eq:volII41} is at most
	\begin{align*}
	 	\sum_{k=1}^\infty \sum_{j=1}^{(m_v+1)\wedge(k+k_\Delta)} 
		a^{-jQ_v} a^{-k(p-Q_v)} \hat{V}_{<(t-q)}
		& \leq \sum_{k=1}^\infty \sum_{j=1}^\infty 
		a^{-jQ_v} a^{-k(p-Q_v)} \hat{V}_{<(t-q)}
		\\ & \preceq \hat{V}_{<(t-q)}.
	\end{align*}
So in total, \eqref{eq:volII1}, \eqref{eq:volII2}, \eqref{eq:volII31}, \eqref{eq:volII41}, and the above give
\begin{align*}
	(II) 
	& \preceq (IIa) + D_v^p\big( (IIb)+(IIc) \big)  
	\\ & \preceq a^{-np}
	+ D_v^p  
	\hat{V}_{<(t-q)} 
	+ D_v^p \cdot (IIc),
\end{align*}
	where, as $\diam h_v(\Lambda_{e_w}) \leq 1$ and $f_v(\Lambda_{e_w})=1$,
\begin{align*}
  (IIc) & = \sum_{w\in\cL(v,q)} \frac{(\diam h_v(\Lambda_{e_w}))^p}{f_v(\Lambda_{e_{w}})^p} \cdot \frac{V_n(w)}{D_w^p}
  \leq \sum_{w\in\cL(v,q)} \frac{V_n(w)}{D_w^p} . \qedhere
\end{align*}
\end{proof}

\subsection{Uniform volume bounds}

We can now complete the proof that $V_n(v_0)$ is bounded independently of $n$.
\begin{proof}[Proof of Theorem~\ref{thm:volbound}]
Our goal is to bound $V_n(v_0) \asymp V_n(v_0)/D_{v_0}^p$ independently of $n$.

	First we apply Proposition~\ref{prop:volume-large-v-case} with $v=v_0, t= t_{v_0,n}\leq n, q=0$ (additionally, $t \approx n$) to bound $V_n(v_0)$ by
\[
	V_n(v_0) \preceq
	\hat{V}_{< t_{v_0,n}}+ \sum_{w \in \cL(v_0,0)} \frac{V_n(w)}{D_w^p}
	\leq \hat{V}_{< n}+ \sum_{w \in \cL(v_0,0)} \frac{V_n(w)}{D_w^p}.
\]
For each $w$ on the right-hand side which is in $T_{\delta'}$, we apply Proposition~\ref{prop:volume-large-v-case} to it (with $v=w, t=t_{w,n}\leq n, q=0$) to get a bound 
\[
	\frac{V_n(w)}{D_w^p} \preceq \hat{V}_{<n} + \sum_{u \in \cL(w,0)} \frac{V_n(u)}{D_u^p}.
\]
There are boundedly many vertices in $T_{\delta'}$, so after doing this step to each such term, we have that
\[
	V_n(v_0) \preceq \hat{V}_{<n} + \sum_u \frac{V_n(u)}{D_u^p},
\]
where each $u$ in the sum is not in $T_{\delta'}$, but does have that $\Lambda_u$ has size comparable to $\delta'$.  Therefore there is a uniform bound on how many such $u$ appear, and by definition $V_n(u)/D_u^p \leq \hat{V}_{<(n+1)}$ since $t_{u,n} \leq n$.

In conclusion, we have found that 
\[
	V_n(v_0) \leq C \hat{V}_{<(n+1)}
\]
for some $C$ independent of $n$, and this is 
	bounded as by Lemma~\ref{lem:vhat-finite} and Proposition~\ref{prop:volume-v-hat-goes-to-0} $\sup_{t\in\Z} \hat{V}_n < \infty$.
\end{proof}

\section{Attainment of conformal dimension}\label{sec:attained}

In this section we characterise when the conformal dimension of a hyperbolic graph of groups with elementary edge groups is attained.
The key concept we use is porosity.
\begin{definition}
	\label{def:porous}
	A subset $Y$ of a metric space $X$ is \emph{porous} if there exists $c>0$ so that for any $y\in Y$ and $r \leq \diam(X)$ there exists $x \in X$ with $B(x,cr) \subset B(y,r) \setminus Y$.
\end{definition}
Under mild hypotheses, porosity is preserved by quasisymmetric homeomorphisms.
\begin{lemma}[{cf.\ \cite[Theorem 4.2]{Vai-88-porous}}]
	\label{lem:porous-qs-inv}
	If $X$ is a uniformly perfect metric space, and $Y \subset X$ is porous, and $f:X\to X'$ is a quasisymmetric homeomorphism,  then $f(Y) \subset X'$ is porous.
\end{lemma}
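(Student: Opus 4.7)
The plan is to transfer the porosity condition through $f$ by carefully choosing a reference scale in $X'$, pulling back to $X$, applying porosity of $Y$ there, and pushing the resulting ball forward. Let $g := f^{-1}$, and write $\eta$ for the distortion function of $f$. I'll use that uniform perfectness of $X$ (with constant $C$, say) transfers through $f$ to uniform perfectness of $X'$ (with some constant $C'$), which is standard.

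First, fix $y' = f(y) \in f(Y)$ and $r' \leq \diam X'$, and set $y = g(y')$. To ensure the image of a scale-$r$ ball around $y$ lands safely inside $B(y',r')$, I would pre-select $\alpha \in (0,1)$ (say $\alpha = 1/2$), put $s := \alpha r'/\eta(1-c)$ where $c$ is the porosity constant of $Y$, and invoke uniform perfectness of $X'$ to pick $z' \in X'$ with $s/C' \leq d'(y',z') \leq s$. Defining $r := d(y, g(z'))$, the quasisymmetry inequality applied to $(y, g(z'), u)$ for any $u \in B(y,r)$ gives $d'(y', f(u)) \leq \eta(1) \cdot d'(y', z')$; similarly, distances inside $B(y,r)$ shrink by a factor $\eta(1-c)\cdot d'(y',z') = \alpha r'$ for points within $(1-c)r$ of $y$. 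So the "scale in $X'$" induced by $r$ is uniformly comparable to $r'$.

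Next I apply porosity of $Y$ at scale $r$: there exists $x \in X$ with $B(x, cr) \subset B(y,r) \setminus Y$. Two bounds follow: since $y \in Y$ and $y \in B(y,r)$ we get $d(x,y) > cr$, while the containment $B(x,cr) \subset B(y,r)$ gives $d(x,y) \leq (1-c)r$; and for any $z \in Y$, either $z \notin B(x,cr)$ directly (if $z \in B(y,r)$), or $z \notin B(y,r)$ and then $d(x,z) \geq d(y,z) - d(x,y) \geq r - (1-c)r = cr$. In both cases $d(x,z) \geq cr$ for every $z \in Y$.

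Finally, with $x' := f(x)$ I would apply the quasisymmetry inequality three times, choosing the basepoint carefully each time: taking basepoint $y$ with ratio $d(y,x)/d(y,g(z'))\leq 1-c$ gives $d'(y',x') \leq \eta(1-c)\, d'(y',z') \leq \alpha r'$; reversing to ratio $d(y,g(z'))/d(y,x) \leq 1/c$ gives $d'(y',x') \geq d'(y',z')/\eta(1/c) \geq s/(C'\eta(1/c))$, which is comparable to $r'$; and taking basepoint $x$ with ratio $d(x,y)/d(x,z) \leq (1-c)/c$ for any $z \in Y$ yields $d'(x',f(z)) \geq d'(x',y')/\eta((1-c)/c)$. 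Combining the last two,
\[
d'(x', f(Y)) \geq \frac{\alpha r'}{C'\,\eta(1/c)\,\eta((1-c)/c)} =: \beta r',
\]
while the first bound gives $d'(x',y') \leq \alpha r'$, so $B(x', c'' r') \subset B(y', r') \setminus f(Y)$ with $c'' := \min\{1-\alpha, \beta\} > 0$. This $c''$ depends only on $c$, $\alpha$, $C'$ and $\eta$, so $f(Y)$ is porous. The only potentially delicate step is the scale-matching in the first paragraph—ensuring we use uniform perfectness of $X'$ (rather than $X$) and pick $s$ small enough that the distorted image of $B(y,r)$ still fits inside $B(y',r')$—but once $\alpha$ is chosen with $\alpha < 1$ the constants fall into place.
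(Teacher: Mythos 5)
The proof has a gap in the second paragraph. You claim that ``the containment $B(x,cr) \subset B(y,r)$ gives $d(x,y) \leq (1-c)r$,'' and use this to deduce $d(x,z) \geq cr$ for every $z \in Y$, including $z \notin B(y,r)$. Neither claim is justified for a general (even uniformly perfect) metric space: the containment $B(x,cr) \subset B(y,r)$ only gives $d(x,y) < r$, since $x \in B(x,cr) \subset B(y,r)$, and there is no reason for all of $B(x,cr)$ to lie ``on the far side of $x$ from $y$.'' A concrete counterexample: in $\R^2$ with the Euclidean metric, let $X = (K \times \{0\}) \cup (\{1\} \times K)$ where $K$ is the standard Cantor set (an ``L-shape'' with Cantor arms, which is uniformly perfect). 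With $y = (0,0)$, $r = 1.01$, $x = (1,0)$, $c = 0.1$, one checks that $B(x, cr) \subset B(y,r)$ yet $d(x,y) = 1 > 0.909 = (1-c)r$. Consequently, for $z \in Y \setminus B(y,r)$ your argument gives only $d(x,z) \geq d(y,z) - d(x,y) > r - r = 0$, which is useless, and the final lower bound $d'(x', f(Y)) \geq \beta r'$ does not follow. (Your first bullet bound and the quasisymmetry upper bound $d'(y',x') \leq \alpha r'$ can be rescued with a different tuning of $s$, but the lower bound on $d'(x', f(Y))$ cannot, as stated.)

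The issue is avoidable, and the paper's proof does so by arguing with sets rather than pointwise distances. It uses the standard fact that quasisymmetric maps send balls into ``quasiballs'': there is $r$ with $B(f^{-1}(y'),r) \subset f^{-1}(B(y',r')) \subset B(f^{-1}(y'),\lambda r)$. Applying porosity inside the inner ball gives $B(x,cr)$ disjoint from $Y$; then $f(B(x,cr))$ is automatically disjoint from $f(Y)$ because $f$ is a bijection, and the quasiballs property again produces a ball $B(f(x),r'') \subset f(B(x,cr)) \subset B' \setminus f(Y)$. The only remaining work is bounding $r''/r'$ below, which is done with Heinonen's diameter-comparison for quasisymmetric maps. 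This set-theoretic route completely bypasses the need for $d(x,z) \geq cr$ for $z$ far from $y$. If you wish to save your pointwise approach, you would need to restrict attention to $z \in Y$ with $f(z) \in B(y',r')$, pull back to show $d(y,z) \leq Kr$ for a constant $K$ depending on $\eta$ and the uniform perfectness constant, and apply porosity at scale $Kr$; this can be made to work, but it requires the additional argument you omitted.
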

\begin{proof}
	Given $B' = B(y',r') \subset X'$ with $y' \in f(Y)$ and $r' \leq \diam X'$,
	since $f^{-1}$ is quasisymmetric there exists  $r>0$ so that $B=B(f^{-1}(y'),r)$ satisfies $B \subset f^{-1}(B') \subset \lambda B$,
	where $\lambda \geq 1$ is a constant depending only on $f$.
	Since $Y$ is porous, there exists $B(x,cr) \subset B \setminus Y$.
	Now $f(B(x,cr)) \subset f(B) \subset B'$, and by quasisymmetry there exists $r''>0$ with $B(f(x),r'') \subset f(B(x,cr)) \subset B(f(x),\lambda r'')$.
	Since $B(x,cr) \subset B \setminus Y$, $B(f(x),r'') \subset B'\setminus f(Y)$, so it remains to show that $r''/r' \geq c' > 0$ for a constant $c'$.

	In a uniformly perfect space, the radius of any ball is comparable to its diameter (indeed, this is an equivalent definition) up to some uniform constant $C$.
	So by \cite[Proposition 10.8]{Hein-01-lect-an-mtc-spc}, since $B \subset f^{-1}(B')$ and $\diam B \asymp \diam f^{-1}(B')$ we have $\diam f(B) \asymp \diam B' \asymp r'$.
	Thus again by \cite[Proposition 10.8]{Hein-01-lect-an-mtc-spc}, writing $\eta:[0,\infty)\to[0,\infty)$ for the distortion function of $f$,
	\begin{align*}
		\frac{r''}{r'} & \asymp \frac{\diam f(B(x,cr))}{\diam f(B)}	
		\geq \frac{1}{2\eta\left(\frac{\diam B}{\diam B(x,cr)}\right)}
		\geq \frac{1}{\eta(C^2/c)} >0. \qedhere
	\end{align*}
\end{proof}
We will use the following criteria for non-attainment of Ahlfors regular conformal dimension, likely well known to experts in the area.
\begin{proposition}
	\label{prop:porous-non-attain}
	Suppose there is a metric space $X$ with a subset $Y \subset X$ that is porous, so that the (Ahlfors regular) conformal dimensions of $Y$ and $X$ are equal (and finite).
	Then the conformal dimension of $X$ is not attained.
\end{proposition}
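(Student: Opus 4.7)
The plan is to argue by contradiction: suppose $\Confdim X = Q$ is attained. We may assume $Q > 0$, since if $Q = 0$ any Ahlfors $0$-regular realisation of $X$ is uniformly discrete, admitting no non-empty porous subset compatible with the finite-and-equal conformal dimension hypothesis. By attainment there is a metric $d'$ on $X$, quasisymmetric to the original $d$, such that $(X, d')$ is Ahlfors $Q$-regular; such a space is in particular uniformly perfect. Applying Lemma~\ref{lem:porous-qs-inv} to $\mathrm{id}\colon (X,d)\to(X,d')$ shows that $Y$ remains porous in $(X, d')$ with some porosity constant $c' > 0$.

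The next step --- the only substantive one --- is to convert porosity of $Y$ inside the Ahlfors $Q$-regular space $(X, d')$ into a strict drop in Ahlfors regular conformal dimension. The starting computation is standard: for each $y \in Y$ and each $r \leq \diam(X, d')$ the excluded ball $B(x, c'r) \subset B(y, r) \setminus Y$ has $\mu$-measure at least a definite fraction of $\mu(B(y, r))$ by $Q$-regularity. Iterating this estimate at dyadic scales via a tree/covering argument yields $\dim_A(Y, d'|_Y) \leq Q - \epsilon$ for some $\epsilon = \epsilon(c', Q, \text{regularity data}) > 0$, where $\dim_A$ denotes the Assouad dimension. Classical results (see the survey \cite{MacTys10-confdim}) then upgrade this to a bound on the AR conformal dimension: a doubling, uniformly perfect metric space is quasisymmetric to an Ahlfors $s$-regular space for any $s$ strictly greater than its Assouad dimension. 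Since $Y$ inherits doubling from $(X, d')$ and (in the intended application with $X = \bdry G$, $Y$ a limit set of a quasiconvex subgroup) also the uniform perfectness needed, one concludes
\begin{equation*}
\Confdim Y \;\leq\; Q - \epsilon \;<\; Q \;=\; \Confdim X,
\end{equation*}
contradicting $\Confdim Y = \Confdim X$.

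The hard part of the argument is thus the quantitative dimension drop for porous subsets of Ahlfors regular spaces, together with the Assouad-to-AR-conformal-dimension upgrade; this is a combination of V\"ais\"al\"a/Luukkainen-type results on porosity and Assouad dimension that I would either cite or carry out explicitly via the iterative excision sketched above. An alternative, sometimes technically cleaner route is to deform the Ahlfors $Q$-regular measure $\mu$ on $X$ itself by a conformal weight concentrated near $Y$, producing a new quasisymmetrically equivalent Ahlfors $(Q-\epsilon)$-regular metric on all of $X$ --- this contradicts attainment without ever having to extract an AR metric on $Y$. Everything else in the proof is a direct combination of the quasisymmetric invariance of porosity (Lemma~\ref{lem:porous-qs-inv}) with the definition of attainment.
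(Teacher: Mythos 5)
Your argument is essentially the paper's: transfer porosity to the Ahlfors regular model via Lemma~\ref{lem:porous-qs-inv}, obtain a strict Assouad dimension drop for the porous subset (the paper cites David--Semmes~\cite[Lemma 5.8]{David-Semmes-97-fract-fractals}, you sketch the dyadic excision argument), then regularise at any exponent above the Assouad dimension (the paper cites Heinonen~\cite[Theorem 14.16]{Hein-01-lect-an-mtc-spc}) to conclude $\Confdim Y < \Confdim X$, a contradiction. Your caution about the uniform perfectness needed for the regularisability step is well-placed but is resolved by the hypothesis rather than by the intended application: finiteness of $\Confdim Y$ already means $Y$ is quasisymmetric to some Ahlfors regular space, hence uniformly perfect, so the general statement goes through without restricting to boundaries and limit sets.
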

\begin{proof}
	Suppose otherwise, and that $f:X\to X'$ is a quasisymmetric map with $X'$ Ahlfors regular of dimension $\Confdim X$.
	Since $X'$ is Ahlfors regular it is uniformly perfect, and so is $X = f^{-1}(X')$.
	Then $f(Y) \subset X'$ is porous by Lemma~\ref{lem:porous-qs-inv} above,
	so its Assouad dimension satisfies $\dim_A f(Y) < \dim_A X' = \Confdim X$~\cite[Lemma 5.8]{David-Semmes-97-fract-fractals}.
	For any $Q > \dim_A f(Y)$, $f(Y)$ is quasisymmetric to an Ahlfors $Q$-regular space~\cite[Theorem 14.16]{Hein-01-lect-an-mtc-spc}, so choosing $Q \in (\dim_A f(Y), \Confdim X)$ we get that $\Confdim Y \leq Q < \Confdim X$, a contradiction.
\end{proof}
A useful tool for identifying porous subsets is the following.
\begin{proposition}
	\label{prop:qcvx-porous}
	Suppose $H$ is a quasiconvex subgroup of a hyperbolic group $G$.
	Then the following are equivalent:
	\begin{enumerate}
		\item the limit set $\Lambda H$ is porous in $\bdry G$,
		\item $\Lambda H \subset \bdry G$ is a proper subset, and
		\item $H$ is infinite index in $G$.
	\end{enumerate}
\end{proposition}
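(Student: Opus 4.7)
The easy implications are $(1) \Rightarrow (2)$ and $(2) \Rightarrow (3)$. For the first, porosity of $\Lambda H$ immediately forbids any ball $B(y, r) \subset \bdry G$ centred on a point of $\Lambda H$ from being contained in $\Lambda H$, so $\Lambda H \neq \bdry G$. For the second, I would argue by contrapositive: if $[G:H] < \infty$ then the inclusion $H \hookrightarrow G$ is a quasi-isometry (both act geometrically on the same Cayley graph), so $\Lambda H = \Lambda G = \bdry G$.

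For the substantive implication $(3) \Rightarrow (1)$, I would first dispose of the elementary cases: if $H$ is finite then $\Lambda H = \emptyset$ is trivially porous, and if $G$ is elementary then (3) forces $H$ finite. So assume $G$ is non-elementary, and normalise $\diam \bdry G = 1$. The main step is a refinement of Lemma~\ref{lem:hypbdry-selfsim}: for every $y \in \Lambda H$ and $r \in (0, 1]$, the element $g \in G$ witnessing approximate self-similarity at $y$ at scale $r$ may in fact be chosen inside $H$. This is because the proof of that lemma only requires $g$ to move a point $p$ on $[o, y)$ at depth $\approx -\epsilon^{-1}\log r$ close to the basepoint $o$; since $y \in \Lambda H$, quasiconvexity of $H$ places $p$ within a uniform distance of $H \cdot o$, so some $h \in H$ works. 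Thus there is $L \geq 1$ so that for every such $(y, r)$, some $h \in H$ provides an $L$-bi-Lipschitz map $h : (B(y, r), d/r) \to V \subset \bdry G$ with $B(hy, 1/L) \subset V$.

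Because $h \in H$ forces $h \Lambda H = \Lambda H$, this bi-Lipschitz map restricts to a correspondence between $B(y, r) \setminus \Lambda H$ and $V \setminus \Lambda H$. The problem of finding a large ball in $B(y, r) \setminus \Lambda H$ therefore reduces to the uniform unit-scale claim: there exists $c_0 > 0$ such that for every $z \in \Lambda H$ the ball $B(z, 1/L)$ contains a ball of radius $c_0$ lying in $\bdry G \setminus \Lambda H$. A compactness argument on the closed set $\Lambda H$ reduces this in turn to the assertion that $\Lambda H$ has empty interior in $\bdry G$ (else a limit point $z \in \Lambda H$ would satisfy $B(z, 1/L) \subset \Lambda H$). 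For an infinite-index quasiconvex subgroup this is true: if $\Lambda H$ contained an open ball, then all bi-infinite geodesics in $Z$ with both endpoints in that ball would stay in a uniform $C$-neighbourhood of $H \cdot o$ by quasiconvexity, and a quasiconvex-hull argument analogous to the contrapositive of $(3) \Rightarrow (2)$ (every point of $Z$ lies on such a bi-infinite geodesic once enough endpoints are available) forces $[G:H] < \infty$, contradicting (3). Pulling back a unit-scale gap via $h^{-1}$ then yields a ball of radius $\asymp c_0 r / L^2$ in $B(y, r) \setminus \Lambda H$, giving porosity with a uniform constant. The main subtlety to make rigorous is the empty-interior claim; the remaining steps are a matter of bookkeeping.
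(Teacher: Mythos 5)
Your reductions are sound up to the last step, but the final step is not.
The easy implications $(1)\Rightarrow(2)\Rightarrow(3)$ match the paper.
Your refinement of Lemma~\ref{lem:hypbdry-selfsim} is correct and worth noting: with the basepoint $o$ chosen in $H\cdot o$, quasiconvexity places a deep point of $[o,y)$ (for $y\in\Lambda H$) within uniform distance of $H\cdot o$, so the ``zooming'' element can indeed be taken in $H$, and since $h\Lambda H=\Lambda H$ this reduces porosity to a single-scale statement. The compactness step reducing that to ``$\Lambda H$ has empty interior'' also works (with a little care about ball inclusions). This framework is genuinely different from the paper's, which proves porosity at all scales directly.

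The gap is your argument for empty interior, and I think it fails. You write that ``all bi-infinite geodesics in $Z$ with both endpoints in that ball would stay in a uniform $C$-neighbourhood of $H\cdot o$'' and that ``every point of $Z$ lies on such a bi-infinite geodesic once enough endpoints are available.'' But the set of bi-infinite geodesics with both endpoints in a fixed small ball $B\subset\bdry G$ is a \emph{bounded} subset of $Z$ --- if both endpoints lie in a ball of radius $\epsilon$ around $\xi$, the geodesic stays within bounded distance of a fixed point depending only on $\xi$ and $\epsilon$. So no quantifier on ``enough endpoints'' inside $B$ makes this cover $Z$. If instead you mean geodesics with both endpoints anywhere in $\Lambda H$, then the set of such geodesics lies in $N_C(H)$ by quasiconvexity, and showing that every $p\in Z$ lies near one is \emph{exactly} the statement $Z=N_{C'}(H)$, i.e., $[G:H]<\infty$ --- which is what you are trying to derive, so the argument is circular. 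The fact that infinite-index quasiconvex subgroups have nowhere-dense (indeed porous) limit sets is true, but not elementary; the standard proofs either go through porosity directly or use strict inequality of growth rates. The paper's proof supplies the missing ingredient: using $[G:H]=\infty$ it builds a geodesic ray $\beta$ with $d(\beta(t),H)\geq t$ via Arzel\`a--Ascoli, and the endpoint of a suitable $H$-translate $h\beta$ is the centre of the required gap. You still need some such escaping-ray construction even in your unit-scale setting, so the reduction, while elegant, does not bypass the essential work.
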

\begin{proof}
	The implication (1) $\Longrightarrow$ (2) is trivial.
	Likewise, (2) $\Longrightarrow$ (3) is straightforward: if $[G:H]<\infty$ then there is a bounded fundamental domain for the action of $H$ on a Cayley graph $X$ for $G$, and so for some constant $C$, every point of $X$ is within a distance $C$ of $H \subset X$, and thus $\Lambda H = \bdry X = \bdry G$.

	It remains to show (3) $\Longrightarrow$ (1).
	We fix a Cayley graph $X$ for $G$.
	Since $H$ acts freely on $G$, the quotient $H\backslash X$ is a regular graph of bounded degree, with a vertex for each right coset $Hg$.
	As $[G:H]=\infty$, $H\backslash X$ has infinite diameter, and so we can find a sequence of points $g_i\in G\subset X$, $i \in \N$, so that $d(H,g_i)\to\infty$ as $i\to\infty$.
	Suppose for each $i$ that $h_i \in H \subset X$ is a closest point in $H$ to $g_i$.
	Let $\gamma_i:[0,d(h_i,g_i)]\to X$ be a geodesic from $h_i$ to $g_i$.
	By the choice of $h_i$ for each $t \in [0,d(h_i,g_i)]$, $d(\gamma_i(t),H) \geq t$.
	Let $\beta_i = h_i^{-1}\gamma_i$, so that $\beta_i(0)=1$, and still for each $t$ in the domain of each $\beta_i$, $d(H, \beta_i(t)) \geq t$.
	We apply Arzel\`a--Ascoli to the sequence of maps $(\beta_i)$ to find a subsequence that converges uniformly on compact intervals to a map $\beta:[0,\infty)\to X$.
	This map $\beta$ will be a geodesic ray, and will inherit the property that $d(\beta(t),H) \geq t$ for all $t \in [0,\infty)$.

	Now to show porosity: fix a visual metric $\rho$ on $\bdry X = \bdry G$, with visual parameter $\epsilon>0$ and constant $C_1$, so that $\rho(x,y) \asymp_{C_1} e^{-\epsilon (x|y)_1}$.  Suppose $H$ is $C_2$-quasiconvex: any geodesic with endpoints on $H$ lies in $N_{C_2}H$; this will also be true for a geodesic ray from $1\in H$ to a point of $\Lambda H$.  Finally, write $\delta_X$ for the hyperbolicity constant of $X$.
	
	We want to find $c>0$ so that given $y \in \Lambda H$ and $r \leq \diam(\bdry X)$, there exists $x \in \bdry X$ with (i) for any $y' \in \Lambda H$, $\rho(x,y')\geq cr$, and (ii) for any $x' \in \bdry X$ with $\rho(x,x')\leq cr$, $\rho(x',y) \leq r$.
	
	We will set $c=e^{-\epsilon (A_1+A_2)}$, where $A_1$ and $A_2$ are parameters depending only $\epsilon, C_1, C_2, \delta_X$ found below.
	Given $y \in \Lambda H$ and $r \leq \diam \bdry X$, fix a geodesic ray $\alpha$ from $1$ representing $y$.
	Consider the point of $\alpha$ at distance $\frac{-1}{\epsilon}\log r + A_1$ from $1$, and let $h \in H$ be a point within $C_2$ from that point.
	Let $x \in \bdry X$ be the limit point of $h\beta$.

	We show that (i) holds.
	For $y'\in \Lambda H$, if $\rho(x,y') < e^{-\epsilon(A_1+A_2)} r$ then $(x|y')_1 \geq \frac{-1}{\epsilon}\log r + A_1+A_2-C_3$ for some $C_3=C_3(\epsilon,C_1)$, so the geodesics from $1$ to $x$ and to $y'$ stay $2\delta_X$-close for all times up to this value.
	But this is a contradiction for large $A_2$ since the geodesic from $1$ to $y'$ lies in $N_{C_2}H$, while at times $t \geq \frac{-1}{\epsilon}\log r +A_1$, the geodesic from $1$ to $x$ has distance at least $t-(-\epsilon^{-1}\log(r)+A_1)-C_4$ from $H$ for $C_4=C_4(\delta_X,\epsilon,C_1,C_2)$.  See Figure~\ref{fig:porous1}.
\begin{figure}
	\def\svgwidth{.5\textwidth}
\begingroup%
  \makeatletter%
  \providecommand\color[2][]{%
    \errmessage{(Inkscape) Color is used for the text in Inkscape, but the package 'color.sty' is not loaded}%
    \renewcommand\color[2][]{}%
  }%
  \providecommand\transparent[1]{%
    \errmessage{(Inkscape) Transparency is used (non-zero) for the text in Inkscape, but the package 'transparent.sty' is not loaded}%
    \renewcommand\transparent[1]{}%
  }%
  \providecommand\rotatebox[2]{#2}%
  \newcommand*\fsize{\dimexpr\f@size pt\relax}%
  \newcommand*\lineheight[1]{\fontsize{\fsize}{#1\fsize}\selectfont}%
  \ifx\svgwidth\undefined%
    \setlength{\unitlength}{247.24221057bp}%
    \ifx\svgscale\undefined%
      \relax%
    \else%
      \setlength{\unitlength}{\unitlength * \real{\svgscale}}%
    \fi%
  \else%
    \setlength{\unitlength}{\svgwidth}%
  \fi%
  \global\let\svgwidth\undefined%
  \global\let\svgscale\undefined%
  \makeatother%
  \begin{picture}(1,0.60327083)%
    \lineheight{1}%
    \setlength\tabcolsep{0pt}%
    \put(0,0){\includegraphics[width=\unitlength,page=1]{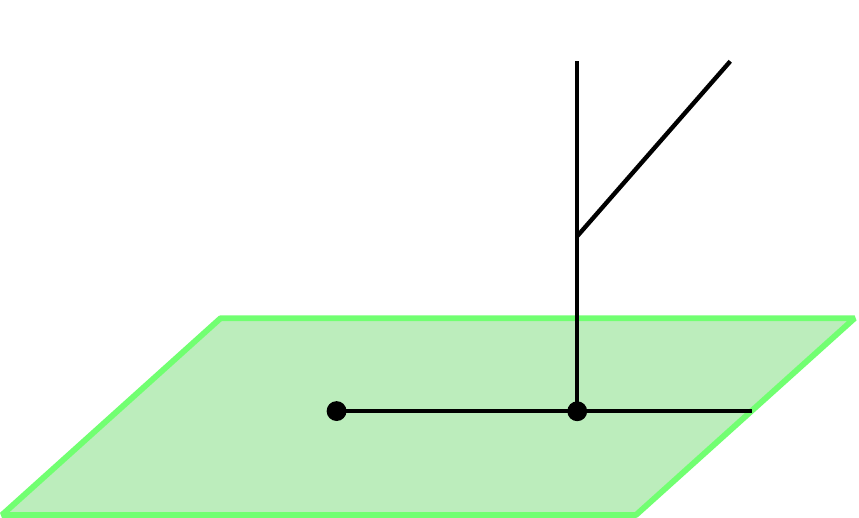}}%
    \put(0.32353403,0.10842177){\color[rgb]{0,0,0}\makebox(0,0)[lt]{\lineheight{1.25}\smash{\begin{tabular}[t]{l}$1$\end{tabular}}}}%
    \put(0.64367749,0.05870907){\color[rgb]{0,0,0}\makebox(0,0)[lt]{\lineheight{1.25}\smash{\begin{tabular}[t]{l}$h$\end{tabular}}}}%
    \put(0.90319699,0.10357719){\color[rgb]{0,0,0}\makebox(0,0)[lt]{\lineheight{1.25}\smash{\begin{tabular}[t]{l}$y$\end{tabular}}}}%
    \put(0.64852196,0.56199226){\color[rgb]{0,0,0}\makebox(0,0)[lt]{\lineheight{1.25}\smash{\begin{tabular}[t]{l}$x$\end{tabular}}}}%
    \put(0.83528365,0.56199226){\color[rgb]{0,0,0}\makebox(0,0)[lt]{\lineheight{1.25}\smash{\begin{tabular}[t]{l}$x'$\end{tabular}}}}%
    \put(0.11342978,0.24464903){\color[rgb]{0,0,0}\makebox(0,0)[lt]{\lineheight{1.25}\smash{\begin{tabular}[t]{l}$H$\end{tabular}}}}%
    \put(0,0){\includegraphics[width=\unitlength,page=2]{porous1.pdf}}%
    \put(0.87370349,0.25211729){\color[rgb]{0,0,0}\makebox(0,0)[lt]{\lineheight{1.25}\smash{\begin{tabular}[t]{l}$y'$\end{tabular}}}}%
  \end{picture}%
\endgroup%

	\caption{The configuration of the points $x$, $x'$ and $y$, with a potential location for $y'$}
	\label{fig:porous1}
\end{figure}
	
	We show that (ii) holds.
	If $\rho(x,x')\leq cr=e^{-\epsilon(A_1+A_2)}$ then $(x|x')_1 \geq \frac{-1}{\epsilon}\log r +A_1+A_2-C_3$.
	If $A_2$ is large enough, 
	the tree approximation to $1$, $y$, $x$ and $x'$
	must look like Figure~\ref{fig:porous1}.
	In particular, $(x'|y)_1$ equals $\frac{-1}{\epsilon}\log r+A_1$ up to an additive error $C_5$.
	Thus 
	$ \rho(x',y) \leq C_1 e^{-\epsilon(x'|y)} \leq C_1 e^{-\epsilon A_1}e^{\epsilon C_5}r$.
	Provided $A_1$ is chosen large enough depending on $C_1$, $\epsilon$, $C_5$, we have $\rho(x',y)\leq r$ as desired.
\end{proof}

As an aside, this implies that hyperbolic groups which attain their conformal dimension satisfy a kind of ``co-Hopfian'' property; compare the variations discussed in Kapovich--Lukyanenko~\cite{Kap-Luk-12-qi-cohopf} and Stark--Woodhouse~\cite{Sta-Wood-hyp-cohopf}.  (The second author thanks Woodhouse for asking him this question.)
\begin{corollary}
	\label{cor:qcvx-cohopf}
	If $G$ is a hyperbolic group, and $\bdry G$ attains its (Ahlfors regular) conformal dimension, then no finite index subgroup of $G$ is isomorphic to a quasiconvex infinite index subgroup of $G$.
\end{corollary}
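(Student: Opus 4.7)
The plan is to argue by contradiction, using Propositions~\ref{prop:qcvx-porous} and~\ref{prop:porous-non-attain} as the main tools. Suppose, for a contradiction, that $H_1 \leq G$ has finite index, that $H_2 \leq G$ is quasiconvex of infinite index, and that there is a group isomorphism $\phi : H_1 \to H_2$. I would aim to exhibit a porous subset of $\bdry G$ whose conformal dimension equals $\Confdim \bdry G$, which by Proposition~\ref{prop:porous-non-attain} contradicts the assumed attainment of $\Confdim \bdry G$.

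The natural candidate is the limit set $\Lambda H_2 \subset \bdry G$. Porosity of this set in $\bdry G$ is immediate from Proposition~\ref{prop:qcvx-porous}, since $H_2$ is quasiconvex and of infinite index in $G$.

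To verify $\Confdim \Lambda H_2 = \Confdim \bdry G$, I would use the following chain of quasisymmetries. First, $[G:H_1]<\infty$ makes the inclusion $H_1 \hookrightarrow G$ a quasi-isometry, so $\bdry H_1$ is quasisymmetric to $\bdry G$. Second, the group isomorphism $\phi$ is a quasi-isometry between $H_1$ and $H_2$ (with respect to any word metrics), inducing a quasisymmetry $\bdry H_1 \to \bdry H_2$. Third, quasiconvexity of $H_2$ in $G$ promotes the orbit map $H_2 \to G$ to a quasi-isometric embedding, whose boundary extension is a quasisymmetric homeomorphism $\bdry H_2 \to \Lambda H_2 \subset \bdry G$. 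Concatenating these three quasisymmetries yields
\[
\Confdim \Lambda H_2 = \Confdim \bdry H_2 = \Confdim \bdry H_1 = \Confdim \bdry G,
\]
as required.

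The contradiction is then immediate from Proposition~\ref{prop:porous-non-attain} applied to $X = \bdry G$ and $Y = \Lambda H_2$: a porous subset realising the full conformal dimension forces non-attainment. I do not expect any real obstacle: the corollary is a direct assembly of Propositions~\ref{prop:qcvx-porous} and~\ref{prop:porous-non-attain} with the standard dictionary relating commensurable or isomorphic hyperbolic groups to their boundaries. The only minor point worth recording is that $H_2$ is hyperbolic, so that $\bdry H_2$ is defined; this holds either because $H_2$ is quasiconvex in the hyperbolic group $G$, or because it is isomorphic to $H_1$, which is finite index in $G$.
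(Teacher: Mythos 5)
Your proposal is correct and follows essentially the same route as the paper: apply Proposition~\ref{prop:qcvx-porous} to get porosity of $\Lambda H_2$, note that $\Lambda H_2$ and $\bdry G$ are quasisymmetric (via $\Lambda H_2 \simeq \bdry H_2 \simeq \bdry H_1 \simeq \bdry G$, which you spell out in more detail than the paper), and then invoke Proposition~\ref{prop:porous-non-attain} for the contradiction.
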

\begin{proof}
	Suppose $H_1, H_2 \leq G$ are isomorphic (indeed, it suffices that they are quasi-isometric) with $[G:H_1]<\infty$ and $[G:H_2]=\infty$.
	By Proposition~\ref{prop:qcvx-porous}, $\Lambda H_2$ is porous in $\bdry G$.
	But $\Lambda H_2$ and $\bdry G$ are quasisymmetric, and hence each attains their conformal dimension, which contradicts Proposition~\ref{prop:porous-non-attain}.
\end{proof}
We return to our main goal, of characterising the attainment of conformal dimension for a hyperbolic graph of groups with finite or 2-ended edge groups.
\begin{proof}[Proof of Theorem~\ref{thm:main-attained}]
	Suppose $G$ is a hyperbolic group so that $\Confdim \bdry G$ is attained, and with a graph of groups decomposition over finite or $2$-ended subgroups.

	If $\Confdim \bdry G = 0$ then $G$ is virtually free by Stallings--Dunwoody, and as the conformal dimension is attained $G$ is $2$-ended, see e.g.~\cite[Theorem 3.4.6]{MacTys10-confdim}.
	If $\Confdim \bdry G = 1$ is attained, then $G$ is virtually a cocompact Fuchsian group by, e.g., a result of Bonk--Kleiner~\cite[Theorem 1.1]{Bonk-Kleiner-02-rigidity-QM-actions}.

	We are left with the case that $\Confdim \bdry G >1$ is attained, and so by Theorem~\ref{thm:main} is equal to $\Confdim \bdry G_i$ for some vertex group $G_i$.
	Let $T$ be the Bass--Serre tree for the given graph of groups decomposition $\cG$.  
	Each vertex of $T$ is stabilized by a conjugate of a vertex group.
	If $T$ has infinite diameter, then there are infinitely many such vertices stabilized by conjugates of $G_i$, each corresponding to a left coset of $G_i$, so $[G:G_i]=\infty$, but this contradicts Proposition~\ref{prop:porous-non-attain}.  So $T$ has finite diameter.

	If there were any loops in $\cG$ then $T$ would have infinite diameter, so $\cG$ must be a tree.
	Consider a leaf of $\cG$ where the vertex group is $G_v$ and the adjacent edge group $G_e$.
	Let $x, y \in T$ be vertices stabilized by $G_i$ and $G_v$ respectively, and $\gamma\subset T$ the simple path connecting them.

	If the injection $i_e:G_e\to G_v$ has proper image, then the index $[G_v:i(G_e)] \geq 2$, so $y$ has degree $\geq 2$, and there is an element $g_1$ of $G_v \leq G$ which fixes $y$ but moves the rest of $\gamma$.
	Since $\Confdim \bdry G_i >1$, the edge groups adjacent to $G_i$ have infinite index in $G_i$, so again there is an element $g_2$ of $G_i \subset G$ which fixes $x$ but moves the rest of $\gamma$.
	Alternating $g_1$ and $g_2$, one shows that $T$ contains an unbounded line and hence has infinite diameter, a contradiction.

	So the injection $i_e:G_e\to G_v$ is an isomorphism, and we can remove $v$ and $e$ from $\cG$ without changing $G$.
	We continue to do this process, removing leafs, until only $G_i$ is left, and thus $G = G_i$.
\end{proof}

\bibliography{biblio}

\end{document}